\tikzset{%
    symbol/.style={%
        ,draw=none
        ,every to/.append style={%
            edge node={node [sloped, allow upside down, auto=false]{$#1$}}}
    }
}
\def\thm@space@setup{%
  \thm@preskip=2ex \thm@postskip=2ex
}
\newcommand{\Rmnum}[1]{\expandafter\@slowromancap\romannumeral #1@}
\newcommand{\Id}{{\mathrm{Id}}}
\numberwithin{equation}{section}
\theoremstyle{plain}
\newtheorem{thm}{Theorem~}[section] 
\newtheorem{lem}[thm]{Lemma~}
\newtheorem{prop}[thm]{Proposition~}
\theoremstyle{remark}
\newtheorem{rmk}[thm]{Remark~}
\theoremstyle{definition}
\newtheorem{defn}[thm]{Definition~}
\newcommand{\calH}{\mathcal{H}}
\newcommand{\calF}{\mathcal{F}}
\newcommand{\CC}{\mathbb{C}}
\newcommand{\ZZ}{\mathbb{Z}}
\newcommand{\RR}{\mathbb{R}}
\newcommand{\LL}{\mathbb{L}}
\newcommand{\PP}{\mathbb{P}}
\newcommand{\FF}{\mathbb{F}}
\newcommand{\QQ}{\mathbb{Q}}
\newcommand{\DD}{\mathbb{D}}
\newcommand{\HH}{\mathbb{H}}
\newcommand\PGL{\mathrm{PGL}}
\newcommand\SO{\mathrm{SO}}
\newcommand\PSL{\mathrm{PSL}}
\newcommand\id{\mathrm{id}}
\newcommand\rank{\mathrm{rank}}
\newcommand\I{\mathrm{I}}
\newcommand\SL{\mathrm{SL}}
\newcommand\diag{\mathrm{diag}}
\newcommand\GL{\mathrm{GL}}
\newcommand\rO{\mathrm{O}}
\newcommand{\Aut}{\mathrm{Aut}}
\newcommand{\bs}{\backslash}
\newcommand{\dbs}{\bs\hspace{-0.5mm}\bs}
\title{Cubic Fourfolds with an Order-$7$ Automorphism}
 \author[X. He]{Xuancong He}
\address{Fudan University, China}
\email{xche25@m.fudan.edu.cn}
 \author[Y. Li]{Yi Li}
\address{Tsinghua University, China}
\email{yi-li22@mails.tsinghua.edu.cn}
 \author[S. Wang]{Shihao Wang}
\address{Tsinghua University, China}
\email{wangshihao25@mails.tsinghua.edu.cn}
 \author[Z. Zheng]{Zhiwei Zheng}
\address{Tsinghua University, China}
\email{zhengzhiwei@mail.tsinghua.edu.cn}
\date{}
\begin{document}
\bibliographystyle{amsalpha}

\begin{abstract}
We study smooth cubic fourfolds admitting an automorphism of order $7$. It is known that the possible symplectic automorphism groups of such cubic fourfolds are precisely $F_{21}$, $\mathrm{PSL}(2,\mathbb{F}_7)$, and $A_7$. In this paper, we determine all possible full automorphism groups of smooth cubic fourfolds with an automorphism of order $7$. We also investigate the moduli spaces of cubic fourfolds whose automorphism group is either $F_{21}$ or $\mathrm{PSL}(2,\mathbb{F}_7)$, describing them both as GIT quotients and as locally symmetric varieties. In particular, we give an explicit description of the singular cubic fourfolds that appear in the boundary of the corresponding GIT quotients. For these two cases, we determine the commensurability classes of the monodromy groups by explicitly identifying certain arithmetic subgroups. As an interesting consequence, we prove that the period domain for cubic fourfolds equipped with an order-$7$ automorphism is isogenous to a Hilbert modular surface.
\end{abstract}

\maketitle

 \setcounter{tocdepth}{1}
	\tableofcontents

\emph{Notation}:
\begin{enumerate}
\item For two groups $G_1, G_2$, we write $G_1<G_2$ if $G_1$ is a subgroup of $G_2$.
\item We use $L_2(7)$ to denote $\PSL_2(\FF_7)$. 
\item For $\frac{1}{n}(m_{1},\dots,m_{k})$, we mean the diagonal matrix $\diag(\zeta_{n}^{m_1},\dots,\zeta_{n}^{m_{k}})$, where $\zeta_{n}=\text{exp}(\frac{2\pi \sqrt{-1}}{n})$ and $m_{1},\dots,m_{k},n\in \mathbb{Z}$. 
\item For $g\in \GL(n,\CC)$, denote by $\overline{g}$ the image of $g$ in $\PGL(n,\CC)$ under the natural map $\GL(n,\CC)\rightarrow \PGL(n,\CC)$ defined by quotient the center of $\GL(n,\CC)$. 
\item For permutation $\sigma \in S_n$, we also use $\sigma$ to denote the matrix $A_\sigma \in \GL(n,\CC)$ such that $(e_{\sigma(1)},..,e_{\sigma(n)}) = (e_1,..,e_n)A_\sigma$. For example, we use $(123) \in S_3$ to denote $\begin{pmatrix}
    0&0&1\\
    1&0&0\\
    0&1&0
\end{pmatrix}.$
\end{enumerate}

\section{Introduction}
\label{introduction}

Cubic fourfolds play an important role in complex algebraic varieties thanks to their relation to hyperk\"ahler geometry and rationality problems. What makes cubic fourfolds special is that their middle cohomology has Hodge numbers $0,1,21,1,0$, which is very similar to the middle cohomology of K3 surfaces. Therefore, the moduli space of cubic fourfolds shares very similar properties with the moduli space of K3 surfaces and they both have corresponding Torelli theorems. Historically, Voisin \cite{voisin1986torelli, voisin2008erratum} proved a global torelli theorem for cubic fourfolds. An alternative proof was provided by Looijenga \cite{looi2009period}. Hassett \cite{Hassett_2000} conjectured that the image of the period map is the complement is two specific Noether-Lefschetz divisors. This conjecture was independently confirmed by Looijenga \cite{looi2009period} and Laza \cite{laza2010period}, who also identified the GIT compactification with the Looijenga compactification (with respect to one of the Noether-Lefschetz divisors) for the moduli space of cubic fourfolds.

In \cite{yu2020moduli}, Yu--Zheng studied moduli space of cubic fourfolds with specified group action, and identified the GIT compactification of the moduli space to the Looijenga compactification of the corresponding arithmetic quotient of local period domain. In this paper, we follow the work of \cite{yu2020moduli}, and focus on the study of smooth cubic fourfolds with an automorphism of order $7$. According to \cite[Theorem 3.8]{gonzalez2011automorphisms} (see also \cite[Theorem 1.2 (7b)]{laza2022automorphisms}), up to a linear coordinate transformation, any such cubic fourfold can be defined by an equation:
\[
F_{a,b} = x_1^2x_2+x_3^2x_4+x_5^2x_6+x_2^2x_3+x_4^2x_5+x_6^2x_1+ax_1x_3x_5+bx_2x_4x_6,
\]
and the order-$7$ automorphism is given by $g_7 = \frac{1}{7}(1,5,4,6,2,3)$. An automorphism of a smooth cubic fourfold is called symplectic if its action on $H^{3,1}$ is trivial. From the equation $F_{a,b}$, it is direct to see that any order-$7$ automorphism of a smooth cubic fourfold is symplectic.

%By \cite[Theorem 3.8]{gonzalez2011automorphisms}, such an automorphism must be symplectic. Indeed, $H^{3,1}(X)$ is generated by a differential form represented by ${dx_1 \wedge\cdots\wedge dx_6 \over F_{a,b}^2}$, and it is clear that $g_7 \cdot {dx_1 \wedge\cdots\wedge dx_6 \over F_{a,b}^2} = {dx_1 \wedge\cdots\wedge dx_6 \over F_{a,b}^2}$. 

In \cite{laza2022automorphisms}, Laza--Zheng classified all symplectic automorphism groups of smooth cubic fourfolds. From \cite[Theorem 1.2, (7b), (8), (9)]{laza2022automorphisms}, we have 
\begin{thm}[Laza--Zheng]
\label{theorem: laza--zheng}
If a smooth cubic fourfold $X$ admits an automorphism of order $7$, then $\Aut^s(X)=F_{21}, L_2(7)$ or $A_7$. Moreover, cubic fourfolds with $F_{21} < \Aut^s(X)$ form a $2$-dimensional family. Those with $L_2(7) < \Aut^s(X)$ form a $1$-dimensional subfamily of it and those with $A_7 < \Aut^s(X)$ are two isolated points. 
\end{thm}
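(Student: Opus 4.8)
The plan is to separate the statement into an \emph{upper bound} (which abstract groups can arise as $\Aut^s(X)$) and a \emph{realization plus dimension count}, exploiting the explicit normal form $F_{a,b}$ together with its $6$-dimensional linear action throughout. First I would record the structure of the order-$7$ symmetry. Since the exponents $1,5,4,6,2,3$ exhaust $(\ZZ/7)^{\times}$, the element $g_7$ has the six distinct nontrivial $7$-th roots of unity as eigenvalues; hence its centralizer in $\PGL(6,\CC)$ is the diagonal torus $T$ and its normalizer is $N = T \rtimes \langle \pi \rangle$, where $\pi$ has order $6$ and realizes the Galois action $\zeta_7 \mapsto \zeta_7^{3}$ by permuting the coordinate lines via multiplication of exponents by a primitive root modulo $7$. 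Any $G = \Aut^s(X)$ contains $\langle g_7 \rangle$, and I would first check that its Sylow $7$-subgroup is exactly $\ZZ/7$, since an element of order $49$ whose seventh power is $g_7$ cannot act on $\CC^6$ while preserving a smooth invariant cubic.

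Next I would pin down the symplectic part of $N$. Using the criterion that a linear $g$ with $g^{*}F = \lambda F$ is symplectic if and only if $\det g = \lambda^{2}$ (which, for a coordinate permutation fixing $F$, reduces to the permutation being even), the even Galois elements $\{\pi^{0}, \pi^{2}, \pi^{4}\} = \langle \pi^{2} \rangle \cong \ZZ/3$ are symplectic while $\pi, \pi^{3}, \pi^{5}$ are anti-symplectic. A direct check shows that the order-$3$ element $(135)(246)$, a power of $\pi$, preserves every $F_{a,b}$ and conjugates $g_7$ to $g_7^{2}$, so that $F_{21} = \langle g_7, (135)(246) \rangle \subseteq \Aut^s(X)$ for every $(a,b)$, and in fact $N_{\Aut^s(X)}(\langle g_7 \rangle) = F_{21}$. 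Consequently, whenever the Sylow $7$-subgroup of $\Aut^s(X)$ is normal, $\Aut^s(X) = F_{21}$; since the larger groups occur only on a proper closed subvariety, the general member of the family satisfies $\Aut^s(X) = F_{21}$. That this family is genuinely $2$-dimensional follows because $(a,b) \mapsto V(F_{a,b})$ has finite fibers, the only identifications being the swap $(a,b) \mapsto (b,a)$ induced by the anti-symplectic involution $(14)(25)(36)$ together with finitely many scalings.

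If the Sylow $7$-subgroup is not normal there are at least eight of them, so $168 = 8 \cdot 21$ divides $|\Aut^s(X)|$. To bound the possibilities I would invoke the general classification of symplectic automorphism groups of cubic fourfolds, following the lattice-theoretic strategy used for K3 surfaces: the coinvariant lattice of $G$ embeds primitively into the Leech lattice, yielding $G \hookrightarrow \mathrm{Co}_0$, after which a finite enumeration of subgroups admitting a $6$-dimensional projective representation that restricts to the reduced regular representation of $\langle g_7 \rangle$ and preserves a smooth cubic leaves only $F_{21}$, $L_2(7)$ and $A_7$. Both $L_2(7)$ and $A_7$ carry a genuine $6$-dimensional irreducible representation $V$ with this property, and for the realization and the dimensions I would compute the invariant cubics directly via characters: one finds $\dim (\Sym^{3} V)^{L_2(7)} = 2$ and $\dim (\Sym^{3} V)^{A_7} = 1$. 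The pencil of $L_2(7)$-invariant cubics, all of the form $F_{a,b}$, cuts out a curve in the $(a,b)$-plane, namely the $1$-dimensional $L_2(7)$-locus inside the $F_{21}$-family, while the unique $A_7$-invariant cubic meets the fixed $L_2(7)$-pencil in the two points corresponding to the two $A_7$-conjugacy classes of $L_2(7)$-subgroups, giving the two isolated $A_7$-points.

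The genuinely hard step is the upper bound: ruling out every group of order divisible by $168$ other than $L_2(7)$ and $A_7$ (for instance groups with $15$, $36, \dots$ Sylow $7$-subgroups, or inconvenient extensions). A purely hands-on invariant-theoretic attack becomes combinatorially unwieldy, so the efficient route is the embedding into $\mathrm{Co}_0$, where the real work is verifying that the coinvariant lattice of any symplectic $G \supseteq \langle g_7 \rangle$ embeds primitively into the Leech lattice and then matching the resulting subgroup against known $\mathrm{Co}_0$-conjugacy data. On the geometric side, the most delicate point is confirming that the $A_7$-locus consists of \emph{exactly} two points rather than one or a positive-dimensional family; this requires both the invariant-space computation above and a careful check that the resulting cubics are smooth and carry no symplectic automorphisms beyond $A_7$.
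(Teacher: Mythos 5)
First, a framing point: the paper does not prove this theorem at all --- it is quoted from Laza--Zheng \cite[Theorem 1.2 (7b), (8), (9)]{laza2022automorphisms}. Your ``genuinely hard step'' (bounding $\Aut^s(X)$ when the Sylow $7$-subgroup is not normal) is precisely an appeal to that lattice-theoretic classification via the Leech lattice and $\mathrm{Co}_0$, so there you are not arguing independently of the cited result but sketching a re-derivation of it; that is the same route as the source, and is fine as far as it goes.

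The genuine gap is in your treatment of the $A_7$ case, and it would lead to a wrong count. You propose that the two isolated $A_7$-points arise from the unique invariant cubic of the genuine $6$-dimensional irreducible representation $V$ of $A_7$, via the two $A_7$-conjugacy classes of $L_2(7)$-subgroups. This mechanism produces only \emph{one} point: the unique cubic in $(\Sym^3 V)^{A_7}$ is $f_1=\sum_{i=1}^{7}x_i^3$ restricted to $\sum_i x_i=0$, which is in fact $S_7$-invariant, and the two classes of $L_2(7)$ in $A_7$ are fused by $S_7$, so both embeddings of the fixed $L_2(7)$ yield the same point $[f_1]$ of the pencil. The second cubic fourfold with symplectic $A_7$-action --- case (6) of Theorem \ref{theorem: main}, with equation $x_1^3+x_2^3+x_3^3+\tfrac{12}{5}x_1x_2x_3+x_1x_4^2+x_2x_5^2+x_3x_6^2+\tfrac{4\sqrt{15}}{9}x_4x_5x_6$ --- carries an $A_7$-action that does \emph{not} lift to any linear $6$-dimensional representation of $A_7$: the only faithful $6$-dimensional linear representation of $A_7$ is the standard one, whose invariant cubic is $f_1$ up to scale, and that cubic has full automorphism group $S_7$, whereas the second cubic has $\Aut(X)=A_7$. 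The second action lifts only to the triple cover $3.A_7$ (equivalently, one works inside $6.A_7<\SL(6,\CC)$). Your framework of ``genuine'' representations, together with $\dim(\Sym^3 V)^{A_7}=1$, misses this cubic entirely and would conclude that there is exactly one $A_7$-point, contradicting the statement; note that the lifting statement, Lemma \ref{repinv}, is special to $C_7$, $F_{21}$, $L_2(7)$ and fails for $A_7$. Two smaller gaps of the same kind: your Sylow argument excludes $\ZZ/49\ZZ$ but not an elementary abelian $(\ZZ/7\ZZ)^2$, which does exist inside the diagonal torus centralizing $g_7$ and must be ruled out separately; and the identity $N_{\Aut^s(X)}(\langle g_7\rangle)=F_{21}$ needs an argument for \emph{every} smooth $F_{a,b}$, not only generic ones, since special parameters could a priori acquire extra diagonal symplectic automorphisms.
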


%Specific equations of cubic fourfolds with $L_2(7)$ or $A_7$ action have been worked out in \cite{KOIKE202512} \cite{laza2022automorphisms}, \cite{KOIKE202512} and d \cite{yang2024automorphism}, respectively. 

%The defining equations for cubic fourfold with $L_2(7)$-action are given in \cite[\S 7.2]{KOIKE202512}, see (4) in Theorem \ref{theorem: main} for the explicit expression.

%Where $L_2(7)$ is generated by $(1,2)(3,6)$ and $(1,2,3,4,5,6,7)$ inside $\SL(6, \CC)$. 

%When it comes to the two cubic fourfolds with $A_7 < \Aut^s(X)$, first \cite[Theorem 1.8, (2)]{laza2022automorphisms} gives one of them.
%Later, \cite[Theorem 6.14]{yang2024automorphism} gives the other. For explicit defining equations, see last two cases in Theorem \ref{theorem: main}.

%Specific equations of cubic fourfolds with order-$7$ action have been worked out in \cite{laza2022automorphisms}, \cite{KOIKE202512} and \cite{yang2024automorphism}. 

However, a full classification of automorphism groups of cubic fourfolds with order-$7$ actions has not been obtained yet. There is also a lack of detailed study of their moduli. In this paper, we give a comprehensive investigation of such cubic fourfolds, from both geometric and arithmetic perspectives.

Next we give the main theorem (see \S \ref{section:full automorphism group}) of this paper. Note that we borrow many explicit equations from previous work including \cite{gonzalez2011automorphisms}, \cite{fu2016classification},  \cite{hohn2019finite}, \cite{laza2022automorphisms}, \cite{yang2024automorphism}, \cite{KOIKE202512}.
\begin{thm}
\label{theorem: main}
If a smooth cubic fourfold $X$ admits an automorphism of order $7$, then $\Aut(X) = F_{21}, F_{21}\rtimes C_2,F_{21}\rtimes C_6,L_2(7),L_2(7)\rtimes C_2,  A_7$ or $S_7$. More precisely, the pair $(\Aut^s(X), \Aut(X))$ equals to exactly one of the following cases:
    \begin{enumerate}[(1)]
        \item $(F_{21},F_{21})$. This is the generic case. And defining equation of $X$ can be written as 
        \[        x_{1}^{2}x_{2}+x_{3}^{2}x_{4}+x_{5}^{2}x_{6}+x_{2}^{2}x_{3}+x_{4}^{2}x_{5}+x_{6}^{2}x_{1}+ax_{1}x_{3}x_{5}+bx_{2}x_{4}x_{6},
        \]
        with $a,b\in \CC$ distinct. 
        \item $(F_{21},F_{21}\rtimes C_{2})$. This is a 1-dimensional family in $\overline{\mathcal{F}}_{F_{21}}$. In this case, defining equation of $X$ can be written as 
        \[        x_{1}^{2}x_{2}+x_{3}^{2}x_{4}+x_{5}^{2}x_{6}+x_{2}^{2}x_{3}+x_{4}^{2}x_{5}+x_{6}^{2}x_{1}+a(x_{1}x_{3}x_{5}+x_{2}x_{4}x_{6}),
        \]
         where $a\in \mathbb{C}^\times$.
        \item $(F_{21}, F_{21}\rtimes C_{6})$. In this case, $X$ is isomorphic to the Klein cubic fourfold, defined by the equation 
        \[        x_{1}^{2}x_{2}+x_{3}^{2}x_{4}+x_{5}^{2}x_{6}+x_{2}^{2}x_{3}+x_{4}^{2}x_{5}+x_{6}^{2}x_{1}.
        \] 
        See also \cite[Example 6.1 (9)]{yang2024automorphism}.
        \item $(L_{2}(7),L_{2}(7))$. It is the generic case for the family $\overline{\mathcal{F}}_{L_{2}(7)}$, which is a 1-dimensional subfamily in $\overline{\mathcal{F}}_{F_{21}}$. The defining equations are given by
        \[
        c_{1}(\sum\limits _{i=1}^{6}x_{i}^{3}-(\sum\limits_{i=1}^{6}x_{i})^{3})+c_{2}(x_{1}x_{2}x_{4}+x_{2}x_{3}x_{5}+x_{3}x_{4}x_{6}-(x_{1}x_{3}+x_{4}x_{5}+x_{2}x_{6})(\sum\limits_{i=1}^{6}x_{i})).
        \]
        \item $(L_{2}(7),L_{2}(7)\rtimes C_{2})$. There exists and only exists two $X$ up to isomorphism. They are Galois conjugate to each other and the defining equations can be given by 
        \[
        \sum\limits _{i=1}^{6}x_{i}^{3}-(\sum\limits_{i=1}^{6}x_{i})^{3}\pm\frac{3\sqrt{2}}{2}(x_{1}x_{2}x_{4}+x_{2}x_{3}x_{5}+x_{3}x_{4}x_{6}-(x_{1}x_{3}+x_{4}x_{5}+x_{2}x_{6})(\sum\limits_{i=1}^{6}x_{i})).
        \]
        $($Equation of one of the examples for another coordinate was given by \cite[Example 6.1 (13)]{yang2024automorphism}.$)$
        \item $(\text{A}_{7},\text{A}_{7})$. We only have 1 possibility for $X$ up to isomorphism, explicit defining equations given by \[x_1^3+x_2^3+x_3^3+\frac{12}{5}x_1x_2x_3+x_1x_4^2+x_2x_5^2+x_3x_6^2+\frac{4\sqrt{15}}{9}x_4x_5x_6.
        \]
        This equation was first obtained by \cite[Theorem 6.14]{yang2024automorphism}. See \cite[Page 16 (2.3)]{KOIKE202512} for an equation up to another coordinate.
        \item $(\text{A}_{7},\text{S}_{7})$. We only have 1 possibility for $X$ up to isomorphism, with defining equation 
        \[
        \sum \limits_{i=1}^{6}x_{i}^{3}-(\sum\limits_{i=1}^{6}x_{i})^{3}.
        \]
        see also \cite[Table 11]{hohn2019finite}.
    \end{enumerate}
\end{thm}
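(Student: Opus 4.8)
The plan is to analyze the action of $\Aut(X)$ on the one-dimensional space $H^{3,1}(X)$. Writing $X=\{F=0\}\subset\PP(V)$ with $V=\CC^{6}$, any $\overline g\in\Aut(X)$ lifts to $g\in\GL(V)$ with $g^{*}F=\lambda_{g}F$, and under the residue description $H^{3,1}(X)\cong\CC\cdot\Omega/F^{2}$ it acts by the scalar $\det(g)/\lambda_{g}^{2}$. By definition $\overline g$ is symplectic exactly when this scalar equals $1$, so the rule $\overline g\mapsto\det(g)/\lambda_{g}^{2}$ gives an injection
\[
\Aut(X)/\Aut^{s}(X)\hookrightarrow\CC^{\times},
\]
whose image is a finite cyclic group $\mu_{k}$. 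By Theorem~\ref{theorem: laza--zheng} we have $\Aut^{s}(X)\in\{F_{21},L_{2}(7),A_{7}\}$, so the problem reduces to determining, for each such $G$, the possible $k$ and the resulting extension, and then pinning down on which cubics each value occurs.

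To bound $k$, I would study how $\Aut(X)$ sits inside the normalizer $N_{\PGL(V)}(\overline G)$. Conjugation gives $\Aut(X)\to\mathrm{Aut}(\overline G)$ with kernel the projective centralizer $Z=Z_{\PGL(V)}(\overline G)\cap\Aut(X)$, while the composite $\Aut(X)\to\mathrm{Aut}(\overline G)\to\mathrm{Out}(\overline G)$ lands in the outer classes realizable in $\PGL(V)$ while preserving $F$. An element of $Z$ lifts to $g$ with $ghg^{-1}=\chi(h)\,h$ for a character $\chi\in\widehat{G}=\widehat{G^{\mathrm{ab}}}$, and such $g$ exists precisely when $V\cong V\otimes\chi$ as $G$-representations. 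Hence $k$ divides $|\mathrm{Out}(G)|$ times the order of $\{\chi:V\otimes\chi\cong V\}$. The three cases then separate cleanly: both $A_{7}$ (acting via its $6$-dimensional standard irreducible) and $L_{2}(7)$ (acting via $V=W\oplus\overline W$, the sum of its two $3$-dimensional irreducibles) are perfect, hence carry no nontrivial characters, and $\mathrm{Out}=C_{2}$ in each case, so $k\mid 2$. For $F_{21}$ one has $G^{\mathrm{ab}}=C_{3}$, and since the two $3$-dimensional irreducibles are induced from $C_{7}$ they are fixed by every such character (by the projection formula $\mathrm{Ind}(\psi)\otimes\chi\cong\mathrm{Ind}(\psi)$ when $\chi|_{C_{7}}=1$), so the full group $C_{3}$ contributes; together with $\mathrm{Out}(F_{21})=C_{2}$ this gives $k\mid 6$.

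It then remains to realize each admissible $k$ by an explicit automorphism and to identify its locus. For $A_{7}$ the outer class is the transposition action of $S_{7}$; writing the invariant cubic as $\sum_{i=1}^{7}y_{i}^{3}$ on the hyperplane $\sum y_{i}=0$ recovers equation (7) and shows it is $S_{7}$-invariant with transpositions acting by $-1$ on $H^{3,1}$, giving $(A_{7},S_{7})$, while the second $A_{7}$-cubic (6) is not $S_{7}$-invariant, so there $k=1$. For $L_{2}(7)$ the outer class is realized by $\PGL_{2}(7)$, which swaps $W$ and $\overline W$: the generic member of the pencil in (4) is fixed only by $L_{2}(7)$, whereas the two Galois-conjugate members in (5) admit the extra involution, giving $(L_{2}(7),L_{2}(7)\rtimes C_{2})$. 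For $F_{21}$, starting from $F_{a,b}$, the outer $C_{2}$ is the hexagon symmetry $x_{i}\mapsto x_{i+1}$ (a $6$-cycle, acting by $-1$ on $H^{3,1}$), which preserves $F_{a,b}$ exactly when $a=b$, and the character-twist $C_{3}$ is a diagonal transformation of shape $\tfrac{1}{21}(0,1,20,3,16,11)$ (acting by $\zeta_{3}$ on $H^{3,1}$), which preserves $F_{a,b}$ only for the Klein cubic $a=b=0$; combining them produces the stratification (1)--(3).

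The main obstacle I anticipate is the $F_{21}$ stratum. First, one must verify that the diagonal character-twist genuinely normalizes $\overline{F_{21}}$ and computes to an inner automorphism composed with a nontrivial $C_{3}$-character; this is exactly the mechanism yielding a cyclic quotient ($C_{6}$) strictly larger than $\mathrm{Out}(F_{21})=C_{2}$, and it is tempting but wrong to classify it as an outer automorphism, since the diagonal element fixes each of $W,\overline W$. Second, one must show the strata are precisely $\{a\neq b\}$, $\{a=b\neq 0\}$ and $\{a=b=0\}$, with no further parameter coincidences creating extra symmetry; this amounts to checking that the only diagonal and monomial symmetries of $F_{a,b}$ are the listed ones, and that the jumps of $\Aut^{s}(X)$ to $L_{2}(7)$ and $A_{7}$ occur only at the special values detected through the coordinate changes borrowed from the cited normal forms. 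Finally one verifies smoothness of the chosen representatives and rules out accidental identifications between strata, for instance that the Klein point does not secretly lie in the $L_{2}(7)$-subfamily.
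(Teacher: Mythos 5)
Your route is genuinely different from the paper's: you want to exploit the embedding $\Aut(X)/\Aut^{s}(X)\hookrightarrow\CC^{\times}$ given by the residue action on $H^{3,1}$ and bound the cyclic quotient by $\mathrm{Out}(G)$ plus character twists, whereas the paper proves $\Aut(X)=\mathrm{Stab}_{N_{\PGL(6,\CC)}(\Aut^{s}(X))}(X)$ (Lemma \ref{AutAuts}), computes the normalizers $N_{\PGL(6,\CC)}(F_{21})\cong(\CC^{\times}\times C_{21})\rtimes C_{6}$ and $N_{\PGL(6,\CC)}(L_{2}(7))\cong L_{2}(7)\rtimes C_{2}$ explicitly, and reads off all stabilizers from an orbit table. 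Your skeleton could be made to work, but as written it has genuine gaps. First, you never show that the projective representation of $G$ on $\CC^{6}$ is unique; this is exactly the paper's Lemma \ref{repinv}, and it is what guarantees that the families you analyze exhaust \emph{all} smooth cubics with $G\le\Aut^{s}(X)$. Worse, your structural claim for $L_{2}(7)$ is false: the action is \emph{not} $W\oplus\overline{W}$ but the unique irreducible $6$-dimensional representation (the paper proves this via the trace-zero condition on order-$4$ elements); a priori one must rule the reducible choice out, not assume it. Second, your divisibility bound ``$k$ divides $|\mathrm{Out}(G)|\cdot\#\{\chi:V\otimes\chi\cong V\}$'' does not follow formally in the $F_{21}$ case, precisely because there $V$ \emph{is} reducible (two non-isomorphic $3$-dimensional irreducibles): the projective centralizer then contains the positive-dimensional torus $\{\overline{\diag(1,c,1,c,1,c)}\}$, which is the kernel of your map from $Z$ to the character group, so $k$ is not bounded until one shows $\Aut(X)$ meets this torus trivially. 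That step needs the smoothness input that all six ``hexagon'' coefficients are nonzero (Proposition \ref{nonADE}); your ``Hence'' skips it, and your later acknowledgement of checking ``diagonal symmetries'' is doing real, not routine, work there.

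The most serious gap is that both your method and the paper's require knowing $\Aut^{s}(X)$ \emph{exactly}, not merely up to containment, before the quotient analysis applies. You assume that the two $C_{2}$-fixed points of the $L_{2}(7)$ pencil have $\Aut^{s}=L_{2}(7)$. But by Laza--Zheng the same pencil contains exactly two points with $\Aut^{s}=A_{7}$, and nothing in your argument prevents these from being the same two points --- in which case case (5) would be empty and those cubics would fall under cases (6)/(7) instead. The paper excludes this with the group-theoretic fact $N_{A_{7}}(L_{2}(7))=N_{S_{7}}(L_{2}(7))=L_{2}(7)$, so that $L_{2}(7)\rtimes C_{2}$ cannot embed in $A_{7}$ or $S_{7}$ compatibly. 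Likewise, on the locus $a=b$ you need the \emph{generic} member to have $\Aut^{s}$ equal to $F_{21}$ (not $L_{2}(7)$, whose family sits inside that locus); the paper gets this by a specialization/density argument through the Klein point. Finally, case (5) also asserts the two cubics are non-isomorphic, which requires an argument (the paper's Proposition \ref{notiso}, via $N_{\PGL(6,\CC)}(L_{2}(7)\rtimes C_{2})=N_{\PGL(6,\CC)}(L_{2}(7))$); Galois conjugacy alone does not give it. Without these three ingredients your stratification (1)--(7) is not established.
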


Along the proof of Theorem \ref{theorem: main}, we obtain a clear understanding of the stable and semistable locus, see Proposition \ref{C7GITstb} and Proposition \ref{F21GIT} in \S\ref{subsection: GIT moduli of F21}.

We know from \cite{yu2020moduli} that there is an isomorphism 
$\calF \cong (\DD_T\setminus\calH_{s})/\Gamma_T$, where $\calF$ is the moduli space of smooth cubic fourfolds with certain symmetries, $\DD_T$ is the corresponding local period domain, $\calH_{s}$ is a specific hyperplane arrangement in $\mathbb{D}_{T}$, and $\Gamma_T$ is an arithmetic subgroup acting on $\DD_T$. For details of these notations, see \S \ref{preliminary}.

%The locus of singular cubic fourfolds with at most ADE singularities corresponds to $\calH_6$. 

Using the $a,b$ model $F_{a,b}$, we calculate the locus of singular cubic fourfolds and obtain the following results, see \S \ref{subsection: singular cubic fourfolds} and Proposition \ref{singuL27}.

\begin{prop}
    For cubic fourfolds inside the family $F_{a,b}$, there are three curves corresponding to singular cubic fourfolds and they only have isolated singularities of $A_1$ or $A_2$ type. Moreover, there is an action of the cyclic group $C_{6}$ on the parameter space $\mathbb{A}^{2}$ for the space $F_{a,b}$ such that $ C_{6}\backslash\mathbb{A}^2 \hookrightarrow \overline{\calF}_{F_{21}}$ is a Zariski open subset. As a corollary, $\overline{\calF}_{F_{21}}$ is a rational surface. 
\end{prop}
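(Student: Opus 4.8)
The plan is to attack the three assertions in turn---the singular locus, the $C_6$-quotient description, and rationality---using the Jacobian criterion for the first and the explicit normalizer of $\langle g_7\rangle$ in $\PGL_6$ for the others.

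For the singular locus, I would first write down the six partial derivatives of $F_{a,b}$,
\[
\partial_1 F = 2x_1x_2+x_6^2+ax_3x_5,\qquad \partial_2 F = x_1^2+2x_2x_3+bx_4x_6,\qquad \partial_3 F = x_2^2+2x_3x_4+ax_1x_5,
\]
together with the three obtained from the cyclic pattern, and impose that they have a common zero in $\PP^5$. Eliminating the six variables by a resultant or Gr\"obner computation yields a discriminant polynomial $\Delta(a,b)$ whose vanishing is exactly the locus of singular members. The $C_6$-symmetry established below forces $\Delta$ to be semi-invariant under $(a,b)\mapsto(\omega a,\omega b)$ and $(a,b)\mapsto(b,a)$, which both constrains the computation and explains the shape of the answer; I expect $\Delta$ to factor into three curves permuted by the $C_6$-action. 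To pin down the singularity type along each curve I would, at its generic point, solve the Jacobian system for the isolated singular point $p$ and compute the Hessian of $F_{a,b}$ there: nondegeneracy of the Hessian gives an ordinary node $A_1$, while at the finitely many points where the Hessian drops rank by one I would expand $F_{a,b}$ to third order in local coordinates and verify the Milnor number equals $2$, giving a cusp $A_2$. One must also confirm that the common zero is isolated, so that no non-isolated or worse-than-$A_2$ degeneration occurs; this case-by-case analysis is the most delicate and computational part of the whole statement and is the main obstacle.

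To produce the $C_6$-action I would exhibit two commuting symmetries of the family. First, the diagonal transformation $x_i\mapsto\lambda_1^{c_i}x_i$ with $(c_i)=(1,-2,4,-8,16,-32)$ preserves all six quadratic monomials precisely when $\lambda_1^{63}=1$, and then rescales both cubic terms $x_1x_3x_5$ and $x_2x_4x_6$ by $\omega:=\lambda_1^{21}\in\mu_3$; hence it sends $F_{a,b}$ to $F_{\omega a,\omega b}$, giving an order-$3$ action $(a,b)\mapsto(\omega a,\omega b)$. Second, the permutation $\tau=(1\,6\,5\,4\,3\,2)$ cyclically permutes the six quadratic monomials and interchanges $x_1x_3x_5\leftrightarrow x_2x_4x_6$, so $\tau^*F_{a,b}=F_{b,a}$, giving the involution $(a,b)\mapsto(b,a)$; note $\tau^2=(1\,5\,3)(2\,6\,4)$ is the order-$3$ element of $F_{21}$ and so acts trivially on parameters. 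These two actions commute, generating $C_6=C_3\times C_2$ on $\mathbb{A}^2$. Conceptually, since $g_7$ has six distinct eigenvalues indexed by all of $(\ZZ/7)^\times$, its normalizer in $\PGL_6$ is $T\rtimes C_6$ with $T$ the diagonal torus; the subgroup preserving the family is $F_{21}$ together with the two generators above, and $N/F_{21}\cong C_6$ is exactly the group just described.

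For the open embedding I would show the classifying map $\mathbb{A}^2\to\overline{\calF}_{F_{21}}$, $(a,b)\mapsto[F_{a,b}]$, is $C_6$-invariant and descends to an injection on $C_6\backslash\mathbb{A}^2$. Invariance is immediate from the previous paragraph; for injectivity, if $F_{a,b}\cong F_{a',b'}$ as cubic fourfolds then, after composing with an automorphism carrying one Sylow-$7$ subgroup to the standard $\langle g_7\rangle$, the isomorphism lies in $N_{\PGL_6}(\langle g_7\rangle)$, hence is the product of an element of $F_{21}$ (trivial on parameters) with an element of the $C_6$ above, so $(a',b')\in C_6\cdot(a,b)$. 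Matching this with the GIT description of $\overline{\calF}_{F_{21}}$ in \S\ref{subsection: GIT moduli of F21}, the image is the open locus of stable members and the induced map is an open immersion onto a dense Zariski-open subset. Finally, rationality is formal: $C_6$ acts linearly on $\mathbb{A}^2$ (by the scalar $\omega$ and by the swap), so $C_6\backslash\mathbb{A}^2$ is a finite linear quotient of a rational surface, hence unirational and therefore rational by Castelnuovo's criterion in characteristic $0$ (indeed it is toric, with $\CC[a,b]^{C_3}=\CC[a^3,a^2b,ab^2,b^3]$ followed by $a\leftrightarrow b$ symmetrization); since it is birational to $\overline{\calF}_{F_{21}}$, the latter is a rational surface.
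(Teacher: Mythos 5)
Your plan follows the same route as the paper: Jacobian analysis for the singular locus, Hessian plus local normal forms for the $A_1$/$A_2$ dichotomy, the copy of $C_6$ inside $N_{\PGL(6,\CC)}(C_7)$ for the quotient description, and rationality from the explicit finite quotient of $\mathbb{A}^2$. However, two steps are genuinely incomplete or incorrect as written.

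First, the opening assertion --- exactly three curves of singular members, with only isolated $A_1$ or $A_2$ singularities --- is where the real content lies, and you leave it to an unspecified resultant/Gr\"obner elimination that you yourself call ``the main obstacle.'' The paper's proof rests on a concrete reduction that your outline is missing (Lemma \ref{singeq}): one first shows that any singular point of $F_{a,b}$ has \emph{all} coordinates nonzero (if one coordinate vanishes, the Jacobian system collapses and forces the point to be zero), and that at such a point the quantities $e_i=x_i^2x_{i+1}$ satisfy $e_1=e_3=e_5$ and $e_2=e_4=e_6$. This parametrizes the singular point as $[1:t:\omega^k\zeta_7^l:\cdots]$ and reduces everything to the two equations $a+2\zeta_7^lt+\omega^k(\zeta_7^lt)^2=0$ and $b+2\omega^k(\zeta_7^lt)^{-1}+(\zeta_7^lt)^{-2}=0$, from which the curve $G=a^2b^2-6ab+4a+4b-3$ and its two $\omega$-translates, the count of $7$ singular points on each, and (via the Hessian determinant $-\tfrac{147}{16}c^{14}t(ct+1)$ together with finite determinacy at $(a,b)=(1,1)$) the $A_1$/$A_2$ classification all follow by hand. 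Without this reduction, or an actually executed computation, the first half of the proposition is asserted rather than proved: in particular isolatedness and the exclusion of singularities worse than $A_2$ are unverified.

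Second, your injectivity step is false as stated: by Proposition \ref{NorC7}, $N_{\PGL(6,\CC)}(\langle g_7\rangle)=(\CC^\times)^5\rtimes C_6$, so the isomorphism carrying $F_{a,b}$ to $F_{a',b'}$ is a \emph{torus} element times a $C_6$ element; it need not lie in the finite set $F_{21}\cdot C_6$. The missing computation (implicit in the paper's claim that an $N_{\PGL(6,\CC)}(F_{21})$-orbit meets the $(a,b)$-plane exactly in $F_{a,b},F_{b,a},F_{\omega a,\omega b},F_{\omega b,\omega a},F_{\omega^2a,\omega^2b},F_{\omega^2b,\omega^2a}$) is to determine which normalizer elements carry one normalized form to another: a diagonal $\diag(t_1,\dots,t_6)$ keeping all six quadratic coefficients equal must satisfy $(t_2/t_1)^{21}=1$, and it then rescales $a$ and $b$ by the common cube root of unity $(t_1/t_2)^{7}$; hence the slice-preserving subgroup is only $C_{21}\rtimes C_6$ and acts on $(a,b)$ exactly through your $C_6$. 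Two smaller repairs: the image of $C_6\backslash\mathbb{A}^2$ is not ``the open locus of stable members'' --- $[1,0,t,-1]$ is stable by Proposition \ref{F21GIT} but its orbit never meets $\{c_1c_2\neq0\}$ --- it is the open saturated locus $\{c_1c_2\neq 0\}$; and passing from ``injective morphism onto an open subset'' to ``open immersion'' requires normality of $\overline{\calF}_{F_{21}}$ plus Zariski's main theorem, as in Proposition \ref{modulirati}. Your Castelnuovo argument for rationality of the quotient is fine, and is a legitimate alternative to the paper's explicit invariant-ring computation $C_6\backslash\mathbb{A}^2\cong\Spec\,\CC[x,y,z]/(xy-z^3)$.
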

\begin{prop}
\label{proposition: L_2(7) moduli}
    The moduli of smooth cubic fourfolds with $L_2(7) < \Aut^s(X)$, denoted by $\calF_{L_2(7)}$, has compactification $\overline{\calF}_{L_2(7)} \cong \PP^1$. The boundary $\overline{\calF}_{L_2(7)} \setminus \calF_{L_2(7)}$ has exactly one determinantal cubic fourfold, one cubic fourfold with $7$ nodes and one cubic fourfold with $14$ nodes. 
\end{prop}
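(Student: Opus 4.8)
The plan is to realize $\overline{\calF}_{L_2(7)}$ as an explicit pencil of cubic fourfolds and then read off its three boundary points. The starting point is that the relevant $6$-dimensional representation $V$ of $L_2(7)$ is $\Sym^2 W$, where $W$ is one of the two $3$-dimensional irreducible representations. A character computation gives $\chi_{\Sym^3 V}=(56,8,2,0,0,0)$ on the classes $(1A,2A,3A,4A,7A,7B)$, so $\dim(\Sym^3 V)^{L_2(7)}=2$. Thus the $L_2(7)$-invariant cubics form a pencil spanned by the two forms $P_1=\sum_i x_i^3-(\sum_i x_i)^3$ and $P_2=x_1x_2x_4+x_2x_3x_5+x_3x_4x_6-(x_1x_3+x_4x_5+x_2x_6)(\sum_i x_i)$ of case (4), and the family is parametrized by $\PP^1=\{[c_1:c_2]\}$.

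To pass to the coarse moduli I would quotient this $\PP^1$ by $N_{\PGL_6}(L_2(7))/L_2(7)$. Since the character of $V$ is real and stable under $\mathrm{Out}(L_2(7))=C_2$, the outer automorphism is induced by some $\tau\in\PGL_6$ normalizing the action, and this normalizer acts on the pencil through $C_2$. By Theorem \ref{theorem: main}\,(5) this involution has exactly two fixed points, the two cubics with $\Aut=L_2(7)\rtimes C_2$; in the coordinate $t=c_2/c_1$ it is $t\mapsto \tfrac{9}{2t}$, fixing $t=\pm\tfrac{3\sqrt2}{2}$ and interchanging $t=0$ and $t=\infty$, so that $P_1$ and $P_2$, both isomorphic to the $S_7$-cubic, descend to a single interior point. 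As $\PP^1/C_2\cong\PP^1$, this yields $\overline{\calF}_{L_2(7)}\cong\PP^1$.

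For the boundary I would locate the singular members and classify their singularities inside $\PP(\Sym^2 W)=\PP^5$, which carries the $L_2(7)$-invariant Veronese surface $\PP(W)\cong\PP^2$ of rank-one symmetric tensors, together with distinguished orbits of sizes $7$ and $14$ arising from the index-$7$ subgroup $S_4$ and the index-$14$ subgroup $A_4$, each of which has a fixed point on $\PP(\Sym^2 W)$ (because $\Sym^2 W$ contains exactly one $S_4$-trivial, resp.\ $A_4$-trivial, line). The invariant cubic $\det$ on $\Sym^2 W$ is a distinguished point of the pencil, and $\{\det=0\}$ is precisely the secant variety of the Veronese, i.e.\ the determinantal cubic fourfold, singular along $\PP^2$. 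The remaining two singular fibers are nodal: requiring $\nabla(c_1P_1+c_2P_2)$ to vanish at a point of the size-$7$ (resp.\ size-$14$) orbit reduces, by equivariance, to a single equation in $[c_1:c_2]$, which pins down the value of $t$ at which the cubic acquires ordinary double points exactly along that orbit. The involution $\tau$ groups these six singular members into three pairs, one of each type, so the boundary of $\overline{\calF}_{L_2(7)}$ consists of exactly one determinantal, one $7$-nodal, and one $14$-nodal cubic.

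The main obstacle is the completeness-and-type verification: proving that the pencil is smooth away from these orbits and that the singularities are exactly as stated. I would use that, by $L_2(7)$-invariance, the singular locus of any member is a union of $L_2(7)$-orbits, so a point can be singular only if it lies on the Veronese or on one of the few low-index orbits; this localizes the computation of $\mathrm{disc}(c_1P_1+c_2P_2)$ to a product of a small number of explicit factors in $t$, to be matched against the degree-$192$ discriminant of a cubic fourfold. The genuinely computational part is confirming that the size-$7$ and size-$14$ orbit points have nondegenerate Hessians (hence are nodes rather than worse singularities) and that the determinantal fiber is the secant variety of the Veronese and not some more degenerate invariant cubic; the representation-theoretic constraints on the singular locus are what keep these verifications finite.
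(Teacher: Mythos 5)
Your strategy---cut out the invariant pencil by character theory, quotient by the outer involution, and locate singular members at special $L_2(7)$-orbits---is genuinely different from the paper's, which instead conjugates the pencil $\langle f_1,f_2\rangle$ into the $F_{21}$-coordinates and intersects it with the discriminant curves already computed for the $(a,b)$-family (Lemma \ref{singeq}, Propositions \ref{singlocus} and \ref{singuL27}). Several of your steps are correct and attractive: $\dim(\Sym^3 V)^{L_2(7)}=2$ is right; the involution on the pencil is indeed $t\mapsto 9/(2t)$ with fixed points $t=\pm\tfrac{3\sqrt2}{2}$ (this matches the paper's computation with the explicit normalizer element $E$ from the proof of Proposition \ref{NL27}, though you assert rather than derive it); the identification of one singular member with the secant variety of the Veronese is valid; and your equivariance observation---that $\nabla P_1(p)$ and $\nabla P_2(p)$ lie in a one-dimensional eigenspace of the stabilizer of a special point $p$, so singularity at $p$ is a single linear condition on $[c_1:c_2]$---does work and is a clean alternative to the paper's explicit elimination.

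There are, however, three genuine gaps. First, the representation theory behind your size-$14$ orbit is wrong: $(\Sym^2 W)^{S_4}$ and $(\Sym^2 W)^{A_4}$ are both one-dimensional and nested, hence equal, so "the $A_4$-trivial line" is the same point as the $S_4$-fixed point and has orbit of size $7$; as written your recipe produces no $14$-nodal member at all. The size-$14$ orbits exist, but they are the eigenlines on which $A_4$ acts by its two nontrivial characters. Second, your count is internally inconsistent: you construct one determinantal and two nodal fibers, yet conclude by pairing "these six singular members." The boundary statement requires exactly six singular members upstairs---two determinantal (coming from the two isomorphisms $V\cong\Sym^2 W\cong\Sym^2 W^{*}$, Galois conjugate over $\QQ(\sqrt{-7})$), two $7$-nodal (one for each of the two conjugacy classes of $S_4$ in $L_2(7)$), and two $14$-nodal---together with a check that the involution, whose fixed members $t=\pm\tfrac{3\sqrt2}{2}$ are smooth, swaps each pair; none of this "two of each" structure is established in your write-up. (Also, invoking Theorem \ref{theorem: main}(5) here is circular in the paper's logic: that case of the theorem is proved using Proposition \ref{singuL27}, i.e.\ using the statement you are proving; fortunately the fixed points can be computed directly.) Third, the completeness step is unjustified: it is not true that a singular point must lie on the Veronese or on finitely many small orbits, because there are positive-dimensional loci of points with nontrivial stabilizer---for instance $\dim(\Sym^2 W)^{D_4}=2$ for a Sylow $2$-subgroup, giving a whole line of points whose orbits have size dividing $21$---so ruling out singularities elsewhere genuinely requires a discriminant computation. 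This is exactly what the paper gets by solving the singularity equations once and for all in the larger $C_7$-family (Lemma \ref{singeq}, Proposition \ref{singlocus}) and then restricting to the $L_2(7)$-pencil; your route would need a substitute for that global step.
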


%On the arithmetic side, we have the following results, relating the geometric objects above to locally symmetric varieties.

Via period maps, the moduli spaces $\mathcal{F}_{F_{21}}$ and $\mathcal{F}_{L_2(7)}$ are Zariski open subsets of the corresponding arithmetic quotients of type IV domains. We investigate these two arithmetic quotients via tube domain models. 

Let $\DD_{T_1}$ be the local period domain of the $L_2(7)$-family and $\DD_{T_2}$ be the local period domain of the $F_{21}$-family. Firstly, we compute the Gram matrices for $T_{1}$ and $T_{2}$ respectively; see \S \ref{conslatL27} and \S \ref{conslatF21}. 

The Gram matrix of $T_1$ is $\diag (\begin{bmatrix} -2 & 1 \\ 1 & 10 \end{bmatrix}, -28)$ up to an integral basis. By elementary number theory, one can check that this lattice has no nonzero isotropic vector. This implies the following:
\begin{prop}[= Proposition \ref{proposition:cocompactness}]
\label{proposition: quotient compact}
    The quotient curve $\Gamma_{T_1}\backslash\DD_{T_1}$ is compact. In particular, it is not isogenous to $\PSL(2,\ZZ)\backslash\HH$.  
\end{prop}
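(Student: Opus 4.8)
The plan is to reduce the compactness of $\Gamma_{T_1}\backslash\DD_{T_1}$ to a purely arithmetic statement about $T_1$, namely that its quadratic form is anisotropic over $\QQ$, and then to verify this anisotropy by an elementary congruence argument. Recall that $\DD_{T_1}$ is the $1$-dimensional (type IV) period domain attached to the rank-$3$ lattice $T_1$, so that, by the construction of \S\ref{preliminary} following \cite{yu2020moduli}, $\Gamma_{T_1}\backslash\DD_{T_1}$ is the arithmetic quotient of an upper half-plane by an arithmetic subgroup of $\rO(T_1)$. For such quotients the Baily--Borel compactification adjoins one $0$-dimensional boundary component (a cusp) for each $\Gamma_{T_1}$-orbit of rational isotropic line in $T_1\otimes\QQ$; equivalently, by the Godement compactness criterion, $\Gamma_{T_1}\backslash\DD_{T_1}$ is compact exactly when the $\QQ$-algebraic group $\SO(T_1\otimes\QQ)$ is $\QQ$-anisotropic, i.e. when $T_1$ carries no nonzero isotropic vector. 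So everything comes down to the anisotropy of the form $q$ with Gram matrix $\diag\!\left(\begin{bmatrix} -2 & 1 \\ 1 & 10 \end{bmatrix},\,-28\right)$.

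For the anisotropy I would argue directly over $\ZZ$. Writing $v=(x,y,z)$ we have $q(v)=-2x^2+2xy+10y^2-28z^2$, so $q(v)=0$ is equivalent to $x^2-xy-5y^2+14z^2=0$; multiplying by $4$ and setting $u=2x-y$ turns this into
\[
u^2+56z^2=21y^2 .
\]
Suppose there were a nonzero integral solution; dividing out the gcd we may take $(u,y,z)$ primitive. Reducing modulo $7$ gives $u^2\equiv 0$, so $7\mid u$; writing $u=7u_1$ and dividing by $7$ yields $7u_1^2+8z^2=3y^2$, whence $z^2\equiv 3y^2 \pmod 7$. Since $3$ is a non-residue modulo $7$ (the residues being $1,2,4$), this forces $7\mid y$ and $7\mid z$. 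Then $7$ divides each of $u,y,z$, contradicting primitivity. Hence $q$ has no nonzero isotropic vector. (Equivalently, $q$ is already anisotropic over $\QQ_7$, which suffices by Hasse--Minkowski since $q$ is indefinite, hence isotropic, over $\RR$.)

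Combining the two steps, $\SO(T_1\otimes\QQ)$ is $\QQ$-anisotropic, so $\Gamma_{T_1}\backslash\DD_{T_1}$ is compact. For the final assertion, recall that $\PSL(2,\ZZ)\backslash\HH$ is the non-compact modular curve (it has a cusp at infinity). An isogeny between two arithmetic quotients of $\HH$ amounts to a commensurability of the corresponding lattices in $\PSL(2,\RR)$ once the symmetric spaces are identified, and cocompactness is a commensurability invariant, since a finite-index subgroup of a cocompact lattice is cocompact and conversely. As $\Gamma_{T_1}$ is cocompact while $\PSL(2,\ZZ)$ is not, the two lattices cannot be commensurable, so the curves are not isogenous.

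The conceptual steps---translating compactness into anisotropy, and ruling out isogeny via commensurability invariance of cocompactness---are standard, so the real substance is the arithmetic verification, which the single congruence above dispatches cleanly. The point demanding the most care is ensuring the compactness criterion is applied to the correct group: one must use that $\Gamma_{T_1}$ is genuinely arithmetic in $\rO(T_1)$ (as supplied by the setup of \S\ref{preliminary}), so that the absence of rational isotropic vectors in $T_1$ really does preclude all cusps. Granting that, the anisotropy computation is the entire content of the proof.
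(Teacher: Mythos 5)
Your proposal is correct and follows essentially the same route as the paper: both reduce compactness to the absence of rational isotropic vectors in $T_1$ via the Baily--Borel compactification, and then verify anisotropy by an elementary congruence argument. The only differences are cosmetic: you work modulo $7$ (after the substitution $u=2x-y$) where the paper works modulo $2$, and you rule out isogeny by the commensurability invariance of cocompactness rather than by citing the classification of noncocompact arithmetic subgroups of $\SL(2,\RR)$.
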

The compactness of $\Gamma_{T_1}\backslash\DD_{T_1}$ can also be deduced from Proposition \ref{proposition: L_2(7) moduli} using period map, see Remark \ref{rmk:compactnessfromGIT} for an argument.  

%We will also see this fact on the GIT side. Namely, from Proposition \ref{proposition: L_2(7) moduli}, we know that all points in the compactified moduli space $\overline{\calF}_{L_2(7)}$ are stable.

In fact, we can determine what $\Gamma_{T_1}\backslash\DD_{T_1}$ is isogenous to by computing $\Gamma_{T_1}$ as follows. There is a natural surjection $\Phi_{1}\colon\SL(2,\RR)\rightarrow \text{SO}^{+}(1,2)$, see \S \ref{tubedomainL27} for the detailed construction. Using this surjection, we have the following description of $\Gamma_{T_1}$:

\begin{prop}[= Proposition \ref{proposition:arithgroupofL27}]
    Denote $\Gamma^{\prime \prime} = \Phi_{1}^{-1}(\rO^+(T_{1})\cap \SO^+(1,2))$. We have $\Phi_{1}(\Gamma^{\prime\prime})\cong \Gamma_{T_1}$ and the following description:
\begin{align*}
    \Gamma^{\prime \prime} = &\{
    \begin{pmatrix}
        \frac{u\sqrt{u^\prime}+v\sqrt{v^\prime}}{2} & \frac{w\sqrt{w^\prime}+x\sqrt{x^\prime}}{2} \\
        \frac{w\sqrt{w^\prime}-x\sqrt{x^\prime}}{2} &
        \frac{u\sqrt{u^\prime}-v\sqrt{v^\prime}}{2}
    \end{pmatrix}|u,v,w,x \in \ZZ, (u^\prime, v^\prime, w^\prime, x^\prime) = (1,21,6,14)\  \text{or} \ (2,42,3,7)\\ &\text{up to permutation by }\mathrm{V} < \mathrm{S}_4, \text{and }u^2u^\prime - v^2v^\prime - w^2w^\prime + x^2x^\prime = 4\}
\end{align*}
(Here we use $\mathrm{V}$ to denote the Klein $4$-group generated by $(12)(34)$ and $(13)(24)$.)
\end{prop}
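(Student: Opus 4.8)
The plan is to convert the abstract condition ``$\Phi_{1}(g)\in\rO^+(T_{1})\cap\SO^+(1,2)$'' into an explicit lattice-stabilization problem inside $M_2(\RR)$, and then solve it by exhibiting $T_{1}$ as (a rescaling of) the trace-zero part of an order in a quaternion algebra. Recall that $\Phi_{1}$ realizes $\SL(2,\RR)$ as the connected degree-two cover of $\SO^+(1,2)$ through its action on the three-dimensional representation $W\subset M_2(\RR)$ built in \S\ref{tubedomainL27}; concretely, every $g\in\SL(2,\RR)$ decomposes in the basis $\{I,H,E,J\}$ with $H=\diag(1,-1)$, $E=\begin{pmatrix}0&1\\1&0\end{pmatrix}$, $J=\begin{pmatrix}0&1\\-1&0\end{pmatrix}$, and $\Phi_{1}(g)$ acts on $W$ preserving a determinant form of signature $(1,2)$. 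First I would fix an isometric embedding $T_{1}\hookrightarrow W$, i.e.\ choose $M_1,M_2,M_3\in W$ whose Gram matrix under a fixed rescaling of $\det$ is exactly $\diag(\begin{bmatrix}-2&1\\1&10\end{bmatrix},-28)$. The condition $\Phi_{1}(g)\in\rO^+(T_{1})$ then becomes the requirement that $g\,M_i\,g^{T}$ lie in the $\ZZ$-span of $M_1,M_2,M_3$ for each $i$, together with the orientation conditions cutting out the $+$ components.

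The key structural input is that this integral stabilizer is governed by a quaternion algebra. Writing $i=\sqrt{21}\,H$, $j=\sqrt{6}\,E$ and $k=\sqrt{14}\,J$, one checks the relations $i^2=21,\ j^2=6,\ k^2=-14$ and $ij=3k,\ jk=-2i,\ ki=-7j$, so that $B:=\QQ\langle i,j\rangle\cong\left(\tfrac{21,\,6}{\QQ}\right)\hookrightarrow M_2(\RR)$ is an indefinite quaternion algebra whose trace-zero part $B^{0}=\QQ i\oplus\QQ j\oplus\QQ k$ carries the form $\mathrm{nrd}(vi+wj+xk)=-21v^2-6w^2+14x^2$. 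Under the relevant action, $B^{0}$ is identified with $T_{1}\otimes\QQ$ up to scaling, and the lattice-preserving elements are precisely the reduced-norm-one units of the order $\mathcal{O}\subset B$ singled out by the embedding $T_{1}\hookrightarrow W$ (together with the pertinent part of its normalizer). Thus the core computation is to determine $\mathcal{O}$ from the Gram matrix via the even Clifford algebra, and then to write $\mathcal{O}^{1}$ and its extra coset in the $\{I,H,E,J\}$ basis. This produces exactly the displayed matrices: the coordinates $(u,v,w,x)$ record the $\{I,H,E,J\}$-components, the tuple $(u',v',w',x')$ records the squared scalings $\{1,21,6,14\}$ of the basis vectors, and $\det g=\tfrac14\,(u^2u'-v^2v'-w^2w'+x^2x')$ shows that $\det g=1$ is equivalent to the stated equation $u^2u'-v^2v'-w^2w'+x^2x'=4$.

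The appearance of the second tuple $(2,42,3,7)$ and of the permutations by $\mathrm{V}<\mathrm{S}_4$ is then explained group-theoretically. Since $(2,42,3,7)$ is obtained from $(1,21,6,14)$ by doubling the first two entries and halving the last two (tensoring the embedding by $\sqrt2$), it corresponds to a nontrivial coset of $\mathcal{O}^{1}$ inside $\Gamma''$: a product of two family-one elements stays in family one, a product of two family-two elements returns to family one (as $\sqrt2\cdot\sqrt2=2$), and family-one times family-two lands in family two, so the two families assemble into a single group in which family one has index two. The Klein four-group $\mathrm{V}$ acts by the sign changes and coordinate swaps that are symmetries of the triple $\{i,j,k\}$, equivalently the automorphisms of the ternary form preserving its discriminant data; quotienting by $\{\pm I\}=\ker\Phi_{1}$ then yields $\Phi_{1}(\Gamma'')\cong\Gamma_{T_1}$ as asserted.

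I expect the main obstacle to be the identification of $\mathcal{O}$ and the completeness statement: pinning down the \emph{exact} order (its level and type, e.g.\ whether it is maximal or Eichler) from the Gram matrix, and then proving that \emph{every} $g$ stabilizing $T_{1}$ is accounted for by the two families, with no spurious units. Controlling the $2$-adic and $7$-adic behavior (the primes visible in $-28$ and in the structure constants $2,3,7$) and correctly locating the index-two coset producing the $(2,42,3,7)$ family is the delicate point; by contrast, the orientation bookkeeping needed to pass from $\rO(T_{1})$ to $\rO^+(T_{1})\cap\SO^+(1,2)$, and then back through the degree-two cover $\Phi_{1}$, should be routine.
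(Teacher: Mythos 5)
Your structural picture is sound, and it is genuinely different from what the paper does: the paper's proof is a direct computation, writing $\Phi_1(h)=N$ with integer entries $n_i$, expanding $A(h)=B^{-1}NB$ explicitly, extracting from the entries that $u^2,v^2,w^2,x^2\in\ZZ$ and $\sqrt{21}uv,\sqrt{21}wx,\sqrt{6}uw\in\ZZ$ for $u=a+d$, $v=a-d$, $w=b+c$, $x=b-c$, and then running an elementary square-free/divisibility case analysis (with the determinant equation forcing $y_i=1$) to land on the two type-tuples; the converse integrality is checked by parity arguments mod $2$ and mod $4$. Your reformulation via $i=\sqrt{21}H$, $j=\sqrt{6}E$, $k=\sqrt{14}J$ is internally consistent (the relations $ij=3k$, $jk=-2i$, $ki=-7j$ and the determinant identity $\det g=\tfrac14(u^2u'-v^2v'-w^2w'+x^2x')$ all check out, and the algebra $\bigl(\tfrac{21,6}{\QQ}\bigr)$ is indeed the right one, as the paper's Theorem \ref{theorem:typeofdomainL27} confirms a posteriori), and the coset interpretation of the $(2,42,3,7)$ family as $\tfrac{1}{\sqrt{2}}\times$(norm-$2$ normalizer elements) correctly explains the group structure that the paper records in Proposition \ref{normal}.

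However, as written the proposal has a genuine gap: the two steps you defer as ``the main obstacle'' are not side issues but the entire content of the proposition. First, you never identify the order $\mathcal{O}$: the even Clifford algebra of $T_1$ (Gram matrix \eqref{equation: gram matrix of T}) must be computed explicitly, and it is not the naive lattice $\ZZ+\ZZ i+\ZZ j+\ZZ k$ but a specific overlattice encoding exactly the parity conditions $u\equiv v$, $w\equiv x \pmod 2$ that the paper shows are forced by (and compatible with) the norm equation; without pinning this down, the claimed equality of sets cannot even be stated precisely. Second, completeness is missing: you must prove that the norms of lattice-stabilizing normalizer elements are exactly $\{1,21,6,14,2,42,3,7\}$ up to squares --- equivalently, that no other Atkin--Lehner-type cosets contribute and that there are no spurious units --- which is precisely the $2$-adic and $7$-adic analysis you acknowledge but do not carry out. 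The paper's elementary route settles both points simultaneously because the case analysis on $(u_2,v_2,w_2,x_2)$ is exhaustive by construction. To turn your sketch into a proof you would need either to invoke and verify the hypotheses of the Clifford-algebra/normalizer correspondence for this specific non-maximal order (including computing its level and its Atkin--Lehner group), or to fall back on a direct verification equivalent to the paper's; in the current form the argument establishes containment of a plausible family in $\Gamma''$ but not the reverse inclusion.
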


We determine the type of $\Gamma^{\prime \prime}$ as an arithmetic subgroup of $\SL(2,\RR)$ up to commensurability and conjugacy and obtain the following (the notation follows \cite{morris2015introduction}):
\begin{thm}[= Theorem \ref{theorem:typeofdomainL27}]
    $\Gamma_{T_1}\backslash\DD_{T_1}$ is isogenous to $\SL(1, \HH_\ZZ^{21,6})\backslash\HH$. 
\end{thm}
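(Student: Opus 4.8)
The plan is to turn the explicit matrix description of $\Gamma^{\prime\prime}$ from Proposition~\ref{proposition:arithgroupofL27} into an identification with the unit group of a rational quaternion algebra. Since $\Phi_1\colon\SL(2,\RR)\to\SO^+(1,2)$ is the double cover and $\SO^+(1,2)\cong\PSL(2,\RR)$, we have $\Gamma_{T_1}\backslash\DD_{T_1}\cong\overline{\Gamma^{\prime\prime}}\backslash\HH$ for the image $\overline{\Gamma^{\prime\prime}}\subset\PSL(2,\RR)$, with $\HH$ the upper half-plane. As isogeny of these curves is by definition commensurability up to conjugacy of the acting groups, it suffices to prove that $\Gamma^{\prime\prime}$ is commensurable with the reduced-norm-one units of an order in $B:=\left(\tfrac{21,6}{\QQ}\right)$. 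A structural point used throughout: $\Gamma_{T_1}$ is the integral orthogonal group of the lattice $T_1$, hence an arithmetic lattice in $\SO^+(1,2)$ by Borel--Harish-Chandra; therefore $\Gamma^{\prime\prime}$ is itself a lattice in $\SL(2,\RR)$, and any inclusion of $\Gamma^{\prime\prime}$ into another lattice is automatically of finite index. This reduces every commensurability claim below to a mere inclusion.

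Next I realize $B$ inside $M_2(\RR)$ via the splitting $\rho$ with $\rho(i)=\diag(\sqrt{21},-\sqrt{21})$ and $\rho(j)=\sqrt6\left(\begin{smallmatrix}0&1\\1&0\end{smallmatrix}\right)$, where $i^2=21$, $j^2=6$, $k=ij$; then $\rho(k)=3\sqrt{14}\left(\begin{smallmatrix}0&1\\-1&0\end{smallmatrix}\right)$. The decisive elementary identity is $\sqrt6\,\sqrt{14}=2\sqrt{21}$. With it one checks that the matrices of the principal type $(u^\prime,v^\prime,w^\prime,x^\prime)=(1,21,6,14)$ are exactly $\rho\!\left(\tfrac12 u+\tfrac12 v\,i+\tfrac12 w\,j+\tfrac16 x\,k\right)$, that $\det$ corresponds to the reduced norm under $\rho$, and that the defining relation $u^2-21v^2-6w^2+14x^2=4$ is precisely $\mathrm{nrd}=1$. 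As $(u,v,w,x)$ range over $\ZZ^4$ the underlying lattice $\Lambda\supset\ZZ\langle i,j\rangle$ spans $B$ over $\QQ$; being $\Gamma^{\prime\prime}\cap\rho(B)^\times$, the norm-one elements form a subgroup of $\Gamma^{\prime\prime}$ lying inside $\SL(1,\mathcal O)$ for a suitable order $\mathcal O\subset B$, of finite index by the first paragraph.

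It remains to account for the second tuple and the Klein four-group $V$. Scaling a matrix of type $(2,42,3,7)$ by $\sqrt2$ and using $\sqrt2\sqrt{42}=2\sqrt{21}$, $\sqrt2\sqrt3=\sqrt6$, $\sqrt2\sqrt7=\sqrt{14}$, one sees that each such element equals $\tfrac{1}{\sqrt2}\rho(\beta)$ with $\beta\in B$ of reduced norm $2$; these are the Atkin--Lehner elements $w_2$. More conceptually, the discriminant of $B$ is $14=2\cdot7$, so its Atkin--Lehner group $\{w_1,w_2,w_7,w_{14}\}$ is itself a Klein four-group---this is the $V$ appearing in the statement, and the clause ``up to permutation by $V$'' records exactly the action of these involutions on the presentation. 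Thus $\Gamma^{\prime\prime}$ is sandwiched between $\SL(1,\mathcal O)$ (up to finite index) and the image in $\SL(2,\RR)$ of the normalizer $N_{B^\times}(\mathcal O)$, an extension of $\SL(1,\mathcal O)$ by the finite group $V$. All the indices being finite, $\Gamma^{\prime\prime}$ is commensurable with $\SL(1,\mathcal O)$; since any two orders of $B$ are mutually commensurable, $\Gamma^{\prime\prime}$ is commensurable with $\SL(1,\HH^{21,6}_\ZZ)$, and projecting to $\PSL(2,\RR)$ yields the asserted isogeny.

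Finally I would confirm $B$ by a Hilbert-symbol computation: $(21,6)_p=-1$ exactly for $p\in\{2,7\}$ and $(21,6)_\infty=+1$, so $B$ is the indefinite quaternion algebra of discriminant $14$; being ramified at finite primes it is a division algebra, whence $\SL(1,\mathcal O)$ is cocompact---reproving and matching Proposition~\ref{proposition: quotient compact}. I expect the genuine work to be bookkeeping rather than one hard step: verifying the norm-two identification, matching the orbit of $V$ with the involutions $\{w_1,w_2,w_7,w_{14}\}$, tracking the order $\mathcal O$ and the finite indices contributed by $V$, and carrying out the local symbol computations cleanly. Arithmeticity of $\Gamma_{T_1}$ is what upgrades every inclusion in this chain to a finite-index one, so that the final relation is true commensurability and hence isogeny.
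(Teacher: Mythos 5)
Your proposal is correct, and at its core it is the same strategy as the paper's: both proofs take the explicit description of $\Gamma^{\prime\prime}$ from Proposition \ref{proposition:arithgroupofL27}, recognize the principal-type subgroup $H$ as norm-one quaternions in $\HH_\QQ^{21,6}$, and then upgrade an inclusion of groups to commensurability using the fact that a lattice contained in a discrete subgroup of $\SL(2,\RR)$ automatically has finite index. The mechanics run in opposite directions, though. The paper conjugates $H$ by an explicit matrix $A=\begin{pmatrix}0&6^{-1/4}\\-6^{1/4}&0\end{pmatrix}$ and uses a mod-$3$ descent on the norm form to exhibit $\SL(1,\HH_\ZZ^{21,6})$ as exactly the even-entry subgroup of $AHA^{-1}$, so the standard order's unit group sits \emph{inside} a conjugate of $H$; you instead embed $H$ \emph{into} $\SL(1,\mathcal{O})$ for an unnamed order $\mathcal{O}$ via your splitting $\rho$, and then must invoke the (standard) extra fact that unit groups of any two orders of $B$ are commensurable. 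Your Hilbert-symbol computation (ramification exactly at $2,7$, so $B$ is a division algebra) plays the role of the paper's mod-$7$ infinite-descent proof that the form $u^2-21v^2-6w^2+126x^2$ is anisotropic; both serve to place $\SL(1,\HH_\ZZ^{21,6})$ in the cocompact case of Proposition \ref{proposition:arithsubgrp}. Two small points to tighten: (i) the "suitable order" should be named --- e.g.\ take $\mathcal{O}$ to be the $\ZZ$-span of $H$, which is a ring since $H$ is a group, is finitely generated since it lies in the lattice $\Lambda=\ZZ\tfrac12+\ZZ\tfrac{i}{2}+\ZZ\tfrac{j}{2}+\ZZ\tfrac{k}{6}$, and spans $B$ over $\QQ$ since $H$ is infinite and nonabelian; (ii) the Atkin--Lehner identification of $V$ is only heuristic as stated, because $\Gamma^{\prime\prime}/H\cong(\ZZ/2\ZZ)^{\oplus 3}$ has order $8$ by Proposition \ref{normal}, while the Atkin--Lehner group of a \emph{maximal} order of discriminant $14$ has order $4$ (the count would match an Eichler order of level $3$, consistent with the $\tfrac{k}{6}$ in $\Lambda$). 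Neither point affects the proof, since all your argument actually uses is $[\Gamma^{\prime\prime}:H]=8<\infty$ together with the inclusion $H\subset\SL(1,\mathcal{O})$.
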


Similarly, we describe $\Gamma_{T_{2}}$ up to a finite index. There is a natural surjection $\Phi_{2}\colon\SL(2,\RR)\times \SL(2,\RR)\rightarrow \text{SO}^{+}(2,2)$, see \S \ref{tubedomainL27} for the detailed construction. Using this isomorphism, we have the following result:

\begin{prop}[= Proposition \ref{strulem}]
\label{descGammaT2}
    We have that $\Gamma_{T_{2}}$ is an index-2 subgroup of $\rO^{+}(T_{2})$ and $\rO^{+}(T_{2})=\langle -\Id, \Gamma_{T_{2}}\rangle$.

    On the other hand, $\rO^{+}(T_{2})=\langle\SO^{+}(T_{2}),P\rangle$, where $P$ is an involution constructed in \S \ref{conslatF21}.

    Denote $\Gamma^\prime = \Phi_2^{-1}(\SO^+(T_{2}))$, then we have $h \in \Gamma^\prime$ if and only if
    \[
    h =(\begin{pmatrix}
        \frac{\alpha_1\sqrt{\alpha}+\beta_1\sqrt{\beta}}{2}
        &\frac{1}{\sqrt{7}}\frac{\gamma_1\sqrt{\gamma}+\delta_1\sqrt{\delta}}{2}\\
        \sqrt{7}\frac{\gamma_2\sqrt{\gamma}-\delta_2\sqrt{\delta}}{2}&\frac{\alpha_2\sqrt{\alpha}-\beta_2\sqrt{\beta}}{2}
    \end{pmatrix},
    \begin{pmatrix}
        \frac{\alpha_2\sqrt{\alpha}+\beta_2\sqrt{\beta}}{2}
        &\sqrt{7}\frac{\gamma_2\sqrt{\gamma}+\delta_2\sqrt{\delta}}{2}\\
        \frac{1}{\sqrt{7}}\frac{\gamma_1\sqrt{\gamma}-\delta_1\sqrt{\delta}}{2}&\frac{\alpha_1\sqrt{\alpha}-\beta_1\sqrt{\beta}}{2}
    \end{pmatrix})
    \]
    where 
\begin{itemize}
    \item $(\alpha,\beta,\gamma,\delta) = (1,21,3,7)$ up to permutation by $\mathrm{V} < \mathrm{S}_4$
    \item $\alpha_i,\beta_i,\gamma_i,\delta_i \in \ZZ$ such that $\alpha_i-\beta_i,\gamma_i-\delta_i\in 2\ZZ$
    \item $\alpha_1\alpha_2\alpha - \beta_1\beta_2\beta-\gamma_1\gamma_2\gamma+\delta_1\delta_2\delta = 4$ and $\alpha_1\beta_2-\alpha_2\beta_1=\gamma_1\delta_2-\gamma_2\delta_1$. 
\end{itemize}
    (The last condition is equivalent to saying that the determinants of $h_1$ and $h_2$ are $1$.)
\end{prop}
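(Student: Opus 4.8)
The plan is to make the surjection $\Phi_2\colon \SL(2,\RR)\times\SL(2,\RR)\to \SO^+(2,2)$ explicit, transport the integral structure of $T_2$ through it, and read off the defining congruences on matrix entries. The lattice $T_2$ has signature $(2,2)$ and its Gram matrix (computed in \S\ref{conslatF21}) should, after choosing a convenient basis, diagonalize over $\RR$ so that the quadratic form becomes a sum/difference of four squares with coefficients matching $(\alpha,\beta,\gamma,\delta)=(1,21,3,7)$. I would first recall the standard exceptional isogeny: $\SL(2,\RR)\times\SL(2,\RR)$ acts on the space $M_2(\RR)$ of $2\times 2$ real matrices by $(h_1,h_2)\cdot M = h_1 M h_2^{t}$, preserving the determinant form, which has signature $(2,2)$; this gives the double cover of $\SO^+(2,2)$. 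The concrete map sends $(h_1,h_2)$ to the induced linear map on the $4$-dimensional space $M_2(\RR)$, and I would identify this $4$-space with $T_2\otimes\RR$ via the basis realizing the diagonalized form.

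\emph{The key computation} is then to determine exactly when $\Phi_2(h_1,h_2)$ preserves the \emph{integral} lattice $T_2$, not merely the real quadratic space. Writing $h_i$ in terms of the parameters $\alpha_i,\beta_i,\gamma_i,\delta_i$ and the radicals $\sqrt{\alpha},\sqrt{\beta},\sqrt{\gamma},\sqrt{\delta}$, one computes the entries of the $4\times 4$ matrix $\Phi_2(h_1,h_2)$ acting on the chosen basis of $T_2\otimes\RR$. The requirement that these entries be integers (so that the lattice is preserved) forces the radicals to combine into rational quantities: cross terms like $\sqrt{\alpha\beta}$, $\sqrt{\gamma\delta}$ must cancel or pair up correctly, which is precisely what pins down the parametrization with integer $\alpha_i,\beta_i,\gamma_i,\delta_i$ and the parity conditions $\alpha_i-\beta_i,\gamma_i-\delta_i\in 2\ZZ$ (these parities ensure the half-integer-looking entries $\tfrac{\alpha_i\sqrt\alpha+\beta_i\sqrt\beta}{2}$ land in the correct fractional ideal). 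The factor $\sqrt{7}$ appearing asymmetrically in the off-diagonal entries reflects the fact that $T_2$ is a module over the ring of integers of a real quadratic (or biquadratic) field in which $7$ ramifies, consistent with the Gram matrix having a $7$-related discriminant; I would track this factor carefully through the basis change.

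\emph{The constraints} $\alpha_1\alpha_2\alpha-\beta_1\beta_2\beta-\gamma_1\gamma_2\gamma+\delta_1\delta_2\delta=4$ and $\alpha_1\beta_2-\alpha_2\beta_1=\gamma_1\delta_2-\gamma_2\delta_1$ I would derive directly as the two conditions $\det h_1 = \det h_2 = 1$ (the defining equations of $\SL_2$), expanded in the radical parametrization: one linear combination of the products of parameters gives $\det h_1\cdot\det h_2$ type relations whose rational and irrational parts separate into the "$=4$" and "$=$ equality" conditions respectively. To finish, I would verify that $\Phi_2^{-1}(\SO^+(T_2))=\Gamma'$ in both directions — that every $h$ of the stated form preserves $T_2$, and conversely that any lattice-preserving pair arises this way — using the surjectivity of $\Phi_2$ onto $\SO^+(2,2)$ (with kernel $\{\pm(\Id,\Id)\}$) from Proposition \ref{strulem}'s first part to control the preimage.

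\emph{The main obstacle} I anticipate is the bookkeeping of integrality: translating "preserves the $\ZZ$-lattice $T_2$" into clean congruences on the eight integer parameters requires a well-chosen integral basis for $T_2$ and careful tracking of which $\RR$-linear combinations of radicals are forced to be integral versus half-integral. The asymmetry between the two factors (the $\sqrt7$ versus $1/\sqrt7$ placement) is the subtle point, and getting the parity conditions exactly right — rather than off by a factor of $2$ — is where the argument is most error-prone. Once the integral basis is fixed and the action of $\Phi_2(h_1,h_2)$ on it is written out, the rest reduces to matching rational and irrational components, which is routine but must be done with care to land on the stated normalization.
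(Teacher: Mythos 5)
Your setup matches the paper's: the $M_2(\RR)$ model for $\Phi_2$ with action $(h_1,h_2)\cdot M = h_1Mh_2^T$, a real basis change $B$ taking the Gram matrix of $T_2$ to a split form, and the translation of ``$\Phi_2(h)$ preserves $T_2$'' into integrality of the conjugated matrix $BA(h)B^{-1}$; the sufficiency direction and the identification of the two closing constraints as $\det h_1=\det h_2=1$ are also as in the paper. But there is a genuine gap in the necessity direction, which is the heart of the proposition. You write ``writing $h_i$ in terms of the parameters $\alpha_i,\beta_i,\gamma_i,\delta_i$ and the radicals\dots one computes the entries of $\Phi_2(h_1,h_2)$'' --- this presupposes exactly what must be proven, namely that an arbitrary real pair $(h_1,h_2)$ with $\Phi_2(h_1,h_2)\in\SO^+(T_2)$ has entries which are half-integer combinations of $\sqrt{1},\sqrt{21},\sqrt{3},\sqrt{7}$ (up to the $\mathrm{V}$-permutation and the $\sqrt7$ twists). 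A priori the entries of $h_1,h_2$ are arbitrary reals; integrality of the sixteen entries of $B(h_1\otimes h_2)B^{-1}$ only gives integrality of certain \emph{quadratic} expressions in those entries, not of the entries themselves. Your appeal to the surjectivity of $\Phi_2$ with kernel $\pm(\Id,\Id)$ does no work here: it says each element of $\SO^+(T_2)$ has exactly two preimages, and says nothing about their arithmetic shape.

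The missing idea is the paper's mixed combinations $u_1=a_1+d_2$, $v_1=a_1-d_2$, $w_1=\sqrt7(b_1+c_2)$, $x_1=\sqrt7(b_1-c_2)$ and $u_2=a_2+d_1$, $v_2=a_2-d_1$, $w_2=\tfrac{1}{\sqrt7}(b_2+c_1)$, $x_2=\tfrac{1}{\sqrt7}(b_2-c_1)$, which deliberately entangle entries of $h_1$ with entries of $h_2$ (this is why the parameters $\alpha_1,\alpha_2,\dots$ are shared between the two matrices in the final statement, a feature your proposal does not reproduce). One then proves: (i) $u_i^2,v_i^2,w_i^2,x_i^2\in\ZZ$ and $u_1u_2,v_1v_2,w_1w_2,x_1x_2\in\ZZ$, using that products such as $a_1d_2$ are entries of $h_1\otimes h_2$ and that $a_1^2=\det(a_1h_2)$, $d_2^2=\det(d_2h_1)$ are $2\times2$ minors of the integral matrix (here $\det h_i=1$ enters); (ii) $\sqrt{21}\,u_2v_2$, $\sqrt{21}\,w_2x_2$, $\sqrt3\,u_2w_2\in\ZZ$, whence a square-free-part analysis pins the type down to $(1,21,3,7)$ or $(2,42,6,14)$ up to $\mathrm{V}$; (iii) the spurious type $(2,42,6,14)$ is excluded by a separate congruence argument modulo $4$ using both determinant relations. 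None of these steps appears in your proposal, and without them the ``bookkeeping of integrality'' you defer to cannot get started. (Incidentally, your guess that the form diagonalizes with coefficients $(1,21,3,7)$ is off: the paper uses the split anti-diagonal form, and these integers arise from step (ii), with $21$ the discriminant of the rank-two block of the Gram matrix.) Finally, your proposal is silent on the first two assertions of the proposition: that $\Gamma_{T_2}$ has index $2$ in $\rO^+(T_2)$ with $\rO^+(T_2)=\langle-\Id,\Gamma_{T_2}\rangle$, and that $\rO^+(T_2)=\langle\SO^+(T_2),P\rangle$. These require the discriminant-group argument ($f\in\Gamma_{T_2}$ iff $\overline{f}$ acts trivially on the $\ZZ/3\ZZ$ summand of $A_{T_2}$) together with the observation that in rank $4$ the element $-\Id$ has determinant $+1$, so an explicit involution $P$ of determinant $-1$ must be exhibited separately.
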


We will show that $\Gamma^{\prime}$ contains a Hilbert modular group as an index-4 subgroup, in particular, the modular surface admits a degree-4 covering from the Hilbert modular surface. This is stated as the following theorem.

\begin{thm}
\label{theorem: hilbert modular group}
    There is a subgroup $H<\Gamma^{\prime}$ such that $\Gamma^{\prime}/H\cong \ZZ/2\ZZ\oplus \ZZ/2\ZZ$.

    There is a natural homomorphism $\psi \circ \varphi\colon \SL(2,F)\to \SL(2,\RR)\times\SL(2,\RR)$ mapping $\Gamma(\mathcal{O}_F\oplus\mathfrak{p})$ isomorphically onto $H$, where $F=\mathbb{Q}(\sqrt{21})$ and $\mathfrak{p} = (\frac{7+\sqrt{21}}{2})$, the prime ideal of $\mathcal{O}_F$ lying over $7$.

    In particular, there are natural morphisms
    \[   
    \Gamma(\mathcal{O}_F\oplus\mathfrak{p})\backslash\HH^2 \to \Gamma^\prime\backslash\HH^2 \xrightarrow{\sim} \SO^+(T_{2})\backslash\mathbb{D}_{T_{2}} \to \Gamma_{T_{2}}\backslash\mathbb{D}_{T_{2}}\cong\rO^+(T_{2})\backslash\DD_{T_{2}}.
    \]
    Each of them is of degree $2$, except for the isomorphisms.
\end{thm}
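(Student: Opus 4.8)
The plan is to make the homomorphism $\psi\circ\varphi$ completely explicit, match it entry by entry against the parametrization of $\Gamma'$ in Proposition~\ref{descGammaT2}, and then read off both the subgroup $H$ and all the covering degrees. Let $\sigma_1,\sigma_2\colon F\to\RR$ be the two real embeddings of $F=\QQ(\sqrt{21})$, normalized by $\sigma_1(\sqrt{21})>0$, and let $\varphi\colon\SL(2,F)\to\SL(2,\RR)\times\SL(2,\RR)$ be $\varphi(\gamma)=(\sigma_1(\gamma),\sigma_2(\gamma))$, applying $\sigma_i$ entrywise. Writing $w=\begin{pmatrix}0&1\\-1&0\end{pmatrix}$, I would set $\psi(g_1,g_2)=(g_1,wg_2w^{-1})$. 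First I would compute $(\psi\circ\varphi)(\gamma)$ for $\gamma=\begin{pmatrix}a&b\\c&d\end{pmatrix}$ and compare with the pair $(h_1,h_2)$ of Proposition~\ref{descGammaT2} after substituting $(\alpha,\beta,\gamma,\delta)=(1,21,3,7)$. The diagonal entries force $a=\frac{\alpha_1+\beta_1\sqrt{21}}{2}$ and $d=\frac{\alpha_2-\beta_2\sqrt{21}}{2}$, while the conjugation by $w$ in the second factor is exactly what reproduces the swapped diagonal of $h_2$ together with the sign change on its off-diagonal entries; this is how the factor $\psi$ is pinned down.

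The heart of the argument is matching the integrality conditions, and here I would use that $7$ ramifies in $F$, so that $\mathfrak{p}^2=(7)$ and $\mathfrak{p}^{-1}=(\frac{7-\sqrt{21}}{14})$. Computing $\ZZ$-bases of $\mathfrak{p}$ and $\mathfrak{p}^{-1}$, I expect to show that the congruences $\alpha_i\equiv\beta_i$ and $\gamma_i\equiv\delta_i\pmod 2$ are equivalent to $a,d\in\mathcal{O}_F$ together with $b\in\mathfrak{p}^{-1}$ and $c\in\mathfrak{p}$; that the relation $\alpha_1\beta_2-\alpha_2\beta_1=\gamma_1\delta_2-\gamma_2\delta_1$ kills the $\sqrt{21}$-part of $ad-bc$; and that the norm condition $\alpha_1\alpha_2-21\beta_1\beta_2-3\gamma_1\gamma_2+7\delta_1\delta_2=4$ becomes precisely $\det\gamma=ad-bc=1$. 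Since the Hilbert modular group of the module $\mathcal{O}_F\oplus\mathfrak{p}$ is exactly $\{\gamma\in\SL(2,F): a,d\in\mathcal{O}_F,\ b\in\mathfrak{p}^{-1},\ c\in\mathfrak{p}\}$, these identities identify it isomorphically with the subgroup $H\subset\Gamma'$ carved out by the single branch $(\alpha,\beta,\gamma,\delta)=(1,21,3,7)$, with no permutation.

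The same bookkeeping also yields the first assertion. Because $\Gamma'$ is the union of the four branches obtained by letting $\mathrm{V}$ permute the values $(1,21,3,7)$, and these four branches are the cosets of the single-branch subgroup $H$, I expect to conclude directly that $\Gamma'/H\cong\mathrm{V}\cong\ZZ/2\ZZ\oplus\ZZ/2\ZZ$. Thus the Klein four-group appearing in Proposition~\ref{descGammaT2} is precisely the deck group of the cover $\Gamma(\mathcal{O}_F\oplus\mathfrak{p})\backslash\HH^2\to\Gamma'\backslash\HH^2$ at the level of abstract groups.

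For the covering tower I would argue as follows. Since $\Phi_2$ is surjective with kernel of order $2$ and $\Gamma'=\Phi_2^{-1}(\SO^+(T_2))$, one gets the middle isomorphism $\Gamma'\backslash\HH^2\cong\SO^+(T_2)\backslash\DD_{T_2}$; the final isomorphism $\Gamma_{T_2}\backslash\DD_{T_2}\cong\rO^+(T_2)\backslash\DD_{T_2}$ is immediate from $\rO^+(T_2)=\langle-\Id,\Gamma_{T_2}\rangle$, as $-\Id$ acts trivially on $\DD_{T_2}$. The map $\SO^+(T_2)\backslash\DD_{T_2}\to\rO^+(T_2)\backslash\DD_{T_2}$ then has degree $[\rO^+(T_2):\SO^+(T_2)]=2$. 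The genuinely subtle point, which I expect to be the main obstacle together with the ramified-prime ideal computation, is the degree of the first map: although $[\Gamma':H]=4$, the two sign cosets represented by $(-I,I)$ and $(I,-I)$ lie in $\Phi_2^{-1}(-\Id)\subset\Gamma'$ and act trivially on $\HH\times\HH$, so the kernel of the $\HH^2$-action jumps from order $2$ on $H$ to order $4$ on $\Gamma'$. Hence the induced morphism of quotient surfaces has degree $4\cdot\tfrac{2}{4}=2$, as claimed, and keeping straight which central elements lie in $H$ versus $\Gamma'$ (and which act trivially on $\HH^2$) is where the most care will be needed.
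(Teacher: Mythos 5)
Your proposal is correct and takes essentially the same route as the paper: you define $H$ as the single branch $(\alpha,\beta,\gamma,\delta)=(1,21,3,7)$ of the parametrization in Proposition \ref{strulem}, identify it with $\Gamma(\mathcal{O}_F\oplus\mathfrak{p})$ by matching entries of $\psi\circ\varphi$ and translating the parity conditions into the ideal conditions for the ramified prime $\mathfrak{p}$ (with $\mathfrak{p}^2=(7)$), and obtain the degree-$2$ count for the first map from the fact that the coset of $(\Id,-\Id)$ acts trivially on $\HH^2$ --- exactly the paper's argument. One cosmetic slip: $(-\Id,\Id)$ and $(\Id,-\Id)$ represent the \emph{same} nontrivial coset of $H$ (their ratio $(-\Id,-\Id)$ lies in $H$), not two distinct cosets, but this does not affect your count that the kernel of the $\HH^2$-action grows from order $2$ in $H$ to order $4$ in $\Gamma'$, nor the resulting degree $4\cdot\tfrac{2}{4}=2$.
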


\begin{rmk}
In \cite{dolgachev2018geometry} and \cite{farb2021arithmeticity}, the authors studied the geometry and monodromy group of the Wiman-Edge pencil. They proved that the monodromy group is commensurable to the Hilbert modular group $\SL(2,\ZZ[\sqrt{5}])$. In their case, the period map is defined via Jacobian of curves, and they used Picard--Lefschetz theory to calculate the monodromy group. In this paper (Theorem \ref{theorem: hilbert modular group}), by considering the Hodge structures on the middle cohomology of cubic fourfolds, we provide another monodromy group commensurable to Hilbert modular group. 
\end{rmk}

\textbf{Acknowledgement}: This work was carried out as a project in the 2025 summer school “Algebra and Number Theory,” jointly organized by Peking University and the Academy of Mathematics and Systems Science, Chinese Academy of Sciences. We gratefully acknowledge Professor Shou-Wu Zhang for initiating this summer school, and we thank all the organizers for their dedicated efforts. We thank Chenglong Yu for stimulating discussion on commensurabilities among arithmetic groups. We thank Liang Xiao and Kaiyuan Gu for their interest and helpful discussion on Shimura varieties. We thank Eduard Looijenga and Gerard van der Geer for their interest and helpful comments on an earlier version of this paper. The author Z. Zheng is supported by NSFC 12301058.

\section{Preliminary: Cubic Fourfolds and Periods}
\label{preliminary}
\subsection{Periods for Cubic Fourfolds}
\label{periodsfourfolds}
First we recall the global Torelli theorem for cubic fourfolds. It allows us to connect the GIT moduli space of cubic fourfolds to the moduli of their Hodge structures. 

Let $X$ be a smooth cubic fourfold, the middle cohomology $H^4(X, \mathbb{Z})$, with the natural intersection pairing, is the unimodular odd lattice $\Lambda$ of signature $(21, 2)$. Let $\eta_X \in H^4(X, \mathbb{Z})$ be the square of the hyperplane class of $X$, we can choose an isomorphism of lattices $\Lambda \cong \mathrm{E}_8^{\oplus2}\oplus \mathrm{U}^{\oplus2}\oplus \mathrm{I}_{3,0}$ such that $\eta_X = (1,1,1) \in I_{3,0}$. The primitive cohomology $H^4(X, \mathbb{Z})_{prim} = \langle\eta_X\rangle^\perp$ carries a Hodge structure of K3 type and let $\Lambda_0$ denote its lattice type, we have $\Lambda_0 \cong \mathrm{E}_8^{\oplus 2}\oplus \mathrm{U}^{\oplus 2} \oplus \mathrm{A}_2$. The local period domain for Hodge structures on $\Lambda_0$ is the $20$-dimensional Type IV domain
\[
\mathbb{D} = \{x \in \mathbb{P}(\Lambda_0 \otimes \mathbb{C})|(x,x) = 0, (x, \overline{x})<0\}^+
\]
(where the subscript $+$ means taking one of the two connected components). 

Let $\mathbb{P}(H^0(\mathbb{P}^5, \mathcal{O}(3)))^{sm}$ be the subset of $\mathbb{P}(H^0(\mathbb{P}^5, \mathcal{O}(3)))$ consisting of smooth cubic fourfolds. It is known that any isomorphism between two smooth cubic fourfolds can be lifted to a linear isomorphism. Therefore, the moduli of smooth cubic fourfolds is $\mathcal{M} = \SL(6, \mathbb{C})\backslash\mathbb{P}(H^0(\mathbb{P}^5, \mathcal{O}(3)))^{sm}$ with GIT compactification $\overline{\mathcal{M}} = \SL(6, \mathbb{C})\dbs\mathbb{P}(H^0(\mathbb{P}^5, \mathcal{O}(3)))$. 

Let $\rO(\Lambda_0)$ denote the automorphism group of the lattice $\Lambda_0$. For any $g \in \rO(\Lambda_0)$, we use $\overline{g}$ to denote its image in the orthogonal group of the discriminant group $\rO(A_{\Lambda_0})$. The global monodromy group for the family $\mathbb{P}(H^0(\mathbb{P}^5, \mathcal{O}(3)))^{sm}$ is (cf. \cite[Theorem 2]{be1986})
\[\widehat{\Gamma} = \{g \in \rO(\Lambda_0)|\overline{g} = Id_{A_{\Lambda_0}}, g(\mathbb{D}) = \mathbb{D}\}\]

Therefore, by associating a cubic fourfold with its Hodge structure, one obtains the global period map
\[
\mathscr{P}\colon \mathcal{M} \to \widehat{\Gamma}\backslash\mathbb{D}
\]
which is injective due to Voisin \cite{voisin1986torelli, voisin2008erratum}, see also Looijenga \cite{looi2009period}. Here the analytic variety $\widehat{\Gamma}\backslash\mathbb{D}$ is in fact a quasi-projective variety due to Baily-Borel compactification \cite{baily1966compactification}. 

\begin{defn}
\
    \begin{itemize}
        \item A norm $2$ vector $v$ in $\Lambda_0$ is called a short root. The orthogonal complements of each short root give a $\widehat{\Gamma}$-invariant hyperplane arrangements $\mathcal{H}_6$ in $\mathbb{D}$. The associated Heegner divisor is $\mathcal{C}_6 =\widehat{\Gamma}\backslash \mathcal{H}_6$. 
        \item A norm $6$ vector $v$ in $\Lambda_0$ with divisibility $3$ is called a long root. The orthogonal complements of each long root give a $\widehat{\Gamma}$-invariant hyperplane arrangements $\mathcal{H}_2$ in $\mathbb{D}$. The associated Heegner divisor is $\mathcal{C}_2 = \widehat{\Gamma}\backslash\mathcal{H}_2$. 
    \end{itemize}
\end{defn}
Both $\mathcal{C}_6$ and $\mathcal{C}_2$ are irreducible divisors \cite{Hassett_2000} in $\widehat{\Gamma}\backslash\mathbb{D}$. Let $\mathcal{M}^{\mathrm{ADE}}$ denote the moduli space of cubic fourfolds with at most ADE singularities, we state the following theorem due to \cite{looi2009period} and \cite{laza2010period}. 
\begin{thm}
    The period map for cubic fourfolds gives an isomorphism of quasi-projective varieties
    \[
    \mathscr{P}\colon \mathcal{M} \to \widehat{\Gamma}\backslash(\mathbb{D}-(\mathcal{H}_2 \cup \mathcal{H}_6))
    \]
    which extends to an isomorphism
    \[
    \mathscr{P}\colon \mathcal{M}^{\mathrm{ADE}} \to \widehat{\Gamma}\backslash(\mathbb{D}- \mathcal{H}_2 )
    \] 
    and further to an isomorphism
    \[
    \mathscr{P}\colon \overline{\mathcal{M}}\to \overline{\widehat{\Gamma}\backslash\DD}^{\mathcal{H}_2}
    \]
    where $\overline{\widehat{\Gamma}\backslash\DD}^{\mathcal{H}_2}$ is the Looijenga compactification. 
\end{thm}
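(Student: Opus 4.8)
The plan is to take the injectivity of $\mathscr{P}\colon\mathcal{M}\to\widehat{\Gamma}\backslash\mathbb{D}$ as given (Voisin's global Torelli theorem, recalled above) and to promote it to the three nested isomorphisms by the Looijenga--Laza strategy: extend $\mathscr{P}$ to a morphism between the two natural projective compactifications, prove that extension is an isomorphism by matching graded rings, and then read off the three open statements by restriction. Local Torelli makes $\mathscr{P}$ an immersion and $\dim\mathcal{M}=\dim\mathbb{D}=20$, so on the smooth locus $\mathscr{P}$ is an open immersion of smooth quasi-projective varieties of the same dimension; all the genuine work is at the boundary.

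\textbf{Identifying the image and the strata.} First I would set up the Hodge-theoretic dictionary between the two root types and the geometry, so that the nested targets $\mathbb{D}-(\mathcal{H}_2\cup\mathcal{H}_6)\subset\mathbb{D}-\mathcal{H}_2\subset\mathbb{D}$ match the sources $\mathcal{M}\subset\mathcal{M}^{\mathrm{ADE}}\subset\overline{\mathcal{M}}$. A short root is a norm-$2$ class and is exactly the vanishing cycle of an ordinary node: such a class appears as an algebraic Hodge class in the primitive cohomology precisely when the cubic degenerates to a nodal --- more generally ADE --- cubic, so smooth cubics have period off $\mathcal{H}_6$ while the ADE locus fills in $\mathcal{H}_6\setminus\mathcal{H}_2$ (with $\mathcal{C}_6$ the nodal discriminant). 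A long root is a norm-$6$, divisibility-$3$ class; one checks it can never occur for a cubic fourfold with at worst ADE singularities, since it forces a degeneration into the non-ADE boundary (the determinantal/chordal locus) that GIT retains but that carries no pure Hodge structure of K3 type. Thus $\mathcal{H}_2$ is exactly the arrangement along which the two compactifications must be compared.

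\textbf{Extending to a morphism of compactifications.} Next I would extend $\mathscr{P}$ over the discriminant. For a cubic fourfold with an ADE singularity the limiting mixed Hodge structure on a smoothing is in fact pure of K3 type (the vanishing cohomology of an even-dimensional ADE singularity lies in Hodge type $(2,2)$), so the period varies continuously and $\mathscr{P}$ extends holomorphically across $\mathcal{C}_6$, giving a morphism $\mathcal{M}^{\mathrm{ADE}}\to\widehat{\Gamma}\backslash(\mathbb{D}-\mathcal{H}_2)$ that is still injective by a Torelli statement for ADE cubics (again from Voisin/Looijenga). By the Borel extension theorem this further extends to a morphism $\overline{\mathscr{P}}\colon\overline{\mathcal{M}}\to\overline{\widehat{\Gamma}\backslash\mathbb{D}}^{\mathcal{H}_2}$ of projective varieties, where the Looijenga compactification is the $\mathcal{H}_2$-modification of the Baily--Borel compactification. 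The three open statements of the theorem will then all descend by restricting a single fact: that $\overline{\mathscr{P}}$ is an isomorphism.

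\textbf{The compactification isomorphism --- the main obstacle.} Proving $\overline{\mathscr{P}}$ an isomorphism is where I expect the real difficulty. Both spaces are naturally $\mathrm{Proj}$ of a graded ring: on the GIT side the ring of $\SL(6,\mathbb{C})$-invariant sections of powers of the $\mathcal{O}(1)$ polarization on $\mathbb{P}(H^0(\mathbb{P}^5,\mathcal{O}(3)))$, and on the period side the ring of meromorphic automorphic forms for $\widehat{\Gamma}$ with controlled poles along $\mathcal{H}_2$ (the ring defining the Looijenga modification). I would show that $\overline{\mathscr{P}}$ pulls the automorphic polarization back to a power of the GIT polarization, so that it is induced by a graded-ring homomorphism. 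The genuine obstacle is then the boundary stratum correspondence: one must classify the minimal (polystable) orbits of strictly semistable singular cubics --- the determinantal cubic and the highly degenerate non-ADE cubics --- compute for each the parabolic/limit data it determines in the Baily--Borel and Looijenga boundaries, and verify stratum by stratum that the GIT boundary of $\overline{\mathcal{M}}$ is carried bijectively onto the arrangement boundary produced by the $\mathcal{H}_2$-modification. Granting this dictionary between degenerate cubics and arrangement strata together with the polarization match, $\overline{\mathscr{P}}$ is a bijective birational morphism of normal projective varieties (birational because it is already an isomorphism on the smooth locus), hence an isomorphism by Zariski's main theorem; restricting to the three nested interiors yields the theorem, with surjectivity onto each target coming for free. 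Everything upstream of this boundary analysis is formal.
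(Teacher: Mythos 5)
This theorem is not proved in the paper at all: it is recalled in \S 2 as a known result, with the proof outsourced to Looijenga \cite{looi2009period} and Laza \cite{laza2010period} (building on Voisin's Torelli theorem and Laza's GIT analysis in \cite{laza2009moduli}). So the only fair comparison is against those cited works, and on that score your proposal is a correct \emph{roadmap} of their strategy --- extension of the period map over the ADE locus via finite monodromy and purity of the limit mixed Hodge structure, Borel-type extension to the compactifications, and identification of the GIT compactification with the Looijenga compactification $\overline{\widehat{\Gamma}\backslash\DD}^{\mathcal{H}_2}$ --- but it is not a proof, because the step you label ``the main obstacle'' and then grant (``Granting this dictionary \dots together with the polarization match'') is precisely the mathematical content of those two papers. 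Concretely, what you assume comprises: the full classification of (semi)stable singular cubic fourfolds and their minimal orbits; the analysis of the chordal (determinantal) cubic degeneration and its relation to degree-$2$ K3 surfaces, which is what actually produces the stratum correspondence along $\mathcal{H}_2$; and Looijenga's comparison of the ring of $\SL(6,\CC)$-invariants with the ring of meromorphic automorphic forms with poles on $\mathcal{H}_2$. Declaring ``everything upstream of this boundary analysis is formal'' inverts the difficulty: the theorem \emph{is} that boundary analysis.

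Two subsidiary points also need repair. First, injectivity of the extended map on $\mathcal{M}^{\mathrm{ADE}}$ is not ``again from Voisin'': Voisin's theorem concerns smooth cubics, and Torelli for ADE-singular cubics must be deduced (as Laza does) from the structure of the extension together with the separatedness of the relevant quotients; citing it as known is circular at this stratum. Second, your dictionary argument that periods of smooth cubics avoid $\mathcal{H}_6$ (``a short root is exactly the vanishing cycle of a node'') only shows that nodal degenerations land \emph{on} $\mathcal{H}_6$; excluding a norm-$2$ algebraic class on a smooth cubic, and likewise excluding long roots on ADE cubics, requires a genuine Hodge-theoretic/geometric argument (Voisin, Hassett), not the degeneration heuristic. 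With those gaps filled one indeed concludes via Zariski's main theorem exactly as you say, but as written the proposal reproduces the architecture of the cited proofs while assuming their substance.
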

\subsection{Moduli and Periods for Cubic Fourfolds with Symmetries}
\label{subsection:moduli}
Next we recall Yu--Zheng's work \cite{yu2020moduli} on moduli space of cubic fourfolds with a specified group action. 

Let $\CC^{6}$ be a complex vector space of dimension $6$. The space $H^0(\mathbb{P}(\CC^{6}), \mathcal{O}(3))$ is the space of degree $3$ polynomials on $\CC^{6}$ and we have a natural action of $\SL(6,\CC)$ on $H^0(\mathbb{P}(\CC^{6}), \mathcal{O}(3))$. For each matrix $A \in \SL(6,\CC)$ and $F \in H^0(\mathbb{P}(\CC^{6}), \mathcal{O}(3))$, we denote $(AF)(\vec{x}) = F(\vec{x}A)$. 

The center of $\SL(6,\CC)$ is the group $\mu_6$ consisting of $6$-th roots of unity. Let $\widetilde{G}$ be a finite subgroup of $\SL(6,\CC)$ containing $\mu_6$ and $G = \widetilde{G}/\mu_6$ its image in $\PGL(6,\CC)$. Let $\lambda\colon \widetilde{G} \to \mathbb{C}^\times$ be a character of $\widetilde{G}$ such that $\lambda|_{\mu_6}$ sends $\xi \in \mu_6$ to $\xi^{-3}$ and let $\mathcal{V}_{\widetilde{G},\lambda}$ be the $\lambda$-eigenspace of $H^0(\mathbb{P}(\CC^{6}), \mathcal{O}(3))$ with respect to $G$. Geometrically,  $\mathcal{V}_{\widetilde{G},\lambda}$ contains some of the cubic fourfolds with automorphism group containing $G$. Now let
\[
N = \{A \in \SL(6,\CC)|A\widetilde{G}A^{-1} = \widetilde{G}, \lambda(AgA^{-1}) = \lambda(g), \forall g \in \widetilde{G}\}.
\]
We know that $N$ is a reductive subgroup of $\SL(6,\CC)$ and has an action on $\mathcal{V}_{\widetilde{G},\lambda}$. Let $\mathcal{V}_{\widetilde{G},\lambda}^{sm}$ be the subspace containing smooth cubic fourfolds and $\mathcal{V}_{\widetilde{G},\lambda}^{ss}$ the subspace containing semistable points with respect to the action of $N$. Respectively, we let $\mathcal{F} = N \dbs \mathcal{V}_{\widetilde{G},\lambda}^{sm}$ and $\overline{\mathcal{F}} = N \dbs \mathcal{V}_{\widetilde{G},\lambda}^{ss}$ be the corresponding GIT quotients. We borrow the following theorem form Yu--Zheng's work
\begin{thm}
\label{yz}
    There is a natural morphism $j\colon \overline{\mathcal{F}} \to \overline{\mathcal{M}}$ sending $[F] \in \mathcal{F}$ to $[F] \in \mathcal{M}$ for any $F \in \mathcal{V}_{\widetilde{G},\lambda}^{sm}$. This morphism is finite. Furthermore, if there is some point $F \in \mathcal{V}_{\widetilde{G},\lambda}^{sm}$ with automorphism group $\Aut(Z(F)) = G$, then this morphism is a normalization of its image. (Here we denote $Z(F)$ for its corresponding cubic fourfold). 
\end{thm}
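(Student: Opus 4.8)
\section*{Proof proposal}

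The plan is to prove the three assertions---existence of the morphism $j$, its finiteness, and the normalization property---in that order. The technical heart, and the step I expect to be the main obstacle, is a comparison of GIT (semi)stability between the small group $N$ acting on the eigenspace $\mathcal{V}_{\widetilde{G},\lambda}$ and the full group $\SL(6,\CC)$ acting on all cubic forms; the remaining formal steps (properness, a fiber count, and the normalization formalism) are comparatively routine once this comparison is in hand.

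To construct $j$ I would first establish the \emph{semistability comparison}: a form $F\in\mathcal{V}_{\widetilde{G},\lambda}$ is $N$-semistable if and only if it is $\SL(6,\CC)$-semistable (and similarly for stability and polystability). The inclusion $N\hookrightarrow\SL(6,\CC)$ gives one direction immediately, so the content is that an $N$-semistable $F$ cannot be destabilized inside the larger group. For this I would invoke Kempf's theorem on the existence and essential uniqueness of an optimal destabilizing one-parameter subgroup: if $F$ were $\SL(6,\CC)$-unstable, choose its Kempf-optimal $1$-PS $\mu$. Since $F$ is a $\lambda$-semi-invariant, every $g\in\widetilde{G}$ sends $F$ to a scalar multiple of itself and hence permutes the optimal subgroups; by uniqueness, $\mu$ may be taken $\widetilde{G}$-invariant, i.e.\ a one-parameter subgroup centralizing $\widetilde{G}$ and therefore lying in $N$. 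This would destabilize $F$ already for $N$, a contradiction. Granting the comparison, the linear inclusion $\mathcal{V}^{ss}_{\widetilde{G},\lambda}\hookrightarrow H^0(\mathbb{P}^5,\mathcal{O}(3))^{ss}$ is $N$-equivariant and descends, by functoriality of GIT quotients, to the desired $j\colon\overline{\mathcal{F}}\to\overline{\mathcal{M}}$, which on the smooth locus is $[F]\mapsto[F]$.

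For finiteness I would show that $j$ is proper and quasi-finite. Properness is automatic, since $\overline{\mathcal{F}}$ and $\overline{\mathcal{M}}$ are both projective over $\CC$. For quasi-finiteness I would bound the fibers: a point of $j^{-1}([Z(F_0)])$ is represented by some $F\in\mathcal{V}^{ss}$ with $Z(F)\cong Z(F_0)$, so after acting by $\SL(6,\CC)$ one compares $\widetilde{G}$ with a conjugate sitting inside the stabilizer $S=\mathrm{Stab}_{\SL(6,\CC)}(Z(F_0))$. As cubic fourfolds with at worst ADE singularities have finite automorphism group, $S$ is finite; hence there are only finitely many subgroups of $S$ isomorphic to $\widetilde{G}$ and carrying the character $\lambda$, accounting for finitely many $N$-orbits. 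Thus the fibers are finite, and proper plus quasi-finite yields finite.

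Finally, for the normalization statement I would combine normality of the source with generic injectivity. The quotient $\overline{\mathcal{F}}=N\dbs\mathcal{V}^{ss}_{\widetilde{G},\lambda}$ is a GIT quotient of an open subset of the smooth variety $\mathbb{P}(\mathcal{V}_{\widetilde{G},\lambda})$ by the reductive group $N$, hence normal. The hypothesis that some $F$ satisfies $\Aut(Z(F))=G$ exactly forces the stabilizer of $Z(F)$ to be the preimage of $\widetilde{G}$ itself, so by the fiber analysis above $j^{-1}(j([F]))$ is a single point; as this holds on a nonempty, hence dense, open locus, $j$ is generically injective, i.e.\ birational onto its image. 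A finite, birational morphism from a normal variety onto its reduced image is precisely the normalization, which completes the argument. The one point deserving real care---and the place where I would spend the most effort---is the polystable (orbit-closure) refinement of the semistability comparison needed to match GIT-equivalence along the boundary, rather than only on the stable locus.
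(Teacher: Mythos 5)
First, a point of context: this paper does not prove Theorem \ref{yz} at all; it is quoted verbatim from Yu--Zheng \cite{yu2020moduli}, so your proposal can only be measured against the proof given there. Your first and third steps essentially coincide with that proof and are sound: the construction of $j$ via Kempf's uniqueness of the optimal destabilizing one-parameter subgroup (a finite symmetry group $\widetilde{G}$ fixing $[F]$ normalizes the Kempf parabolic, and an averaging/fixed-point argument produces an optimal $1$-PS centralizing $\widetilde{G}$, hence lying in $N$) is exactly the mechanism used in \cite{yu2020moduli}; and the normalization step (normal source, finiteness, generic injectivity on the open dense locus where $\Aut(Z(F))=G$, then Zariski's main theorem) is the standard argument and is correct \emph{granted} finiteness.

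The genuine gap is in your finiteness argument, and it occurs precisely where finiteness is at stake: the strictly semistable boundary. Two claims fail there. (i) Your description of the fiber $j^{-1}([Z(F_0)])$ as the set of $N$-orbits of $F\in\mathcal{V}^{ss}_{\widetilde{G},\lambda}$ with $Z(F)\cong Z(F_0)$ presupposes that a closed $N$-orbit in $\mathcal{V}^{ss}_{\widetilde{G},\lambda}$ is a closed $\SL(6,\CC)$-orbit in the semistable locus --- this is exactly the ``polystable refinement'' you defer and never supply; without it a boundary point of $\overline{\mathcal{F}}$ is represented by a form whose $\SL(6,\CC)$-orbit may be non-closed, and its image in $\overline{\mathcal{M}}$ is the class of a \emph{different} cubic sitting in the orbit closure. (ii) The quasi-finiteness count via finiteness of $S=\mathrm{Stab}_{\SL(6,\CC)}(Z(F_0))$ is false on the boundary: minimal (polystable) orbits of singular cubics can have positive-dimensional stabilizers --- for instance the determinantal cubic fourfold, the secant variety of the Veronese surface, whose automorphism group contains $\PGL(3,\CC)$, and which by Proposition \ref{singuL27} and \S\ref{subsection: boundaryofFF21} genuinely appears in the boundary of the quotients studied in this paper. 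So ``finitely many subgroups of $S$ isomorphic to $\widetilde{G}$'' is unavailable exactly at the points you must control. Both gaps are closed by Luna's theorem \cite{luna1975adherences} (the very result this paper invokes in the proof of Proposition \ref{nonADE}): for $x$ in the $\widetilde{G}$-eigenspace, the $\SL(6,\CC)$-orbit of $x$ is closed if and only if the $N$-orbit is closed, and the induced morphism on invariant-theoretic quotients of the eigenspace is finite. This, rather than a stabilizer count, is how \cite{yu2020moduli} obtains finiteness of $j$; your Kempf step plus Luna's theorem would yield a complete proof.
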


Now let $X = Z(F)$ be a generic point $F \in \mathcal{V}_{\widetilde{G},\lambda}^{sm}$, the action of $G$ on $X$ induces an action of $G$ on $\Lambda = H^4(X, \ZZ)$ and therefore on $\Lambda_0$ since it acts on $\eta_X$ trivially. Meanwhile, $G$ acts on $H^{3,1}(X)$ and this action is a character $\zeta\colon G \to \mathbb{C}^{\times}$. We only consider $\zeta = \overline{\zeta}$ here. Therefore, according to Yu--Zheng's work, we define $T = (\Lambda_0)_\zeta = \{x \in \Lambda_0|gx=\zeta(g)x,\forall g \in G\}$ which is a lattice of signature $(n,2)$ with $n = \dim \overline{\mathcal{F}}$. And we define the local period domain $\mathbb{D}_T = \{x \in \PP( T\otimes \CC)|(x,x)=0,(x,\overline{x}) <0\}^+$. Let $\overline{G} < \widehat{\Gamma}$ denote the image of $G$ in $\widehat{\Gamma}$, and let $\Gamma_T$ be the normalizer of $\overline{G}$ in $\widehat{\Gamma}$. We borrow the following theorem from Yu--Zheng's work \cite{yu2020moduli}:
\begin{thm}
\label{thm:Yu--Zhengperiodmapsymmcubic4fold}
    There is a global period map 
    \[\mathscr{P}\colon\mathcal{F} \to \Gamma_T\backslash(\mathbb{D}_T-\mathcal{H}_s)\] 
    which is an algebraic isomorphism. 
    Here $\mathcal{H}_s = \mathbb{D}_T\cap(\mathcal{H}_2\cup\mathcal{H}_6)$. Moreover, let $\mathcal{F}^{\mathrm{ADE}}$ denote the open subset parametrizing those with at most ADE singularities, then this period map extends to isomorphisms
     \[
    \mathscr{P}\colon\mathcal{F}^{\mathrm{ADE}} \to \Gamma_T\backslash(\mathbb{D}_T- \mathcal{H}_*)
     \]
     $(\mathcal{H}_* = \DD_T\cap\mathcal{H}_2)$
     and
     \[
     \mathscr{P}\colon \overline{\mathcal{F}} \to \overline{\Gamma_T\backslash\DD_T}^{\mathcal{H}_*}
     \]
     where $\overline{\Gamma_T\backslash\DD_T}^{\mathcal{H}_*}$ is the Looijenga compactification.
\end{thm}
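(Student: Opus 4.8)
The plan is to deduce this symmetric period theorem from the Looijenga--Laza theorem for \emph{all} cubic fourfolds (the preceding theorem of this section), by restricting every piece of that statement to the $G$-fixed eigendata. The first step is the eigenspace localization: given $F \in \mathcal{V}_{\widetilde{G},\lambda}^{sm}$ with $X = Z(F)$, the group $G$ acts on $\Lambda_0 = H^4(X,\ZZ)_{prim}$ and on the line $H^{3,1}(X)$ through the character $\zeta$. Since the period point of $X$ spans $H^{3,1}(X)$, it lies in the $\zeta$-eigenspace of $\Lambda_0 \otimes \CC$, hence in $\mathbb{P}(T \otimes \CC)$; therefore $\mathscr{P}(X) \in \mathbb{D}_T = \mathbb{D} \cap \mathbb{P}(T \otimes \CC)$. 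Because we assume $\zeta = \overline{\zeta}$, the lattice $T$ is real of signature $(n,2)$, so $\mathbb{D}_T$ is a genuine type IV subdomain and the ambient period map restricts to a well-defined $\mathscr{P}\colon \mathcal{F} \to \Gamma_T \backslash \mathbb{D}_T$.

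Next I would establish that $\mathscr{P}$ is a local isomorphism onto its image and injective. Injectivity follows from Voisin's global Torelli theorem \cite{voisin1986torelli, voisin2008erratum}: two smooth $G$-symmetric cubics sharing a period are isomorphic, and one checks the isomorphism may be chosen to respect $G$ up to the normalizer, so the two points agree in $\mathcal{F} = N \dbs \mathcal{V}_{\widetilde{G},\lambda}^{sm}$. The arithmetic group is then pinned down by identifying the image of the geometric monodromy of the $N$-family inside $\rO(T)$ with $\Gamma_T$, the normalizer of $\overline{G}$ in $\widehat{\Gamma}$ acting on $T$. That $\mathscr{P}$ is a local isomorphism follows from infinitesimal Torelli restricted to the $\zeta$-eigenspace of the tangent space, together with the dimension match $n = \dim \overline{\mathcal{F}}$.

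To identify the image I would intersect the Laza--Looijenga image description \cite{looi2009period, laza2010period} with $\mathbb{D}_T$, giving $\Gamma_T \backslash (\mathbb{D}_T - \mathcal{H}_s)$ with $\mathcal{H}_s = \mathbb{D}_T \cap (\mathcal{H}_2 \cup \mathcal{H}_6)$. The delicate point is surjectivity onto the \emph{full} fixed locus: every $G$-fixed period avoiding the Heegner divisors must be realized by a $G$-symmetric smooth cubic. Given such a point, Laza--Looijenga produces a smooth cubic $X$ with that period; since the period is $G$-fixed, Torelli shows that the lattice automorphisms generating $\overline{G}$ are induced by automorphisms of $X$, so $G < \Aut(X)$; one then lifts this $G$-action to a linear action on $\PP^5$ whose character matches the data $(\widetilde{G}, \lambda)$, exhibiting $X$ as $Z(F)$ for some $F \in \mathcal{V}_{\widetilde{G},\lambda}^{sm}$. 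Finally, the two extensions follow by restricting the corresponding Laza--Looijenga isomorphisms to $\mathbb{D}_T$: the period extends across the short-root divisor $\mathcal{H}_6$ over cubics with ADE singularities, yielding $\mathcal{H}_* = \mathbb{D}_T \cap \mathcal{H}_2$, and for the GIT compactification one combines the finite morphism $j\colon \overline{\mathcal{F}} \to \overline{\mathcal{M}}$ of Theorem \ref{yz} with the functoriality of the Looijenga compactification under the subdomain inclusion $\mathbb{D}_T \hookrightarrow \mathbb{D}$.

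The main obstacle is precisely the geometrization step inside the surjectivity argument: passing from an abstract $G$-action on a polarized Hodge structure to an honest $G$-action on a cubic fourfold, and moreover linearizing it on $\SL(6,\CC)$ with exactly the right character so that the resulting equation lands in $\mathcal{V}_{\widetilde{G},\lambda}$. This is where the global Torelli theorem (to descend the action to $X$) must be coupled with the representation-theoretic lift to $\SL(6,\CC)$, and it is the part that genuinely uses the specific structure of $\widetilde{G}$ and $\lambda$ rather than formal restriction of the non-symmetric theorem.
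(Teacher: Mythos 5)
The first thing to say is that the paper offers no proof of this statement at all: it is imported wholesale from Yu--Zheng \cite{yu2020moduli} (``We borrow the following theorem from Yu--Zheng's work''), so the only meaningful benchmark is the proof in that reference. Your outline does reproduce its general architecture --- restriction of the period point to the eigenlattice $T$, global Torelli for injectivity, local Torelli plus a dimension count for openness, a geometrization step for surjectivity, and a separate comparison of compactifications --- so the strategy is the right one.

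As a proof, however, your proposal leaves the two hardest steps undone, so it has genuine gaps. (i) Surjectivity: you correctly isolate the geometrization-plus-linearization step as ``the main obstacle,'' but flagging it does not discharge it. Concretely you need (a) the automorphism form of global Torelli --- every Hodge isometry of $H^4(X,\ZZ)$ fixing $\eta_X$ and lying in the monodromy group $\widehat{\Gamma}$ is induced by an automorphism of $X$ --- which is a strengthening of the injectivity statement of Voisin that you cite, and (b) an argument that the resulting projective $G$-action on $X$ can be linearized with exactly the prescribed pair $(\widetilde{G},\lambda)$: a priori different lifts and different characters give different eigenspaces $\mathcal{V}_{\widetilde{G},\lambda}$ and hence different families $\mathcal{F}$, and eliminating this ambiguity is genuine work (it is precisely what Lemma \ref{repinv} does for $C_7$, $F_{21}$, $L_2(7)$); the same issue recurs, unaddressed, in your extension over the ADE locus. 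Your injectivity step also hides a lattice-theoretic ingredient: the isometry matching two periods is only given on $T$ and must first be extended to an isometry of $\Lambda_0$ normalizing $\overline{G}$ (Nikulin gluing of $S$ and $T$) before Torelli can be applied and before one can conclude that the resulting isomorphism lies in $N$. (ii) Compactification: there is no formal ``functoriality of the Looijenga compactification under the subdomain inclusion $\DD_T \hookrightarrow \DD$'' to invoke. The space $\overline{\Gamma_T\backslash\DD_T}^{\mathcal{H}_*}$ is defined intrinsically from the arrangement $\mathcal{H}_*$ on $\DD_T$ (as a Proj of meromorphic automorphic forms), and identifying it simultaneously with the normalized closure of the image inside $\overline{\widehat{\Gamma}\backslash\DD}^{\mathcal{H}_2}\cong\overline{\mathcal{M}}$ and with $\overline{\mathcal{F}}$ is exactly the nontrivial content of Yu--Zheng's theorem; in \cite{yu2020moduli} this is established by combining the finiteness and normalization statement of Theorem \ref{yz}, the Laza--Looijenga isomorphism for $\overline{\mathcal{M}}$, and Zariski's main theorem, not by a formal restriction argument.
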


\section{GIT Analysis for Cubic Fourfolds Admitting Order-$7$ Action}
\label{section: GIT moduli for 7 action}
In this section, we investigate the geometry of the GIT compactification of the moduli of cubic fourfolds with order-$7$ automorphism. We obtain a complete description of the singular cubic fourfolds inside it and their singularities. 
%Meanwhile, for smooth cubic fourfolds inside it, we compute their stabilizers, which will be used in \S\ref{section:full automorphism group} to give a complete description of their automorphism groups. 

\subsection{Some Preliminery Group-theoretic Analysis}
By \S \ref{periodsfourfolds}, to construct the GIT moduli of smooth cubic fourfolds with order-7 action, we need to construct $N \dbs \PP\mathcal{V}_{T}$ for $T=(\widetilde{G},\lambda)$. However, different projective representations of $G$ and different characters $\lambda$ may give different families. 

The following lemma shows that there is no such ambiguity. More precisely, we only need to consider one projective representation for each $G=C_{7},F_{21}\ \text{or} \ L_{2}(7)$. In fact, we only need to deal with one linear lifting for this representation and the $G$-invariant cubic polynomials with respect to this linear lifting.

%From now on, we fix a generator of $C_{7}$ and denote it by $g_{7}$. 

Fix generators $g_3$ and $g_7$ of $F_{21}$ such that $g_3$ has order $3$, $g_7$ has order $7$ and $g_{3}g_{7}g_{3}^{-1}=g_{7}^{2}$. We identify $C_7$ with the subgroup $\langle g_7\rangle$ of $F_{21}$.

Consider 6-dimensional representations of $C_{7}$ and $F_{21}$ determined by sending $g_{3}$ to $(e_{i}\mapsto e_{i+2})$, where $(e_{1},e_{2},\dots,e_{6})$ represents the standard basis of $\CC^{6}$, and sending $g_{7}$ to $\frac{1}{7}(1,5,4,6,2,3)$.

\begin{lem}
\label{repinv}
    For $G=C_{7}, F_{21}\ \text{or} \ L_{2}(7)$, if there exists a smooth cubic fourfold $X$ such that $G<\Aut^{s}(X)$ where $G$ acts on $X$ via some faithful projective representation $\rho\colon G\rightarrow \PGL(6,\mathbb{C})$. Then $\rho$ is uniquely determined up to isomorphism and it can be lifted to a linear representation $\widetilde{\rho}\colon G\rightarrow \GL(6,\mathbb{C})$.

    For $C_{7}$ and $F_{21}$, $\widetilde{\rho}$ can be chosen as we describe above. For $L_{2}(7)$, see \S \ref{subsecL27}.
    
    Moreover, for all smooth cubic fourfold $X$ with $G<\Aut^{s}(X)$, there exists a coordinate change such that $X$ is defined by a $\widetilde{\rho}(G)$-invariant polynomial.
\end{lem}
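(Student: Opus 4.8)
The plan is to bootstrap all three claims from the normal form $F_{a,b}$ cited in the introduction together with the ordinary character theory of $C_7$, $F_{21}$ and $L_2(7)$. First I would record that every automorphism of a smooth cubic fourfold is induced by a linear transformation (\S\ref{periodsfourfolds}), so the $G$-action is given by a faithful projective representation $\rho\colon G\to\PGL(6,\CC)$ fixing the class $[F]\in\PP(H^0(\PP^5,\mathcal{O}(3)))$ of the defining cubic. Since each of the three groups contains $\langle g_7\rangle$, the fourfold $X$ carries an order-$7$ automorphism, so by \cite{gonzalez2011automorphisms} it is projectively equivalent to some $F_{a,b}$ with the order-$7$ element equal to $g_7=\frac17(1,5,4,6,2,3)$. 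After a coordinate change, $\rho|_{C_7}$ is therefore the diagonal representation carrying the six distinct nonzero weights modulo $7$, i.e.\ the sum of all six nontrivial characters of $C_7$. This already settles $G=C_7$: the representation is forced, it is diagonalizable (hence lifts to $\GL(6,\CC)$), and it is unique up to isomorphism.

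Next I would promote this to $F_{21}$ and $L_2(7)$ by restriction to $C_7$. For $F_{21}=C_7\rtimes C_3$ the irreducible degrees are $1,1,1,3,3$, the two degree-$3$ representations are induced from nontrivial characters of $C_7$ and restrict to the two $C_3$-orbits $\{1,2,4\}$ and $\{3,5,6\}$, while the one-dimensional representations restrict trivially; since $\rho|_{C_7}$ contains all six nontrivial characters with multiplicity one, the only possibility is $\rho\cong V_3\oplus V_3'$, which is unique up to isomorphism. Being a genuine linear representation it lifts, and the explicit lift is $g_7=\frac17(1,5,4,6,2,3)$, $g_3\colon e_i\mapsto e_{i+2}$, for which one checks $g_7^7=g_3^3=\id$ and $g_3g_7g_3^{-1}=g_7^2$. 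For $L_2(7)$, which contains $F_{21}$ as the normalizer of a $7$-Sylow subgroup, I would use the character table of $\PSL(2,\FF_7)$ (irreducible degrees $1,3,\overline3,6,7,8$): the same restriction argument forces $\rho$ to be six-dimensional with $\rho|_{C_7}$ equal to the sum of all nontrivial characters, and the precise identification of $\rho$ together with its explicit linear model is carried out in \S\ref{subsecL27}.

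For the final assertion I would invoke the uniqueness just established. Given any smooth $X$ with $G<\Aut^s(X)$ acting through some faithful $\rho$, uniqueness up to isomorphism produces $P\in\GL(6,\CC)$ conjugating $\rho(G)$ into the standard image $\widetilde\rho(G)$; after applying the coordinate change $P$ the group acts by $\widetilde\rho(G)$, and since it preserves $[F]$ the cubic $F$ becomes a common eigenvector, i.e.\ $\widetilde\rho(G)$-semi-invariant. To upgrade ``semi-invariant'' to ``invariant'' I would note that $L_2(7)$ is perfect, so its only character is trivial; for $F_{21}$ the abelianization is $C_3$ (so $g_7$ already fixes $F$), and comparison with the manifestly invariant $F_{a,b}$ shows the residual $C_3$-character is trivial as well. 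Hence $F$ is genuinely $\widetilde\rho(G)$-invariant.

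The step I expect to be the main obstacle is the $L_2(7)$ case, which is why it is deferred to \S\ref{subsecL27}: the restriction to $C_7$ does not distinguish the irreducible six-dimensional representation from $V_3\oplus\overline{V_3}$ (both have character $-1$ on order-$7$ elements), so one must use the geometry of the invariant cubic to select the correct one, fix the scalars so that the abstract representation lifts compatibly with the central $\mu_6$ of the Yu--Zheng formalism, and exhibit explicit matrices. A lighter secondary point is the passage from semi-invariant to invariant for $F_{21}$, which relies on $g_3,g_7\in\SL(6,\CC)$ and on the explicit $g_3$-invariance of $F_{a,b}$.
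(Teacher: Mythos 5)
Your overall skeleton (the $C_7$ normal form, character-theoretic uniqueness, reduction of invariance to a character computation) parallels the paper's, but your argument for the crucial step --- upgrading $g_3$-semi-invariance to invariance in the $F_{21}$ case --- is not valid, and this is exactly the point where the symplectic hypothesis must enter. Your mechanism is ``comparison with the manifestly invariant $F_{a,b}$,'' but the normal form $F_{a,b}$ only normalizes the $C_7$-action and says nothing about the scalar by which $g_3$ acts on the equation; moreover smoothness cannot force that scalar to be $1$. Concretely, consider
\[
F \;=\; x_1^2x_2+\omega^2 x_3^2x_4+\omega x_5^2x_6+x_2^2x_3+\omega^2 x_4^2x_5+\omega x_6^2x_1 .
\]
It is $C_7$-invariant, it satisfies $g_3\cdot F=\omega^{\pm1}F$ (a nontrivial cube root of unity), and it is smooth: a diagonal substitution $x_i\mapsto\epsilon_i x_i$ carries it to the Klein cubic, because the exponent matrix of the monomials $x_i^2x_{i+1}$ is the nonsingular circulant $2\,\Id+S$ ($S$ the cyclic shift), so the associated monomial map on $(\CC^\times)^6$ is surjective. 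Thus the standard projective $F_{21}$ preserves a smooth cubic while acting on its equation through a nontrivial character. Every ingredient you actually use (smoothness, the $C_7$ normal form, the fact that the abelianization of $F_{21}$ is $C_3$) holds for this $F$, so your argument would ``prove'' triviality of the character here as well --- a contradiction. What rules this example out of the lemma is that the twisted action is not symplectic: via the residue description of $H^{3,1}$, the symplectic condition for $g$ with $gF=\lambda F$ translates into $\det(g)=\lambda^2$, so symplecticity together with $\det(\widetilde{\rho}(g_3))=1$ gives $\lambda^2=1$, whence $\lambda=1$ from $\lambda^3=1$. This is precisely the paper's invocation of \cite[Lemma 3.2]{fu2016classification} (or \cite[Lemma 6.10]{yang2024automorphism}); your parenthetical remark that the step ``relies on $g_3,g_7\in\SL(6,\CC)$'' supplies half the ingredients, but the determinant condition alone yields nothing without the $H^{3,1}$ relation.

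A smaller but real gap: you defer essentially the entire $L_2(7)$ case. The lemma's pointer to \S\ref{subsecL27} covers only the explicit choice of $\widetilde{\rho}$; the proof itself must still establish (i) liftability, which is not automatic since the Schur multiplier of $L_2(7)$ is $\ZZ/2\ZZ$ (the paper cites \cite[Lemma 2.5]{KOIKE202512}), and (ii) uniqueness of $\rho$, i.e.\ excluding $V_3\oplus\overline{V_3}$, which the paper obtains from the vanishing of the trace of an order-$4$ element \cite[Page 5]{KOIKE202512}. You correctly observe that restriction to $C_7$ cannot distinguish the $6$-dimensional irreducible representation from $V_3\oplus\overline{V_3}$, but ``use the geometry of the invariant cubic'' is not an argument. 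Your invariance step for $L_2(7)$ itself (perfectness, equivalently simplicity, kills all characters) does agree with the paper.
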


\begin{proof}
(1) The case that $G=C_{7}$ directly follows from \cite[Theorem 1.1]{fu2016classification}. Any smooth cubic fourfold with $C_{7}$-action can be defined by the polynomial $x_{1}^{2}x_{2}+x_{3}^{2}x_{4}+x_{5}^{2}x_{6}+x_{2}^{2}x_{3}+x_{4}^{2}x_{5}+x_{6}^{2}x_{1}+ax_{1}x_{3}x_{5}+bx_{2}x_{4}x_{6}$ with $a,b\in \CC$ after a coordinate changing.

(2) For $G=F_{21}$, each projective representation $\rho$ of $G$ can be lifted to be a linear representation $\widetilde{\rho}$, see \cite[Page 4]{KOIKE202512}. By \cite[Theorem 1.1]{fu2016classification} again, we may assume $\widetilde{\rho}$ is what we describe above. Such 6-dimensional representation of $F_{21}$ is unique. We have $\text{det}(\widetilde{\rho}(g_{3}))=1$. Suppose $\widetilde{\rho}(g_{3})F=\lambda F$, then $\lambda^{3}=1$. By \cite[Lemma 3.2]{fu2016classification} or \cite[Lemma 6.10]{yang2024automorphism}, $1=\text{det}(\widetilde{\rho}(g_{3}))=\lambda^{2}$. So $\lambda=1$ and $F$ is $F_{21}$-invariant for representation $\widetilde{\rho}$.

(3) For $G=L_{2}(7)$, if there is a smooth cubic fourfold $X$ with $\Aut^{s}(X)\cong L_{2}(7)$ with $L_{2}(7)$ acting on $X$ via some projective representation $\rho$, then $\rho$ can be lifted to a linear representation $\widetilde{\rho}$, see \cite[Lemma 2.5]{KOIKE202512}. By \cite[Page 5]{KOIKE202512}, let $g_{4}\in L_{2}(7)$ be an order-$4$ element, then $\text{Tr}(\widetilde{\rho}(g_{4}))=0$. There is only one $6$-dimensional representation of $L_{2}(7)$ satisfying above condition, the irreducible one. Thus $\rho$ is uniquely determined. And defining equation $F$ of $X$ must be $L_{2}(7)$-invariant for $\widetilde{\rho}$ since $L_{2}(7)$ is a simple group.
\end{proof}

From now on, we denote $\mathcal{V}_{C_{7}}$ and $\mathcal{V}_{F_{21}}$ for $C_{7}$-invariant and $F_{21}$-invariant polynomials respectively. 

By straightforward calculation we have
\begin{align*}
    \mathcal{V}_{C_{7}} = &\text{Span}_{\mathbb{C}}\{x_1^2x_2, x_3^2x_4, x_5^2x_6, x_2^2x_3, x_4^2x_5, x_6^2x_1, x_1x_3x_5,x_2x_4x_6\} \\
    \mathcal{V}_{F_{21}} = &\text{Span}_\mathbb{C}\{x_1^2x_2+x_3^2x_4+x_5^2x_6, x_2^2x_3+x_4^2x_5+x_6^2x_1, x_1x_3x_5,x_2x_4x_6\}
\end{align*}

Therefore, we define the following moduli spaces:
\begin{align*}
&\mathcal{F}_{C_{7}} = N_{\PGL(6,\CC)}(C_{7})\backslash\mathbb{P}\mathcal{V}_{C_{7}}^{sm} \\
&\mathcal{F}_{F_{21}} = N_{\PGL(6,\CC)}(F_{21})\backslash\mathbb{P}\mathcal{V}_{F_{21}}^{sm}
\end{align*}
and their GIT compactifications $\overline{\mathcal{F}}_{C_{7}}$ and $\overline{\mathcal{F}}_{F_{21}}$ respectively. 

Next we compute normalizer of $C_{7}$ and $F_{21}$ in $\PGL(6,\CC)$.

\begin{prop}
\label{NorC7}
    The normalizer $N_{\PGL(6,\mathbb{C})}(C_{7})$ of $C_{7}$ in $\PGL(6,\mathbb{C})$ as above is isomorphic to $(\mathbb{C}^{*})^{5}\rtimes C_{6}$. 

    Here $(\mathbb{C}^{*})^{5}$ is the subgroup of diagonal matrices in $\PGL(6,\CC)$. And $C_{6}$ is generated by the permutation matrix $(123456)$. 
\end{prop}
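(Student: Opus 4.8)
The plan is to study $N_{\PGL(6,\CC)}(C_7)$ through the exact sequence induced by its conjugation action on $C_7 = \langle\overline{g_7}\rangle \cong \ZZ/7$. The structural fact to exploit is that the six diagonal entries of $g_7 = \frac{1}{7}(1,5,4,6,2,3)$, namely $\zeta_7^1,\zeta_7^5,\zeta_7^4,\zeta_7^6,\zeta_7^2,\zeta_7^3$, run over all six primitive $7$th roots of unity exactly once; in particular $g_7$ has pairwise distinct eigenvalues.

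First I would prove a rigidity statement: if $\overline A \in \PGL(6,\CC)$ satisfies $\overline A\,\overline{g_7}\,\overline A^{-1} = \overline{g_7}^{\,k}$ for some $k \in (\ZZ/7)^\times$, then already $A g_7 A^{-1} = g_7^{\,k}$ in $\GL(6,\CC)$. A priori the projective relation yields only $A g_7 A^{-1} = \mu\, g_7^{\,k}$ for some $\mu \in \CC^\times$. Since $k$ is prime to $7$, both $g_7$ and $g_7^{\,k}$ have eigenvalue set equal to the full set of primitive $7$th roots of unity, so comparing the conjugation-invariant eigenvalues of the two sides forces multiplication by $\mu$ to carry $\{\zeta_7^m : 1\le m\le 6\}$ onto itself. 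This already forces $\mu \in \mu_7$, and then a nontrivial $\mu = \zeta_7^c$ would send some root to $\zeta_7^0 = 1$, which is not in the set; hence $\mu = 1$.

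With this in hand I would read off both ends of
\[
1 \longrightarrow C_{\PGL(6,\CC)}(\overline{g_7}) \longrightarrow N_{\PGL(6,\CC)}(C_7) \stackrel{\Psi}{\longrightarrow} \Aut(C_7)\cong C_6 \longrightarrow 1 ,
\]
where $\Psi(\overline A)$ records the automorphism $\overline{g_7}\mapsto \overline A\,\overline{g_7}\,\overline A^{-1}$. The kernel is the centralizer of $\overline{g_7}$: by the rigidity lemma $\overline A$ centralizes $\overline{g_7}$ if and only if $A$ commutes with $g_7$, and distinct eigenvalues force such $A$ to be diagonal, so the kernel is exactly the image of the diagonal torus, i.e.\ $(\CC^\times)^5 \subset \PGL(6,\CC)$. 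For surjectivity of $\Psi$ it suffices to hit a generator of $\Aut(C_7)$, and here I would use the numerical coincidence that scaling the exponent vector $(1,5,4,6,2,3)$ by $3$ gives $(3,1,5,4,6,2) \pmod 7$, which is precisely its cyclic shift. Consequently the permutation matrix $P=(123456)$ (acting by $e_i\mapsto e_{i+1}$) satisfies $P g_7 P^{-1}=g_7^{\,3}$, so $\Psi(\overline P)=[3]$ generates $(\ZZ/7)^\times\cong C_6$.

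Finally I would split the sequence: $P=(123456)$ has order $6$ in $\PGL(6,\CC)$, because no nontrivial power of a single $6$-cycle permutation matrix is scalar, and $\langle\overline P\rangle$ meets the diagonal torus trivially (a power of $P$ is diagonal only when it is the identity). Thus $\langle\overline P\rangle\cong C_6$ is a section of $\Psi$, and the sequence exhibits $N_{\PGL(6,\CC)}(C_7)\cong(\CC^\times)^5\rtimes C_6$ with the asserted generators. I expect the main obstacle to be the rigidity lemma: forcing the projective ambiguity $\mu$ to be trivial is exactly what pins the centralizer down to the diagonal torus rather than something larger. The second load-bearing point is the coincidence that multiplication by $3$ on the exponents equals a cyclic shift, without which a single permutation matrix could not realize a generator of $C_6$ (the natural candidate $g_3$ realizes only $k=2$, of order $3$).
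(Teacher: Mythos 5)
Your proposal is correct and takes essentially the same approach as the paper: the same eigenvalue-comparison rigidity (the paper's Lemma \ref{cenC7}, which kills the scalar ambiguity because $g_7$ has no eigenvalue $1$), the same identification of the centralizer with the diagonal torus via the distinct eigenvalues of $g_7$, and the same key element $(123456)$ conjugating $g_7$ to $g_7^3$ with $3$ a primitive root mod $7$. The only difference is packaging — you organize this as a split exact sequence onto $\Aut(C_7)\cong C_6$, while the paper reduces an arbitrary normalizing element to a centralizing one by left-multiplying by powers of $\overline{g_\tau}$ — and the two formulations carry identical content.
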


\begin{lem}
\label{cenC7}
    For $g\in \GL(6,\CC)$, denote its image in $\PGL(6,\CC)$ by $\overline{g}$. If $\overline{gg_{7}g^{-1}}=\overline{g_{7}}$, then $gg_{7}g^{-1}=g_{7}$. In this case, $g$ must be a diagonal matrix.
\end{lem}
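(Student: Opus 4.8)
The plan is to reduce the projective statement to a linear one by tracking the scalar ambiguity and then to invoke the structure of the centralizer of a regular diagonal element.

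First I would observe that $\overline{gg_{7}g^{-1}}=\overline{g_{7}}$ in $\PGL(6,\CC)$ is by definition equivalent to the existence of a scalar $\lambda\in\CC^{\times}$ with $gg_{7}g^{-1}=\lambda g_{7}$. The entire content of the first assertion is then that $\lambda=1$. To see this I would simply take traces: conjugation preserves the trace, so $\operatorname{Tr}(gg_{7}g^{-1})=\operatorname{Tr}(g_{7})$, whereas the right-hand side gives $\operatorname{Tr}(\lambda g_{7})=\lambda\operatorname{Tr}(g_{7})$. Since $g_{7}=\frac{1}{7}(1,5,4,6,2,3)$ has diagonal entries $\zeta_{7}^{k}$ with $k$ running over $\{1,\dots,6\}$, its trace is $\sum_{k=1}^{6}\zeta_{7}^{k}=-1\neq 0$. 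Comparing the two expressions forces $\lambda=1$, hence $gg_{7}g^{-1}=g_{7}$.

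Having established that $g$ commutes with $g_{7}$, I would finish by recalling that $g_{7}$ is diagonal with pairwise distinct diagonal entries: the six powers $\zeta_{7},\zeta_{7}^{5},\zeta_{7}^{4},\zeta_{7}^{6},\zeta_{7}^{2},\zeta_{7}^{3}$ of a primitive $7$th root of unity are all different. For such a regular diagonal matrix the centralizer in $\GL(6,\CC)$ is exactly the diagonal torus: writing the commutation relation $gg_{7}=g_{7}g$ entrywise gives $g_{ij}(\,d_{j}-d_{i}\,)=0$, where $d_{1},\dots,d_{6}$ are the distinct diagonal entries of $g_{7}$, so every off-diagonal entry of $g$ vanishes. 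Thus $g$ is diagonal, which is the second assertion.

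I do not expect a genuine obstacle here; the only point requiring care is that the projective hypothesis a priori yields merely $gg_{7}g^{-1}=\lambda g_{7}$ for some $\lambda$, and one must rule out $\lambda\neq 1$. The trace computation does this cleanly precisely because $\operatorname{Tr}(g_{7})=-1$ is nonzero. Were the trace to vanish one would instead compare the full eigenvalue multisets, using that $\{\zeta_{7},\dots,\zeta_{7}^{6}\}$ is not invariant under multiplication by any nontrivial $7$th root of unity; but this is not needed here. Everything else is standard linear algebra, and no input beyond the explicit diagonal form of $g_{7}$ is used.
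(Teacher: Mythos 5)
Your proof is correct and takes essentially the same route as the paper: both lift the projective hypothesis to $gg_{7}g^{-1}=\lambda g_{7}$, force $\lambda=1$, and then conclude diagonality of $g$ from commutation with the regular diagonal matrix $g_{7}$. The only (minor) difference is in how $\lambda=1$ is obtained: you use the conjugation-invariance of the trace together with $\mathrm{Tr}(g_{7})=-1\neq 0$, whereas the paper compares eigenvalue multisets, observing that $\lambda\neq 1$ would give $\lambda g_{7}$ the eigenvalue $1$, which $g_{7}$ does not have.
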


\begin{proof}[Proof of Proposition \ref{NorC7}]
    We have $gg_{7}g^{-1}=\lambda g_{7}$ for some $\lambda\in\CC^{\times}$. Thus $\lambda g_{7}$ is similar to $g_{7}$ and then $\lambda \zeta_{7}=\zeta_{7}^{k}$ for some $k\in \ZZ$. Thus $\lambda=\zeta_{7}^{k-1}$. If $k\neq 1$, then $\lambda g_{7}$ has eigenvalue $1$, this is a contradiction since $g_{7}$ has no eigenvalue $1$. So we have $gg_{7}g^{-1}=g_{7}$. 

    We have $gg_{7}=g_{7}g$, hence $g$ must be diagonal.
\end{proof}

Now we can compute $N_{\PGL(6,\CC)}(C_{7})$.

\begin{proof}
    We consider the representation $\widetilde{\rho}$ of $C_{7}$ as above and consider the induced projective representation $\rho\colon C_{7}\rightarrow \PGL(6,\mathbb{C})$. Consider an arbitrary $\overline{g}\in \PGL(6,\CC)$ such that $\overline{g}$ normalizes $C_{7}$ (here we denote $C_{7}$ by its image in $\GL(6,\CC)$ and $\PGL(6,\CC)$ again if there is no ambiguity), then $\overline{gg_{7}g^{-1}}=\overline{g_{7}}^{k}$ for some $k\in \ZZ$.

    Let $g_{\tau}=(123456)\in \GL(6,\CC)$. Then $g_{\tau}$ stabilizes $C_{7}$ since $g_{\tau}g_{7}g_{\tau}^{-1}=g_{7}^{3}$. Thus one can check $\overline{gg_{7}g^{-1}}=\overline{g_{\tau}^{m}g_{7}g_{\tau}^{-m}}$ for some $m\in \ZZ/7\ZZ$ since $3$ generates $(\ZZ/7\ZZ)^{*}$. Thus after a left action of $\langle \overline{g_{\tau}}\rangle$, we may assume $\overline{gg_{7}g^{-1}}=\overline{g_{7}}$, that is $\overline{g_{7}}$ centralizes $\overline{g_{7}}$. Take a representative $g$ of $\overline{g}$ in $\GL(6,\CC)$, then $g$ is diagonal by Lemma \ref{cenC7}.

    In conclusion, for any element which normalizes $C_{7}$ in $\PGL(6,\CC)$, after a left multiplication of $\langle \overline{g_{\tau}}\rangle$, we may assume it is a centralizer of $C_{7}$. And it is easy to check that centralizer of $C_{7}$ is the group of diagonal matrices.  We denote $(\CC^{\times})^{5}$ by all diagonal matrices in $\PGL(6,\CC)$. Conversely, the group generated by $(\CC^{\times})^{5}$ and $\langle \overline{g_{\tau}}\rangle$ inside $\PGL(6,\CC)$ is $(\CC^{\times})^{5}\rtimes \langle \overline{g_{\tau}}\rangle \cong (\CC^{\times})^{5}\rtimes C_{6}$ and it normalized $C_{7}$. Thus the normalizer of $C_{7}$ in $\PGL(6,\CC)$ is $(\CC^{\times})^{5}\rtimes C_{6}$.
\end{proof}

\begin{lem}
\label{NF21inNC7}
 We have $N_{\PGL(6,\mathbb{C})}(F_{21})<N_{\PGL(6,\mathbb{C})}(C_{7})$, where $C_{7}<F_{21}$ is the unique Sylow-7 subgroup.   
\end{lem}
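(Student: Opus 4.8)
The plan is to reduce the statement to the purely group-theoretic fact that the Sylow-$7$ subgroup $C_{7}$ is \emph{characteristic} in $F_{21}$, and then to observe that conjugation by any element of $\PGL(6,\CC)$ normalizing $F_{21}$ induces an abstract automorphism of $F_{21}$, which must therefore preserve every characteristic subgroup. The point is that the argument is entirely intrinsic to the abstract group $F_{21}$ and does not see the particular linear or projective realization.

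First I would verify that $C_{7}$ is the unique Sylow-$7$ subgroup of $F_{21}$. Since $|F_{21}| = 21 = 3 \cdot 7$, the number $n_{7}$ of Sylow-$7$ subgroups divides $3$ and satisfies $n_{7} \equiv 1 \pmod{7}$; the only possibility is $n_{7} = 1$. Hence $C_{7}$ is normal in $F_{21}$, and being the unique subgroup of order $7$ it is in fact characteristic: every $\phi \in \Aut(F_{21})$ carries the unique Sylow-$7$ subgroup to a Sylow-$7$ subgroup, which can only be $C_{7}$ itself, so $\phi(C_{7}) = C_{7}$.

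Next I would take an arbitrary $\overline{g} \in N_{\PGL(6,\CC)}(F_{21})$, so that $\overline{g}\,F_{21}\,\overline{g}^{-1} = F_{21}$ (identifying $F_{21}$ and $C_{7}$ with their images in $\PGL(6,\CC)$). Conjugation $c_{\overline{g}}\colon h \mapsto \overline{g}\,h\,\overline{g}^{-1}$ then restricts to an automorphism of the abstract group $F_{21}$. By the previous step this automorphism fixes $C_{7}$ setwise, i.e. $\overline{g}\,C_{7}\,\overline{g}^{-1} = C_{7}$, and therefore $\overline{g} \in N_{\PGL(6,\CC)}(C_{7})$. Since $\overline{g}$ was arbitrary, this gives the desired inclusion $N_{\PGL(6,\CC)}(F_{21}) < N_{\PGL(6,\CC)}(C_{7})$.

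I expect no serious obstacle here: the only thing to keep in mind is that ``characteristic'' is precisely the tool that upgrades normality of $C_{7}$ in $F_{21}$ to invariance under \emph{all} automorphisms of $F_{21}$ (not merely the inner ones), and that conjugation by a normalizing $\overline{g}$ is exactly such an automorphism. One should also note that the inclusion need not be an equality, which is consistent with Proposition~\ref{NorC7} computing $N_{\PGL(6,\CC)}(C_{7})$ as the strictly larger group $(\CC^{\times})^{5}\rtimes C_{6}$.
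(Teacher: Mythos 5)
Your proof is correct and follows essentially the same route as the paper: both use Sylow's theorem to identify $C_{7}$ as the unique order-$7$ subgroup of $F_{21}$, and then observe that conjugation by any $\overline{g}$ normalizing $F_{21}$ must send this unique Sylow-$7$ subgroup to itself. Your framing via the notion of a \emph{characteristic} subgroup is just a slightly more explicit packaging of the same argument, so there is nothing further to add.
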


\begin{proof}
    Note that $F_{21}\cong C_{7}\rtimes C_{3}$, by Sylow's theorem, there is a unique order-7 subgroup, denoted by $C_{7}$. Any element in $\PGL(6,\CC)$ normalizing $F_{21}$ must sending $C_{7}$ into another order-7 subgroup of $F_{21}$, which is forced to be $C_{7}$ itself. Thus we have $N_{\PGL(6,\CC)}(F_{21})<N_{\PGL(6,\CC)}$, which is isomorphic to $ (\CC^{\times})^{5}\rtimes C_{6}$, see Lemma \ref{NorC7}.
\end{proof}

\begin{prop}
\label{NF21}
    The normalizer $N_{\PGL(6,\mathbb{C})}(F_{21})$ of $F_{21}$ in $\PGL(6,\mathbb{C})$ as above is isomorphic to $(\mathbb{C}^{\times}\times C_{21})\rtimes C_{6}$.

    Here $\mathbb{C}^{\times}$ is the subgroup consisting of elements of the form $\diag(1,c,1,c,1,c)$, $c\in \CC^{\times}$. The group $C_{21}$ is generated by $\frac{1}{7}(1,5,4,6,2,3)$ and $\frac{1}{3}(0,0,1,1,2,2)$. And $C_{6}$ is generated by the permutation matrix $(123456)$. 
\end{prop}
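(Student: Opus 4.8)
The plan is to leverage Lemma \ref{NF21inNC7}, which gives the inclusion $N_{\PGL(6,\CC)}(F_{21}) < N_{\PGL(6,\CC)}(C_7) \cong (\CC^\times)^5 \rtimes \langle \overline{g_\tau}\rangle$, where $g_\tau = (123456)$ and $(\CC^\times)^5 =: D$ is the diagonal torus of $\PGL(6,\CC)$. Thus every element of $N(F_{21})$ has the form $\overline{d}\,\overline{g_\tau}^{\,j}$ with $\overline{d}\in D$. First I would observe that $\langle\overline{g_\tau}\rangle$ already lies in $N(F_{21})$: since $g_\tau^2 = (135)(246) = g_3$ and $g_\tau g_7 g_\tau^{-1} = g_7^3$ (from the proof of Proposition \ref{NorC7}), conjugation by $g_\tau$ sends the generators $g_7, g_3$ of $F_{21}$ to $g_7^3, g_3$, both in $F_{21}$. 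Because $\langle\overline{g_\tau}\rangle\cap D = \{e\}$ (no nontrivial power of $g_\tau$ is diagonal) and $\langle\overline{g_\tau}\rangle$ normalizes $D$, it follows that $N(F_{21}) = (N(F_{21})\cap D)\rtimes\langle\overline{g_\tau}\rangle$. This reduces the whole problem to computing the diagonal part $N(F_{21})\cap D$.

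Next I would compute $N(F_{21})\cap D$. A diagonal $\overline d$ automatically centralizes $C_7$ (Lemma \ref{cenC7}), and since $F_{21} = \langle C_7, g_3\rangle$ with $C_7$ preserved, $\overline d$ normalizes $F_{21}$ iff $\overline d\,\overline{g_3}\,\overline d^{-1} \in F_{21}$. The conjugate $d g_3 d^{-1}$ has the same cycle pattern $(135)(246)$ as $g_3$, so among the order-$3$ elements $\{g_7^k g_3\}\cup\{g_7^k g_3^2\}$ of $F_{21}$ it can only equal one carrying a $g_3$ (not $g_3^2$) factor; hence the condition becomes $d g_3 d^{-1} = \mu\, g_7^k g_3$ for some scalar $\mu$ and some $k \in \ZZ/7\ZZ$. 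Writing $g_7 = \diag(\zeta_7^{a_1},\dots,\zeta_7^{a_6})$ with $(a_1,\dots,a_6)=(1,5,4,6,2,3)$ and comparing entries gives the relations $d_{i+2}/d_i = \mu\,\zeta_7^{k a_{i+2}}$. Multiplying these around the $3$-cycle $(135)$ yields $1 = \mu^3 \zeta_7^{k(a_1+a_3+a_5)} = \mu^3\zeta_7^{7k} = \mu^3$, and likewise around $(246)$, so necessarily $\mu^3 = 1$. Conversely, for any cube root $\mu$ and any $k$ the relations are consistently solvable, with $d_1$ and $d_2$ free. Passing to $\PGL$ removes one overall scalar, leaving one torus parameter $c := d_2/d_1$ together with the discrete data $\mu\in\mu_3$ and $k\in\ZZ/7\ZZ$; a short check that distinct triples $(c,\mu,k)$ give distinct projective classes (using that the $g_7^k$ have pairwise non-proportional diagonals) shows there is no overcounting.

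Finally I would match these pieces with the stated generators. Setting $\mu=1,k=0$ gives exactly the torus $\{\diag(1,c,1,c,1,c)\} \cong \CC^\times$; setting $c=1,k=0,\mu=\zeta_3$ gives $\diag(1,1,\zeta_3,\zeta_3,\zeta_3^2,\zeta_3^2)=\tfrac{1}{3}(0,0,1,1,2,2)=:h$; and $\overline{g_7}$ contributes the order-$7$ direction (its $k$-component being a generator of $\ZZ/7\ZZ$). Since $h$ and $g_7$ are diagonal they commute, and a direct inspection of eigenvalue patterns shows $\langle\overline{g_7},\overline{h}\rangle\cong C_{21}$ and that this group meets the torus $\{\diag(1,c,1,c,1,c)\}$ trivially; together they generate all of $N(F_{21})\cap D$, so $N(F_{21})\cap D\cong \CC^\times\times C_{21}$. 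Combined with the semidirect decomposition from the first step this yields $N_{\PGL(6,\CC)}(F_{21})\cong(\CC^\times\times C_{21})\rtimes C_6$ with the asserted explicit generators. The main obstacle is the middle step: correctly organizing the entrywise conjugation relations and extracting the constraint $\mu^3=1$ — the observation that the $g_7$-exponents sum to $0\bmod 7$ along each $3$-cycle is exactly what produces the extra order-$3$ factor $\langle h\rangle$ enlarging the naive centralizer, and keeping the $\PGL$-scalar bookkeeping straight (to avoid over- or under-counting) is the delicate part.
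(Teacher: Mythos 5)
Your proof is correct and follows essentially the same route as the paper's: both pass through the inclusion $N_{\PGL(6,\CC)}(F_{21}) < N_{\PGL(6,\CC)}(C_7) \cong (\CC^{\times})^{5}\rtimes C_{6}$ (Lemma \ref{NF21inNC7}, Proposition \ref{NorC7}), reduce to diagonal elements via Lemma \ref{cenC7}, and determine the diagonal part by analyzing conjugation on $g_3$, with diagonality ruling out $g_3\mapsto g_3^2$. The only differences are organizational: where the paper successively reduces by left multiplication with $\langle\overline{g_\tau}\rangle$, $C_7$ and $\langle\overline{k}\rangle$ (using $g_{7}^{m}g_{3}^{n}=g_{7}^{m(1-2n)^{-1}}g_{3}^{n}g_{7}^{-m(1-2n)^{-1}}$ and $\omega^{n}g_{3}=k^{n}g_{3}k^{-n}$), you set up the internal semidirect decomposition $N_{\PGL(6,\CC)}(F_{21}) = (N_{\PGL(6,\CC)}(F_{21})\cap D)\rtimes \langle\overline{g_\tau}\rangle$ and solve $dg_3d^{-1}=\mu g_7^k g_3$ entrywise, extracting $\mu^{3}=1$ from $a_1+a_3+a_5\equiv a_2+a_4+a_6\equiv 0 \pmod 7$ --- which is exactly where the paper's extra factor $\langle\overline{k}\rangle\cong C_3$ comes from.
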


\begin{proof}
     
    Now we consider the linear representation and induced projective representation of $F_{21}$ as above.

    Recall that $g_{7}=\frac{1}{7}(1,5,4,6,2,3)$ and $g_{3}=(135)(246)$.

    Direct computation shows that $\overline{g_{\tau}}\in N_{\PGL(6,\CC)}(F_{21})$, where $g_{\tau}=(123456)$. Take $\overline{g}\in N_{\PGL(6,\CC)}(F_{21})$, then $\overline{g}\in N_{\PGL(6,\CC)}(C_{7})$. Since we have $g_{\tau}g_{7}g_{\tau}^{-1}=g_{7}^{3}$, after a left multiplication of $\langle \overline{g_{\tau}} \rangle$, we may assume $\overline{g}$ centralizes $C_{7}$. By Lemma \ref{cenC7}, $\overline{g}$ is diagonal. 
    
    After all discussions above, the condition that $\overline{g}$ normalizes $F_{21}$ is equivalent to that $\overline{g}$ conjugating $g_{3}$ to another element of order 3. All order-3 elements in $F_{21}$ can be written as $g_{7}^{m}g_{3}^{n}$ for some $m\in \ZZ/7\ZZ$ and $n=1\ \text{or}\ 2$. And $g_{7}^{m}g_{3}^{n}=g_{7}^{m*(1-2n)^{-1}}g_{3}^{n}g_{7}^{-m*(1-2n)^{-1}}$. So after further a left multiplication of $C_{7}$ (which does not change the property of centralizing $C_{7}$), we may further assume that $\overline{g}$ conjugates $\overline{g_{3}}$ to $\overline{g_{3}}$ or $\overline{g_{3}}^{2}$. The latter case can never happen since $\overline{g}$ is diagonal. So $\overline{g}$ centralizes $F_{21}$.    
    
    Take a representative $g$ of $\overline{g}$ in $\GL(6,\CC)$, then $g$ is diagonal and $gg_{3}g^{-1}=\lambda g_{3}$ with $\lambda \in \CC^{\times}$. Since $\lambda g_{3}$ is similar to $g_{3}$, $\lambda$ is $1$, $\omega$ or $\omega^{2}$, where $\omega=\text{exp}(\frac{2\pi\sqrt{-1}}{3})$. Let $k\coloneqq \frac{1}{3}(0,0,1,1,2,2)$. Note that $\omega^{n}g_{3}=k^{n}g_{3}k^{-n}$, $n=0,1,2$ and that $gg_{3}g^{-1}=g_{3}$ with $g$ diagonal is equivalent to that $g=\diag(a,b,a,b,a,b)$ for some $a,b\in \CC^{\times}$. We know that $gg_{3}g^{-1}=\omega^{n} g_{3}$ is equivalent to that $g=k^{n}\diag(a,b,a,b,a,b)$ for $n=0,1,2$ and $a,b\in\CC^{\times}$. 

    In conclusion, after a left multiplication of $\langle \overline{g_{\tau}} \rangle$, $C_{7}$ and $\langle\overline{k}\rangle$, we may assume $\overline{g}\in N_{\PGL(6,\CC)}(F_{21})$ is of the form $\overline{\diag (a,b,a,b,a,b)}=\overline{\diag(1,c,1,c,1,c)}$ with $n=0,1,2$ and $a,b,c=\frac{b}{a}\in\CC^{\times}$. So $N_{\PGL(6,\CC)}(F_{21})<\langle\CC^{\times},\overline{g_{7}},\overline{k},\overline{g_{\tau}}\rangle\cong (\CC^{\times}\times C_{21})\rtimes C_{6}$. Conversely, it is clear that $(\CC^{\times}\times C_{21})\rtimes C_{6}<N_{\PGL(6,\CC)}(F_{21})$. So the conclusion follows.
\end{proof}

\subsection{GIT Moduli of Cubic Fourfolds with $F_{21}$-Action}

\label{subsection: GIT moduli of F21}

By definition, there is a surjective morphism $\SL(6,\CC) \to \PGL(6,\CC)$ and the normalizer of $G_{1}=\mu_6C_7$ or $G_{2}=\mu_6F_{21}$ in $\SL(6,\CC)$ is just the inverse image of the normalizer of $C_7$ or $F_{21}$ in $\PGL(6,\CC)$. 

Using these, we analyze the GIT stability and form the following
\begin{prop}
\label{C7GITstb}
    For $F = a_1x_1^2x_2 + a_2x_2^2x_3 + a_3x_3^2x_4 + a_4x_4^2x_5 + a_5x_5^2x_6 + a_6x_6^2x_1 + a_7x_1x_3x_5 + a_8x_2x_4x_6$, it is $N_{\SL(6,\CC)}(G_1)$-semistable if and only if one of the following holds:
    \begin{center}
        (1) $a_7,a_8 \ne 0$ \,
        (2) $a_2,a_4,a_6,a_7 \ne 0$ \,
        (3) $a_1,a_3,a_5,a_8 \ne 0$ \,
        (4) $a_1,a_2,..,a_6 \ne 0$.
    \end{center}
    It is $N_{\SL(6,\CC)}(G_1)$-stable if and only if $a_1,a_2,..,a_6 \ne 0$. 
\end{prop}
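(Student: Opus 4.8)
The plan is to reduce the question to a torus computation and then to the combinatorics of a weight polytope.

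First I would pin down the group. Lifting Proposition \ref{NorC7} along $\SL(6,\CC)\to\PGL(6,\CC)$, the normalizer $N:=N_{\SL(6,\CC)}(G_1)$ has identity component the diagonal maximal torus $T<\SL(6,\CC)$, while $N/T\cong C_6$ is finite (generated by a lift of $(123456)$). Since a one-parameter subgroup is connected, it factors through $N^{\circ}=T$; hence the Hilbert--Mumford numerical function need only be tested on one-parameter subgroups of $T$. Consequently $N$-semistability coincides with $T$-semistability, and the same holds for stability, because the finite quotient $N/T$ can only enlarge a stabilizer by a finite factor and so does not affect finiteness of stabilizers. This reduces everything to torus GIT.

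Second, I set up the toric dictionary. The eight monomials $m_1=x_1^2x_2,\dots,m_8=x_2x_4x_6$ are $T$-weight vectors, so $F=\sum a_im_i$ has weights exactly $\{m_i:a_i\neq0\}$, regarded as lattice points in the $5$-dimensional hyperplane $H=\{b\in\RR^6:\sum b_i=3\}$. With stability centre the barycentre $\beta=\tfrac12(1,\dots,1)$, the Hilbert--Mumford criterion for a torus reads: $F$ is semistable iff $\beta\in\Delta(F):=\mathrm{Conv}\{m_i:a_i\neq0\}$, and stable iff $\beta$ lies in the interior of $\Delta(F)$ relative to $H$. Two identities drive the analysis, namely $\beta=\tfrac16(m_1+\dots+m_6)=\tfrac12(m_7+m_8)$. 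Moreover $m_1,\dots,m_6$ are affinely independent: the relation $\sum\lambda_im_i=0$ forces the recursion $\lambda_{j-1}=-2\lambda_j$, hence $63\lambda_1=0$, so they are the vertices of a $5$-simplex spanning $H$ whose centroid is $\beta$. Sufficiency of the four conditions is then immediate from convex combinations: $\beta=\tfrac12(m_7+m_8)$ is case (1), $\beta=\tfrac14(m_2+m_4+m_6+m_7)$ is case (2), $\beta=\tfrac14(m_1+m_3+m_5+m_8)$ is case (3), and $\beta=\tfrac16\sum_{i=1}^6 m_i$ is case (4); all coefficients are positive, so $\beta\in\Delta(F)$ and $F$ is semistable. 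In case (4) the support contains the whole $5$-simplex, whose centroid $\beta$ is an interior point, so $F$ is stable. (In cases (1)--(3) the distinguished monomials by themselves span a proper affine subspace of $H$ and cannot force $\beta$ into the interior; stability is governed precisely by the condition of case (4).)

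Finally, necessity is the crux. For stability, if some $a_k=0$ with $1\le k\le6$, then up to the $C_6$-symmetry acting on weights as $(m_1\cdots m_6)(m_7m_8)$ I may take $k=1$, and the one-parameter subgroup $r=(10,1,-2,4,-8,-5)$ (which has $\sum r_i=0$, so $\langle r,\beta\rangle=0$) pairs non-positively with every $m_2,\dots,m_8$; thus $\beta$ lies on a supporting hyperplane of $\Delta(F)$, is not interior, and $F$ is not stable even for the maximal support $\{m_2,\dots,m_8\}$. For semistability I would show that any support avoiding all four index sets admits a destabilizing subgroup, i.e.\ an $r$ with $\sum r_i=0$ and $\langle r,m_i\rangle>0$ on the whole support. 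The $C_6$-symmetry together with the ``parity'' subgroup $r=(1,-1,1,-1,1,-1)$ (positive on $m_1,m_3,m_5,m_7$, negative on $m_2,m_4,m_6,m_8$) and the facet normals of the simplex $\{m_1,\dots,m_6\}$ reduce this to a finite check; equivalently one verifies by Farkas duality that the only minimal subsets $S$ with $\beta\in\mathrm{Conv}\{m_i:i\in S\}$ are $\{7,8\}$, $\{2,4,6,7\}$, $\{1,3,5,8\}$ and $\{1,2,3,4,5,6\}$. I expect this completeness statement --- proving that no other support can catch $\beta$ --- to be the main obstacle, since it is a genuine finite enumeration rather than one clean inequality; the symmetry reduction and the two explicit subgroups above are what keep it manageable.
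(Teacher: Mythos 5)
Your framework is sound, and it is in substance the same computation the paper performs: both arguments reduce to one-parameter subgroups of the diagonal torus (using Proposition \ref{NorC7}) and apply Hilbert--Mumford; you merely recast the criterion in weight-polytope form ($F$ semistable iff $\beta=\tfrac12(1,\dots,1)\in\Delta(F)$, stable iff $\beta$ is interior relative to $H$). Your sufficiency argument is complete and correct (all four convex combinations check out, and the affine independence of $m_1,\dots,m_6$ gives stability in case (4)), and your stability-necessity vector $r=(10,1,-2,4,-8,-5)$ indeed pairs nonpositively with $m_2,\dots,m_8$ (values $0,0,0,-21,0,0,0$), so that half is also done; it plays exactly the role of the paper's vector $(-8,-5,10,1,-2,4)$.

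The genuine gap is the one you flag yourself: the ``only if'' direction for semistability is asserted, not proved. Saying that symmetry, the parity vector and Farkas duality ``reduce this to a finite check'' is not a proof, and the two tools you name explicitly do not settle the decisive case: if none of (1)--(4) holds, then (splitting into the subcases $a_8=0$ and $a_8\neq 0$, and using the full $C_6$ in the first and the $C_3$ fixing $m_7,m_8$ in the second --- a reduction you also do not spell out) one may assume $a_1=a_7=0$, i.e.\ $\mathrm{supp}(F)\subseteq\{2,3,4,5,6,8\}$; the parity vector $(1,-1,1,-1,1,-1)$ fails on this mixed-parity support (it is positive on $m_3,m_5$ but negative on $m_2,m_4,m_6,m_8$). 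Everything hinges on exhibiting one functional $r$ with $\sum r_i=0$ strictly positive on $m_2,\dots,m_6,m_8$; this is precisely where the paper produces $\vec r=(-25,-1,3,1,-1,23)$, with weights $1,7,1,21,21,23>0$. To be fair, your conjectured list of minimal supports $\{7,8\},\{2,4,6,7\},\{1,3,5,8\},\{1,\dots,6\}$ is correct, and your facet-normal idea does work if actually executed: the functional constant on the facet $\{m_2,\dots,m_6\}$, normalized with $\sum r_i=0$, is $r=(-19,3,1,5,-3,13)$, which takes value $7$ on each of $m_2,\dots,m_6$ and $21$ on $m_8$, hence separates $\beta$. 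But absent such an explicit certificate (or the finished enumeration), the crux of the proposition --- that no support outside your four families can catch $\beta$ --- remains unproven.
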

\begin{proof}
    The connected component of $N_{\SL(6,\CC)}(G_1)$ containing the identity is just the subgroup of $\SL(6,\CC)$ containing all diagonal matrixes. Therefore, nontrivial $1$-parameter subgroups of $N_{\SL(6,\CC)}(G_1)$ are just $\lambda\colon \CC^\times \to N_{\SL(6,\CC)}(G_1)$ with $\lambda(t) = \diag(t^{r_1},\cdots,t^{r_6})$ such that $r_i \in \ZZ$, $\vec{r} \ne \vec{0}$ and $\sum\limits_{i=1}^6 r_i = 0$.

    Let $\omega_{a_1} = 2r_1+r_2,\cdots,\omega_{a_6}=2r_6 + r_1, \omega_{a_7}=r_1+r_3+r_5, \omega_{a_8} = r_2+r_4+r_6$ and let $\omega_{\vec{r}} = \min\{\omega_{a_i}|a_i \ne 0\}$. By Hilbert-Mumford criterion \cite[Theorem 2.1]{mumford1994geometric}, $F$ is not semi-stable (resp. stable) if and only if there is some $\vec{r} \ne \vec{0}$, such that $\omega_{\vec{r}} > 0$(resp. $\omega_{\vec{r}} \ge 0$).

    For sufficiency, first assume $a_7,a_8 \ne 0$, then there is no $\vec{r}$ such that $\omega_{a_7}, \omega_{a_8} > 0$ otherwise $\omega_{a_7} + \omega_{a_8} = \sum\limits_{i=1}^{6}r_i > 0$. The others are similar. For necessity, suppose none of four conditions hold. Up to symmetry, we may assume $a_1, a_7=0$, take $\vec{r} = (-25, -1,3 , 1, -1, 23)$, we would have $\min\{\omega_{a_i}|i=2,3,4,5,6,8\} > 0$ and therefore $F$ is not semistable. For stability, just assume $a_1=0$ and take $\vec{r} = (-8, -5, 10, 1, -2, 4)$ and repeat the same argument.    
\end{proof}

\begin{prop}
\label{F21GIT}
    For $F = c_1(x_1^2x_2+x_3^2x_4+x_5^2x_6) + c_2(x_2^2x_3+x_4^2x_5+x_6^2x_1) + c_3x_1x_3x_5 + c_4x_2x_4x_6$, it is $N_{\SL(6,\CC)}(G_2)$-stable if and only if one of the following holds:
    \begin{center}
     (1) $c_1,c_2 \ne 0$ \, (2) $c_1,c_4 \ne 0$ \, (3) $c_2,c_3 \ne 0$ \, (4) $c_3,c_4 \ne 0$.
   \end{center}
And $F$ is $N_{\SL(6,\CC)}(G_2)$-semistable implies it is $N_{\SL(6,\CC)}(G_2)$-stable.
In particular, the unstable locus is two projective lines $c_1 = c_3 = 0$ and $c_2 = c_4 = 0$. 
\end{prop}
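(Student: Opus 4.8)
The plan is to run the Hilbert--Mumford criterion exactly as in the proof of Proposition \ref{C7GITstb}, but to first exploit the fact that the relevant group is much smaller than in the $C_7$ case. By Proposition \ref{NF21}, the normalizer $N_{\PGL(6,\CC)}(F_{21}) \cong (\CC^\times \times C_{21}) \rtimes C_6$ has a one-dimensional identity component, namely the torus $\{\overline{\diag(1,c,1,c,1,c)} : c \in \CC^\times\}$. Passing to the preimage in $\SL(6,\CC)$ (which is $N_{\SL(6,\CC)}(G_2)$, with $G_2 = \mu_6 F_{21}$), the identity component is the one-dimensional torus $\{\diag(s, s^{-1}, s, s^{-1}, s, s^{-1}) : s \in \CC^\times\}$. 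Since every $1$-parameter subgroup $\CC^\times \to N_{\SL(6,\CC)}(G_2)$ has connected image and therefore factors through this torus, the only nontrivial $1$-PS to test are $\lambda_a(t) = \diag(t^a, t^{-a}, t^a, t^{-a}, t^a, t^{-a})$ for $a \in \ZZ \setminus \{0\}$; equivalently $\vec{r} = a(1,-1,1,-1,1,-1)$.

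First I would record the weights. For $\vec{r}_+ := (1,-1,1,-1,1,-1)$ each of the three monomials in the $c_1$-term has weight $+1$, each monomial in the $c_2$-term has weight $-1$, the monomial $x_1 x_3 x_5$ has weight $+3$, and $x_2 x_4 x_6$ has weight $-3$; the special shape of the admissible $\vec{r}$ guarantees that the three monomials grouped under $c_1$ (resp. $c_2$) share a common weight, so that each of the four terms is a genuine weight vector. Writing $\omega_{c_i}(\vec{r})$ for these weights, we get $(\omega_{c_1},\omega_{c_2},\omega_{c_3},\omega_{c_4}) = (1,-1,3,-3)$ for $\vec{r}_+$ and the negatives for $\vec{r}_- := -\vec{r}_+$; rescaling by $|a|$ changes none of the signs.

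Then I would feed this into the Hilbert--Mumford criterion \cite[Theorem 2.1]{mumford1994geometric} in the form used in Proposition \ref{C7GITstb}: with $\omega_{\vec{r}} = \min\{\omega_{c_i}(\vec{r}) : c_i \neq 0\}$, the form $F$ fails to be semistable (resp. stable) exactly when some admissible $\vec{r} \neq \vec{0}$ has $\omega_{\vec{r}} > 0$ (resp. $\geq 0$). For $\vec{r}_+$ the terms of positive weight are $c_1, c_3$ and those of negative weight are $c_2, c_4$, so $\omega_{\vec{r}_+} > 0 \iff \omega_{\vec{r}_+} \geq 0 \iff c_2 = c_4 = 0$; symmetrically $\vec{r}_-$ gives $c_1 = c_3 = 0$. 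This immediately yields the unstable locus $\{c_1 = c_3 = 0\} \cup \{c_2 = c_4 = 0\}$ (two projective lines), and, because no weight $\omega_{c_i}(\vec{r})$ is ever zero, the non-stable and non-semistable loci coincide, giving semistable $\Rightarrow$ stable. Finally, $F$ is stable iff $\omega_{\vec{r}_\pm} < 0$ both hold, i.e. iff $(c_1 \neq 0 \text{ or } c_3 \neq 0)$ and $(c_2 \neq 0 \text{ or } c_4 \neq 0)$; expanding this conjunction of disjunctions reproduces exactly the four listed alternatives $(1)$--$(4)$.

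The only real subtlety --- and the step I would be most careful about --- is the reduction of the $1$-PS to the single family $\lambda_a$: one must be sure that the identity component of $N_{\SL(6,\CC)}(G_2)$ is genuinely this rank-one torus and that the disconnectedness of $N$ contributes no further $1$-parameter subgroups. This is exactly where the $F_{21}$ computation is strictly easier than the $C_7$ computation of Proposition \ref{C7GITstb}, whose identity component was the full five-dimensional diagonal torus; here the $F_{21}$-invariance forces all admissible weight vectors to be proportional to $(1,-1,1,-1,1,-1)$, collapsing the optimization to the two sign choices $\vec{r}_\pm$. Everything after that is a finite bookkeeping check rather than a search over lattice points.
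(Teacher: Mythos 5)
Your proposal is correct and follows exactly the paper's own argument: identify the identity component of $N_{\SL(6,\CC)}(G_2)$ as the one-dimensional torus $\{\diag(s,s^{-1},s,s^{-1},s,s^{-1})\}$, note that every $1$-parameter subgroup factors through it, and apply the Hilbert--Mumford criterion to the two sign choices of weight vector. The only difference is that you spell out the weight bookkeeping $(1,-1,3,-3)$ and the resulting case analysis, which the paper leaves implicit after the phrase ``Also by Hilbert-Mumford criterion, we obtain the conclusion.''
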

\begin{proof}
    The connected component of $N_{\SL(6,\CC)}(G_2)$ containing the identity is just \[\mathbb{C}^\times = \diag(a,a^{-1},a,a^{-1},a,a^{-1}) < \SL(6,\CC).\]
    
    Therefore, all non-trivial $1$-parameter subgroups are just \[\lambda(t) = \diag(t^n,t^{-n},t^n,t^{-n},t^n,t^{-n}), n \in \ZZ/\{0\}.\] 
    
    Also by Hilbert-Mumford criterion, we obtain the conclusion.  
\end{proof}

\subsection{Singular Cubic Fourfolds with Order-$7$ Action}
\label{subsection: singular cubic fourfolds}

In this section, we study singular cubic fourfolds which admit an order-7 action. 

\begin{prop}
\label{nonADE}
For a cubic fourfold $X$ with an order-7 action, we write the defining equation of $X$ as $F = a_1x_1^2x_2 + a_2x_2^2x_3 + a_3x_3^2x_4 + a_4x_4^2x_5 + a_5x_5^2x_6 + a_6x_6^2x_1 + a_7x_1x_3x_5 + a_8x_2x_4x_6$. If one of $a_{i}=0$, $i=1,\dots, 6$, then $X$ admits singularity which is not of type ADE.
\end{prop}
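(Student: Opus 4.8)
The plan is to exhibit a single non-ADE singular point. By the $C_6$-symmetry of the family generated by $g_\tau=(123456)$, which lies in $N_{\PGL(6,\CC)}(C_7)$ and cyclically permutes the coefficients $a_1,\dots,a_6$ (see the proof of Proposition \ref{NF21}), this symmetry preserves the property of having a non-ADE singularity, so it suffices to treat the case $a_1=0$. First I would check that $p=[1:0:0:0:0:0]$ is a singular point of $X$: a direct computation shows all six partial derivatives of $F$ vanish at $p$, and by Euler's identity this forces $F(p)=0$ as well, so $p\in X$ is singular.

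Next, passing to the affine chart $x_1=1$ with local coordinates $(x_2,\dots,x_6)$ centered at $p$, the local equation becomes
\[
f = a_6 x_6^2 + a_7 x_3 x_5 + a_2 x_2^2 x_3 + a_3 x_3^2 x_4 + a_4 x_4^2 x_5 + a_5 x_5^2 x_6 + a_8 x_2 x_4 x_6,
\]
whose quadratic part is $Q=a_6x_6^2+a_7x_3x_5$. The crucial structural features are that $Q$ involves neither $x_2$ nor $x_4$, and that \emph{every} monomial of $f$ contains at least one of $x_3,x_5,x_6$. I would then split according to the corank of $Q$ as a quadratic form in the five variables. If $a_6=0$ or $a_7=0$, then $\rank(Q)\le 2$, so the corank is $\ge 3$; since every simple (ADE) hypersurface singularity has corank $\le 2$, the point $p$ is non-ADE.

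In the remaining case $a_6,a_7\neq 0$ the form $Q$ has rank $3$ and corank exactly $2$, so the corank alone is inconclusive and I would invoke the splitting lemma. Eliminating the ``fast'' variables $x_3,x_5,x_6$ via the implicit function theorem — legitimate because the Hessian of $f$ restricted to them is exactly $Q$ and hence nondegenerate — writes $f$ up to right-equivalence as $Q'(x_3,x_5,x_6)+g(x_2,x_4)$. The inhomogeneous terms $a_2x_2^2,\,a_4x_4^2,\,a_8x_2x_4$ appearing in the critical equations $\partial_{x_3}f=\partial_{x_5}f=\partial_{x_6}f=0$ are all quadratic in $(x_2,x_4)$ with no linear part, so each solved fast variable vanishes to order $\ge 2$. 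Combined with the fact that every monomial of $f$ carries a fast-variable factor, substituting back shows that the residual $g(x_2,x_4)$ has multiplicity $\ge 4$ (and if $g\equiv 0$ the singularity is non-isolated). Since simple plane-curve singularities have multiplicity $\le 3$, the residual — hence $p$ — is non-ADE.

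The main obstacle is precisely this corank-$2$ case: one must verify that the residual two-variable singularity is non-simple. I expect the cleanest route to be the order-counting argument above, showing that the cubic part of the residual vanishes so that its multiplicity is at least $4$, rather than computing the explicit quartic coefficient of $g$; the two observations that $Q$ omits $x_2,x_4$ and that every monomial of $f$ meets $\{x_3,x_5,x_6\}$ are what make this work uniformly.
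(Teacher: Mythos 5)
Your proposal is correct, but it proves the statement by a genuinely different route than the paper. The paper's own proof is purely GIT-theoretic: by \cite[Theorem 5.6]{laza2009moduli}, a cubic fourfold with at worst ADE singularities is stable for the full $\PGL(6,\CC)$-action; Luna's criterion \cite[\S 3, Corollary 1]{luna1975adherences} then makes its orbit under the reductive subgroup $N_{\PGL(6,\CC)}(C_{7})$ closed with finite stabilizer, hence $X$ would be stable in $\mathcal{V}_{C_{7}}$, contradicting Proposition \ref{C7GITstb} as soon as some $a_i=0$ with $i\le 6$. You instead argue locally: after using the $g_\tau$-symmetry to reduce to $a_1=0$, you exhibit the singular point $[1:0:\cdots:0]$, read off the quadratic part $a_6x_6^2+a_7x_3x_5$ in the affine chart, and conclude non-simplicity either from corank $\ge 3$ (when $a_6a_7=0$) or, via the splitting lemma, from the residual two-variable germ having multiplicity $\ge 4$ (when $a_6,a_7\neq 0$); I checked the order-counting in the elimination of $x_3,x_5,x_6$ and it is sound, since each fast variable indeed vanishes to order $\ge 2$ in $(x_2,x_4)$ and every monomial of $f$ contains a fast variable with slow part of degree at most $2$. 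The trade-off: your argument is elementary and self-contained, pinpoints where the non-ADE singularity sits and roughly what it looks like, but needs the case analysis and standard facts from singularity theory (simple germs have corank $\le 2$ and multiplicity $\le 3$); the paper's argument is shorter given that Proposition \ref{C7GITstb} and the GIT framework are already in place, requires no local computation at all, and treats all six vanishing coefficients uniformly, but it leans on two nontrivial external inputs (Laza's stability theorem and Luna's slice-type result).
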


\begin{proof}
    By \cite[Theorem 5.6]{laza2009moduli}, if $X$ has singularity at worst of type ADE, then $X$ is stable in the moduli of all cubic fourfolds. In particular, the stabilizer in $\PGL(6,\mathbb{C})$ of $X$ is finite and $\PGL(6,\mathbb{C})$-orbit of $X$ is closed in vector space of all cubic fourfolds. By Luna \cite[\S 3, Corollary 1]{luna1975adherences}, $N_{\PGL(6,\mathbb{C})}(C_{7})$-orbit of $X$ is closed. This forces $X$ to be stable in $\mathcal{V}_{C_7}$, which contradicts Proposition \ref{C7GITstb}.
\end{proof}

Direct computation shows that:
\begin{prop}
    For a cubic fourfold $X$ admitting symplectic $F_{21}$-action, write defining equation of $X$ as $F = c_1(x_1^2x_2+x_3^2x_4+x_5^2x_6) + c_2(x_2^2x_3+x_4^2x_5+x_6^2x_1) + c_3x_1x_3x_5 + c_4x_2x_4x_6$. If $c_{1}$ or $c_{2}$ equals $0$, then singular locus of $X$ has dimension at least $1$.
\end{prop}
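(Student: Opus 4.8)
The plan is to produce an explicit positive-dimensional component of the singular locus by a direct computation of partial derivatives, after first using the symmetry of the family to reduce the two cases to one. First I would reduce to $c_1=0$: the permutation $g_\tau=(123456)$ lies in $N_{\PGL(6,\CC)}(F_{21})$ by Proposition \ref{NF21}, and a one-line check shows that acting by $g_\tau$ carries $F$ to the polynomial of the same shape with coefficient vector $(c_1,c_2,c_3,c_4)$ replaced by $(c_2,c_1,c_4,c_3)$. Since $g_\tau$ induces an isomorphism of the associated cubic fourfolds, the case $c_2=0$ is carried isomorphically onto the case $c_1=0$, so it suffices to treat $c_1=0$.

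With $c_1=0$ the defining equation becomes
\[
F = c_2(x_2^2x_3+x_4^2x_5+x_6^2x_1) + c_3\,x_1x_3x_5 + c_4\,x_2x_4x_6 .
\]
The key observation is that \emph{every} monomial of $F$ is divisible by one of $x_3,x_4,x_6$; consequently the plane $\Pi=\{x_3=x_4=x_6=0\}\cong\PP^2$, with homogeneous coordinates $x_1,x_2,x_5$, is entirely contained in $X=Z(F)$. Next I would locate $\operatorname{Sing}(X)\cap\Pi$ by restricting the six partials of $F$ to $\Pi$. Since $F$ vanishes identically on $\Pi$, the three tangential derivatives $\partial_{x_1}F,\partial_{x_2}F,\partial_{x_5}F$ vanish along $\Pi$; one checks directly that the two normal derivatives $\partial_{x_4}F=2c_2x_4x_5+c_4x_2x_6$ and $\partial_{x_6}F=2c_2x_6x_1+c_4x_2x_4$ also vanish identically on $\Pi$, each of their monomials being divisible by one of $x_3,x_4,x_6$. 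The only surviving condition is
\[
\partial_{x_3}F\big|_{\Pi} = c_2\,x_2^2 + c_3\,x_1 x_5 = 0,
\]
which cuts out a conic $C\subset\Pi$. By Euler's relation $3F=\sum_i x_i\,\partial_{x_i}F$, the simultaneous vanishing of all partials forces $F=0$, so every point of $C$ is a genuine singular point of $X$ in $\PP^5$; thus $C\subseteq\operatorname{Sing}(X)$.

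Finally I would note that the conic $C=\{c_2x_2^2+c_3x_1x_5=0\}\subset\PP^2$ is one-dimensional for all values of $c_2,c_3$ (a smooth conic, a union of two lines, or a double line, and if $c_2=c_3=0$ then all of $\Pi$ is singular of dimension $2$), so in every case $\dim\operatorname{Sing}(X)\ge 1$, as claimed. The computation itself is entirely elementary and presents no real obstacle; the only genuine content is spotting the invariant plane $\Pi$. In particular one should keep in mind that merely containing a plane does \emph{not} force a cubic fourfold to be singular—smooth cubic fourfolds containing planes form Hassett's divisor $\mathcal{C}_8$—so the force of the argument lies in the degeneration of the normal derivatives along $\Pi$, which is exactly what the vanishing $c_1=0$ produces.
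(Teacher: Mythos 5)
Your proof is correct, and it is essentially the paper's approach: the paper offers no argument beyond the phrase ``direct computation,'' and your calculation is exactly such a computation, cleanly organized. The symmetry reduction via $g_\tau=(123456)$, the observation that $\Pi=\{x_3=x_4=x_6=0\}$ lies in $X$ when $c_1=0$, and the check that five of the six partials vanish identically on $\Pi$ (leaving the conic $c_2x_2^2+c_3x_1x_5=0$ inside the singular locus) are all verified and close the statement in every degenerate subcase.
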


Thus for the remaining cases, we may assume $X$ is defined by the equation  $x_{1}^{2}x_{2}+x_{3}^{2}x_{4}+x_{5}^{2}x_{6}+x_{2}^{2}x_{3}+x_{4}^{2}x_{5}+x_{6}^{2}x_{1}+ax_{1}x_{3}x_{5}+bx_{2}x_{4}x_{6}$ after suitable action of $N_{\PGL(6,\CC)}(C_{7})$.

\begin{lem}
\label{singeq}
    Suppose $F(x_1,\cdots,x_6) = x_1^2x_2+x_2^2x_3+\cdots+x_5^2x_6 + x_6^2x_1+ax_1x_3x_5 + bx_2x_4x_6$ has a singularity $[t_1:\cdots:t_6]$, then $t_i \ne 0$ for any $i$. Therefore, we can assume $t_1 = 1$ and set $t = t_2$. Then there exists some $0\le k \le2$ and $0\le l\le 6$ such that the following equations hold
\[
\left\{
	\begin{aligned}
		&a + 2\zeta_7^lt + \omega^k(\zeta_7^lt)^2 = 0\\
		&b + 2\omega^k(\zeta_7^lt)^{-1} + (\zeta_7^lt)^{-2} = 0
	\end{aligned}
\right.
\](here $\zeta_7 = e^{\frac{2\pi i}{7}}$ and $\omega = e^{\frac{2\pi i}{3}}$)

And $[1:t_2:\cdots :t_6] = [1:t:\omega^k\zeta_7^l:t\omega^k\zeta_7^{-2l}:\omega^{-k}\zeta_7^{-2l}:t\omega^{-k}\zeta_7^{-3l}]$. The converse also holds. 
\end{lem}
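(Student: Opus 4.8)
The plan is to analyze the singular locus directly through the gradient equations. By the Euler relation, $[t_1:\cdots:t_6]$ is singular on $Z(F)$ if and only if all six partials vanish there; writing $y_i=t_i$, the equations $\partial F/\partial x_i=0$ read
\[
y_{i-1}^2+2y_iy_{i+1}+\epsilon_i=0,\qquad i\in\ZZ/6\ZZ,
\]
where (indices mod $6$) $\epsilon_i=a\,y_{i+2}y_{i+4}$ for odd $i$ and $\epsilon_i=b\,y_{i+2}y_{i+4}$ for even $i$. I would then establish the nonvanishing, the explicit parametrization, and the converse in turn.

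Step 1 (no coordinate vanishes). This is the part I expect to be the main obstacle. Using the cyclic permutation $(123456)$, which interchanges the roles of $a$ and $b$, together with the order-$3$ symmetry $g_3=(135)(246)$ (which shifts indices by $2$), it suffices to rule out $y_1=0$ for arbitrary $a,b$. Setting $y_1=0$, the degenerate subcases are quick: if $a=0$, or if $y_3=0$, or if $y_5=0$, a short cascade through the equations forces every $y_i=0$, which is impossible projectively. In the remaining subcase $a\neq0$ and $y_3,y_5\neq0$, the equations $\partial_1,\partial_5,\partial_3$ successively yield $y_6,y_4,y_2\neq0$, so only $y_1$ vanishes. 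Combining $\partial_1,\partial_2,\partial_5,\partial_6$ then forces either $y_6=0$ (a contradiction) or the numerical identity $ab^2=1$; feeding $ab^2=1$ back into the one unused equation $\partial_4$ and simplifying collapses it to $21=0$, the final contradiction. (The appearance of $21$ is a shadow of the group $F_{21}$.) Hence no coordinate can vanish.

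Step 2 (the parametrization). Once all $y_i\neq0$, I multiply the $i$-th equation by $y_i$ and introduce the edge monomials $m_i=y_i^2y_{i+1}$ together with $P=y_1y_3y_5$, $Q=y_2y_4y_6$. The system becomes \emph{linear} in the $m_i$:
\[
2m_i+m_{i-1}=-aP\ (i\text{ odd}),\qquad 2m_i+m_{i-1}=-bQ\ (i\text{ even}).
\]
The coefficient matrix is $2\,\Id$ plus the cyclic-shift permutation matrix, whose determinant is $2^6-1=63\neq0$, so for fixed values $-aP,-bQ$ the $m_i$ are uniquely determined; solving yields $m_1=m_3=m_5$ and $m_2=m_4=m_6$. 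Normalizing $y_1=1$ and setting $t=y_2$, the resulting equalities $y_1^2y_2=y_3^2y_4=y_5^2y_6$ and $y_2^2y_3=y_4^2y_5=y_6^2y_1$ express $y_4,y_5,y_6$ in terms of $\alpha:=y_3$ and $t$, and the last of them forces $\alpha^{21}=1$. Writing $\alpha=\omega^k\zeta_7^l$ with $0\le k\le2$, $0\le l\le6$, one gets precisely $y_4=t\alpha^{-2}=t\omega^k\zeta_7^{-2l}$, $y_5=\alpha^5=\omega^{-k}\zeta_7^{-2l}$, and $y_6=t\alpha^{-10}=t\omega^{-k}\zeta_7^{-3l}$, which is the claimed coordinate form.

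Step 3 (the two scalar equations and the converse). Given the equalities $m_1=m_3=m_5$ and $m_2=m_4=m_6$, all six linear equations of Step 2 collapse to just $2m_1+m_2=-aP$ and $m_1+2m_2=-bQ$. Substituting the explicit monomials $m_1=t$, $m_2=t^2\alpha$, $P=\alpha^6$, $Q=t^3\alpha^{-12}$ and clearing the appropriate monomial factors (using $\alpha^{-6}=\zeta_7^l$ and $\alpha^{-5}=\omega^k\zeta_7^{2l}$), these two relations become exactly $a+2\tau+\omega^k\tau^2=0$ and $b+2\omega^k\tau^{-1}+\tau^{-2}=0$ with $\tau=\zeta_7^lt$. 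Every step in this chain is reversible: starting from $k,l,t$ and $a,b$ satisfying the two displayed equations and defining the point by the coordinate formula, the monomial identities $m_1=m_3=m_5$ and $m_2=m_4=m_6$ hold by direct inspection, the two equations reconstruct the full linear system, and hence all six partials vanish. This gives the converse and completes the proof.
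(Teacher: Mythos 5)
Your proposal is correct and follows essentially the same route as the paper's proof: rule out vanishing coordinates by a case analysis at $x_1=0$ (where the paper's elimination through $\partial_2,\dots,\partial_6$ likewise collapses to a factor of $21$), then multiply the gradient equations by the coordinates to get a linear system in the edge monomials $x_i^2x_{i+1}$, deduce $m_1=m_3=m_5$, $m_2=m_4=m_6$, and parametrize via a $21$st root of unity to obtain the two scalar equations and the converse. Your only real deviations are cosmetic improvements in execution — the explicit symmetry reduction, the invertibility of $2\,\Id$ plus the cyclic shift (determinant $63$) to pin down the symmetric solution, and spelling out the reversibility — where the paper instead row-reduces the eliminated system and leaves the final parametrization to the reader.
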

\begin{proof}
    A singularity $[x_1:x_2:\cdots : x_6]$ in $Z(F)$ satisfies the following equations: 
        \[\begin{cases}            x_6^2+2x_1x_2+ax_3x_5=0,\quad x_2^2+2x_3x_4+ax_5x_1=0,\quad x_4^2+2x_5x_6+ax_1x_3=0\\
            x_1^2+2x_2x_3+bx_4x_6=0,\quad x_3^2+2x_4x_5+bx_6x_2=0,\quad x_5^2+2x_6x_1+bx_2x_4=0
        \end{cases}\]
    If one of $x_i$ is zero, we shall prove that all $x_i$ are zero. Without loss of generality, we assume $x_1=0$. Then equations reduce to following:
    \begin{equation*}
        \begin{cases}
        x_6^2+ax_3x_5=0,&x_2^2+2x_3x_4=0,\quad \quad \quad \quad x_4^2+2x_5x_6=0
        \\2x_2x_3+bx_4x_6=0,&x_3^2+2x_4x_5+bx_6x_2=0,~~x_5^2+bx_2x_4=0
        \end{cases}
    \end{equation*}
    If any of $x_2,\cdots,x_6$ is zero, then all $x_2,\cdots,x_6$ would be forced to be zero. Thus from now on, we only consider cases where $x_2,\cdots,x_6$ are all nonzero.

    Then we actually just need to consider the 5 equations, from the second to the sixth. 
    They gives 
    \begin{equation*}
        \begin{cases}
            -b=\frac{2x_2x_3}{x_4x_6}=\frac{x_3^2+2x_4x_5}{x_6x_2}=\frac{x_5^2}{x_2x_4}\\
            x_2^2x_3+2x_3^2x_4=0\\
            x_4^2x_5+2x_5^2x_6=0
        \end{cases}
    \end{equation*}
    Then the first equation reduced to 
    \[2x_2^2x_3=x_3^2x_4+2x_4^2x_5=x_5^2x_6\]
    Let $e_i=x_i^2 x_{i+1}$ we have the following:
    \begin{equation*}
        \begin{cases}
            2e_2=e_3+2e_4=e_5\\
            e_2+2e_3=0\\
            e_4+2e_5=0\\
        \end{cases}
    \end{equation*}
    Plug the latter two equations into the first one we obtain that $-4e_3=e_3+2e_4=-\frac{1}{2}e_4$, we obtain $e_2,\cdots,e_5$ should be all zero.

    Now, if none of the $x_i$ is zero. Still denote $e_i=x_i^2x_{i+1}$, then multiply the six equations each with $x_1,x_2,x_3,x_4,x_5,x_6$ in order, the equations can be reduced to following:
    \begin{equation*}
        \begin{cases}
            e_6+2e_1+ax_1x_3x_5=0,
            \quad e_2+2e_3+ax_1x_3x_5=0,
            \quad e_4+2e_5+ax_1x_3x_5=0\\
            e_1+2e_2+bx_2x_4x_6=0,
            \quad e_3+2e_4+bx_2x_4x_6=0,
            \quad e_5+2e_6+bx_2x_4x_6=0
        \end{cases}
    \end{equation*}
    Hence we have 
    \begin{equation*}
        \begin{cases}
            e_6+2e_1=e_2+2e_3=e_4+2e_5\\
            e_1+2e_2=e_3+2e_4=e_5+2e_6
        \end{cases}
    \end{equation*}
    Solving these linear equations of $e_i$, we have all $e_1=e_3=e_5$, and $e_2=e_4=e_6$. Then it's not hard to find out that $[x_{1},x_{2},\dots,x_{6}]$ can be written as the form in the proposition. Plug the results into the equations, we have singularities exists if and only if there exists some $0\leq k\leq 2$ and $0\leq l\leq 6$ such that the following equations hold
    \begin{equation*}
        \begin{cases}
            a+2\zeta_7^l t+\omega^k (\zeta_7^{l} t)^2=0\\
        b+2\omega^k (\zeta_7^l t)^{-1}+(\zeta_7^l t)^{-2}=0
        \end{cases}
    \end{equation*}
\end{proof}

\begin{prop}
\label{singlocus}
    For a cubic fourfold $X=Z(F)$ with the following defining equation:
    
    \[x_{1}^{2}x_{2}+x_{3}^{2}x_{4}+x_{5}^{2}x_{6}+x_{2}^{2}x_{3}+x_{4}^{2}x_{5}+x_{6}^{2}x_{1}+ax_{1}x_{3}x_{5}+bx_{2}x_{4}x_{6}\]
    it is singular if and only if it lies in one of the following 3 curves inside $\CC^2$: $Z(G)$, $\omega Z(G)$ and $\omega^2 Z(G)$ where $G=a^2b^2-6ab+4a+4b-3$ and $\omega=e^{\frac{2\pi i}{3}}$. The cubic fourfold has exactly 7 isolated singularities if it lies on exactly one of the curves.
\end{prop}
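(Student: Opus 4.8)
The plan is to convert Lemma \ref{singeq} into a clean elimination problem. First I would set $s = \zeta_7^l t$ and $u = \omega^k$, so that the two equations of Lemma \ref{singeq} become the pair of quadratics $p_k(s) = u s^2 + 2s + a$ and $q_k(s) = b s^2 + 2us + 1$, which depend on $l$ and $t$ only through $s$. Since $q_k(0) = 1 \neq 0$, every common root is automatically nonzero, so for a fixed residue $k \in \{0,1,2\}$ the fourfold acquires a singular point of the prescribed shape exactly when $p_k$ and $q_k$ share a root in $\CC$. As $\deg p_k = 2$ (the leading coefficient $u$ never vanishes), this happens precisely when the resultant $\mathrm{Res}_s(p_k,q_k)$ vanishes, even in the degenerate case $b=0$.

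Next I would compute this resultant. Using the Bézout formula for two quadratics and reducing with $u^3 = 1$, one obtains
\[
\mathrm{Res}_s(p_k,q_k) = (u-ab)^2 - 4(u^2-b)(1-au) = a^2b^2 - 6u\,ab + 4a + 4b - 3u^2 \eqqcolon G_k(a,b),
\]
so that $G_0 = G$. To match the three stated curves I would record the scaling relation $G(\omega a, \omega b) = \omega\, G_1(a,b)$ (and its analogue for $k=2$), which exhibits $Z(G_1)$ and $Z(G_2)$ as the two nontrivial images of $Z(G)$ under the order-$3$ scaling $(a,b)\mapsto(\omega a,\omega b)$, namely $\omega Z(G)$ and $\omega^2 Z(G)$. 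Running the previous paragraph over $k=0,1,2$ then yields the first assertion: $X$ is singular iff $(a,b) \in Z(G) \cup \omega Z(G) \cup \omega^2 Z(G)$.

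For the count I would organize the singular points by the $g_7$-action. Since $g_7 = \diag(\zeta_7,\zeta_7^5,\zeta_7^4,\zeta_7^6,\zeta_7^2,\zeta_7^3)$ fixes in $\PP^5$ only the six coordinate points while every singular point has all coordinates nonzero, each singular orbit has size exactly $7$; concretely, for fixed $(k,s)$ the seven points coming from $l=0,\dots,6$ form a single orbit (one checks $g_7$ carries the $l$-point to the $(l{+}3)$-point and $3$ generates $\ZZ/7\ZZ$), while a singular point recovers $(k,l,t)$ uniquely via $\omega^k = x_3^2x_5$, then $\zeta_7^l$, then $t=x_2$. Hence the number of singular points is $7$ times the number of pairs $(k,s)$ with $s$ a common root of $p_k, q_k$. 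To count these I would show that $s \mapsto (a(s),b(s)) = (-us^2-2s,\,-2u/s-1/s^2)$ is a bijection $\CC^\times \to Z(G_k)$: two distinct $s$ with equal image would be common roots of both $p_k$ and $q_k$, forcing them proportional and hence $a=b=u^2$, but then $p_k$ has discriminant $4-4u^3=0$ and only a double root, a contradiction. So each $k$ with $G_k(a,b)=0$ contributes exactly one $s$, i.e.\ one orbit of $7$ points; when $(a,b)$ lies on exactly one of the three curves only one $k$ contributes, giving precisely $7$ singular points, necessarily isolated since the singular locus is then finite.

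The resultant computation is routine; the real care lies in the count. The main obstacle is establishing the exact bijection between singular points and admissible triples $(k,l,t)$, together with the injectivity of the parametrization $s\mapsto(a(s),b(s))$, since this injectivity is precisely what forbids a spurious doubling to $14$ nodes away from the pairwise intersections of the three curves (the genuine $14$-node members, as in Proposition \ref{proposition: L_2(7) moduli}, sit at those intersections, where two values of $k$ each contribute an orbit of $7$).
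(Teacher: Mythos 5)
Your proposal is correct, and while it starts from the same place as the paper (the system of Lemma \ref{singeq}, after absorbing $\zeta_7^l t$ into a single variable), it executes the elimination differently and proves more. The paper eliminates $t$ by hand for the curve with $u=1$: it writes $(t+1)^2=1-a$, $\frac{(t+1)^2}{t^2}=1-b$, treats the case $a=1$ or $b=1$ separately, derives $4(1-a)(1-b)=(1-ab)^2$, and then must verify the converse by exhibiting the explicit common root $t=-\tfrac{1}{2}\tfrac{1-ab}{1-b}$; the other two curves are then obtained, as in your argument, by the scaling symmetry $(a,b)\mapsto(\omega a,\omega b)$. Your resultant computation $\mathrm{Res}_s(p_k,q_k)=(u-ab)^2-4(u^2-b)(1-au)=a^2b^2-6u\,ab+4a+4b-3u^2$ (checked, using $u^3=1$) packages both directions at once: since the leading coefficient $u$ of $p_k$ never vanishes, the resultant vanishes if and only if a common root exists, even when $b=0$, so no case distinctions or explicit root constructions are needed. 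The more substantial difference is the count: the paper's proof stops at the defining equation of the curves and leaves the ``exactly $7$ isolated singularities'' claim implicit in the parametrization of Lemma \ref{singeq}, whereas you actually prove it — the recovery of $(k,l,t)$ from a singular point via $\omega^k=x_3^2x_5$, and the injectivity of $s\mapsto(a(s),b(s))$ on $Z(G_k)$ (two distinct common roots would force $p_k\propto q_k$, hence $a=b=u^2$ and a vanishing discriminant, a contradiction) together rule out a spurious doubling to $14$ points away from the pairwise intersections of the three curves. This is a genuine improvement in completeness over the paper's own treatment; the only cosmetic imprecision is that you do not pin down which of $Z(G_1),Z(G_2)$ equals $\omega Z(G)$ versus $\omega^2 Z(G)$ (one has $Z(G_1)=\omega^2 Z(G)$), but only the union matters for the statement.
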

\begin{proof}
    The equations about $a,b$ in the last proposition can be rewritten as follow. Replace $\zeta_7 t$ by $t$, we have singularities exists if and only if there exists $k$ and $t$ s.t. 
    \begin{equation}
        \begin{cases}
            a+2t+\omega^k t^2=0\\
        b+2\omega^k t^{-1}+ t^{-2}=0
        \end{cases}
    \end{equation}
    This gives 3 curves in $\mathbb{C}^2$.
    Let $C$ be the curve where $k=1$, then one sees that $\omega C$, $\omega^2 C$ give exactly another 2 curves.

    Then we only need to find the defining equations of $C$.

    For $(a,b)\in C$, there exists $t\in \mathbb{C}$ s.t. $a+2t+t^2=0$ and $b+2t^{-1}+t^{-2}=0$. Then $(t+1)^2=1-a$, and $\frac{(t+1)^2}{t^2}=1-b$. If one of $a,b$ is 1, then both $a,b$ is 1. When $a,b\neq 1$, then $t^2=\frac{1-a}{1-b}$. Then $a+2t+t^2=0$ implies $4t^2=(a+t^2)^2$, hence $4\frac{1-a}{1-b}=(a+\frac{1-a}{1-b})^2$. We have 
    \[4(1-a)(1-b)=(a-ab+1-a)^2\]
    This can be reduced to 
    \[a^2b^2-6ab+4a+4b-3=0\]
    Notice that $a,b=1$ is a solution of this equation.
    One can varifies for any $a,b$ satisfying this equation, either $a,b=1$, or $a,b\neq 1$. When $a,b=1$, then $t=-1$ satisfies our need, thus $(1,1)\in C$. When $a,b\neq 1$, let $t=-\frac{1}{2}\frac{1-ab}{1-b}$. Then $t^2=\frac{1}{4}\frac{(1-ab)^2}{(1-b)^2}=\frac{1-a}{1-b}$, thus $a+2t+t^2=a-\frac{1-ab}{1-b}+\frac{1-a}{1-b}=0$, and $b+2\frac{1}{t}+\frac{1}{t^2}=b-1+\frac{(1+t)^2}{t^2}=b-1+(1-a)\frac{1-b}{1-a}=0$. Hence the curve $C$ has defining equation $G=a^2b^2-6ab+4a+4b-3=0$.
\end{proof}

\begin{prop}
\label{Proposition:typeADEsinginabplane}
    For singular cubic fourfolds $X=Z(F)$ where $F$ is of the form 
    \[
F_{a,b}=x_{1}^{2}x_{2}+x_{3}^{2}x_{4}+x_{5}^{2}x_{6}+x_{2}^{2}x_{3}+x_{4}^{2}x_{5}+x_{6}^{2}x_{1}+ax_{1}x_{3}x_{5}+bx_{2}x_{4}x_{6},
\]
generically, they only admits $\mathrm{A}_1$ singularities. The only exceptions are $(a,b) = (1,1),(\omega,\omega)$ or $(\omega^2,\omega^2)$, and they lie in the same orbit. The cubic fourfold $Z(F_{1,1})$ admits exactly $7$ singularities of type $\mathrm{A}_2$. 
\end{prop}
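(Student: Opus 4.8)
The plan is to read off the singularity type from the rank of the projective Hessian $H$ of $F_{a,b}$ at a singular point, supplemented by a single cubic obstruction that separates $A_2$ from worse. By Lemma \ref{singeq} every singular point has all coordinates nonzero, so the diagonal order-$7$ element $g_7$ acts \emph{freely} on the set of singular points (a point with all coordinates nonzero cannot be fixed by $g_7$, since the exponents $1,5,4,6,2,3$ are distinct mod $7$). Hence the seven singular points form a single $g_7$-orbit, and since $g_7 \in \Aut(X)$ preserves singularity types, it suffices to analyze one representative. For a point coming from $k=l=0$ in Lemma \ref{singeq} this representative is $p = [1:t:1:t:1:t]$, with $a = -2t - t^2$ and $b = -2t^{-1} - t^{-2}$, and crucially $p$ is fixed by $g_3 = (135)(246)$.

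The main device is to exploit this $g_3$-symmetry to block-diagonalize $H$. Since $g_3$ fixes $p$ and preserves $F_{a,b}$, the Hessian at $p$ is a $g_3$-invariant symmetric form, so it respects the eigenspace decomposition $\CC^6 = V_0 \oplus V_1 \oplus V_2$ into the $1,\omega,\omega^2$-eigenspaces of $g_3$ (each two-dimensional). The form pairs $V_0$ with itself and $V_1$ with $V_2$, cutting the $6\times 6$ Hessian down to a $2\times 2$ block on $V_0$ and a $2\times 2$ pairing $B\colon V_1 \times V_2 \to \CC$. Because the radial vector $p$ lies in $V_0$, the node ($A_1$) condition that $H$ have corank exactly $1$ translates into two numerical conditions: the $V_0$-block has rank exactly $1$, and $\det B \neq 0$. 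Writing $V_0 = \langle u_1, u_2\rangle$ with $u_1 = e_1+e_3+e_5$, $u_2 = e_2+e_4+e_6$, a direct computation gives the $V_0$-block as $2(1+t)\left(\begin{smallmatrix} -t & 1\\ 1 & -1/t \end{smallmatrix}\right)$, whose inner determinant is $0$ (the radial kernel), so it has rank $1$ for all $t\neq -1$ and vanishes identically exactly at $t=-1$. Evaluating $B$ in the $g_3$-eigenbasis, I expect $\det B$ to be a nonzero scalar multiple of $t$ (I compute $189\,t$), hence nonzero for every admissible $t$, so $V_1\oplus V_2$ is always nondegenerate.

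Combining the two blocks, $H$ has rank $5$ — an $A_1$ singularity — for every singular point with $t\neq -1$, and corank $2$ precisely when $t=-1$. Substituting $t=-1$ gives $(a,b)=(1,1)$ on the curve $Z(G)$; applying the transformation $(a,b)\mapsto(\omega a,\omega b)$, which permutes the three curves $Z(G),\omega Z(G),\omega^2 Z(G)$ and is realized by an element of $N_{\PGL(6,\CC)}(F_{21})$ (the order-$3$ part of the $C_6$-action on $\mathbb{A}^2$), carries this to $(\omega,\omega)$ and $(\omega^2,\omega^2)$. Thus these three points form one orbit, each lies on exactly one of the three curves, and by Proposition \ref{singlocus} the corresponding fourfold has exactly seven isolated singularities. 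To confirm the type at $(a,b)=(1,1)$ is exactly $A_2$, note that the null space of $H$ there is all of $V_0 = \langle p, n\rangle$ with transverse null direction $n = u_1 = (1,0,1,0,1,0)$. Since $F$ is a homogeneous cubic with $F(p)=0$, $dF_p=0$ and $H_p(n,n)=0$, one has $F_{1,1}(p+sn) = s^3 F_{1,1}(n)$, and $F_{1,1}(n) = a = 1 \neq 0$; the cubic term along the degenerate direction is therefore nonzero, which together with tangent-corank $1$ forces an $A_2$ singularity. As all seven points are $g_7$-conjugate, $Z(F_{1,1})$ has exactly seven $A_2$ singularities.

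The step I expect to be the main obstacle is the last one: justifying that the nonvanishing of the cubic term $F_{1,1}(n)$ along the kernel line is exactly what distinguishes $A_2$ from $A_{\ge 3}$. This needs the standard Morse/splitting-lemma reduction — eliminating the four nondegenerate directions yields a one-variable function whose leading term is the cubic coefficient of $F_{1,1}$ restricted to the kernel line, the implicit correction being of order $\ge 4$ precisely because the kernel direction is $H$-orthogonal to every direction. One must also take care that the rank computation carried out at the single $g_3$-fixed representative governs all seven singular points, which is exactly what the free $g_7$-action guarantees; the two block computations themselves are routine from the explicit Hessian.
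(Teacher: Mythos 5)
Your proposal is correct, but it follows a genuinely different route from the paper's proof, so let me compare the two. The paper works in the affine chart $x_1=1$ and computes the determinant of the (half-)Hessian symbolically at a general singular point $[1:t:c:c^{19}t:c^5:c^{11}t]$ with $c=\omega^k\zeta_7^l$, obtaining $-\tfrac{147}{16}c^{14}t(ct+1)$; keeping $c$ general handles all $k,l$ at once, so no orbit reduction is needed, and degeneracy forces $t=-c^{-1}$, hence $(a,b)=(c^{14},c^{14})\in\{(1,1),(\omega,\omega),(\omega^2,\omega^2)\}$. For the $A_2$ claim the paper then performs explicit coordinate changes (the matrix $\widehat S$, iterated completion of squares) and invokes finite determinacy. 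You instead use the group to reduce to the $g_3$-fixed representative $p=[1:t:1:t:1:t]$ (the free $g_7$-action normalizes $l$, the $C_6$-action on the $(a,b)$-plane normalizes $k$), and exploit $g_3$-equivariance of the full $6\times 6$ Hessian to split it into the $V_0$-block and the $V_1\times V_2$ pairing $B$, with Euler's relation placing the radial vector $p\in V_0$ in the kernel, so that $\mathrm{rank}(H)=\mathrm{rank}(H|_{V_0})+2\,\mathrm{rank}(B)$. I can confirm your key computations: the $V_0$-block is $6(1+t)\left(\begin{smallmatrix}-t&1\\1&-1/t\end{smallmatrix}\right)$ (your scalar $2$ should be $6$ for the honest Hessian — the normalization that makes $\det B=189t$ correct — but this is immaterial since only the vanishing matters), and $\det B=189t$ is exactly right, so the vanishing locus reproduces the paper's in structured form. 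Your $A_2$ step — splitting lemma plus the homogeneity identity $F(p+sn)=s^3F(n)$ with $F(n)=a=1\neq 0$ — is valid and cleaner than the paper's; the one point to write carefully is that the kernel of the affine Hessian in the chart $x_1=1$ is spanned not by $n=u_1$ itself but by its image modulo the radial direction $p$ (equivalently by $u_2=e_2+e_4+e_6$ when $t=-1$), though since $F(u_2)=b=1\neq 0$ as well, the conclusion is unaffected. What your route buys: a structural explanation of why the Hessian determinant factors as $t$ times $(1+t)$ (up to constants), and avoidance of both the unwieldy $5\times 5$ determinant and the ad hoc matrix $\widehat S$. What the paper's route buys: uniformity over all $(k,l)$ without any appeal to the freeness of the $g_7$-action or to the element of $N_{\PGL(6,\CC)}(F_{21})$ realizing $(a,b)\mapsto(\omega a,\omega b)$ — facts your argument does rely on and states correctly.
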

\begin{proof}
    Suppose $F_{a,b}$ has a singularity, by Lemma \ref{singeq}, we may assume the singularity is $[1:t:\omega^k\zeta_7^l:t\omega^k\zeta_7^{-2l}:\omega^{-k}\zeta_7^{-2l}:t\omega^{-k}\zeta_7^{-3l}]$, where $t$ satisfies the equation in Lemma \ref{singeq}. For simplicity, let $c = \omega^k\zeta_7^l$, then the singularity is $[1:t:c:c^{19}t:c^5:c^{11}t]$. Meanwhile, $a = -2c^{15}t-c^{16}t^2, b = -2c^{13}t^{-1}-c^{12}t^{-2}$. 

    Consider the affine equation $F(1,x_2,..,x_6)$ and compute its Hessian matrix multiplied by $\frac{1}{2}$ at the point $(t,c,c^{19}t,c^5,c^{11}t)$, and compute its determinant, we obtain $-\frac{147}{16}c^{14}t(ct+1)$. Therefore, if it is a singularity not of type $\mathrm{A}_1$, then we would have $t = -c^{-1}$. Put it back into the expressions of $a,b$, we get three possibilities $(a,b) = (1,1),(\omega,\omega)$ or $(\omega^2,\omega^2)$. From the discussion above, these three cubic fourfolds lie in the same orbit and we only need to further analyze the singular locus of $F$ with $(a,b)=(1,1)$. 

    Note that when $(a,b) = (1,1)$, it has $7$ singularities of the form $[1:-\zeta_7^{6l}:\zeta_7^l:-\zeta_7^{4l}:\zeta_7^{5l}:-\zeta_7^{3l}]$ with $0\le l \le 6$. Since the action of $G_1 = C_7$ acts transitively on these $7$ singularities, we only need to consider $[1:-1:1:-1:1:-1]$. Consider the affine equation $F(1,x_2,..,x_6)$, and translate the point $(-1,1,-1,1,-1)$ to the origin, we obtain $f(x_2,..,x_6) = F(1,x_2-1,x_3+1,x_4-1,x_5+1,x_6-1)$. Take the matrix \[
    \widehat{S} = \begin{pmatrix}
1&0&0&0&0\\1&1&0&0&0\\3/4&1/4&1&0&0\\1&1/2&1&1&0\\1&0&1&0&1
    \end{pmatrix} 
    \]
    We obtain $\widehat{S}\cdot f = x_2^2-2x_3^2+\frac{7}{8}x_4^2-\frac{7}{4}x_5^2+x_6^3+\sum\limits_{i=2}^5x_ig_i(x_2,..,x_6)$, $g_i \in \mathfrak{m}^2$ where $\mathfrak{m}=(x_2,...,x_6)$ is a maximal ideal of $\CC[x_2,..,x_6]$. 

    We can further make coordinate changes and assume that $\widehat{S}\cdot f = x_2^2+x_3^2+x_4^2+x_5^2+x_6^3+\sum\limits_{i=2}^5x_ig_i(x_2,..,x_6)$. And then make coordinate changes $x_i \mapsto x_i-\frac{1}{2}g_i$ to obtain $x_2^2+x_3^2+x_4^2+x_5^2+x_6^3+a_4x_6^4+\sum\limits_{i=2}^5x_ih_i(x_2,..,x_6)$ with $h_i \in \mathfrak{m}^3$. Repeat in this way, we obtain $x_2^2+x_3^2+x_4^2+x_5^2+ux_6^3 + g$ where $u$ is a unit in $\CC[[x_6]]$ and $g \in \mathfrak{m}^n$ for $n$ sufficiently high. Using the finite determinacy theorem, see for example \cite{Greuel2007IntroductionTS}, we know that it has the same singularity type as $x_2^2+x_3^2+x_4^2+x_5^2+ux_6^3$ and therefore $x_2^2+x_3^2+x_4^2+x_5^2+x_6^3$, which is an $\mathrm{A}_2$ singularity. 
\end{proof}

Note that $N_{\PGL(6,\CC)}(F_{21})$-orbit of generic $F_{a,b}$ intersects the $(a,b)$-plane at $F_{a,b}$, $F_{b,a}$, $F_{\omega a,\omega b}$, $F_{\omega b,\omega a}$, $F_{\omega^2 a,\omega^2 b}$, $F_{\omega^2 b, \omega^2 a}$, thus in the finite quotient $C_6\bs\CC^2$, the 3 curves $Z(G),\omega Z(G), \omega^2 Z(G)$ becomes $Z(G)$ quotient by a single relation $(a,b)\sim (b,a)$, i.e. a curve isomorphic to $Z(x^2-6x+4y-3=0)$ where $x=ab, y=a+b$. We have the following proposition.

\begin{prop}
\label{Proposition:ADEminussmooth}
$\calF_{F_{21}}^{\textup{ADE}}-\calF_{F_{21}}\cong \Gamma_T\backslash(\calH_*-\calH_s)$ is an irreducible curve.   
\end{prop}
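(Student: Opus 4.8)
My plan is to obtain the claimed isomorphism formally from the period maps, and then to read off irreducibility from the explicit $(a,b)$-model. For the isomorphism, I would apply Theorem \ref{thm:Yu--Zhengperiodmapsymmcubic4fold} twice: it furnishes compatible algebraic isomorphisms $\mathscr{P}\colon \calF_{F_{21}} \xrightarrow{\sim} \Gamma_T\backslash(\DD_T-\calH_s)$ and $\mathscr{P}\colon \calF_{F_{21}}^{\mathrm{ADE}} \xrightarrow{\sim} \Gamma_T\backslash(\DD_T-\calH_*)$, where $\calH_*=\DD_T\cap\calH_2$ is contained in $\calH_s=\DD_T\cap(\calH_2\cup\calH_6)$. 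Since $\calF_{F_{21}}\subseteq\calF_{F_{21}}^{\mathrm{ADE}}$ is exactly the locus whose period avoids $\calH_s$, the period map restricts to an isomorphism of the closed complements; and because $\calH_*$ and $\calH_s$ are $\Gamma_T$-invariant, the target complement is $\Gamma_T\backslash(\calH_s-\calH_*)=\Gamma_T\backslash(\DD_T\cap(\calH_6\setminus\calH_2))$. This identifies the boundary $\calF_{F_{21}}^{\mathrm{ADE}}-\calF_{F_{21}}$ with the stated locally symmetric object.

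For irreducibility I would descend to the explicit family. The first step is to confine the boundary to the chart $C_6\backslash\mathbb{A}^2\hookrightarrow\overline{\calF}_{F_{21}}$. Any member of the $F_{21}$-family with only ADE, hence isolated, singularities must satisfy $c_1,c_2\neq 0$, since otherwise the proposition on the $c_i$-parametrization forces a positive-dimensional singular locus. Using the $\CC^\times=\diag(1,c,1,c,1,c)$-part of $N_{\PGL(6,\CC)}(F_{21})$ (computed in Proposition \ref{NF21}) together with projective rescaling, I can normalize such a cubic to the shape $F_{a,b}$. Hence $\calF_{F_{21}}^{\mathrm{ADE}}$ lies entirely inside $C_6\backslash\mathbb{A}^2$, and its boundary is exactly the image there of the singular locus of the $(a,b)$-family.

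The second step invokes Propositions \ref{singlocus} and \ref{Proposition:typeADEsinginabplane}. The singular locus in $\mathbb{A}^2$ is the union $Z(G)\cup\omega Z(G)\cup\omega^2 Z(G)$ with $G=a^2b^2-6ab+4a+4b-3$, and along it the cubic has only $A_1$ singularities, except on the orbit of $(1,1)$ where they are $A_2$; all of these are ADE, so the whole singular locus genuinely contributes to the boundary (in particular the $A_2$-points are not dropped). As explained in the discussion preceding the statement, the order-$3$ part of $C_6$ permutes the three curves cyclically and the transposition $(a,b)\mapsto(b,a)$ acts within $Z(G)$, so their common image in $C_6\backslash\mathbb{A}^2$ is a single curve, which in the symmetric coordinates $x=ab$, $y=a+b$ is cut out by $x^2-6x+4y-3=0$. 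This equation presents $y$ as a polynomial in $x$, so the curve is the graph of a regular function on $\mathbb{A}^1$; it is therefore smooth and irreducible, finishing the argument.

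The step I expect to demand the most care is the reduction carried out in the two preceding paragraphs: verifying that every ADE-singular member really normalizes into the $F_{a,b}$ chart, and that after passing to $C_6\backslash\mathbb{A}^2$ the three curves glue to exactly one irreducible component with no spurious extra pieces. This amounts to bookkeeping of the $N_{\PGL(6,\CC)}(F_{21})$-identifications and checking the $C_6$-action on the singular locus is the one asserted. By contrast, the period-theoretic isomorphism of the first paragraph is essentially formal once Theorem \ref{thm:Yu--Zhengperiodmapsymmcubic4fold} is granted.
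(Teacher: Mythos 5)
Your proposal is correct and takes essentially the same route as the paper's own (implicit) argument: the period-map identification from Theorem \ref{thm:Yu--Zhengperiodmapsymmcubic4fold} (with the difference correctly read as $\calH_s-\calH_*$, fixing the paper's typographical reversal), followed by the descent of the three singular curves of Propositions \ref{singlocus} and \ref{Proposition:typeADEsinginabplane} to a single curve in $C_6\backslash\mathbb{A}^2$ given in symmetric coordinates by $x^2-6x+4y-3=0$. One minor caution: since the curves $Z(G)$, $\omega Z(G)$, $\omega^2 Z(G)$ pairwise intersect, the image in $C_6\backslash\mathbb{A}^2$ is the image of $Z(x^2-6x+4y-3)\cong\mathbb{A}^1$ under a finite morphism rather than literally isomorphic to it (finitely many extra point-identifications can occur), so you should drop the word ``smooth''---but irreducibility, which is all the proposition asserts, still follows since the image of an irreducible curve is irreducible.
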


For a cubic fourfold $X$ with exactly one node and a $G$-action, one can relate $X$ to a degree-6 K3 surface $S$ with $G$-action. Conversely, any degree-6 K3 surface $S$ with $G$-action gives a one dimensional family of cubic fourfolds with $G$-action and the central fiber is a cubic fourfold with a node, see  \cite[\S 5.5, Page 1493, Page 1494]{laza2022automorphisms}.

In our case, for a cubic fourfold $X$ with an order-7 action admitting some nodes, one can similarly relates a surface by simulating the above construction. Specifically, we have the following conclusion:

\begin{prop} 
    For each cubic fourfold $X$ with an order-7 action and with 7 nodes as in Proposition \ref{singlocus}, around each node one can construct a degree 6 surface $S$ with at least 6 singular points and with a $C_{3}\cong F_{21}/C_{7}$-action.
    
    The $C_{3}$ action permutes these 6 singular points and they forms 2 $C_{3}$-orbits. And these 7 surfaces are isomorphic to each other with isomorphisms induced from the order-7 action.
\end{prop}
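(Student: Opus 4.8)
The plan is to run, at each node, the classical projection construction relating a nodal cubic fourfold to a degree-$6$ surface (as in \cite[\S 5.5]{laza2022automorphisms}), and then to transport the group action through the transitive $C_7$-action on the seven nodes established in Proposition \ref{Proposition:typeADEsinginabplane}. First I would fix one node, say $p_0$, and choose coordinates so that $p_0 = [0:\cdots:0:1]$. Because $p_0$ is an $A_1$ point of $X$ (Proposition \ref{Proposition:typeADEsinginabplane}), the defining cubic can be written as $F = x_6\,Q(x_1,\dots,x_5) + C(x_1,\dots,x_5)$ with $Q$ a nondegenerate quadric and $C$ a cubic. A line through $p_0$ in direction $v \in \PP^4$ restricts to $F = s^2\bigl(t\,Q(v) + s\,C(v)\bigr)$, so it lies on $X$ exactly when $Q(v) = C(v) = 0$. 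Hence the lines through $p_0$ contained in $X$ are parametrized by the degree-$6$ surface $S_0 = Z(Q)\cap Z(C)\subset\PP^4$, a $(2,3)$ complete intersection; this is the surface attached to $p_0$.

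Next I would locate its singular points. For any other node $p_i$ ($i = 1,\dots,6$), the line $\ell_i = \overline{p_0 p_i}$ meets $X$ with intersection multiplicity at least $2$ at each of the two nodes, hence with total multiplicity at least $4 > 3 = \deg X$; therefore $\ell_i \subset X$, and $\ell_i$ determines a point $s_i \in S_0$. Writing $p_i = [v_i : w_i]$, the equation $\nabla F(p_i) = 0$ expressing that $p_i$ is singular on $X$ unwinds to $Q(v_i) = 0$ together with $w_i\,\nabla Q(v_i) + \nabla C(v_i) = 0$; the second relation says precisely that $\nabla Q$ and $\nabla C$ are linearly dependent at $v_i$, i.e.\ that $Z(Q)$ and $Z(C)$ fail to meet transversally there, so $S_0$ is singular at $s_i = v_i$. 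As the $p_i$ are distinct and the $s_i$ are determined by distinct lines through $p_0$, this yields at least six singular points of $S_0$.

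For the group action I would use that $C_7$ acts freely and transitively on the seven nodes, so the stabilizer of $p_0$ in $F_{21}$ has order $3$ and intersects $C_7$ trivially; it is therefore a complement mapping isomorphically onto $F_{21}/C_7 \cong C_3$. Since this $C_3$ fixes $p_0$, it acts on the projection from $p_0$ and preserves $S_0$, and the correspondence $p_i \leftrightarrow s_i$ (via $\ell_i = \overline{p_0 p_i}$) is visibly $C_3$-equivariant, so $C_3$ permutes the six singular points compatibly with its permutation of $\{p_1,\dots,p_6\}$. The seven point-stabilizers are the seven distinct Sylow-$3$ subgroups of $F_{21}$ (here $n_3 = 7$ because $F_{21}$ is nonabelian), so the stabilizer of $p_0$ fixes no other node; hence it acts freely on the six points, giving exactly two orbits of size $3$. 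Finally, because $g_7 \in \Aut(X)$ cyclically permutes the nodes, it intertwines the projections from $p_j$ and $p_{j+1}$ and thus induces isomorphisms $S_j \cong S_{j+1}$, so all seven surfaces are isomorphic.

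The step I expect to be the main obstacle is making the node-to-node-singularity correspondence in the second paragraph fully rigorous and equivariant: one must verify that the gradient relation genuinely produces a singular point of the complete intersection $S_0$ (and ideally identify it as a node), and that the construction is algebraic and $C_3$-equivariant rather than merely set-theoretic on the six points. The remaining group theory is then routine given the transitive $C_7$-action and the Sylow count.
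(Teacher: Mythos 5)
Your proof is correct and follows essentially the same route as the paper: write $F = x_6Q + C$, realize the degree-$6$ surface as the complete intersection $Z(Q)\cap Z(C)\subset\PP^4$ parametrizing lines through the chosen node, use B\'ezout to show the lines joining pairs of nodes lie in $X$, and observe that the vanishing of $\nabla F$ at the other six nodes forces the Jacobian of $(Q,C)$ to have rank $\le 1$ there, yielding the six singular points. The only divergence is in the last step, where the paper settles the $C_3$-orbit structure and the isomorphisms among the seven surfaces by direct application of the explicit coordinates in Lemma \ref{singeq}, while you argue abstractly via the free transitive $C_7$-action on the nodes and the count $n_3=7$ of Sylow $3$-subgroups of $F_{21}$; both arguments are routine and valid.
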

\begin{proof}
    For such a cubic fourfold $X = Z(F)$, we can make a coordinate change such that $F = x_6q(x_1,..,x_5) + c(x_1,..,x_5)$ with $q,c$ being degree $2$ and $3$ homogeneous polynomials respectively. Inside $\PP^5$, we can form a complete intersection $S = \{q = c = 0\}$ and this surface parametrizes all lines through the node $p = [0:0:0:0:0:1]$ of $Z(F)$. 

    It is a classical construction that, if $[0:\cdots:0:1]$ is the only singularity of $X$ and it is a node, then $\{q = c = 0\}$ would a K3 surface. However, here we have got six other nodes $p_1,..,p_6$. By Bézout's theorem, the line $\overline{p p_i}$ should lie in $X$, otherwise they should intersect $X$ with multiplicity $3$ but here $p,p_i$ are two nodes and therefore with multiplicity at least $4$. 

    Note that each of these lines $\overline{pp_i}$ corresponds to a singularity of $S$. Since $p_i$ is a singularity of $F$, we have $\frac{\partial F}{\partial x_j}(p_i) = x_6\frac{\partial q}{\partial x_j} + \frac{\partial c}{\partial x_j} = 0$ for $j = 1,..,5$. Therefore, the Jacobian matrix $\frac{\partial(q,c)}{\partial(x_1,..,x_5)}$ has rank $\le 1$ at these lines and therefore they are singularities. The rest of the proposition is a direct application of Lemma \ref{singeq}. 
\end{proof}

\begin{rmk}
As these surfaces are singular, the above proposition does not conflict with \cite[Theorem 5.4]{laza2022automorphisms}.
\end{rmk}

\subsection{GIT Moduli of Cubic Fourfolds with $L_2(7)$-Action}
\label{subsecL27}

In this section, we compute the moduli of cubic fourfold with $L_{2}(7)$-action.

It is a well-known fact that $L_{2}(7)$ can be realized as a subgroup of $S_{7}$ generated by permutations $(12)(36)$ and $(1234567)$. We denote them by $g_{2}$ and $g_{7}$ respectively.

Fix a $\CC^{7}$ with natural basis $\{e_{1},\dots,e_{7}\}$. We identify $\GL(6,\CC)$ with subgroup of $\GL(7, \CC)$ fixing the subspace $\text{Span}_{\CC}\{6e_1-e_2-\cdots-e_{7},6e_{2}-e_{1}-e_{3}-\cdots-e_{7},\dots,6e_{6}-e_{1}-\cdots-e_{5}-e_{7}\}$ and the element $e_{1}+\cdots+e_{7}$. Explicitly, there is a homomorphism $\Psi\colon\  \GL(6,\CC) \longrightarrow \ \ \GL(7,\CC)$
sending   $A$  to $\text{T}\begin{pmatrix}
        A & 0\\
        0 &1
    \end{pmatrix} T^{-1}$, where 
    \[T=\begin{pmatrix}
    6 & -1 & -1 & -1 & -1 & -1 & 1\\
    -1 & 6 & -1 & -1 & -1 & -1 & 1\\
    -1 & -1 & 6 & -1 & -1 & -1 & 1\\
    -1 & -1 & -1 & 6 & -1 & -1 & 1\\
    -1 & -1 & -1 & -1 & 6 & -1 & 1\\
    -1 & -1 & -1 & -1 & -1 & 6 & 1\\
    -1 & -1 & -1 & -1 & -1 & -1 & 1
    \end{pmatrix}\]
    
On the other hand, we consider the representation $\widetilde{\rho}:L_{2}(7)\rightarrow \GL(7,\CC)$ determined $g_{2}\mapsto (12)(36)\in \GL(7,\CC)$ and $g_{7}\mapsto (1234567)\in\GL(7,\CC)$.

We find that $\widetilde{\rho}$ factor through the subgroup $\Psi\colon \GL(6,\CC)\hookrightarrow\GL(7,\CC)$ by $\widetilde{\rho}=\Psi\circ \rho$. Here $\rho$ is the unique irreducible 6-dimensional representation of $L_{2}(7)$. We identify $g_{2}$ and $g_{7}$ with their image described as above.

\begin{prop}
\label{NL27}
    $N_{\PGL(6,\CC)}(L_{2}(7))=L_{2}(7)\rtimes C_{2}\cong \PGL(2,\FF_{7})$.
\end{prop}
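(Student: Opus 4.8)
The plan is to compute the normalizer $N_{\PGL(6,\CC)}(L_2(7))$ directly, mirroring the strategy used for $C_7$ and $F_{21}$ in Propositions \ref{NorC7} and \ref{NF21}, but exploiting the rigidity coming from the fact that $L_2(7)$ is simple and its $6$-dimensional representation is irreducible. First I would observe that $L_2(7)$ is a simple group, so the conjugation action of any normalizing element $\overline{g}$ gives an automorphism of $L_2(7)$, and we have $\Aut(L_2(7)) \cong \PGL(2,\FF_7)$ with $\mathrm{Out}(L_2(7)) \cong C_2$. The key point is that inner automorphisms are already realized by elements of $L_2(7)$ itself (via the projective representation), so the quotient $N_{\PGL(6,\CC)}(L_2(7))/\big(L_2(7)\cdot C\big)$, where $C$ is the centralizer, injects into $\mathrm{Out}(L_2(7)) \cong C_2$.

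Next I would pin down the centralizer $C = C_{\PGL(6,\CC)}(L_2(7))$. Since $\rho$ is the \emph{irreducible} $6$-dimensional representation (as established in the setup of \S\ref{subsecL27} and in Lemma \ref{repinv}(3)), Schur's lemma forces any element of $\GL(6,\CC)$ commuting with all of $\widetilde{\rho}(L_2(7))$ to be scalar; hence in $\PGL(6,\CC)$ the centralizer is trivial. This reduces the computation to showing two things: that the outer automorphism class is actually realized by an element of $\PGL(6,\CC)$ normalizing $L_2(7)$, and that no more than this outer $C_2$ appears. The realization can be checked concretely: the outer automorphism of $L_2(7) \cong \PSL(2,\FF_7)$ is induced by conjugation inside $\PGL(2,\FF_7)$, and I would exhibit an explicit matrix — most naturally coming from the $S_7$-picture, since $\widetilde{\rho}$ is built from permutation matrices $g_2 = (12)(36)$ and $g_7 = (1234567)$ — that conjugates the pair $(g_2, g_7)$ to the image of an outer automorphism. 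A transposition or a suitable even permutation in $S_7$ normalizing the embedded $L_2(7)$, whose action on the standard representation descends through $\Psi$ to $\PGL(6,\CC)$, should do the job.

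Putting these together gives $N_{\PGL(6,\CC)}(L_2(7)) / L_2(7) \hookrightarrow C_2$ with the inclusion an equality once the outer element is exhibited, yielding an extension of $L_2(7)$ by $C_2$. To identify this extension as $\PGL(2,\FF_7)$ rather than a direct product $L_2(7) \times C_2$, I would note that the non-split extension of $\PSL(2,\FF_7)$ by its outer $C_2$ is exactly $\PGL(2,\FF_7)$, and that the outer involution I exhibit does not centralize $L_2(7)$ (it genuinely acts by the outer automorphism), which rules out the direct product. Thus $N_{\PGL(6,\CC)}(L_2(7)) = L_2(7) \rtimes C_2 \cong \PGL(2,\FF_7)$.

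The main obstacle I anticipate is the explicit realization step: one must produce a concrete element of $\PGL(6,\CC)$ that induces the outer automorphism and genuinely normalizes $\widetilde{\rho}(L_2(7))$ while lying in the image of $\Psi$ (so that it descends to $\GL(6,\CC)$ and fixes $\eta_X$-type data). Verifying that a candidate permutation in $S_7$ both normalizes the copy of $L_2(7)$ generated by $(12)(36)$ and $(1234567)$ \emph{and} induces the outer rather than an inner automorphism requires a careful check on generators; conflating the two is the easiest error to make. I would guard against this by tracking the action on a conjugacy-class invariant that distinguishes the two classes of order-$7$ elements fused by the outer automorphism, confirming that the chosen element swaps them.
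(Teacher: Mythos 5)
Your overall skeleton matches the paper's proof: both use the exact sequence $1 \to C_{\PGL(6,\CC)}(L_2(7)) \to N_{\PGL(6,\CC)}(L_2(7)) \to \Aut(L_2(7))$, kill the centralizer via Schur's lemma (irreducibility of the $6$-dimensional representation), and then must realize the outer automorphism class by an explicit element of $\PGL(6,\CC)$. But your realization step contains a genuine gap: you propose to find ``a transposition or a suitable even permutation in $S_7$ normalizing the embedded $L_2(7)$.'' No such permutation exists: $N_{S_7}(L_2(7)) = N_{A_7}(L_2(7)) = L_2(7)$, a fact the paper itself invokes later (in the $(L_2(7), L_2(7)\rtimes C_2)$ case of Theorem \ref{theorem: main}). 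Conceptually, the outer automorphism of $L_2(7) \cong \PSL(3,\FF_2)$ is inverse--transpose, which exchanges the action on the $7$ points of the Fano plane with the action on its $7$ lines; these are \emph{inequivalent as permutation representations} (their stabilizers form the two non-conjugate classes of $S_4$-subgroups), even though they are isomorphic as linear representations. Equivalently, the two conjugacy classes of $L_2(7)$ inside $A_7$ are fused by odd permutations, and each copy is self-normalizing in $A_7$ since it is maximal and non-normal; so nothing in $S_7 \setminus L_2(7)$ normalizes your copy. Your proposed safeguard --- tracking the fusion of the two classes of order-$7$ elements --- would not rescue the step; it would merely confirm that no candidate permutation exists.

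The correct repair, and what the paper actually does, is to exploit the fact that the $6$-dimensional irreducible representation is the unique irreducible of its dimension, hence stable under $\mathrm{Out}(L_2(7))$; Schur's lemma then guarantees an intertwiner $E \in \GL(6,\CC)$ realizing the outer automorphism, but it is necessarily \emph{not} a permutation matrix. The paper writes it down explicitly:
\[
E=(243756)\cdot\bigl(a(\Id+ g_{7}^{4}+g_{7}^{6})+b(g_{7}+g_{7}^{2}+g_{7}^{3}+g_{7}^{5})\bigr),
\qquad a=\tfrac{2\sqrt{2}}{7}+\tfrac{1}{7},\quad b=-\tfrac{3\sqrt{2}}{14}+\tfrac{1}{7},
\]
and verifies that $E$ lies in the image of $\Psi$, normalizes $L_2(7)$, satisfies $E^{2}=(162)(457)\in L_2(7)$, and is not in $L_2(7)$ since it is not a permutation matrix. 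One minor further correction: $\PGL(2,\FF_7)$ is a \emph{split} extension of $\PSL(2,\FF_7)$ by $C_2$ (as the notation $L_2(7)\rtimes C_2$ in the statement indicates), not a non-split one as you wrote; the cleaner conclusion, which the paper uses, is that triviality of the centralizer makes $N \to \Aut(L_2(7))$ injective, so surjectivity immediately gives $N \cong \Aut(L_2(7)) \cong \PGL(2,\FF_7)$ with no extension analysis needed.
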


\begin{proof}
    We have the following exact sequence:
\[1\rightarrow C_{\PGL(6,\CC)}(L_{2}(7))\rightarrow N_{\PGL(6,\CC)}(L_{2}(7))\rightarrow \Aut(L_{2}(7)),\]
where $C_{\PGL(6,\CC)}(L_{2}(7))$ is the centralizer and $\Aut(L_{2}(7))$ is the automorphism group of $L_{2}(7)$.

We claim that $C_{\PGL(6,\CC)}(L_{2}(7))$ is trivial and the rightmost arrow is surjective. So, $N_{\PGL(6,\CC)}(L_{2}(7))\cong \Aut(L_{2}(7))\cong L_{2}(7)\rtimes C_{2}\cong \PGL(2,\FF_{7})$, see \cite[\S3.4] {steinberg1960automorphisms}.

Let $\overline{g}\in C_{\PGL(6,\CC)}(L_{2}(7))$, take a lifting $g\in \GL(6,\CC)$. Then $gg_{2}g^{-1}=\lambda g_{2}$ and $gg_{7}g^{-1}=\mu g_{7}$ for some $\lambda,\mu \in \CC^{\times}$. By comparing eigenvalues of $\lambda g_{2}$, $\mu g_{7}$ with $g_{2}$ and $g_{7}$ respectively, we know $\lambda=\mu=1$ and $g\in C_{\GL(6,\CC)}(L_{2}(7))$. So $g$ is an automorphism of $\CC^{6}$ as an irreducible $L_{2}(7)$-module, see Lemma \ref{repinv}. By Schur's lemma, $g$ is a scalar and thus $\overline{g}$ is trivial.

For surjectivity, consider 
\[E=(243756)*(a(\Id+ g_{7}^{4}+g_{7}^{6})+b(g_{7}+g_{7}^{2}+g_{7}^{3}+g_{7}^{5})),\]
where $a=\frac{2\sqrt{2}}{7}+\frac{1}{7}$ and $b=\frac{-3\sqrt{2}}{14}+\frac{1}{7}$. Direct computation shows that $E\in \GL(6,\CC)<\GL(7,\CC)$ described above, $E^{2}=(162)(457)\in L_{2}(7)$ and $\overline{E}\in$ $N_{\PGL(6,\CC)}(L_{2}(7))$. And $\overline{E}\notin L_{2}(7)$ since $E$ is not a permutation matrix. 
\end{proof}

From \cite[\S 7.2]{KOIKE202512}, we know that cubic fourfolds with faithful $L_2(7)$-action are given as
\[
\left\{
	\begin{aligned}
		&x_1+x_2+x_3+x_4+x_5+x_6+x_7 = 0, \\
		&c_1(x_1^3+x_2^3+x_3^3+x_4^3+x_5^3+x_6^3+x_7^3)\\&+c_2(x_1x_2x_4+x_2x_3x_5+x_3x_4x_6+x_1x_5x_6 + x_1x_3x_7 + x_4x_5x_7 + x_2x_6x_7) = 0,
	\end{aligned}
\right.
\]
and they are invariant under the action of the representation $\widetilde{\rho}\colon L_2(7) \to \GL(7,\CC)$ above. 

Let $F_1 = x_1^3+x_2^3+x_3^3+x_4^3+x_5^3+x_6^3+x_7^3$ and $F_2 = x_1x_2x_4+x_2x_3x_5+x_3x_4x_6+x_1x_5x_6 + x_1x_3x_7 + x_4x_5x_7 + x_2x_6x_7$, then $f_1 = F_1((x_1,...,x_6,0)7*T^{-1})$ and $f_2 = F_2((x_1,...,x_6,0)7*T^{-1})$ are invariant polynomials over $\CC^6$ under the action of the representation $\rho\colon L_2(7) \to \SL(6,\CC)$ above. Note that
\[
7*T^{-1} = 
\begin{pmatrix}
    1&0&0&0&0&0&-1\\
    0&1&0&0&0&0&-1\\
    0&0&1&0&0&0&-1\\
    0&0&0&1&0&0&-1\\
    0&0&0&0&1&0&-1\\
    0&0&0&0&0&1&-1\\
    1&1&1&1&1&1&1\\
\end{pmatrix}. 
\]
Therefore, $f_1$ and $f_2$ are obtained from replacing $x_7$ by $-(x_1+\cdots+x_6)$ in $F_1$ and $F_2$ respectively. 

From \cite[Theorem 1.2]{laza2022automorphisms} and \cite[Proposition 4.10]{yu2020moduli}, we know that the family of cubic fourfolds with $L_2(7)$-action form a one-dimensional family. Since $N_{\PGL(6,\CC)}(L_2(7))$ is $0$-dimensional and $f_1,f_2$ are clearly linearly independent, we know that the space $\mathcal{V}_{L_2(7)}$ is just the subspace generated by $f_1,f_2$. 

Now let $E^\prime = \Psi^{-1}(E)$, we have $E^{\prime}(f_1) = \frac{3\sqrt{2}}{2}f_2$, $E^{\prime}(f_2) = \frac{\sqrt{2}}{3}f_1$, this makes the action of $N_{\PGL(6,\CC)}(L_2(7))/L_2(7) \cong C_2$ on $\PP(\mathcal{V}_{L_2(7)})$ clear. Therefore, we have the following proposition. 

\begin{prop}
    Denote $\mathcal{F}_{L_2(7)}$ by $N_{\PGL(6,\CC)}(L_{2}(7))\backslash\mathbb{P}\mathcal{V}_{L_{2}(7)}^{sm}$, and $\overline{\mathcal{F}}_{L_2(7)}$ by its GIT compactification, then we have $\overline{\mathcal{F}}_{L_2(7)}\cong  C_{2}\backslash\mathbb{P}^{1}\cong\mathbb{P}^{1}$.
\end{prop}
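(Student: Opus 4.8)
The plan is to reduce the statement to the elementary fact that the quotient of $\mathbb{P}^1$ by a finite group of automorphisms is again $\mathbb{P}^1$. First I would record that, by the computation preceding the proposition, $\mathcal{V}_{L_2(7)}$ is spanned by the two linearly independent invariant polynomials $f_1, f_2$, so $\mathbb{P}\mathcal{V}_{L_2(7)} \cong \mathbb{P}^1$ with homogeneous coordinates $[c_1 : c_2]$ recording the cubic $c_1 f_1 + c_2 f_2$.

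Next I would observe that the action of $N := N_{\PGL(6,\CC)}(L_2(7))$ on $\mathbb{P}\mathcal{V}_{L_2(7)}$ factors through the quotient $N/L_2(7) \cong C_2$ of Proposition \ref{NL27}: indeed $f_1$ and $f_2$ are $L_2(7)$-invariant by construction, so the normal subgroup $L_2(7)$ acts trivially on the whole pencil. The nontrivial coset is represented by $\overline{E'}$, where $E' = \Psi^{-1}(E)$, and by the formulas $E'(f_1) = \frac{3\sqrt{2}}{2} f_2$, $E'(f_2) = \frac{\sqrt{2}}{3} f_1$ the induced involution in coordinates is
\[
[c_1 : c_2] \longmapsto \left[\tfrac{\sqrt{2}}{3}\, c_2 : \tfrac{3\sqrt{2}}{2}\, c_1\right].
\]
A quick check gives $E'^2(f_i) = f_i$, consistent with $E^2 = (162)(457) \in L_2(7)$, so this is a genuine involution of $\mathbb{P}^1$.

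Since $N$ is a finite group, it has no nontrivial one-parameter subgroups, so by the Hilbert--Mumford criterion every point of the projective variety $\mathbb{P}\mathcal{V}_{L_2(7)}$ is semistable; hence the GIT quotient coincides with the geometric quotient, and $\overline{\mathcal{F}}_{L_2(7)} = C_2 \backslash \mathbb{P}^1$ with $C_2$ acting by the involution above. Finally I would identify this quotient: a nontrivial involution of $\mathbb{P}^1$ has exactly two fixed points (here $c_2/c_1 = \pm \frac{3\sqrt{2}}{2}$, matching the two special members of case (5) of Theorem \ref{theorem: main}), and after a coordinate change sending these to $0$ and $\infty$ the involution becomes $z \mapsto -z$, whose quotient is $\mathbb{P}^1$ via $z \mapsto z^2$. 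Equivalently, Riemann--Hurwitz for the degree-$2$ map $\mathbb{P}^1 \to C_2\backslash\mathbb{P}^1$ ramified at two points forces the target to have genus $0$. Either way $\overline{\mathcal{F}}_{L_2(7)} \cong \mathbb{P}^1$.

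There is no serious obstacle in this argument; the only steps requiring a moment's care are verifying that the $N$-action genuinely descends to the finite quotient $C_2$ (immediate from the $L_2(7)$-invariance of $f_1, f_2$) and that semistability is automatic for a finite reductive group, so that the GIT quotient is literally the orbit space.
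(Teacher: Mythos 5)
Your proposal is correct and follows essentially the same route as the paper: identify $\mathbb{P}\mathcal{V}_{L_2(7)}\cong\mathbb{P}^1$ via the basis $f_1,f_2$, note that the finite normalizer $L_2(7)\rtimes C_2$ acts through its $C_2$ quotient by the explicit involution coming from $E'$, and conclude $\overline{\mathcal{F}}_{L_2(7)}\cong C_2\backslash\mathbb{P}^1\cong\mathbb{P}^1$ (the paper treats the last two steps as immediate, while you spell out the semistability of all points under a finite group and the genus-$0$ identification). Your coordinate form of the involution, with fixed points at $c_2/c_1=\pm\frac{3\sqrt{2}}{2}$, is in fact the one consistent with Theorem \ref{theorem: main}(5), whereas the formula $[a,b]\mapsto[\frac{3\sqrt{2}}{2}b,\frac{\sqrt{2}}{3}a]$ printed later in the paper appears to have its coefficients transposed.
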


By intersecting the family $\mathbb{P}\mathcal{V}_{L_{2}(7)}\cong \mathbb{P}^{1}$ with the singular locus described in Proposition \ref{singlocus} in $\mathbb{P}\mathcal{F}_{G_{1}}$, we can characterize the singular cubic fourfolds with $L_{2}(7)$-action.

\begin{prop}
\label{singuL27}
    Inside the $\mathbb{P}\mathcal{V}_{L_{2}(7)}\cong \mathbb{P}^{1}$, there are exactly 6 points corresponding to 6 singular cubic fourfolds with $L_{2}(7)$-action. Two of them have $7$ nodes, another two have $14$ nodes, and remaining two cubic fourfolds have singular locus as a smooth quadratic surface, and they are known as determinantal cubic fourfolds. Each pair of cubic fourfolds as above contributes 1 point in $\overline{\mathcal{F}}_{3}$. The equations for those six singular cubic fourfolds are given in Table \ref{tab:sing}. 
\end{prop}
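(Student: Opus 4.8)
The plan is to reduce the classification of singular fourfolds in the $L_2(7)$-family to a concrete intersection computation inside the $(a,b)$-plane, exploiting the fact that $\PP\mathcal{V}_{L_2(7)} \cong \PP^1$ is embedded in the larger family $\mathcal{V}_{C_7}$. First I would write the generic member $c_1 f_1 + c_2 f_2$ of $\mathcal{V}_{L_2(7)}$ in terms of the monomial basis of $\mathcal{V}_{C_7}$, thereby expressing the coefficients $(a_1,\dots,a_8)$ of the $C_7$-form as explicit functions of $[c_1:c_2] \in \PP^1$; concretely, since $f_1, f_2$ are obtained from $F_1, F_2$ by the substitution $x_7 \mapsto -(x_1+\cdots+x_6)$, this is a finite linear-algebra expansion. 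In particular this realizes the pencil as a curve in the parameter space, which after passing to the $F_{21}$-coordinates $c_1,c_2,c_3,c_4$ (or directly to the $(a,b)$-plane via a suitable $N_{\PGL(6,\CC)}(C_7)$-coordinate change as in Lemma \ref{singeq}) becomes a concrete rational map $\PP^1 \to \CC^2$.

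The key step is then to intersect this curve with the singular locus already determined in Proposition \ref{singlocus}: the three curves $Z(G), \omega Z(G), \omega^2 Z(G)$ with $G = a^2b^2 - 6ab + 4a + 4b - 3$. Substituting the parametrization $(a(c_1,c_2), b(c_1,c_2))$ into $G$ and its $\omega$-twists yields a homogeneous polynomial in $[c_1:c_2]$ whose roots are exactly the singular members; counting these roots (with the appropriate multiplicities and the $\pm$ and Galois symmetries recorded in Proposition \ref{singuL27}) should produce the claimed six points. To distinguish the singularity types, I would plug each singular value of $[c_1:c_2]$ back into Lemma \ref{singeq} to read off the singular points, then analyze each node via the Hessian computation already carried out in Proposition \ref{Proposition:typeADEsinginabplane}: generic intersection points give $7$ nodes (the $C_7$-orbit), while intersections lying on two of the three curves simultaneously should double the node count to $14$. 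The determinantal case is identified separately: a member whose singular locus is a smooth quadric surface corresponds to the cubic being expressible as the determinant of a symmetric matrix of linear forms, which I would verify directly for the relevant $[c_1:c_2]$ by exhibiting such a determinantal expression (or equivalently by checking that the singular locus is positive-dimensional, forcing the non-isolated case excluded from Proposition \ref{singlocus}).

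Finally, the $C_2$-action on $\PP^1$ induced by $N_{\PGL(6,\CC)}(L_2(7))/L_2(7)$, computed explicitly above as $E'(f_1) = \tfrac{3\sqrt 2}{2} f_2$, $E'(f_2) = \tfrac{\sqrt 2}{3} f_1$, pairs up the six singular points into three $C_2$-orbits, accounting for the statement that each pair contributes a single point in $\overline{\mathcal{F}}_{L_2(7)}$; the Galois-conjugacy within each pair matches the $\pm \tfrac{3\sqrt 2}{2}$ structure seen in Theorem \ref{theorem: main}(5). The explicit equations in Table \ref{tab:sing} are then obtained by evaluating $c_1 f_1 + c_2 f_2$ at the six computed values of $[c_1:c_2]$.

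I expect the main obstacle to be the determinantal case: verifying that the singular locus is a smooth quadric surface (rather than a finite set of nodes) requires showing the locus is two-dimensional, which falls outside the isolated-singularity analysis of Lemma \ref{singeq} and Proposition \ref{singlocus}, so I would either produce an explicit symmetric determinantal presentation of the cubic or argue directly that at the corresponding $[c_1:c_2]$ the Jacobian ideal defines a quadric. Matching multiplicities in the intersection count with the geometric node counts ($7$ versus $14$) is the secondary delicate point, since it hinges on which members lie on more than one of the three twisted curves $Z(G), \omega Z(G), \omega^2 Z(G)$.
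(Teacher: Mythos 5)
Your proposal is correct and follows essentially the same route as the paper's proof: embed the $L_2(7)$-pencil into the $F_{21}$-family by a coordinate change (the paper does this with an explicit averaging/conjugation matrix $S$ and a diagonal action landing in the $(a,b)$-plane), intersect with the singular curves of Proposition \ref{singlocus} to obtain the $7$- and $14$-node members, identify the two determinantal members separately as the intersection with the locus of non-isolated singularities, and pair the six points via the $C_2$-action induced by $E'$. The only cosmetic difference is implementation: the paper conjugates the group and normalizes by $\diag(1,s,1,s,1,s)$ to get an explicit rational parametrization $[1:1:a(t):b(t)]$, rather than expanding $c_1f_1+c_2f_2$ in the $C_7$-monomial basis, but these are the same computation.
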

\begin{proof}
    Since we have already made clear the singular cubic fourfolds in the $F_{21}$-family, we make some coordinate change to let the $\mathcal{V}_{L_2(7)}$ lie in $\mathcal{V}_{F_{21}}$. 

    Explicitly, we first find some copy of $F_{21}$ inside $\widetilde{\rho}(L_2(7))$, which is generated by $g_7^{\prime\prime} = (1234567)$ and $g_3^{\prime\prime} = (235)(476)$. Also, let $g_7^{\prime} = \Psi^{-1}(g_7^{\prime \prime})$ and $g_3^{\prime} = \Psi^{-1}(g_3^{\prime \prime})$. To conjugate this $F_{21}$ generated by $g_7^{\prime}$ and $g_3^{\prime}$ to the one we are already familiar with, which is the one generated by $g_3\colon x_i \mapsto x_{i+2}$ and $g_7 = \frac{1}{7}(1,5,4,6,2,3)$, we need to find some isomorphism of representation. 
    
    Here we use a trick and let $S = \sum\limits_{i=0}^2\sum\limits_{j=0}^{6}(g_3^ig_7^j)^{-1}(g_3^{\prime})^i(g_7^{\prime})^j$. Therefore, we would have $Sg_7^\prime S^{-1} = g_7$ and $Sg_3^\prime S^{-1} = g_3$. The invariant polynomials with respect to $S\rho(L_2(7))S^{-1}$ would be linear combinations of $S\cdot f_1$ and $S \cdot f_2$, generating an $\PP^1$ inside $\PP(\mathcal{V}_{F_{21}})$. 

    Recall that cubic fourfolds with non-isolated singular locus in $\PP(\mathcal{V}_{F_{21}})$ give two projective planes, and the $\PP^1$ above intersects them in exactly $2$ points. It is easy to check that they are in the same orbit as the determinantal cubic fourfolds. 

    Meanwhile, it can be checked that $\frac{1}{21}S\cdot f_1 + \frac{t}{7}S \cdot f_2$ and $[1:1:2-\frac{7ct}{(t+c)^2}:2+\frac{7(\frac{1}{2}+c)t}{(t-\frac{1}{2}-c)^2}]$ in $\PP(\mathcal{V}_{F_{21}})$ are in the same orbit, where $c = -\frac{1}{2}\zeta_7^4-\frac{1}{2}\zeta_7^2-\frac{1}{2}\zeta_7-\frac{1}{2}$ (For example, using the action of $\diag(1,s,1,s,1,s)$). Using Proposition \ref{singlocus}, we obtain explicit equations of singular cubic fourfolds in the $L_2(7)$-family, see Table \ref{tab:sing}. Note that two equations in each row can be permuted to each other using the action of $E^{\prime}$ above. 
\end{proof}

\begin{longtable}{ |p{3cm} p{3cm}||p{6cm}| }
 \hline
 Equations& &Singular Type\\
 \hline
  $f_1 + 3\frac{1+\sqrt{-7}}{4}f_2$& $f_1 + 3\frac{1-\sqrt{-7}}{4}f_2$& Determinantal cubic fourfolds \\
  \hline
  $f_1 - \frac{3}{10}f_2$& $f_1 - 15f_2$& 7 nodes\\
  \hline
  $f_1 - \frac{3}{2}f_2$& $f_1 - 3f_2$& 14 nodes\\
 \hline
  \caption{Singular cubic fourfolds inside $\PP(\mathcal{V}_{L_2(7)})$}
  \label{tab:sing}
\end{longtable}

\section{Full Automorphism Groups of Cubic Fourfolds Admitting Order-$7$ Action}
\label{section:full automorphism group}

In this section we determine all possible full automorphism groups of cubic fourfolds admitting an order-7 action. Precisely, we prove Theorem \ref{theorem: main}.

For any smooth cubic fourfold $X$, the symplectic automorphism group $\Aut^{s}(X)$ is a normal subgroup of the automorphism group $\Aut(X)$. Meanwhile, there is a natural embedding $\Aut(X) \to \PGL(6,\CC)$ according to \cite{Matsumura1963OnTA}. So any element of $\Aut(X)$ lies in the normalizer of $\Aut^s(X)$ in $\PGL(6, \CC)$. Therefore we have the following lemma:

\begin{lem}
\label{AutAuts}
For $X\in\PP\mathcal{V}_{G}$ a smooth cubic fourfold with $\Aut^{s}(X)=G$, we have $\Aut(X)=\mathrm{Stab}_{N_{\PGL(6,\CC)}(G)}(X)$. Here $\mathrm{Stab}_{N_{\PGL(6,\CC)}(G)}(X)$ is the stabilizer of $X$ under the action of $N_{\PGL(6,\CC)}(G)$.
\end{lem}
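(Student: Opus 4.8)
The plan is to identify $\Aut(X)$ with its image under the Matsumura--Monsky embedding $\Aut(X)\hookrightarrow\PGL(6,\CC)$ of \cite{Matsumura1963OnTA}, under which $\Aut(X)$ becomes the subgroup of $\PGL(6,\CC)$ consisting of linear transformations preserving the hypersurface $X$; I then prove the asserted equality of subgroups of $\PGL(6,\CC)$ by two inclusions. The two observations recorded just before the statement supply the essential input: every automorphism of $X$ is linear, and $G=\Aut^{s}(X)$ is normal in $\Aut(X)$, so the image of $\Aut(X)$ is already contained in $N_{\PGL(6,\CC)}(G)$.

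First I would prove $\Aut(X)\subseteq\mathrm{Stab}_{N_{\PGL(6,\CC)}(G)}(X)$. Given $\phi\in\Aut(X)$, viewed inside $\PGL(6,\CC)$, we have $\phi(X)=X$, so $\phi$ fixes the point $[X]$ in the linear system $\PP(H^0(\PP^5,\sO(3)))$ and thus lies in $\mathrm{Stab}_{\PGL(6,\CC)}(X)$; moreover $\phi\in N_{\PGL(6,\CC)}(G)$ because conjugation by $\phi$ preserves the normal subgroup $G=\Aut^{s}(X)$. Intersecting these two conditions places $\phi$ in $\mathrm{Stab}_{N_{\PGL(6,\CC)}(G)}(X)$.

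For the reverse inclusion I would take $\psi\in\mathrm{Stab}_{N_{\PGL(6,\CC)}(G)}(X)\subseteq\PGL(6,\CC)$. Since $\psi$ fixes $[X]$, it carries the defining cubic of $X$ to a scalar multiple of itself, hence maps the hypersurface $X$ onto itself and restricts to a biregular automorphism of $X$; therefore $\psi\in\Aut(X)$. The one point deserving care is that this identification is unambiguous, i.e.\ that the restriction map $\mathrm{Stab}_{\PGL(6,\CC)}(X)\to\Aut(X)$ is a bijection: surjectivity is precisely the content of \cite{Matsumura1963OnTA}, while injectivity follows because $X$, being a hypersurface, spans $\PP^5$, so any element of $\PGL(6,\CC)$ fixing $X$ pointwise fixes a spanning set and is the identity.

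I do not anticipate a genuine obstacle here: once the linearity of automorphisms and the normality of $\Aut^{s}(X)$ in $\Aut(X)$ are granted, the statement is a formal comparison of a stabilizer computed in $\PGL(6,\CC)$ with the same stabilizer computed in the subgroup $N_{\PGL(6,\CC)}(G)$, and the comparison is an equality exactly because every linear transformation preserving $X$ already normalizes $G$. The only thing to keep straight is the bookkeeping between the abstract group $\Aut(X)$ and its realization as $\mathrm{Stab}_{\PGL(6,\CC)}(X)$.
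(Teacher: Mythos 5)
Your proposal is correct and follows essentially the same route as the paper: the paper gives no separate proof environment, but the paragraph preceding the lemma makes exactly your argument --- automorphisms are linear by \cite{Matsumura1963OnTA}, $\Aut^{s}(X)$ is normal in $\Aut(X)$, hence $\Aut(X)$ lands in $N_{\PGL(6,\CC)}(G)$ and stabilizes $X$, while conversely any projective linear transformation stabilizing $X$ is an automorphism. Your added remark on injectivity of the restriction $\mathrm{Stab}_{\PGL(6,\CC)}(X)\to\Aut(X)$ is a harmless refinement (though to be fully precise one should argue that a projectivity fixing the irreducible, nondegenerate hypersurface $X$ pointwise has $X$ inside a single eigenspace, forcing it to be scalar --- fixing a spanning set of points alone does not suffice).
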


\subsection{Case $F_{21}$}
For $F_{21}$ case, recall that $\PP\mathcal{V}_{F_{21}}\cong \PP^{3}$. We will use $[a,b,c,d]\in \PP^{3}$ to represent a cubic fourfold $X\in \PP\mathcal{V}_{F_{21}}$ with defining equation $a(x_{1}^{2}x_{2}+x_{3}^{2}x_{4}+x_{5}^{2}x_{6})+b(x_{2}^{2}x_{3}+x_{4}^{2}x_{5}+x_{6}^{2}x_{1})+cx_{1}x_{3}x_{5}+dx_{2}x_{4}x_{6}$.

    The action of $N_{\PGL(6,\CC)}(F_{21})$ on $\PP\mathcal{V}_{F_{21}}$ with respect to the coordinates described as above determines a projective representation $R\colon N_{\PGL(6,\CC)}(F_{21}) \rightarrow \PGL(4,\CC)$. 

    Directly compute all possible stabilizers of $X\in \PP\mathcal{V}_{F_{21}}$ for the action of $N_{\PGL(6,\CC)}(F_{21})$. Orbit-stabilizer theorem shows that orbits which contain some smooth cubic fourfolds are disjoint union of some affine curves. More precisely, we will obtain the following list for such orbits:
\begin{longtable}{ |p{6.1cm}||p{3cm}|p{3cm}|p{1.2cm}|  }
 \hline
 cubic fourfold & stabilizer &number of orbit components & orbit degree\\
 \hline
 $[1,a,b,c]$, $a,b,c\neq 0$, $c\neq a^{3}b$ & $F_{21}$ & 6 & 3\\
 \hline 
 $[1,a,0,b]$ or $[1,a,b,0]$, $a,b,c\neq 0$ & $F_{21}$ & 6 & 2\\
 \hline
 $[1,a,b,a^{3}b]$, $a,b\neq 0$ & $F_{21}\rtimes C_{2}$ & 3 & 3\\
 \hline
 $[1,a,0,0]$, $a\neq 0$ & $F_{21}\rtimes C_{6}$ & 1 & 1\\
 \hline
 \caption{Stabilizers and Orbits}
\end{longtable}

For smooth cubic fourfold $X=[1,a,b,c]$ with $a\neq 0$ and $c\neq a^{3}b$, generically $X$ is smooth with $\Aut^{s}(X)=F_{21}$. In this case, $\Aut(X)=F_{21}$ by Lemma \ref{AutAuts}.

For cubic fourfold $X=[1,a,0,0]$, $X$ is isomorphic to the Klein fourfold $[1,1,0,0]$. We have $X$ is smooth and $\Aut^{s}(X)=F_{21}$. So $\Aut(X)=F_{21}\rtimes C_{6}$.

We claim that for a generic $X=[1,a,b,a^{3}b]$ with $a,b\neq 0$, we have $X$ is smooth and $\Aut^{s}(X)=F_{21}$. Thus $\Aut(X)=F_{21}\rtimes C_{2}$ once $\Aut^{s}(X)=F_{21}$.

Consider the sub-family $\mathbb{A}^{2}\cong\{[1,a,b,a^{3}b]\}\subset \PP\mathcal{V}_{F_{21}}$. Note that $[1,1,0,0]\in \mathbb{A}^{2}$ and $X=[1,1,0,0]$ is smooth and $\Aut^{s}(X)=F_{21}$. The claim follows since smooth cubic fourfolds $X\in \PP\mathcal{V}_{F_{21}}$ with $\Aut^{s}(X)=F_{21}$ forms a dense subset containing an open subset of $\PP\mathcal{V}_{F_{21}}$.

Combining Lemma \ref{repinv}, we have actually taken into consideration all smooth cubic fourfold $X$ with $\Aut^{s}(X)=F_{21}$.

\subsection{Case $L_2(7)$ and $A_7$}
For $G=L_{2}(7)$, recall that $\PP\mathcal{V}_{L_{2}(7)}\cong \PP^{1}$.  We use $[a,b]$ to represent a cubic fourfold $X\in\PP\mathcal{V}_{L_{2}(7)}$ with defining equation $af_{1}+bf_{2}$, see \S \ref{subsecL27} for expression of $f_{1}$ and $f_{2}$.

Note that $N_{\PGL(6,\CC)}(L_{2}(7))=L_{2}(7)\rtimes C_{2}$ acts on $\PP\mathcal{V}_{L_{2}(7)}$ factor through the quotient $C_{2}$. And $C_{2}$ acts on $\PP\mathcal{V}_{L_{2}(7)}$ by sending $[a,b]$ to $[\frac{3\sqrt{2}}{2}b,\frac{\sqrt{2}}{3}a]$. 

So for $X\neq [1,\frac{3}{2}\sqrt{2}],[1,-\frac{3}{2}\sqrt{2}]$ with $X$ smooth and $\Aut^{s}(X)=L_{2}(7)$, we have $\Aut(X)=L_{2}(7)$.

For $X=[1,\frac{3}{2}\sqrt{2}] \ \text{or}\ [1,-\frac{3}{2}\sqrt{2}]$, by proposition $\ref{singuL27}$, we have $X$ is smooth. We know $\Aut^{s}(X)\neq A_{7}$. Otherwise, by \cite[Theorem 1.8 (2)]{laza2022automorphisms}, $L_{2}(7)\rtimes C_{2}<A_{7} \ \text{or} \ S_{7}$. However, $N_{S_{7}}(L_{2}(7))=N_{A_{7}}(L_{2}(7))=L_{2}(7)$. So $\Aut^{s}(X)=L_{2}(7)$. By Lemma \ref{AutAuts} again, $\Aut(X)=L_{2}(7)\rtimes C_{2}$.

Denote them by $X_{1}$ and $X_{2}$ respectively. Above argument shows that they are Galois conjugate to each other with $\text{Gal}(\QQ(\sqrt{2}/\QQ))$. The following proposition shows that they are not isomorphic to each other.

\begin{prop}
\label{notiso}
Let $X_{1}\neq X_{2}\in \PP \mathcal{V}_{L_{2}(7)}$ with automorphism group $L_{2}(7)\rtimes C_{2}$. Then $X_{1}$ is not isomorphic to $X_{2}$. 
\end{prop}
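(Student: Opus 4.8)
The plan is to argue by contradiction using the orbit–stabilizer principle on $\PP\mathcal{V}_{L_2(7)}$. Suppose $X_1 \cong X_2$. Since any isomorphism between smooth cubic fourfolds is induced by a projective linear transformation (Matsumura–Monsky, \cite{Matsumura1963OnTA}), there is some $\phi \in \PGL(6,\CC)$ with $\phi(X_1) = X_2$. The first thing I would record is that the two fourfolds carry literally the \emph{same} copy of the symplectic group: because $X_1, X_2 \in \PP\mathcal{V}_{L_2(7)}$, the fixed representation $\rho(L_2(7))$ is contained in $\Aut^s(X_i)$, and since we have already shown $\Aut^s(X_i) = L_2(7)$, a group of order $168$, in fact $\Aut^s(X_1) = \Aut^s(X_2) = \rho(L_2(7))$ as subgroups of $\PGL(6,\CC)$.

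The key step is to show that $\phi$ normalizes this common $L_2(7)$. Conjugation by $\phi$ carries $\Aut(X_1)$ isomorphically onto $\Aut(X_2)$, and it sends symplectic automorphisms to symplectic ones: if $g \in \Aut^s(X_1)$ acts trivially on the one-dimensional space $H^{3,1}(X_1)$, then $\phi g \phi^{-1}$ acts on $H^{3,1}(X_2)$ through $(\phi^*)^{-1} g^* \phi^* = \id$, so it is again symplectic. Hence $\phi\,\rho(L_2(7))\,\phi^{-1} = \rho(L_2(7))$, that is, $\phi \in N_{\PGL(6,\CC)}(L_2(7))$. In particular $\phi$ preserves $\mathcal{V}_{L_2(7)}$ and therefore acts on $\PP\mathcal{V}_{L_2(7)} \cong \PP^1$ through the $C_2$-action described above.

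Finally I would invoke the orbit computation already in hand. We have determined that $\Aut(X_i) = L_2(7)\rtimes C_2 = N_{\PGL(6,\CC)}(L_2(7))$ (using Proposition \ref{NL27}), and by Lemma \ref{AutAuts} this coincides with $\mathrm{Stab}_{N_{\PGL(6,\CC)}(L_2(7))}(X_i)$. Thus the whole normalizer fixes $X_i$, so the orbit $N_{\PGL(6,\CC)}(L_2(7))\cdot X_1 = \{X_1\}$. Since $\phi \in N_{\PGL(6,\CC)}(L_2(7))$ and $\phi(X_1) = X_2$, we conclude $X_2 \in \{X_1\}$, i.e.\ $X_1 = X_2$ as points of $\PP\mathcal{V}_{L_2(7)}$, contradicting $X_1 \neq X_2$. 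I expect the only delicate point to be the verification in the second paragraph that an abstract isomorphism of the two fourfolds conjugates $\Aut^s(X_1)$ onto $\Aut^s(X_2)$; this rests on the intrinsic Hodge-theoretic characterization of symplecticity (triviality of the action on $H^{3,1}$), after which the statement reduces entirely to the orbit–stabilizer computation.
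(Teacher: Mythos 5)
Your proof is correct, and it shares the overall skeleton of the paper's argument: linearize the hypothetical isomorphism via Matsumura--Monsky to get $\phi \in \PGL(6,\CC)$ with $\phi(X_1)=X_2$, show that $\phi$ lies in $N_{\PGL(6,\CC)}(L_2(7)) = L_2(7)\rtimes C_2 = \Aut(X_1)$ (via Proposition \ref{NL27} and Lemma \ref{AutAuts}), and conclude $X_2 = \phi(X_1) = X_1$, a contradiction. Where you genuinely diverge is the key step of placing $\phi$ in the normalizer of $L_2(7)$. The paper observes that $\phi$ conjugates $\Aut(X_1)$ onto $\Aut(X_2)$, i.e.\ normalizes $L_2(7)\rtimes C_2$, and then proves the purely group-theoretic identity $N_{\PGL(6,\CC)}(L_2(7)\rtimes C_2) = N_{\PGL(6,\CC)}(L_2(7))$ using the simplicity of $L_2(7)$: the conjugate $\phi L_2(7)\phi^{-1}$ is normal in $L_2(7)\rtimes C_2$, meets $L_2(7)$ in a normal subgroup, and order considerations force it to equal $L_2(7)$. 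You instead work with the symplectic groups directly: conjugation by an isomorphism preserves triviality of the action on $H^{3,1}$, so $\phi\,\Aut^s(X_1)\,\phi^{-1} = \Aut^s(X_2)$, and since both symplectic groups are literally the same subgroup $\rho(L_2(7))$, $\phi$ normalizes $L_2(7)$ with no group theory needed. Both routes are valid; yours is somewhat more robust (it applies whenever two members of a family share the same symplectic subgroup, with no simplicity hypothesis), while the paper's is purely algebraic and does not invoke the Hodge-theoretic characterization of symplecticity. One small point worth a sentence in your write-up: the inclusion $\rho(L_2(7)) \subset \Aut^s(X_i)$, which you use to identify $\Aut^s(X_1)$ and $\Aut^s(X_2)$ as the same subgroup, holds because $L_2(7)$ is perfect, so its character on $H^{3,1}$ is necessarily trivial.
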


\begin{proof}
    We first prove $N_{\PGL(6,\CC)}(L_{2}(7)\rtimes C_{2})=N_{\PGL(6,\CC)}(L_{2}(7))$. Take $\overline{g}\in N_{\PGL(6,\CC)}(L_{2}(7)\rtimes C_{2})$. Denote $\overline{g}L_{2}(7)\overline{g}^{-1}$ by $G$. Then $G$ is a normal subgroup of $L_{2}(7)\rtimes C_{2}$ and thus $G\cap L_{2}(7)$ is a normal subgroup of $L_{2}(7)$. Since $L_{2}(7)$ is a simple group, $G\cap L_{2}(7)$ must be $L_{2}(7)$ itself or $\{ \id\}$. On the other hand, $G/G\cap L_{2}(7)=1\ \text{or}\ C_{2}$, which forces $G=L_{2}(7)$.

    Recall that $\Aut(X_{1})$ and $\Aut(X_{2})$ are exactly the same subgroup of $\PGL(6,\CC)$, isomorphic to $L_{2}\rtimes C_{2}$. If $X_{1}\cong X_{2}$, then one can find $\overline{g}\in \PGL(6,\CC)$ such that $\overline{g}X_{1}=X_{2}$ by \cite{Matsumura1963OnTA}. Then $\overline{g}\Aut(X_{1})\overline{g}^{-1}<\Aut(X_{2})$. So $\overline{g}\in N_{\PGL(6,\CC)}(L_{2}(7)\rtimes C_{2})=L_{2}(7)\rtimes C_{2}$. We know that $\overline{g}\in L_{2}(7)\rtimes C_{2}=\Aut(X_{1})$. So $X_{1}=X_{2}$, this is a contradiction.
\end{proof}

\begin{rmk}
    When embedding $X_{1}$ and $X_{2}$ into $\PP\mathcal{V}_{F_{21}}$, they are of the type $[1,a,b,a^{3}b]$ since there is an extra order-2 element in $N_{\PGL(6,\CC)}(L_{2}(7))=\Aut(X_{i})$ normalizing $F_{21}$, $i=1,2$.

    Act them to the plane $\mathbb{A}^{2}=\{[1,1,a,b]\}\subset \PP\mathcal{V}_{F_{21}}$, then they are in the diagonal.
\end{rmk}
    
\begin{rmk}
    We have $E^{\prime}\cdot F_{X_1}=F_{X_1}$ and $E^{\prime}\cdot F_{X_2}=-F_{X_2}$ where $F_{X_i}$ is the defining equation of $X_{i}$, $i=1,2$. 

    So $X_{1}$ and $X_{2}$ are corresponds to different symmetric type for cubic fourfolds with $L_{2}(7)\rtimes C_{2}$-action, see \cite[\S 2.2]{yu2020moduli}.
\end{rmk}

For $G=A_{7}$, \cite[Theorem 1.8 (2)]{laza2022automorphisms} shows that there are 2 smooth cubic fourfolds with symplectic automorphism group equals $A_{7}$. By \cite[Theorem 1.8 (2)]{laza2022automorphisms}, $X$ defined by $f_{1}$ is smooth, satisfying $\Aut^{s}(X)=A_{7}$ and $\Aut(X)=S_{7}$. And \cite[Theorem 6.14]{yang2024automorphism}, \cite[Page 16 (2.3)]{KOIKE202512} give explicit expressions of the other smooth $X$ with $\Aut^{s}(X)=\Aut(X)=A_{7}$.

In conclusion, we have proved Theorem \ref{theorem: main}.

\section{More Geometric Consequences}
In this section, we apply the previous explicit analysis to obtain more geometric consequences.
\subsection{Moduli Spaces $\mathcal{F}_{C_7}$ and $\mathcal{F}_{F_{21}}$}
We first show that $\overline{\mathcal{F}}_{F_{21}}$ is a rational surface. From the proof, we can also determine $\mathcal{F}_{F_{21}}^{\mathrm{ADE}}$.
\begin{prop}
\label{modulirati}
    There is a $C_6$ action on $\mathbb{A}^2$ which gives an open embedding $C_6\backslash\mathbb{A}^2 \hookrightarrow \overline{\mathcal{F}}_{F_{21}}$, which identifies $C_{6}\backslash\mathbb{A}^2$ with $\mathcal{F}_{F_{21}}^{\mathrm{ADE}}$.
    
    Moreover, $C_{6}\backslash\mathbb{A}^2$ is rational and therefore $\overline{\mathcal{F}}_{F_{21}}$ is rational. 
\end{prop}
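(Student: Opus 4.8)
The plan is to realize the group $C_6$ explicitly inside $N_{\PGL(6,\CC)}(F_{21})$, to identify the slice $\mathbb{A}^2 = \{[1,1,a,b]\}\subset \PP\mathcal{V}_{F_{21}}$ of $F_{a,b}$-models with a fundamental slice for the $N$-action on the ADE locus, and then to read off rationality from the quotient. First I would pin down the $C_6$-action. The transposition $\iota\colon (a,b)\mapsto(b,a)$ is induced by the permutation $g_\tau = (123456)\in N_{\PGL(6,\CC)}(F_{21})$ from Proposition~\ref{NF21}, which sends $[c_1,c_2,c_3,c_4]$ to $[c_2,c_1,c_4,c_3]$ and hence preserves the slice; the order-$3$ scaling $\tau\colon(a,b)\mapsto(\omega a,\omega b)$ is induced by $k=\diag(1,1,\omega,\omega,\omega^2,\omega^2)$ followed by a normalizing element of the torus $\diag(1,c,1,c,1,c)<N$. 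Since $\iota$ and $\tau$ commute they generate $C_6=C_2\times C_3$, and the six elements reproduce exactly the intersection $\{F_{a,b},F_{b,a},F_{\omega a,\omega b},F_{\omega b,\omega a},F_{\omega^2 a,\omega^2 b},F_{\omega^2 b,\omega^2 a}\}$ of the $N$-orbit with the slice recorded in the remark preceding Proposition~\ref{Proposition:ADEminussmooth}. Consequently the morphism $q\colon\mathbb{A}^2\to\overline{\mathcal{F}}_{F_{21}}$, $(a,b)\mapsto[F_{a,b}]$ (a morphism since every $F_{a,b}$ has $c_1=c_2=1\ne0$ and is therefore stable by Proposition~\ref{F21GIT}), factors through $C_6\backslash\mathbb{A}^2$.

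Next I would identify the image of $q$ with $\mathcal{F}_{F_{21}}^{\mathrm{ADE}}$. By Propositions~\ref{singlocus} and~\ref{Proposition:typeADEsinginabplane} every member of the family $F_{a,b}$ is either smooth or has only $\mathrm{A}_1$/$\mathrm{A}_2$ singularities, so $q$ lands in $\mathcal{F}_{F_{21}}^{\mathrm{ADE}}$. For surjectivity, any $F_{21}$-cubic with at worst ADE singularities must have $c_1,c_2\ne0$: the proposition preceding Lemma~\ref{singeq} shows that $c_1=0$ or $c_2=0$ forces a positive-dimensional singular locus, incompatible with isolated (ADE) singularities. Acting by $\diag(1,c,1,c,1,c)\in N$ with $c=c_1/c_2$ then rescales $[c_1,c_2,c_3,c_4]$ to the form $[1,1,a,b]$, exhibiting the cubic as some $F_{a,b}$. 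Hence $q$ surjects onto $\mathcal{F}_{F_{21}}^{\mathrm{ADE}}$.

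Combining these steps, $\bar q\colon C_6\backslash\mathbb{A}^2\to\mathcal{F}_{F_{21}}^{\mathrm{ADE}}$ is a bijective morphism: the fibers of $q$ are exactly the $C_6$-orbits because an $N$-orbit meets the normalized slice $\{c_1=c_2=1\}$ in precisely one $C_6$-orbit, namely the residual ambiguity left after the torus normalization of the first step. Both source and target are normal surfaces over $\CC$ (the former a quotient of a smooth affine variety by a finite group, the latter a GIT quotient, open in $\overline{\mathcal{F}}_{F_{21}}$ by Theorem~\ref{thm:Yu--Zhengperiodmapsymmcubic4fold}), so by Zariski's main theorem the bijective, and hence birational, morphism $\bar q$ is an isomorphism; this yields the asserted open embedding. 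Finally, $C_6\backslash\mathbb{A}^2$ is dominated by $\mathbb{A}^2$, hence unirational, hence rational by Castelnuovo's criterion for complex surfaces (concretely, in the coordinates $u=a+b$ and $v=a-b$ the group $C_6$ acts diagonally, so the invariant field $\CC(u,v)^{C_6}$ is purely transcendental). As $C_6\backslash\mathbb{A}^2$ is a dense open subvariety of the irreducible surface $\overline{\mathcal{F}}_{F_{21}}$, the latter is rational.

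I expect the fiber computation in the third paragraph---verifying that each $N$-orbit meets the slice $\{c_1=c_2=1\}$ in exactly one $C_6$-orbit, rather than a strictly larger set---to be the main obstacle, since it requires full control of the induced action of $N=(\CC^\times\times C_{21})\rtimes C_6$ on $\PP\mathcal{V}_{F_{21}}\cong\PP^3$ together with the residual stabilizer of the normalization $c_1=c_2=1$. Everything else reduces to invoking the singularity analysis of \S\ref{subsection: singular cubic fourfolds}, the GIT stability of Proposition~\ref{F21GIT}, and the period-theoretic description of $\mathcal{F}_{F_{21}}^{\mathrm{ADE}}$ from Theorem~\ref{thm:Yu--Zhengperiodmapsymmcubic4fold}.
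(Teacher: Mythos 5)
Your proposal is correct and follows essentially the same route as the paper: the same slice $\{[1,1,a,b]\}\subset\PP\mathcal{V}_{F_{21}}$, the same $C_6$-action induced from $N_{\PGL(6,\CC)}(F_{21})$ (the paper's single generator $[1,1,a,b]\mapsto[1,1,\omega b,\omega a]$ is exactly your $\iota\tau$), the same orbit-intersection-plus-Zariski's-main-theorem argument for the open embedding, and the same identification of the image with $\mathcal{F}_{F_{21}}^{\mathrm{ADE}}$ via the singularity analysis; even the key unverified step you flag (each $N$-orbit meets the slice in exactly one $C_6$-orbit) is left at the same level of assertion in the paper's proof. The only divergence is the final rationality step, where the paper computes the invariant ring explicitly to get the affine model $\Spec\,\CC[x,y,z]/(xy-z^3)$, while you diagonalize the action in the coordinates $u=a+b$, $v=a-b$ and invoke rationality of invariant fields of diagonal abelian actions (or Castelnuovo) — a minor, equally valid variation.
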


\begin{proof} 
    Consider $\mathbb{A}^{2}=\{[1,1,a,b]| \ a,b\in \CC\}\subset\PP\mathcal{V}_{F_{21}}$. Consider that $C_{6}$ acts on $\mathbb{A}^{2}=\{[1,1,a,b]|\ a,b\in \CC\ \}$ by sending $[1,1,a,b]$ to $[1,1,\omega b,\omega a]$. Computing the intersection of $\mathbb{A}^2$ with $N_{\PGL(6,\CC)}(F_{21})$-orbits inside $\PP(\mathcal{V}_{F_{21}})$, we can show that the natural embedding $\mathbb{A}^{2}\rightarrow \PP\mathcal{V}_{F_{21}}$ induces an injective morphism

    \[
    C_{6}\backslash\mathbb{A}^{2}\rightarrow \overline{\mathcal{F}}_{F_{21}}\]
    which maps onto an open subvariety. Since $\overline{\mathcal{F}}_{F_{21}}$ is normal, so is this open subvariety. Therefore, applying Zariski's main theorem, we know that $C_{6}\backslash\mathbb{A}^2$ is isomorphic to its image and therefore $C_{6}\backslash\mathbb{A}^2 \hookrightarrow \overline{\mathcal{F}}_{F_{21}}$ is an open embedding. 

    The property that $\mathcal{F}_{F_{21}}^{\mathrm{ADE}}\cong C_{6}\backslash\mathbb{A}^{2}$ is a direct result from Proposition \ref{nonADE} and Proposition \ref{Proposition:typeADEsinginabplane}. 
    
    Therefore we only need to show that $C_{6}\backslash\mathbb{A}^{2}$ is rational. By computing invariants, we have 
    \[
    C_{6}\backslash\mathbb{A}^{2}\cong\text{Spec}\,\CC[(\alpha+\beta)^{3}, (\alpha\beta)^{3},\alpha\beta(\alpha+\beta)]\cong\text{Spec}\,\CC[x,y,z]/(xy-z^{3}).
    \] 
    It is easy to show that this affine cubic surface is rational by considering projection to the first and third coordinates. 
\end{proof}

Moreover, we have the following proposition, showing that it is equivalent to study $\overline{\mathcal{F}}_{C_{7}}$ and $\overline{\mathcal{F}}_{F_{21}}$.

\begin{prop}
\label{moduliiso}
    We have $\overline{\mathcal{F}}_{C_{7}} \cong \overline{\mathcal{F}}_{F_{21}}$ as algebraic varieties.
\end{prop}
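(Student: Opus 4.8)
The plan is to establish the isomorphism $\overline{\mathcal{F}}_{C_7} \cong \overline{\mathcal{F}}_{F_{21}}$ by comparing the two GIT quotients through their common parametrization. Recall that $\overline{\mathcal{F}}_{C_7} = N_{\PGL(6,\CC)}(C_7)\dbs \PP\mathcal{V}_{C_7}^{ss}$ and $\overline{\mathcal{F}}_{F_{21}} = N_{\PGL(6,\CC)}(F_{21})\dbs \PP\mathcal{V}_{F_{21}}^{ss}$, where $\PP\mathcal{V}_{C_7} \cong \PP^7$ and $\PP\mathcal{V}_{F_{21}} \cong \PP^3$. First I would exhibit the natural inclusion $\mathcal{V}_{F_{21}} \hookrightarrow \mathcal{V}_{C_7}$, since every $F_{21}$-invariant polynomial is in particular $C_7$-invariant. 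This gives a linear embedding $\PP\mathcal{V}_{F_{21}} \hookrightarrow \PP\mathcal{V}_{C_7}$, and the goal is to show that the induced map on GIT quotients is an isomorphism.

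The key geometric input is that every $C_7$-invariant cubic can be moved into the $F_{21}$-invariant subspace by the $N_{\PGL(6,\CC)}(C_7)$-action. Concretely, using Proposition \ref{NorC7}, the normalizer $N_{\PGL(6,\CC)}(C_7) \cong (\CC^\times)^5 \rtimes C_6$ acts on the eight coefficients $(a_1,\dots,a_8)$ of a general $C_7$-invariant cubic $F$; the diagonal torus $(\CC^\times)^5$ rescales the monomials. The plan is to show that, modulo the torus action and on the semistable locus, one can normalize the three coefficients $a_1, a_3, a_5$ to a common value and likewise $a_2, a_4, a_6$, landing in the $F_{21}$-invariant form $c_1(x_1^2x_2+x_3^2x_4+x_5^2x_6)+c_2(x_2^2x_3+x_4^2x_5+x_6^2x_1)+c_3 x_1x_3x_5+c_4 x_2x_4x_6$. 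This requires that the relevant coefficients be nonzero, which is guaranteed precisely by the semistability conditions of Proposition \ref{C7GITstb}: in each of the four semistable cases one has enough nonvanishing among $a_1,\dots,a_6$ to perform the torus normalization. Thus the composite $\PP\mathcal{V}_{F_{21}}^{ss} \hookrightarrow \PP\mathcal{V}_{C_7}^{ss} \to \overline{\mathcal{F}}_{C_7}$ is surjective.

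To conclude, I would verify that this surjection descends to an isomorphism of quotients. Since $N_{\PGL(6,\CC)}(F_{21}) < N_{\PGL(6,\CC)}(C_7)$ by Lemma \ref{NF21inNC7}, the map factors as $\overline{\mathcal{F}}_{F_{21}} \to \overline{\mathcal{F}}_{C_7}$, and I would check it is bijective on closed points. Injectivity amounts to showing that if two $F_{21}$-invariant cubics are $N_{\PGL(6,\CC)}(C_7)$-equivalent, then they are already $N_{\PGL(6,\CC)}(F_{21})$-equivalent; this follows because any $g \in N_{\PGL(6,\CC)}(C_7)$ carrying one $F_{21}$-invariant cubic to another must conjugate the symplectic automorphism group $F_{21} = \Aut^s$ of the source to that of the target, hence lies in $N_{\PGL(6,\CC)}(F_{21})$ after adjusting by the residual $C_7$-action, using the rigidity of $\Aut^s$ established via Lemma \ref{repinv} and the normalizer computation. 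Both source and target are normal projective varieties (GIT quotients), so a bijective morphism between them that is birational is an isomorphism by Zariski's main theorem.

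The main obstacle I anticipate is the bookkeeping in the torus normalization step: one must confirm that in each semistable stratum the $(\CC^\times)^5$-orbit genuinely meets $\PP\mathcal{V}_{F_{21}}$, and that the semistable/stable loci match up correctly on both sides so that no boundary points are lost or doubled. In particular, the subtle cases are the strictly semistable-but-not-stable configurations of Proposition \ref{C7GITstb} (cases (2) and (3) with some $a_i = 0$), where one must check that their $S$-equivalence classes still have $F_{21}$-invariant representatives and map to the correct boundary points of $\overline{\mathcal{F}}_{F_{21}}$; handling these degenerate strata, rather than the generic torus normalization, is where the real care is required.
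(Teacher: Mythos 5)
Your first half coincides with the paper's: the linear embedding $\PP\mathcal{V}_{F_{21}}\hookrightarrow\PP\mathcal{V}_{C_7}$, equivariance via $N_{\PGL(6,\CC)}(F_{21})<N_{\PGL(6,\CC)}(C_7)$ (Lemma \ref{NF21inNC7}), and the torus normalization $a_1=a_3=a_5$, $a_2=a_4=a_6$ on the locus where all six of those coefficients are nonzero are exactly the ingredients used to get a morphism $\overline{\mathcal{F}}_{F_{21}}\to\overline{\mathcal{F}}_{C_7}$ hitting a dense subset. The gap is in how you close the argument. Your injectivity mechanism --- any $g\in N_{\PGL(6,\CC)}(C_7)$ carrying one $F_{21}$-invariant cubic to another conjugates $\Aut^s=F_{21}$, hence lies in $N_{\PGL(6,\CC)}(F_{21})$ --- is only available on the locus of \emph{smooth} cubics, since $\Aut^s$ is defined through the action on $H^{3,1}$; it says nothing at the boundary. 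Worse, at the boundary the statement you reduce injectivity to is not the right one: points of a GIT quotient are S-equivalence classes, not orbits, and there exist $F_{21}$-stable points that are only strictly $C_7$-semistable (e.g.\ $[1:0:0:1]$, stable by Proposition \ref{F21GIT}(2) but falling under case (3), not case (4), of Proposition \ref{C7GITstb}), so equality of images in $\overline{\mathcal{F}}_{C_7}$ means the $N(C_7)$-orbit closures share a closed orbit, which is weaker than $N(C_7)$-equivalence. The same issue affects surjectivity: a semistable point such as $(a_1,\dots,a_8)=(0,1,1,1,1,1,1,1)$ can \emph{never} be moved into $\PP\mathcal{V}_{F_{21}}$ by $N(C_7)$ (the vanishing pattern of $a_1,\dots,a_6$ is preserved up to cyclic shift and scaling), and only a one-parameter degeneration inside its S-equivalence class produces an $F_{21}$-invariant representative. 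You flag this as ``where the real care is required,'' but that care is precisely the missing content: without it neither injectivity nor surjectivity (hence neither quasi-finiteness nor bijectivity) is established, and Zariski's main theorem has nothing to act on.

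The paper closes the argument by a different device that avoids all boundary analysis: by Theorem \ref{yz} (Yu--Zheng), both $\overline{\mathcal{F}}_{F_{21}}\to\overline{\mathcal{M}}$ and $\overline{\mathcal{F}}_{C_7}\to\overline{\mathcal{M}}$ are finite, and the first is a normalization of its image because the generic member has automorphism group exactly $F_{21}$; once the morphism $\overline{\mathcal{F}}_{F_{21}}\to\overline{\mathcal{F}}_{C_7}$ exists and its image is dense, $\overline{\mathcal{F}}_{C_7}\to\overline{\mathcal{M}}$ is generically injective, hence also a normalization of the same image, and the universal property of normalization produces the inverse morphism with no pointwise control of the boundary. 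If you want to keep your direct route, the quickest repair is to import exactly this input: the composite $\overline{\mathcal{F}}_{F_{21}}\to\overline{\mathcal{F}}_{C_7}\to\overline{\mathcal{M}}$ is the finite map of Theorem \ref{yz}, so $\overline{\mathcal{F}}_{F_{21}}\to\overline{\mathcal{F}}_{C_7}$ is finite; combined with generic injectivity (which your rigidity argument does give on the smooth locus) and normality of the target, this already yields the isomorphism. Otherwise you must genuinely carry out the S-equivalence bookkeeping for the strictly semistable strata of Propositions \ref{C7GITstb} and \ref{F21GIT}, which your proposal defers.
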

\label{iso}
\begin{proof} 
Using Theorem \ref{yz}, we know that there is a natural finite morphism $\overline{\mathcal{F}}_{F_{21}} \to \overline{\mathcal{M}}$ which is in fact a normalization of its image since generically, cubic fourfolds with $F_{21}$-action has exactly automorphism group $F_{21}$. 

Meanwhile, there is a natural morphism $\PP(\mathcal{V}_{F_{21}}) \to \PP(\mathcal{V}_{C_{7}})$ sending $[a:b:c:d]$ to $[a:a:a:b:b:b:c:d]$ with respect to the basis given at the beginning. This morphism restricts to the semistable locus and gives $\PP(\mathcal{V}_{F_{21}}^{ss}) \to \PP(\mathcal{V}_{C_{7}}^{ss}) \to N_{\SL(6,\CC)}(C_{7})\dbs \PP(\mathcal{V}_{C_{7}}^{ss})$. Note that since $C_7$ is the only sylow-$7$ subgroup of $F_{21}$, we have $N_{\SL(6,\CC)}(F_{21}) < N_{\SL(6,\CC)}(C_{7})$. Therefore, the above morphism is equivariant under the action of $N_{\SL(6,\CC)}(F_{21})$ and descends to a morphism $\overline{\mathcal{F}}_{F_{21}} \to \overline{\mathcal{F}}_{C_{7}}$. This morphism forces the natural morphism $\overline{\mathcal{F}}_{C_{7}} \to \overline{\mathcal{M}}$ to be generically $1$ to $1$ and hence a normalization of its image.

On the other hand, it is easy to check that each element in $\PP(\mathcal{V}_{C_{7}})$ can become of the form $[a:a:a:b:b:b:c:d]$ after some action from $N_{\SL(6,\CC)}(C_{7})$. Therefore, $\overline{\mathcal{F}}_{C_{7}} \to \overline{\mathcal{M}}$ is mapped into the image of $\overline{\mathcal{F}}_{F_{21}} \to \overline{\mathcal{M}}$. Using the universial property of normalization, we obtain a natural morphism $\overline{\mathcal{F}}_{C_{7}} \to \overline{\mathcal{F}}_{F_{21}}$. 

The two morphisms $\overline{\mathcal{F}}_{C_{7}} \to \overline{\mathcal{F}}_{F_{21}}$ and $\overline{\mathcal{F}}_{F_{21}} \to \overline{\mathcal{F}}_{C_{7}}$ are clearly birational morphisms.  It is also clear that they are inverse to each other and we are done. 
\end{proof}

\subsection{Relation to Degree-Two K3}
\label{subsection: relation to degree-2 K3}

As \cite[Theorem 1.2 (8)]{laza2022automorphisms} mentioned, there is a relation between cubic fourfold with $L_{2}(7)$ action with degree-2 K3-surface. We give a more precise description in this section.

By proposition \ref{singuL27}, there exist exactly 2 determinantal cubic fourfolds in family $\mathbb{P}\mathcal{V}_{L_{2}(7)}$. We denote them by $X_{1}$ and $X_{2}$ respectively. The singularity locus of each of $X_{i}$ with $i=1,2$ is a quadratic smooth rational surface $V_{i}$ with $i=1,2$, which is given by Veronese embedding from $\mathbb{P}^{2}$ to $\mathbb{P}^{5}$. And $V_{i}$ naturally inherent the $L_{2}(7)-$action from $X_{i}$, $i=1,2$. For a generic cubic fourfold $X$ with a $G$-action with $C_{7}< G < L_{2}(7)$, the intersection of $X$ with each of $V_{i}$ gives a sextic curve $C_{i,X}$ admitting a $G$-action. 

There is a sextic curve $C$ with $G<\PGL(3,\CC)-$action is related to a degree-2 K3-surface $S$ with a $G$-action. Here $S$ is constructed by considering degree-2 covering of $\mathbb{P}^{2}$ branched over $C$. Explicitly, suppose $C$ has the defining equation $f(z_{1},z_{2},z_{3})$, a homogeneous polynomial with degree 6. Then $S\subset \PP(1,1,1,3)$ (the weighted projective space) is defined by the equation $y^{2}=f(z_{1},z_{2},z_{3})$, with $G$ acting on the coordinates $z_{1},z_{2},z_{3}$ the same as $G$ acts on $\PP^{2}$ and $G$ acting on $y$ trivially.

For $G=L_{2}(7)$, there is explicit expression of sextic curve with $L_{2}(7)$-action in \cite[Page 16]{smith2007picard} and \cite[Proposition 8.1]{badr2025stratification}. We provide an alternative approach with more details as following:

\begin{prop}   
    For any cubic fourfold $X\in \mathbb{P}\mathcal{V}_{L_{2}(7)}$ with $X\neq X_{2}$, $X\cap V_{1}$ is the same smooth sextic curve $C_{1}$. Thus the above construction relates the whole family $\mathbb{P}\mathcal{V}_{L_{2}(7)}\setminus \{X_{2}\}$ to one degree-2 K3 surface $S$ with an $L_{2}(7)$-action. Symmetrically, we have the same result for $X_{2}$ and $V_{2}$.
\end{prop}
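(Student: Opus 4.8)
The plan is to show that the determinantal cubic fourfold $X_1$ is special precisely because its singular locus $V_1$ is fixed, so that intersecting any member of the pencil $\PP\mathcal{V}_{L_2(7)}$ with $V_1$ produces a constant sextic curve. The key observation is that $V_1$ is the singular surface of $X_1$ itself, which means $X_1$ already vanishes to order $\geq 2$ along $V_1$; more usefully, $V_1$ is the Veronese surface, cut out by quadrics, and $X_1$ restricted to $V_1$ vanishes identically. I would begin by writing the pencil explicitly as $X_{[a:b]} = Z(af_1 + bf_2)$ and recording that $X_1 = Z(f_1 + c_1 f_2)$ for the appropriate constant $c_1 = 3\frac{1+\sqrt{-7}}{4}$ from Table \ref{tab:sing}. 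Then I would restrict the cubic form $af_1 + bf_2$ to the Veronese surface $V_1$, parametrized by the degree-$2$ Veronese map $\nu\colon \PP^2 \to V_1 \subset \PP^5$, obtaining a family of sextic forms $\nu^*(af_1 + bf_2)$ on $\PP^2$ depending linearly on $[a:b]$.

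Next I would argue that this family of sextics is in fact constant in $\PP^2$ up to scalar, i.e.\ that $\nu^* f_1$ and $\nu^* f_2$ are proportional as sextic forms on $\PP^2$. The mechanism is exactly determinantality: since $X_1$ has $V_1$ as its singular locus, $f_1 + c_1 f_2$ vanishes to order $\geq 2$ along $V_1$, and in particular $\nu^*(f_1 + c_1 f_2) = 0$ identically on $\PP^2$. This gives the linear relation $\nu^* f_1 = -c_1\,\nu^* f_2$, so for every $[a:b]$ we get
\[
\nu^*(a f_1 + b f_2) = (a - b c_1^{-1} c_1)\,\nu^* f_1 \cdot(\text{scalar}) ,
\]
which is a fixed sextic form up to a scalar depending on $[a:b]$. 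Consequently the curve $X_{[a:b]} \cap V_1 = Z(\nu^*(af_1+bf_2)) \subset \PP^2$ is one and the same plane sextic $C_1$, independent of $[a:b]$, for every member of the pencil except $X_2$ (where the scalar can degenerate, or where $X_2$ meets $V_1$ non-transversally). I would then check that $C_1$ is smooth: it inherits the faithful $L_2(7)$-action from the $L_2(7)$-action on $V_1 \cong \PP^2$, and a sextic with such a large automorphism group is rigid; concretely one verifies smoothness from the explicit Klein-type equation, matching the sextic recorded in \cite[Page 16]{smith2007picard} and \cite[Proposition 8.1]{badr2025stratification}.

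The symmetric statement for $X_2$ and $V_2$ follows by applying the involution $\overline{E} \in N_{\PGL(6,\CC)}(L_2(7))$, which swaps $X_1 \leftrightarrow X_2$ (by Proposition \ref{singuL27} and the action $[a:b]\mapsto [\tfrac{3\sqrt 2}{2}b:\tfrac{\sqrt 2}{3}a]$) and hence swaps $V_1 \leftrightarrow V_2$, carrying the constant-sextic conclusion over verbatim. Finally, the passage to the degree-$2$ K3 surface is the general construction already described: the fixed sextic $C_1$ determines $S = Z(y^2 - f(z_1,z_2,z_3)) \subset \PP(1,1,1,3)$ with its $L_2(7)$-action, and since $C_1$ does not vary in the family, a single K3 surface $S$ is attached to the entire pencil minus $\{X_2\}$. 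The main obstacle I anticipate is the clean identification of the scalar factor and the verification that $\nu^*(af_1+bf_2)$ really is a nonzero multiple of a \emph{fixed} form rather than genuinely varying: this reduces to confirming that $\nu^* f_1$ and $\nu^* f_2$ span a one-dimensional space of sextics on $\PP^2$, which is the determinantal vanishing $\nu^*(f_1 + c_1 f_2) = 0$ — a direct but slightly delicate computation with the Veronese pullback that must be carried out carefully, together with pinning down the single excluded fiber $X_2$.
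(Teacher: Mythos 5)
Your overall strategy coincides with the paper's: both proofs restrict the pencil to the Veronese surface $V_1$ and show that the two generators $f_1,f_2$ cut out proportional sextics on it, so that every member of the pencil meets $V_1$ in one and the same curve. The difference is the mechanism for proportionality. The paper conjugates by an explicit $g\in N_{\PGL(6,\CC)}(F_{21})$ into standard Veronese coordinates and computes both pullbacks directly, finding each equal (up to scalar) to the Klein sextic $z_1^5z_3+z_2^5z_1+z_3^5z_2-5z_1^2z_2^2z_3^2$, whose smoothness is then checked from the equation; you instead derive the linear relation $\nu^*(f_1+c_1f_2)=0$ conceptually from the fact that $X_1$ is singular along $V_1$, hence contains it. That is a genuinely cleaner route for the proportionality step, but it leaves two gaps that the paper's explicit computation closes automatically.

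First, proportionality alone does not show the common restriction is nonzero: you must also rule out that $\nu^*f_1$ and $\nu^*f_2$ both vanish identically, i.e.\ that $V_1$ lies in the base locus of the pencil. Your remark that the two pullbacks ``span a one-dimensional space'' is exactly what is needed, but the determinantal vanishing you invoke only bounds that span above by dimension one; some further input (the paper's explicit nonzero sextic, or an independent argument) is required. Second, and this is a genuine error in your write-up: the member at which the scalar degenerates is $X_1$, not $X_2$. From $\nu^*f_1=-c_1\nu^*f_2$ one gets $\nu^*(af_1+bf_2)=(b-ac_1)\,\nu^*f_2$, which vanishes exactly at $[a:b]=[1:c_1]$, i.e.\ at $X_1$. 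This is forced geometrically: $V_1=\mathrm{Sing}(X_1)\subset X_1$, so $X_1\cap V_1=V_1$ is a surface, whereas $X_2\cap V_1$ is the sextic $C_1$ like every other member. The paper's own proof reaches precisely this conclusion (``the latter case can happen only when $gX=X_1'$, which is equivalent to $X=X_1$''), so the ``$X\neq X_2$'' in the statement is a typo; your parenthetical attributing the degeneration to $X_2$ contradicts your own mechanism and should be corrected rather than reproduced. Finally, your smoothness argument via ``rigidity'' is vague as stated, though salvageable: the induced $L_2(7)$-action on $V_1\cong\PP^2$ has a one-dimensional space of invariant sextics, spanned by the Hessian of the Klein quartic, which is smooth; since you defer to the explicit equation and the cited references anyway, this point aligns with the paper.
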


\begin{proof}
    We have singular locus $V_{1}$ of $X_{1}$ is $L_{2}(7)-$invariant. So $X\cap V_{1}$ has a $L_{2}(7)-$action for $X\in\PP\mathcal{V}_{L_{2}(7)}$. Combining argument above, we only need to check $X\cap V_{1}$ is a smooth sextic curve.

    We have a determinantal cubic fourfold $X_{1}^{\prime}=[1,0,-2,-1]\in \PP\mathcal{V}_{F_{21}}$. The singular locus of $X_{1}^{\prime}$ is a surface $V_{1}^{\prime}=\{[z_{2}z_{3},z_{1}^{2},z_{1}z_{3},z_{2}^{2},z_{1}z_{3},z_{3}^{2}]|\ [z_{1},z_{2},z_{3}]\in \PP^{1}\ \}\cong \PP^{2}$. Let $s=-2\zeta_{7}^{5}-2\zeta_{7}^{3}-\zeta_{7}-1$, where $\zeta_{7}=\text{exp}(\frac{2\pi\sqrt{-1}}{7})$. There is \[g=\diag(1,s^{-1},1,s^{-1},1,s^{-1})\in N_{\PGL(6,\CC)}(F_{21})\] such that $gX_{1}=X_{1}^{\prime}$. Then singular locus $V_{1}$ of $X_{1}$ is $g^{-1}V_{1}^{\prime}$. For any $X\in \PP\mathcal{V}_{L_{2}(7)}$, $X\cap V_{1}=g^{-1}(gX\cap V_{1}^{\prime})$. So we only need to compute $gX\cap V_{1}^{\prime}$.

    For $X$ defined by $F(x_{1},\dots ,x_{6})$, $X\cap V_{1}^{\prime}\subset \PP^{2}$ is defined by $F(z_{2}z_{3},z_{1}^{2},z_{1}z_{3},z_{2}^{2},z_{1}z_{3},z_{3}^{2})$. 
    
    Direct computation shows that, up to scalar in $\CC^{\times}$, \[gSf_{1}(z_{2}z_{3},z_{1}^{2},z_{1}z_{3},z_{2}^{2},z_{1}z_{3},z_{3}^{2})=gSf_{2}(z_{2}z_{3},z_{1}^{2},z_{1}z_{3},z_{2}^{2},z_{1}z_{3},z_{3}^{2})=z_{1}^{5}z_{3}+z_{2}^{5}z_{1}+z_{3}^{5}z_{2}-5z_{1}^{2}z_{2}^{2}z_{3}^{2}.\]

    So $gSf_{1}\cap V_{1}^{\prime}=gSf_{2}\cap V_{1}^{\prime}=\{[z_{1},z_{2},z_{3}]|\ z_{1}^{5}z_{3}+z_{2}^{5}z_{1}+z_{3}^{5}z_{2}-5z_{1}^{2}z_{2}^{2}z_{3}^{2}=0\}$, which is a smooth sextic curve in $\PP^{2}$. For $X\in \PP\mathcal{V}_{L_{2}(7)}$ defined by $aSf_{1}+bSf_{2}$ for some $a,b\in \CC$, $gX\cap V_{1}^{\prime}$ is defined by $(agSf_{1}+bgSf_{2})(z_{2}z_{3},z_{1}^{2},z_{1}z_{3},z_{2}^{2},z_{1}z_{3},z_{3}^{2})$, giving either the same sextic curve as above or the whole $\PP^{2}$. The latter case can happen only when $gX=X_{1}^{\prime}$, which is equivalent to $X=X_{1}$. 
\end{proof}

\subsection{Boundary of $\overline{\mathcal{F}}_{F_{21}}$}
\label{subsection: boundaryofFF21}

Now we study the boundary of the moduli space $\overline{\mathcal{F}}_{F_{21}}\setminus \mathcal{F}_{F_{21}}$. We have obtained the following stratification:
\begin{align*}
    \overline{\mathcal{F}}_{F_{21}}\supset \mathcal{F}_{F_{21}}^{\mathrm{ADE}}=\mathbb{A}^{2}/C_{6}\supset \mathcal{F}_{F_{21}}.
\end{align*}
Recall that $\mathcal{F}_{F_{21}}^{\mathrm{ADE}}-\mathcal{F}_{F_{21}}$ is an irreducible curve described in Proposition \ref{Proposition:ADEminussmooth}. So, it remains to determine $\overline{\mathcal{F}}_{F_{21}}- \mathcal{F}_{F_{21}}^{\mathrm{ADE}}$.

By Proposition \ref{F21GIT} and Proposition \ref{modulirati}, we know that a cubic fourfold in $\overline{\mathcal{F}}_{F_{21}}- \mathcal{F}_{F_{21}}^{\mathrm{ADE}}$ can be written as the following types: $[1,0,a,b]$, $[0,1,b,a]$ and $[0,0,1,b]$ with $b\neq 0$.

Up to an $N_{\PGL(6,\CC)}(F_{21})$-action, the remaining types are $[1,0,t,-1]$ and $[0,0,1,1]$ with $t\in \mathbb{C}$. Moreover, two different cubic fourfolds $[1,0,t,-1]$ and $[1,0,t^{\prime},-1]$ are in the same $N_{\PGL(6,\CC)}(F_{21})$-orbit if and only if $t=-t^{\prime}$. 

So $\overline{\mathcal{F}}_{F_{21}}- \mathcal{F}_{F_{21}}^{\mathrm{ADE}}$ consists of two parts: image of $C_{2}\backslash\mathbb{A}^{1}$ induced by the natural injection $t\mapsto [1,0,t,-1]$, which is a rational irreducible curve, and a point represented by $[0,0,1,1]$. 

In conclusion, we obtain the following description of the boundary of $\overline{\mathcal{F}}_{F_{21}}$.

\begin{prop}
\label{Proposition:boundaryoverlineF21minusF21}
    The boundary $\overline{\mathcal{F}}_{F_{21}}- \mathcal{F}_{F_{21}}$ of moduli space of cubic fourfold with $F_{21}$-action consists of the following $3$ parts: $2$ irreducible curves and $1$ point.

    One of the irreducible curves consists of singular cubic fourfolds with singularities of ADE type and the other irreducible curve consists of cubic fourfolds of type $[1,0,t,-1]$. The point is given by the cubic fourfold $[0,0,1,1]$.
\end{prop}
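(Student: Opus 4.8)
The plan is to peel the boundary off along the stratification $\overline{\mathcal{F}}_{F_{21}}\supset \mathcal{F}_{F_{21}}^{\mathrm{ADE}}\supset \mathcal{F}_{F_{21}}$, writing
\[
\overline{\mathcal{F}}_{F_{21}}-\mathcal{F}_{F_{21}}
=\left(\mathcal{F}_{F_{21}}^{\mathrm{ADE}}-\mathcal{F}_{F_{21}}\right)\sqcup\left(\overline{\mathcal{F}}_{F_{21}}-\mathcal{F}_{F_{21}}^{\mathrm{ADE}}\right).
\]
The first piece is exactly the irreducible curve of ADE-singular cubic fourfolds produced in Proposition \ref{Proposition:ADEminussmooth}, so the whole task reduces to describing the second piece.

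For the second piece I would first use Proposition \ref{F21GIT}, by which semistability and stability coincide, so that every point of $\overline{\mathcal{F}}_{F_{21}}$ is represented by a closed (stable) orbit. Since $\mathcal{F}_{F_{21}}^{\mathrm{ADE}}\cong C_6\backslash\{[1,1,a,b]\}$ (Proposition \ref{modulirati}) lies in the open set $\{c_1c_2\neq0\}$, and since under the identification $a_1=a_3=a_5=c_1$, $a_2=a_4=a_6=c_2$ with the $C_7$-model the vanishing of $c_1$ or $c_2$ forces one of $a_1,\dots,a_6$ to vanish and hence produces a non-ADE singularity by Proposition \ref{nonADE}, the orbits contributing to $\overline{\mathcal{F}}_{F_{21}}-\mathcal{F}_{F_{21}}^{\mathrm{ADE}}$ are precisely the stable ones on $\{c_1=0\}\cup\{c_2=0\}$. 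Running the four stability conditions of Proposition \ref{F21GIT} on these two hyperplanes and using $g_{\tau}=(123456)$, whose induced action on $\PP\mathcal{V}_{F_{21}}$ is $[c_1,c_2,c_3,c_4]\mapsto[c_2,c_1,c_4,c_3]$, to match the loci $\{c_1=0\}$ and $\{c_2=0\}$, I would reduce these orbits to the two shapes $[1,0,a,b]$ and $[0,0,1,b]$ with $b\neq0$.

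Next I would cut these families to normal form using the connected part $\CC^{\times}<N_{\PGL(6,\CC)}(F_{21})$, namely $\diag(1,c,1,c,1,c)$, which acts by $[c_1,c_2,c_3,c_4]\mapsto[c\,c_1,c^2c_2,c_3,c^3c_4]$. Taking $c$ with $c^3=b^{-1}$ collapses every $[0,0,1,b]$ with $b\neq0$ to the single $g_{\tau}$-fixed point $[0,0,1,1]$, giving the isolated boundary point. For $[1,0,a,b]$ with $b\neq0$, taking $c$ with $c^2=-b^{-1}$ produces the shape $[1,0,t,-1]$, and the two square roots show $t$ is defined only up to sign. I would then prove the sharper statement that $[1,0,t,-1]\sim[1,0,t',-1]$ if and only if $t'=\pm t$: any element of $N_{\PGL(6,\CC)}(F_{21})$ carrying one such point to another must fix the hyperplane $\{c_2=0\}$, and by Proposition \ref{NF21} the stabilizer of that hyperplane in $N_{\PGL(6,\CC)}(F_{21})/F_{21}$ is generated by $\CC^{\times}$ together with the diagonal element $k=\diag(1,1,\omega,\omega,\omega^2,\omega^2)$, where $\omega=e^{2\pi\sqrt{-1}/3}$. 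Here $k$ fixes $c_1,c_3,c_4$ and only scales $c_2$, hence acts trivially on $[1,0,\ast,-1]$, while $\CC^{\times}$ moves $t$ exactly by $t\mapsto\pm t$; the remaining generator $g_{\tau}$ interchanges $c_1$ and $c_2$ and so carries $\{c_2=0\}$ off to $\{c_1=0\}$. Therefore this family descends to $C_2\backslash\mathbb{A}^1\cong\mathbb{A}^1$ via $t\mapsto t^2$, a rational irreducible curve.

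The delicate step, which I expect to be the main obstacle, is exactly this last identification: pinning down which cosets of $N_{\PGL(6,\CC)}(F_{21})/F_{21}$ preserve the chart $\{c_2=0,\ c_1,c_4\neq0\}$ and how they move the free parameter, so as to be certain the only coincidence is $t\sim-t$ and that the point $[0,0,1,1]$ is genuinely separate from the curve (it satisfies $c_1=c_2=0$, whereas the curve points have exactly one of $c_1,c_2$ nonzero). Assembling the ADE curve, the curve $\{[1,0,t,-1]\}/(t\sim-t)$, and the point $[0,0,1,1]$ then yields the asserted decomposition of the boundary into two irreducible curves and one point.
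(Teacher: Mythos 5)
Your proposal is correct and follows essentially the same route as the paper: the same stratification $\overline{\mathcal{F}}_{F_{21}}\supset\mathcal{F}_{F_{21}}^{\mathrm{ADE}}\supset\mathcal{F}_{F_{21}}$, the same use of Propositions \ref{Proposition:ADEminussmooth}, \ref{F21GIT}, \ref{nonADE} and \ref{modulirati} to reduce the outer stratum to the types $[1,0,a,b]$, $[0,1,b,a]$, $[0,0,1,b]$, and the same normal forms $[1,0,t,-1]$ (with $t\sim-t$) and $[0,0,1,1]$. In fact your explicit orbit analysis via the generators $\diag(1,c,1,c,1,c)$, $k$ and $g_\tau$ of $N_{\PGL(6,\CC)}(F_{21})/F_{21}$ supplies precisely the verification that the paper asserts without proof.
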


From \cite{yu2020moduli}, we have the following commutative diagram with horizontal morphisms being isomorphisms. The complements of vertical left embeddings were described in Proposition \ref{Proposition:boundaryoverlineF21minusF21}.
\[\begin{tikzcd}
	{\mathcal{F}_{F_{21}}} && {\Gamma_{T}\backslash(\mathbb{D}-\mathcal{H}_{s})} \\
	\\
	{C_{6}\backslash\mathbb{A}^{2}\cong\mathcal{F}_{F_{21}}^{ADE}} && { \Gamma_{T}\backslash(\mathbb{D}- \mathcal{H}_{*})} \\
	\\
	{\overline{\mathcal{F}}_{F_{21}}} && {\Gamma_{T}\backslash\overline{\mathbb{D}}^{\mathcal{H}_{*}}}
	\arrow["\cong", from=1-1, to=1-3]
	\arrow[hook, from=1-1, to=3-1]
	\arrow[hook, from=1-3, to=3-3]
	\arrow["\cong", from=3-1, to=3-3]
	\arrow[hook, from=3-1, to=5-1]
	\arrow[hook, from=3-3, to=5-3]
	\arrow["\cong", from=5-1, to=5-3]
\end{tikzcd}\]

Since horizontal maps are isomorphisms, we also determine the boundary of quotient of Type IV domain $(\overline{
\Gamma_{T}\backslash\mathbb{D}}^{\mathcal{H}_{*}})- (\Gamma_{T}\backslash(\mathbb{D}-\mathcal{H}_{s})) $.

Note that the determinantal cubic fourfold $[1,0,-2,1]$ lies in the boundary of the moduli spaces $\overline{\mathcal{F}}_{F_{21}}$, so there is a corresponding point $p\in\overline{\Gamma_{T}\backslash\mathbb{D}}^{\mathcal{H}_{*}}$. This implies $\calH_*$ is nonempty. So the Looijenga compactification $\overline{\Gamma_{T}\backslash\mathbb{D}}^{\mathcal{H}_{*}}$ is different to the Baily-Borel compactification $\overline{\Gamma_{T}\bs\mathbb{D}}^{\mathrm{BB}}$.

%Denote $\overline{\Gamma_{T}\backslash\mathbb{D}}^{\mathrm{BB}}$ by the Baily--Borel compactification \cite{baily1966compactification} of $\Gamma_{T}\backslash(\mathbb{D}-\mathcal{H}_{s})$. Consider $\overline{\Gamma_{T}\backslash\mathcal{H}_{*}}\subset \overline{\Gamma_{T}\backslash\mathbb{D}}^{\mathrm{BB}}$, one blow up $\overline{\Gamma_{T}\backslash\mathbb{D}}^{\mathrm{BB}}$ iteratively and get a strict transformation $\widetilde{\Gamma_{T}\backslash\mathcal{H}_{*}}$ of $\overline{\Gamma_{T}\backslash\mathcal{H}_{*}}$. And blowing down to contract $\widetilde{\Gamma_{T}\backslash\mathcal{H}_{*}}$ to a point $p$, then one obtain the Looijenga compactification $\overline{\Gamma_{T}\backslash\mathbb{D}}^{\mathcal{H}_{*}}$. And the point $p\in\overline{\Gamma_{T}\backslash\mathbb{D}}^{\mathcal{H}_{*}}$ corresponds to the determinantal cubic fourfold.

%The existence of the point $p$ the corresponds to the determinantal cubic fourfold forces that $\overline{\Gamma_{T}\backslash\mathcal{H}_{*}}$ is not empty. In particular, we obtain that the Looijenga compactification $\overline{\Gamma_{T}\backslash\mathbb{D}}^{\mathcal{H}_{*}}$ and the Baily-Borel compactification $\overline{\Gamma_{T}\bs\mathbb{D}}^{\mathrm{BB}}$ are not isomorphic to each other.

\section{Periods of the $L_2(7)$-family}
\label{section: periods of L27}

In this section, we focus on the $L_2(7)$-family and determine its monodromy and period domain. In the end, we characterize the quotient curve $\Gamma_{T_1}\backslash\DD_{T_1}$ in an explicit way. For convenience, we use notation $T$ instead of $T_1$, $\Gamma$ instead of $\Gamma_{T_1}$ and $\Phi$ instead of $\Phi_{1}$ through this section. The calculation is inspired by \cite{dolgachev1996mirror}.

\subsection{Calculation of the Lattice $T$} 
\label{conslatL27}
We compute the lattice corresponding to the local period domain of $L_{2}(7)$-family. We mainly use Conway-Sloane's notation \cite[Chapter 15]{conway2013sphere} and the classification result of H\"{o}hn--Mason \cite[\S 4]{hohn2016290}. The reader can also refer to \cite[Appendix A]{laza2022automorphisms} for a brief introduction to the Conway-Sloane symbol.
\begin{defn}
A pair $(G, S)$ is called a Leech pair, if $S$ is a positive definite even lattice with no vectors of norm two, and $G$ is a finite group acting faithfully on $S$ such that the induced action of $G$ on the discriminant group $A_S$ is trivial, and the invariant sublattice $S^G$ is trivial.
\end{defn}

By \cite[Appendix B.2]{gaberdiel6symmetries} (see also \cite[Corollary 4.19]{marquand2025finite} and \cite[Lemma 1.1]{zheng2025lemmaleechlikelattices}), there is the following useful lemma: 
\begin{lem}
\label{lemma: leech pair}
Suppose $(G,S)$ is a Leech pair, and $\rank(S)+l(A_S)\le 24$. Then there exists a primitive embedding of $S$ into the Leech lattice $\LL$.
\end{lem}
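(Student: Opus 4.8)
The plan is to obtain the embedding through Nikulin's theory of primitive embeddings into even unimodular lattices, reducing the problem to producing a rootless gluing. Recall that $\LL$ is the unique even unimodular positive definite lattice of rank $24$ with no vectors of norm $2$, i.e.\ the unique rootless Niemeier lattice. By Nikulin's theory, a primitive embedding of $S$ into an even unimodular lattice of signature $(24,0)$ is the same datum as an even positive definite lattice $T$ of rank $24-\rank(S)$ together with an isometry $q_S\cong -q_T$ of discriminant forms, the unimodular lattice being the overlattice $N$ of $S\oplus T$ glued along the graph of this isometry inside $A_S\oplus A_T$. Thus the task splits into (i) producing such a complement $T$, hence a primitive embedding into \emph{some} Niemeier lattice $N$, and (ii) arranging that $N$ is rootless, i.e.\ $N=\LL$.

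For step (i) I would invoke Nikulin's existence criterion for an even lattice with prescribed signature and discriminant form. Since $S$ is positive definite and even, Milgram's formula gives $\sign(q_S)\equiv \rank(S)\pmod 8$, so a candidate $T$ of signature $(24-\rank(S),0)$ satisfies the congruence $\sign(-q_S)\equiv -\rank(S)\equiv 24-\rank(S)\pmod 8$ automatically. The only substantive hypothesis is $\rank(T)=24-\rank(S)\ge l(A_{-q_S})=l(A_S)$, which is exactly the assumption $\rank(S)+l(A_S)\le 24$, the remaining $p$-adic conditions being straightforward to verify. This yields a primitive embedding $S\hookrightarrow N$ into a Niemeier lattice. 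Moreover, because $G$ acts trivially on $A_S$, I can extend the $G$-action to $N$ by letting $G$ act as the identity on $T$: triviality on $A_S\cong A_T$ guarantees that the glue subgroup is preserved, so the action descends to $N$, and since $S^G=0$ we get $N^G=T$ and $S=(N^G)^{\perp}$.

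Step (ii), forcing $N=\LL$, is the main obstacle. A clean first reduction is the following averaging observation: for any root $r\in N$, writing $r=r_S+r_T$ in $(S\otimes\QQ)\oplus(T\otimes\QQ)$, one has $\tfrac{1}{|G|}\sum_{g\in G} g(r)=r_T$, since $\sum_g g(r_S)\in S^G\otimes\QQ=0$ while $G$ fixes $T$ pointwise; hence if $r_T=0$ then $r\in (S\otimes\QQ)\cap N=S$ by primitivity, contradicting the rootlessness of $S$. So no root of $N$ lies in $S$ and every root projects nontrivially to $N^G=T$. This alone does not exclude roots of $N$, and the genuine difficulty is to rule them out entirely. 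I would complete the argument by analyzing the genus of the complement: all complements arising this way share the signature $(24-\rank(S),0)$ and the discriminant form $-q_S$, and one must exhibit a representative for which the glued lattice carries no norm-$2$ vector. Here one controls the norms of the genuine glue vectors $s+t$ (necessarily with $s,t\ne 0$ and $(s,s),(t,t)>0$) together with the roots of $T$, using rootlessness of $S$ to bound below the minimal norms of the relevant discriminant classes; the classification of Niemeier lattices (Niemeier--Venkov) then isolates the rootless member as $\LL$. The detailed verification of this last point is precisely what is carried out in the references cited above, and I would follow their treatment.
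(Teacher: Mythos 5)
The paper does not actually prove this lemma: it is imported from the literature, with the proof deferred to \cite[Appendix B.2]{gaberdiel6symmetries} (see also \cite[Corollary 4.19]{marquand2025finite}, \cite[Lemma 1.1]{zheng2025lemmaleechlikelattices}). So the real question is whether your sketch constitutes a proof on its own, and it does not: you identify the decisive step correctly and then outsource it. Your step (i) (Nikulin gluing) only produces a primitive embedding of $S$ into \emph{some} even unimodular positive definite lattice of rank $24$, and this genus has $24$ classes; nothing so far selects $\LL$. Your averaging remark correctly shows no root of $N$ lies in $S$ (in fact this needs only primitivity of $S$ and rootlessness, not the $G$-action), but as you concede it excludes neither roots of $T$ nor mixed roots, and excluding these \emph{is} the lemma. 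Worse, the finishing move you gesture at cannot work as stated: a mixed root $r=r_S+r_T$ has $r_S\in S^*$ with $0<(r_S,r_S)<2$, and rootlessness of $S$ (no norm-$2$ vectors \emph{in} $S$) gives no lower bound on norms of vectors of $S^*\setminus S$; likewise nothing shows $T$ can be chosen rootless within its genus, nor that the gluing isometry can be chosen to avoid mixed roots. The gap is not cosmetic: the same Leech pair can perfectly well occur as the coinvariant pair of a Niemeier lattice \emph{with} roots (e.g.\ an order-$23$ automorphism of the Niemeier lattice with root system $\mathrm{A}_1^{24}$ has rootless coinvariant lattice and trivial discriminant action), so a generic gluing really can fail, and an actual choice must be exhibited. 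The proofs in the cited references do this using substantially heavier input --- Venkov's theory of Niemeier lattices and Conway's description of $\mathrm{II}_{25,1}$ and its Weyl vector --- and they use the group action in an essential way, whereas your argument invokes $G$ only in the easy averaging step.

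Two secondary points. First, in step (i) the case $\rank(S)+l(A_S)=24$ is the \emph{boundary} case of Nikulin's existence theorem \cite[Theorem 1.10.2]{Nikulin_1980}, where extra $p$-adic conditions on the discriminant form are required and are not automatic; ``straightforward to verify'' is unjustified for an abstract $q_S$, and note that the applications in this paper ($q_T=4_7^{+1}7^{+2}$ and $7^{+3}$ complements) sit exactly on this boundary. Second, your extension of the $G$-action to $N$ (identity on $T$) is correct and is indeed part of the standard argument, but it is only the starting point of the hard step, not a substitute for it.
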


Now let $G=L_2(7)$, and $S$ the coinvariant lattice of the action of $G$ on $\Lambda_0$. By \cite[Lemma 4.2]{laza2022automorphisms}, $(G,S)$ is a Leech pair satisfying the condition in Lemma \ref{lemma: leech pair}. Then there is a primitive embedding $S\hookrightarrow \LL$. Thus the $L_{2}(7)$-action on $S$ can be extended to $\mathbb{L}$, such that $S$ is the coinvariant lattice in $\mathbb{L}$. It is known from \cite{laza2022automorphisms} that the invariant sublattice $\mathbb{L}^{G}$ corresponds to \cite[Table 1, item 77]{hohn2016290}, which has discriminant form $q_{S}=4_{1}^{+1}7^{+2}$. And $S$ is the orthogonal complement of $\mathbb{L}^{G}$ in $\LL$, so $S$ has discriminant form $q_{S}=4_{7}^{+1}7^{+2}$. The lattice $T$ is the orthogonal complement of $S$ in $\Lambda_{0}$, and thus $T$ has signature $(1,2)$. We have the following embeddings:
\[
S\oplus E_{6} \hookrightarrow \Lambda_{0}\oplus E_{6}\hookrightarrow \text{\Rmnum{2}}_{26,2}. \]
The lattice $\text{\Rmnum{2}}_{26,2}$ is the even unimodular lattice of signature $(26,2)$, known as the Borcherds lattice. The lattice $T$ is the orthogonal complement of $S\oplus E_{6}$ in $\text{\Rmnum{2}}_{26,2}$. Then $q_T=4_{1}^{+1}7^{+2}3^{-1}$. By \cite[Theorem 1.13.2]{Nikulin_1980}, such $T$ is unique. One can directly construct a Gram matrix for $T$ with respect to certain basis as below:
\begin{equation}
    \label{equation: gram matrix of T}
\begin{pmatrix}
-2 & 1 & 0 \\ 1 & 10 & 0 \\ 0 & 0 & -28
\end{pmatrix}.
\end{equation}

\subsection{Characterization of $\Gamma$}
\label{subsection: chara of Gamma L27}
By abuse of notation, we denote the above matrix by $T$. 

According to \cite[\S 4.1.2]{laza2022automorphisms}, $(L_2(7), S)$ is a saturated Leech pair and therefore $L_2(7) = \{h \in \widehat{\Gamma}|h|_T = \Id_T\}$. Now let $N$ denote the normalizer of $L_2(7)$ inside $\widehat{\Gamma}$. For each element $f \in N$ and $h \in L_2(7)$, $f^{-1}hf = h^{\prime}$ for some $h^{\prime} \in L_2(7)$. Therefore, for any $t \in T$, we have $h(f(t)) = f(t)$, i.e. $f(t)$ is $L_2(7)$-invariant. By definition, we have $f(t) \in T$. Therefore, we can restrict $f$ on $T$. When forming quotients, there is no difference between considering $N$ or its image in $\rO^+(T)$ via restriction. Therefore, we denote $\Gamma = \mathrm{Im}(N \to \rO^+(T))$. We have the following characterization of $\Gamma$. 
\begin{prop}
    Recall that the discriminant group of $T$ is isomorphic to $\ZZ/4\ZZ \oplus (\ZZ/7\ZZ)^2\oplus\ZZ/3\ZZ$. An element $f \in \rO^+(T)$ is in $\Gamma$ if and only if $\overline{f}|_{\ZZ/3\ZZ}$ is trivial. 
\end{prop}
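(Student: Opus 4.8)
The plan is to realize $\Lambda_0$ as a finite-index overlattice of the orthogonal direct sum $S \oplus T$, where $S = T^\perp$ and $T = S^\perp$ are both primitive, and to read off everything from Nikulin's gluing theory. By \cite{Nikulin_1980}, such an overlattice corresponds to an isotropic glue subgroup $H \subset A_S \oplus A_T$ (with respect to $q_S \oplus q_T$) that projects isomorphically onto subgroups $H_S \subset A_S$ and $H_T \subset A_T$ via an anti-isometry $\gamma\colon H_S \to H_T$, and $A_{\Lambda_0} \cong H^\perp/H$. Comparing the discriminant forms $q_S = 4_{7}^{+1}7^{+2}$, $q_T = 4_{1}^{+1}7^{+2}3^{-1}$ and the $A_2$-summand form $q_{\Lambda_0}$ supported on $\ZZ/3\ZZ$, I would first note that $A_S$ has no $3$-torsion, so $H$ is supported entirely on the $4$- and $7$-primary parts. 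An order count, $|A_{\Lambda_0}| = |A_S|\,|A_T|/|H|^2 = 196 \cdot 588/196^2 = 3$, then forces $H_S = A_S$ and $H_T = (A_T)_{4,7}$, so that $\gamma$ glues all of $A_S$ isomorphically onto the $4,7$-part of $A_T$, while the $3$-part $\ZZ/3\ZZ \subset A_T$ maps isomorphically onto $A_{\Lambda_0}$.

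Next I would identify $N$ and $\Gamma$ purely lattice-theoretically. Since $(L_2(7),S)$ is a saturated Leech pair, $L_2(7) = \ker(\rO(S) \to \rO(A_S))$, so $L_2(7)$ is normal in $\rO(S)$. An element $g \in \rO(\Lambda_0)$ normalizes $L_2(7)$ if and only if it preserves the invariant/coinvariant splitting, i.e. $g(S)=S$ and $g(T)=T$: the forward implication is because the invariant lattice $T = \Lambda_0^{L_2(7)}$ is intrinsic to the group, and the converse follows from normality of the kernel. Hence $N = \{g \in \widehat{\Gamma} : g(S)=S\}$ and $\Gamma = \{g|_T : g \in N\}$. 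Because the negative-definite $2$-plane lies in $T\otimes\RR$ (as $S$ is positive definite), the condition $g(\mathbb{D})=\mathbb{D}$ is equivalent to $g|_T \in \rO^+(T)$; in particular $\Gamma \subset \rO^+(T)$ and the positivity constraint decouples entirely from $S$.

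The forward inclusion is then immediate. For $g \in N$, the action induced on $A_{\Lambda_0} = H^\perp/H$ restricts, on the $3$-part, to the action of $\overline{g|_T}$ on $\ZZ/3\ZZ \subset A_T$; since $g \in \widehat{\Gamma}$ acts trivially on $A_{\Lambda_0}$, we conclude $\overline{g|_T}|_{\ZZ/3\ZZ} = \Id$, giving $\Gamma \subseteq \{f \in \rO^+(T) : \overline{f}|_{\ZZ/3\ZZ} = \Id\}$. For the converse, take $f \in \rO^+(T)$ with $\overline{f}|_{\ZZ/3\ZZ} = \Id$, and write $\overline{f}_{4,7}$ for the restriction of $\overline{f}$ to $(A_T)_{4,7}$. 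By Nikulin's gluing, $f$ extends to $g = \phi \oplus f \in \rO(\Lambda_0)$ preserving $S$ and $T$ precisely when there is $\phi \in \rO(S)$ with $\overline{\phi} = \gamma^{-1}\,\overline{f}_{4,7}\,\gamma$ on $A_S$ (so that $\overline{\phi}\oplus\overline{f}$ preserves $H$). The hypothesis on the $3$-part guarantees $\overline{g}|_{A_{\Lambda_0}} = \Id$, and $f \in \rO^+(T)$ guarantees $g(\mathbb{D})=\mathbb{D}$; hence $g \in \widehat{\Gamma}$, and by the previous paragraph $g \in N$, so $f = g|_T \in \Gamma$.

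Everything therefore reduces to a single claim: \emph{the reduction map $\rO(S) \to \rO(A_S)$ is surjective}, for then the required $\phi$ exists for every admissible $f$. This is the main obstacle, and it is genuine: $S$ is positive definite of rank $19$ with $l(A_S)=3$, so Nikulin's indefinite surjectivity criterion \cite[Theorem 1.14.2]{Nikulin_1980} does not apply. I would resolve it by exploiting the primitive embedding $S \hookrightarrow \LL$ of \S\ref{conslatL27}: the normalizer $N_{\Co_0}(L_2(7))$ inside $\Co_0 = \rO(\LL)$ preserves both $S$ and the rank-$5$ companion lattice $\LL^{L_2(7)}$, and its image in $\rO(A_S)$ can be computed from the known structure of this normalizer. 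Since $\rO(A_S) = \rO(4_{7}^{+1}) \times \rO(7^{+2})$, with $\rO(4_{7}^{+1}) = \{\pm 1\}$ and $\rO(7^{+2})$ a small (dihedral) group, and since $-\Id_S$ already realizes the central involution, I expect these isometries to exhaust $\rO(A_S)$; equivalently one may exhibit explicit isometries of $\LL^{L_2(7)}$ (whose discriminant group is anti-isometric to $A_S$) realizing generators of $\rO(A_S)$. Establishing this surjectivity is where the real work lies, the rest being the formal gluing bookkeeping above.
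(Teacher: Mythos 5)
Your proposal follows essentially the same route as the paper's proof: the same Nikulin gluing data ($H_S = A_S$ glued anti-isometrically onto the $4,7$-part of $A_T$, with $A_{\Lambda_0}$ embedding onto the $\ZZ/3\ZZ$ summand of $A_T$), the same use of saturation to identify $N$ with the isometries preserving the splitting $S\oplus T$, the same forward direction, and the same extension step via \cite[Corollary 1.5.2]{Nikulin_1980}. The only point of divergence is the surjectivity of $\rO(S)\to\rO(A_S)$, which you correctly isolate as the crux (and you are right that \cite[Theorem 1.14.2]{Nikulin_1980} is unavailable because $S$ is positive definite). But the paper does not prove this fact either: it quotes it directly from H\"ohn--Mason \cite[\S 4]{hohn2016290}, where the image of $\rO(S)$ in $\rO(A_S)$ is recorded for the coinvariant (fixed-point) sublattices of the Leech lattice. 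So the ``real work'' you anticipate is already available in the literature; your sketched strategy via $N_{\Co_0}(L_2(7))$ acting on $\LL$ is indeed the spirit in which such facts are established, but as written your argument is incomplete at exactly this step, and the paper closes that gap by citation rather than by computation. Everything else in your proposal (the order count forcing $H_S=A_S$, the decoupling of the positivity condition onto $T$, the normality argument) is correct and matches the paper.
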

\label{gam}
\begin{proof}
From \cite[\S 4]{hohn2016290}, the discriminant group $A_S$ of $S$ is isomorphic to $\ZZ/4\ZZ \oplus (\ZZ/7\ZZ)^2$. For lattice $\Lambda_0$, we have $A_{\Lambda_0}\cong \ZZ/3\ZZ$. Let $A_T$ denote the discriminant group of $T$. From their discriminant groups we see that $\Lambda_0/(S\oplus T)$ is isomorphic to $\ZZ/4\ZZ \oplus (\ZZ/7\ZZ)^2$. Let $H_S = A_S$ and $H_T$ be the subgroup $\ZZ/4\ZZ \oplus (\ZZ/7\ZZ)^2$ of $A_T$. From \cite[Proposition 1.5.1]{Nikulin_1980}, we see that the primitive embedding $T \hookrightarrow \Lambda_0$ with orthogonal complement $S$ is determined by an isomorphism $\gamma\colon H_T \overset{\sim}{\to} H_S$ with $q_S \circ\gamma = -q_T$ (where $q_S$ and $q_T$ are their corresponding discriminant-quadratic forms). Also note that the projection $\Lambda_0^*/(S\oplus T) \to T^*/T=  A_T$ induces an embedding $A_{\Lambda_0} \to A_T$ to the copy of $\ZZ/3\ZZ$. 

    Now suppose $f \in \rO^+(T)$ satisfies that $\overline{f}|_{\ZZ/3\ZZ}$ is trivial. Note that from \cite[\S 4]{hohn2016290} we have $\rO(S) \to A_S = H_S$ being surjective, thus we can choose some $g \in \rO(S)$ such that $\overline{g} = \gamma \circ\overline{f} \circ \gamma^{-1}$. Therefore, also from \cite[Corollary 1.5.2]{Nikulin_1980}, we can extend $f$ to some $\widetilde{f} \in \rO^+(\Lambda_0)$ such that $\widetilde{f}|_T = f, \widetilde{f}|_S = g$. Note that $\widetilde{f} \in \widehat{\Gamma}$ since $\overline{f}|_{\ZZ/3\ZZ}$ is trivial. Meanwhile, $\widetilde{f}L_2(7)\widetilde{f}^{-1} = L_2(7)$ is easy to check, therefore, $f \in \Gamma$.

    Conversely, any $f \in \Gamma$ is the image of some element in $\widehat{\Gamma}$, then $f|_{\ZZ/3\ZZ}$ is trivial. The proposition follows. 
\end{proof}

For each element $f \in \rO^+(T)$, if $\overline{f}|_{\ZZ/3\ZZ}$ is not trivial, then it sends $1$ to $-1$ in $\ZZ/3\ZZ$, thus $-f$ acts trivially on $\ZZ/3\ZZ$. Meanwhile, it is obvious that $-\Id \notin \Gamma$, therefore we have the following
\begin{prop}
    As a group, $\rO^+(T)$ is generated by $\Gamma$ and $-\Id$. And $\Gamma$ is an index $2$ subgroup of $\rO^+(T)$. 
\end{prop}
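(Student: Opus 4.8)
The plan is to exhibit $\Gamma$ as the kernel of a homomorphism onto a group of order two, and then to produce an explicit element, namely $-\Id$, mapping to the nontrivial class. First I would package the preceding characterization of $\Gamma$ as follows: the assignment $f \mapsto \overline{f}|_{\ZZ/3\ZZ}$ defines a group homomorphism $\phi\colon \rO^+(T) \to \rO(q_3)$, where $q_3$ denotes the discriminant quadratic form of $T$ restricted to its $\ZZ/3\ZZ$-summand. By Proposition \ref{gam}, we have $\Gamma = \ker\phi$. Since $\Aut(\ZZ/3\ZZ) = \{\pm 1\}$ and both automorphisms preserve $q_3$, the target $\rO(q_3)$ has order two; consequently $\Gamma$ is a normal subgroup of index one or two, and it remains only to decide which.

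To settle the index I would verify two small points. First, $-\Id \in \rO^+(T)$: since $[-x] = [x]$ in $\PP(T \otimes \CC)$, the involution $-\Id$ acts trivially on $\DD_T$, in particular fixing the distinguished component, so it lies in $\rO^+(T)$ and not merely in $\rO(T)$. Second, $-\Id$ acts on $A_T = T^*/T$ by negation, so its restriction to the $\ZZ/3\ZZ$-summand is the map $1 \mapsto -1$, the nontrivial element of $\rO(q_3)$. Hence $\phi(-\Id) \neq \Id$, so $\phi$ is surjective and $\Gamma = \ker\phi$ has index exactly two in $\rO^+(T)$.

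Generation is then formal, and in fact follows from the dichotomy recorded just before the statement: given any $f \in \rO^+(T)$, either $\overline{f}|_{\ZZ/3\ZZ}$ is trivial, so that $f \in \Gamma$, or it is the nontrivial involution, in which case $\overline{(-f)}|_{\ZZ/3\ZZ}$ is trivial, so that $-f \in \Gamma$ and therefore $f = (-\Id)(-f) \in (-\Id)\Gamma$. Thus $\rO^+(T) = \Gamma \sqcup (-\Id)\Gamma = \langle \Gamma, -\Id\rangle$, which yields both assertions at once.

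I do not anticipate a genuine obstacle here; the argument is essentially kernel-and-index bookkeeping once the two verifications of the middle paragraph are in place. The only steps meriting care are confirming that $-\Id$ preserves the chosen component of $\DD_T$ (handled by the triviality of its action on projective space) and that $\rO(q_3)$ is exactly $\{\pm 1\}$, so that the nontriviality of $\phi(-\Id)$ genuinely forces surjectivity rather than merely landing in a proper subgroup of a larger target.
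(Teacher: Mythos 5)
Your proof is correct and takes essentially the same route as the paper: the paper's (very brief) argument likewise rests on the dichotomy that any $f \in \rO^+(T)$ restricts to $\pm 1$ on the $\ZZ/3\ZZ$ summand of $A_T$, that $-\Id$ acts there nontrivially (so $-\Id \notin \Gamma$ while $-f \in \Gamma$ whenever $f \notin \Gamma$), and concludes $\rO^+(T) = \Gamma \sqcup (-\Id)\Gamma$. Your packaging of this as the kernel of a homomorphism onto $\rO(q_3) \cong \{\pm 1\}$, together with the explicit check that $-\Id$ preserves the component $\DD_T$, is just a more careful bookkeeping of the same idea.
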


Let $\rO(1,2)$ be the orthogonal group of $T \otimes \mathbb{R}$, and identify $\rO^+(T) < \rO(1,2)$. Multiplying those elements in $\Gamma$ with determinant $-1$ by $-\Id$, we obtain an isomorphism from $\Gamma$ to $\Gamma^\prime = \rO^+(T) \cap \SO(1,2)$. The quotients of $\mathbb{D}_T$ by $\Gamma$ or $\Gamma^\prime$ are obviously the same, therefore we are reduced to considering $\Gamma^\prime = \rO^+(T) \cap \SO(1,2)$. 

\subsection{Tube Domain Model for the $L_2(7)$-family}
\label{tubedomainL27}
Let $e_1,e_2,e_3 \in T$ be the integral basis for which the Gram matrix is \eqref{equation: gram matrix of T}. Then $(e_1)^2=-2,(e_2)^2=10,(e_1,e_2)=1,(e_3)^2=-28$. Let $f,e,g \in T\otimes\mathbb{R}$ be an $\RR$-basis such that if we denote
\[
B = \begin{pmatrix}
    -\frac{\sqrt{21}}{42}-\frac{1}{2} & 0 & -\frac{\sqrt{21}}{42}+\frac{1}{2} \\
    -\frac{\sqrt{21}}{21} & 0 & -\frac{\sqrt{21}}{21} \\
    0 & \frac{\sqrt{14}}{14} & 0
\end{pmatrix},
\]
we have $\begin{pmatrix}
    f&e&g
\end{pmatrix} = \begin{pmatrix}
    e_1&e_2&e_3
\end{pmatrix}B.$ 

Note that
\[
B^TTB = \begin{pmatrix}
    0&0&1\\
    0&-2&0\\
    1&0&0
\end{pmatrix}
\]

Therefore, $f,g$ are isotropic and $(f,g) = 1, (e)^2=-2, (f,e)=0, (g,e) = 0$. 

With this basis, we can identify $T \otimes \RR$ with the quadratic space of binary forms over $\RR$, sending $\alpha f + \beta e + \gamma g$ to $\alpha x^2 + 2\beta xy + \gamma y^2$, where the quadratic form on this space is $Q(\alpha,\beta,\gamma) = -2\beta^2+2\alpha \gamma$. Note that $\beta^2-\alpha \gamma$ is the discriminant of the binary form $\alpha x^2 + 2\beta xy + \gamma y^2$. Each $h = \begin{pmatrix}
    a & b \\ c & d
\end{pmatrix} \in \SL(2, \RR)$ can be seen as an isometry on this space via the action
\[
h \cdot [\begin{pmatrix}
    x & y
\end{pmatrix} 
\begin{pmatrix}
    \alpha & \beta \\
    \beta & \gamma
\end{pmatrix}  
\begin{pmatrix}
    x \\ y
\end{pmatrix}]
 = 
\begin{pmatrix}
    x & y
\end{pmatrix} 
\begin{pmatrix}
    a & b \\ 
    c & d
\end{pmatrix}
\begin{pmatrix}
    \alpha & \beta \\
    \beta & \gamma
\end{pmatrix}  
\begin{pmatrix}
    a & b \\ 
    c & d
\end{pmatrix}^T
\begin{pmatrix}
    x \\ y
\end{pmatrix}
\]

With respect to the basis $f,e,g$, the action of $h$ corresponds to a matrix $A(h)\in \SO^+(1,2) = \SO^+(1,2;\RR)$. Straightforward calculation shows that 
\[A(h) = \begin{pmatrix}
    a^2 & 2ab & b^2 \\ ac & ad+bc & bd \\ c^2 & 2cd & d^2
\end{pmatrix}.\]
Let us denote $\Phi\colon \SL(2,\RR) \to \SO^+(1,2)$ be the morphism given by sending $h$ to $BA(h)B^{-1}$ (i.e. considering the basis $e_1,e_2,e_3$). This gives a surjective morphism $\SL(2,\RR) \to \SO^+(1,2)$ of degree two, which induces a group isomorphism $\PSL(2, \RR) \overset{\sim }{\to} \SO^+(1,2)$. 

Let $\mathbb{H} = \{z \in \CC|\Im(z)>0\}$ be the complex upper plane, with $\SL(2, \RR)$ acting on it by $h\cdot \frac{\omega_1}{\omega_2} = \frac{a\omega_1+b\omega_2}{c\omega_1+d\omega_2}$. it is easy to check that there is a biholomorphic morphism $\mathbb{H} \to \mathbb{D}_T$ sending $z$ to $z^2f + ze + g$, or equivalently sending $\frac{\omega_1}{\omega_2}$ to $\omega_1^2f + \omega_1\omega_2e+\omega_2^2g$, which is the classic tube domain model. As binary forms, we can write $\omega_1^2f + \omega_1\omega_2e+\omega_2^2g$ as $\begin{pmatrix}
    x & y
\end{pmatrix} 
\begin{pmatrix}
    \omega_1 \\ \omega_2
\end{pmatrix} 
\begin{pmatrix}
    \omega_1 & \omega_2
\end{pmatrix}
\begin{pmatrix}
    x \\ y
\end{pmatrix}.$ 
Therefore, we have the following equivariant diagram
\[ \begin{tikzcd}
\SL(2, \RR) \arrow{r}{\Phi} \arrow[symbol=\circlearrowright]{d} & \SO^+(1,2) \arrow[symbol=\circlearrowright]{d} \arrow[hook]{r} & \mathrm{O}(1,2) \arrow[symbol=\circlearrowright]{d} \\
\mathbb{H} \arrow{r}{\cong} & \mathbb{D}_T \arrow[hook]{r} & \mathbb{P}(T \otimes \CC)
\end{tikzcd}
\]

Next we characterize the curve $\mathbb{D}_T/\Gamma$ as an arithmetic quotient of $\mathbb{H}$. For this, we compute the inverse image $\Phi^{-1}(\Gamma^{\prime}) = \Phi^{-1}(\rO^+(T)\cap \SO^+(1,2))$. By definition, they are those $h \in \SL(2, \RR)$ such that $\Phi(h)$ has integer coefficients. 

\begin{prop}
\label{proposition:arithgroupofL27}
    Denote $\Gamma^{\prime \prime} = \Phi^{-1}(\rO^+(T)\cap \SO^+(1,2))$. We have 
\begin{align*}
    \Gamma^{\prime \prime} = &\{
    \begin{pmatrix}
        \frac{u\sqrt{u^\prime}+v\sqrt{v^\prime}}{2} & \frac{w\sqrt{w^\prime}+x\sqrt{x^\prime}}{2} \\
        \frac{w\sqrt{w^\prime}-x\sqrt{x^\prime}}{2} &
        \frac{u\sqrt{u^\prime}-v\sqrt{v^\prime}}{2}
    \end{pmatrix}|u,v,w,x \in \ZZ, (u^\prime, v^\prime, w^\prime, x^\prime) = (1,21,6,14)\  \text{or} \ (2,42,3,7)\\ &\text{up to permutation by }\mathrm{V} < \mathrm{S}_4, \text{and }u^2u^\prime - v^2v^\prime - w^2w^\prime + x^2x^\prime = 4\}
\end{align*}
(Here we use $\mathrm{V}=\{1, (12)(34), (13)(24),(14)(23)\}$ to denote the Klein $4$-group).
\end{prop}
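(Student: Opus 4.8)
The plan is to reduce the statement to a lattice-integrality condition and then translate it, through the binary-form model of \S\ref{tubedomainL27}, into an explicit Diophantine system on the entries of $h$. Since $\Phi(h)=BA(h)B^{-1}$ already lies in $\SO^+(1,2)$ for every $h\in\SL(2,\RR)$, membership $\Phi(h)\in\rO^+(T)\cap\SO^+(1,2)$ is equivalent to $\Phi(h)$ preserving the lattice $T$; as $\det\Phi(h)=1$, this is equivalent to $\Phi(h)(e_i)\in T$ for $i=1,2,3$. I would carry this out in the space of binary quadratic forms, where $h=\left(\begin{smallmatrix}a&b\\c&d\end{smallmatrix}\right)$ acts by $M\mapsto hMh^T$. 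A direct computation (using $e=e_3/\sqrt{14}$ and $e_1=g-f$) identifies the forms attached to $e_1,e_2,e_3$ as $M_1=\diag(-1,1)$, $M_2=\diag\!\big(\tfrac{1-\sqrt{21}}{2},-\tfrac{1+\sqrt{21}}{2}\big)$ and $M_3=\sqrt{14}\left(\begin{smallmatrix}0&1\\1&0\end{smallmatrix}\right)$. One then checks that a form $\left(\begin{smallmatrix}P&R\\R&Q\end{smallmatrix}\right)$ lies in $T=\ZZ M_1\oplus\ZZ M_2\oplus\ZZ M_3$ precisely when $R\in\sqrt{14}\,\ZZ$, $P+Q\in\sqrt{21}\,\ZZ$, and $P-Q\equiv -(P+Q)/\sqrt{21}\pmod 2$.

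Next I would introduce the \emph{trace/cotrace} coordinates $p=a+d$, $q=a-d$, $r=b+c$, $s=b-c$, in which $ad-bc=\tfrac14(p^2-q^2-r^2+s^2)$ and the entries of $hM_ih^T$ take a clean form (for instance $P_1+Q_1=rs-pq$, $R_1=\tfrac12(ps-qr)$, and the analogous expressions for $M_2,M_3$ obtained from $M_2=-\tfrac12 M_1-\tfrac{\sqrt{21}}{2}\,\Id$). Applying the membership criterion to $M_1,M_2,M_3$ produces conditions such as $pq-rs\in\sqrt{21}\,\ZZ$, $ps-qr\in 2\sqrt{14}\,\ZZ$, $pr+qs\in\tfrac{\sqrt6}{2}\ZZ$, together with integrality of combinations $\tfrac14(p^2\pm q^2\pm r^2\pm s^2)$. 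Invoking the $\QQ$-linear independence of $1,\sqrt6,\sqrt{14},\sqrt{21}$ inside $\QQ(\sqrt6,\sqrt{14})$ (note $\sqrt6\cdot\sqrt{14}=2\sqrt{21}$), these separate and force each of $p,q,r,s$ to be an integer multiple of a single surd; a short case analysis shows that the resulting quadruple of surds is $(\sqrt1,\sqrt{21},\sqrt6,\sqrt{14})$ or $(\sqrt2,\sqrt{42},\sqrt3,\sqrt7)$ up to the $\mathrm{V}<\mathrm{S}_4$ action permuting the four slots. Writing $p=u\sqrt{u'}$, $q=v\sqrt{v'}$, $r=w\sqrt{w'}$, $s=x\sqrt{x'}$ then recovers exactly the matrix shape in the statement.

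It remains to handle the normalization and the converse. The condition $\det h=1$ reads $p^2-q^2-r^2+s^2=4$, i.e.\ $u^2u'-v^2v'-w^2w'+x^2x'=4$, which is the displayed equation. For the converse I would substitute a quadruple of the stated shape back into the three membership criteria and verify each; the only nontrivial points are parity and divisibility conditions (e.g.\ $ux-3vw$ even, or $\tfrac14(p^2-q^2+r^2-s^2)\in\ZZ$), and I would show that every one of them follows automatically from the single relation $u^2u'-v^2v'-w^2w'+x^2x'=4$ by reducing it modulo $4$ (which already forces, e.g., $u\equiv v$ and $w\equiv x\pmod 2$).

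The main obstacle is the forward bookkeeping. Separating the integrality conditions cleanly requires careful use of the arithmetic of $\QQ(\sqrt6,\sqrt{14})$ and of its Galois action (which interchanges $a\leftrightarrow d$ and $b\leftrightarrow c$, explaining the $\tfrac12(\,\cdot\sqrt{\,\cdot\,}\pm\,\cdot\sqrt{\,\cdot\,})$ pattern), and one must rule out spurious assignments of surds to the slots $p,q,r,s$ in order to pin down precisely the two $\mathrm{V}$-orbits of tuples. Equally delicate is the observation that the several a priori independent parity conditions all collapse into the single norm equation; this is exactly what makes the final description so clean, and it is the step where the modulo-$4$ congruence analysis is indispensable.
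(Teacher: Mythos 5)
Your proposal is correct and follows essentially the same route as the paper: both reduce membership in $\Gamma^{\prime\prime}$ to preservation of the lattice $T$ (equivalently, integrality of $\Phi(h)$ in the basis $e_1,e_2,e_3$), pass to the trace coordinates $u=a+d$, $v=a-d$, $w=b+c$, $x=b-c$, determine the squarefree parts of these quantities by a case analysis that the determinant condition $u^2u^\prime-v^2v^\prime-w^2w^\prime+x^2x^\prime=4$ closes off, and establish the converse by reducing this norm equation modulo $2$ and $4$. The only difference is organizational: the paper conjugates an unknown integer matrix ($A(h)=B^{-1}NB$) and reads the arithmetic constraints off its entries, whereas you push the lattice basis forward in the binary-form model ($hM_ih^T$ tested against an explicit membership criterion for $T$) — two equivalent ways of doing the same bookkeeping, with your ``linear independence of surds'' step really amounting to the same rationality-of-squares-plus-product-conditions argument the paper uses.
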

\begin{proof}
    For any $h=\begin{pmatrix}
        a & b\\ c&d
    \end{pmatrix} \in \SL(2, \RR)$ such that $\Phi(h)$ has integer coefficients. Set $\Phi(h) = N = \begin{pmatrix}
        n_1 & n_2 & n_3 \\ n_4 & n_5 & n_6\\ n_7 & n_8 & n_9
    \end{pmatrix}$ with $n_i \in \ZZ$. Writing out $A(h) = B^{-1}NB$, we obtain this matrix
\begin{frame}
\footnotesize
\setlength{\arraycolsep}{2.5pt} % default: 5pt
\medmuskip = 1mu % default: 4mu plus 2mu minus 4mu
\[ 
\left( \begin{array}{llllll} 
\frac{21(n_1+n_5)+(n_1+2n_2+10n_4-n_5)\sqrt{21}}{42}& -\frac{\sqrt{14}}{14}n_3 + (-\frac{\sqrt{6}}{4}+\frac{\sqrt{14}}{28})n_6 & \frac{21(-n_1+n_4+n_5)+(n_1+2n_2-11n_4-n_5)\sqrt{21}}{42}  \\
(-\frac{\sqrt{6}}{6}-\frac{\sqrt{14}}{2})n_7 - \frac{\sqrt{6}}{3}n_8 & n_9 & (-\frac{\sqrt{6}}{6}+\frac{\sqrt{14}}{2})n_7 - \frac{\sqrt{6}}{3}n_8 \\
\frac{21(-n_1+n_4+n_5)-(n_1+2n_2-11n_4-n_5)\sqrt{21}}{42} & \frac{\sqrt{14}}{14}n_3 + (-\frac{\sqrt{6}}{4}-\frac{\sqrt{14}}{28})n_6 & \frac{21(n_1+n_5)-(n_1+2n_2+10n_4-n_5)\sqrt{21}}{42}    
\end{array} \right)
\]
\end{frame}

Now let $u = a+d, v=a-d, w=b+c, x=b-c$, direct computation shows that $u^2, v^2, w^2, x^2 \in \ZZ$ and  $\sqrt{21}uv, \sqrt{21}wx, 2\sqrt{6}uw \in \ZZ$ (and therefore $\sqrt{21}uv, \sqrt{21}wx, \sqrt{6}uw \in \ZZ$). Therefore, we can set $u=u_1\sqrt{u_2},..,x=x_1\sqrt{x_2}$  with $u_1,..,x_1 \in \ZZ$, $u_2,..,x_2 \in \ZZ^+$ and $u_2,..,x_2$ square-free. Since $\sqrt{21}uv \in \ZZ$, we have $u_2v_2 = 21y_1^2$ for some $y_1 \in \ZZ^+$, $y_1$ square-free and cannot be divided by $3$ or $7$. This gives four possibilities, which are $u_2 = 21y_1$, $7y_1$, $3y_1$ or $y_1$. Similarily for $w_2,x_2$, we have $w_2x_2 = 21y_2^2, u_2w_2 = 6y_3^2$ with $y_2, y_3 \in \ZZ^+$, square-free, and not divided by $3,7$ and $2,3$ respectively. 

Up to symmetry, we only need to consider $u_2 = 6y_3 = 6w_2$ or $u_2 = 3y_3, w_2=2y_3$. In the first case, we must have $w_2$ not divided by $2,3$ and thus $w_2 = y_2$ or $7y_2$. Further discussion of the relation of $u_2$ and $v_2$ shows that $(u_2,v_2,w_2,x_2) = (6y_2,14y_2,y_2,21y_2)$ or $(42y_2,2y_2,7y_2,3y_2)$. The "determinant $= 1$" condition gives $u_1^2u_2-v_1^2v_2-w_1^2w_2+x_1^2x_2=4$ and therefore $4$ is divided by $y_2$. However, $v_2 = 14y_2$ or $2y_2$ shows that $y_2$ is not divided by $2$. The only possibility is that $y_2 = 1$. 

Similarily, in the second case, $u_2 = 3y_3, w_2=2y_3$ we will have $(u_2,v_2,$ $w_2,x_2) =$ $(21y_1,y_1,$ $14y_1,$ $6y_1)$ or $(3y_1,7y_1,$$2y_1,42y_1)$ and we must have $y_1 = 1$. Therefore, $(u_2,v_2,w_2,x_2)=$ $(1,$ $21,$ $6,14)$ or $(2,42,3,7)$ up to permutation by $\mathrm{V} < \mathrm{S}_4$. 

Conversely, for any element $h$ in the group in the statement of the theorem, we already have $\Phi(h)$ having integer coefficients, which can be seen applying elementary number theory. For example, when $h = \begin{pmatrix}
        \frac{u+v\sqrt{21}}{2} & \frac{w\sqrt{6}+x\sqrt{14}}{2} \\
        \frac{w\sqrt{6}-x\sqrt{14}}{2} &
        \frac{u-v\sqrt{21}}{2}
    \end{pmatrix}$, we have $u^2 - 21v^2 - 6w^2 + 14x^2 = 4$. Observing the coefficients of $\Phi(h)$, we see that $\Phi(h)$ has integer coefficients if and only if $u-v,w-x \in 2\ZZ$. (This is the same as $u$ is even if and only $v$ is even, $w$ is even if and only $x$ is even). They all can be deduced from $u^2 - 21v^2 - 6w^2 + 14x^2 = 4$. For example, if we modulo $2$, we obtain $u^2-v^2 \equiv 0$ and thus $u-v \in 2\ZZ$ or $u+v \in 2\ZZ$. However, $u-v\in 2\ZZ$ is the same as $u+v \in 2\ZZ$. Similar argument applies to $w,x$ if we modulo $4$. 
Therefore, the theorem is proven. 
\end{proof}

We investigate more information about the group $\Gamma^{\prime \prime}$ as below. For convenience, we call elements of the form $\begin{pmatrix}
        \frac{u\sqrt{u^\prime}+v\sqrt{v^\prime}}{2} & \frac{w\sqrt{w^\prime}+x\sqrt{x^\prime}}{2} \\
        \frac{w\sqrt{w^\prime}-x\sqrt{x^\prime}}{2} &
        \frac{u\sqrt{u^\prime}-v\sqrt{v^\prime}}{2}
    \end{pmatrix}$ in $\Gamma^{\prime \prime}$ have type $(u^\prime,v^\prime,w^\prime,x^\prime)$. There are eight types of elements here. 
\begin{prop}
\label{normal}
    Elements of type $(1,21,6,14)$ form a normal subgroup $H$ of index $8$ in $\Gamma^{\prime\prime}$, with each type of elements form a coset in $\Gamma^{\prime\prime}$ with repect to this subgroup $H$. Moreover, $\Gamma^{\prime \prime}/H \cong (\ZZ/2\ZZ)^{\oplus3}$. 
\end{prop}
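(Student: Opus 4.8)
The plan is to realize the eight ``types'' as the fibers of an explicit group homomorphism $\theta\colon \Gamma^{\prime\prime} \to (\ZZ/2\ZZ)^3$ whose kernel is exactly $H$; once this is in place, normality of $H$, the coset description of the types, and the final isomorphism all follow formally. The starting observation is that every entry appearing in Proposition \ref{proposition:arithgroupofL27} lies in the real multiquadratic field $L = \QQ(\sqrt 2,\sqrt 3,\sqrt 7)$, since $\sqrt{21}=\sqrt3\sqrt7$, $\sqrt6=\sqrt2\sqrt3$, $\sqrt{14}=\sqrt2\sqrt7$, $\sqrt{42}=\sqrt2\sqrt3\sqrt7$. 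I would equip $L$ with its natural grading by $\mathrm{Gal}(L/\QQ)\cong(\ZZ/2\ZZ)^3$: writing each squarefree $d\mid 42$ as a subset of $\{2,3,7\}$, the line $\QQ\cdot\sqrt d$ is the homogeneous component of that degree, and this is an \emph{algebra} grading because $\sqrt d\,\sqrt{d'}\in\QQ^\times\sqrt{d''}$ with $d''$ the squarefree part of $dd'$. Crucially, the eight possible first entries $u'$ among the admissible tuples are precisely the eight divisors of $42$, so the ``type'' of an element is faithfully recorded by $u'$ under the bijection $\{d\mid 42\}\leftrightarrow(\ZZ/2\ZZ)^3$.

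Second, I would decompose each $h\in\Gamma^{\prime\prime}$ in the basis $I$, $D=\diag(1,-1)$, $J=\left(\begin{smallmatrix}0&1\\1&0\end{smallmatrix}\right)$, $W=\left(\begin{smallmatrix}0&1\\-1&0\end{smallmatrix}\right)$ of $M_2(\RR)$; the formula of Proposition \ref{proposition:arithgroupofL27} reads $h=\tfrac12\bigl(u\sqrt{u'}\,I+v\sqrt{v'}\,D+w\sqrt{w'}\,J+x\sqrt{x'}\,W\bigr)$. These four matrices carry the fine Pauli grading of $M_2$ by the even-weight subgroup of $(\ZZ/2\ZZ)^3$: assigning $\deg I=0$, $\deg D=(0,1,1)$, $\deg J=(1,1,0)$, $\deg W=(1,0,1)$, the multiplication table ($D^2=J^2=I$, $W^2=-I$, $DJ=W$, $JW=-D$, etc.) is respected degree-wise, the point being that the stray sign in $W^2=-I$ is harmless since $-1$ has degree $0$. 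Tensoring this grading of $M_2(\QQ)$ with that of $L$ then yields an algebra grading of $M_2(L)$ by $(\ZZ/2\ZZ)^3$.

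The heart of the argument is a single compatibility check: for each of the eight admissible tuples $(u',v',w',x')$, the four monomials $\sqrt{u'}I$, $\sqrt{v'}D$, $\sqrt{w'}J$, $\sqrt{x'}W$ share one and the same degree, equal to the label of $u'$. This is exactly why the admissible tuples form the two $\mathrm V$-orbits of $(1,21,6,14)$ and $(2,42,3,7)$: permuting $(u',v',w',x')$ by $\mathrm V$ translates the common degree by the even-weight subgroup, so the tuples are indexed by the common degree $g\in(\ZZ/2\ZZ)^3$. Consequently every $h\in\Gamma^{\prime\prime}$ is \emph{homogeneous} in the grading, of degree $\theta(h):=$ the label of its first entry $u'$; since the grading is multiplicative and $\Gamma^{\prime\prime}$ is closed under multiplication, $\theta$ is a group homomorphism. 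Its kernel is the degree-$0$ part, i.e.\ the type-$(1,21,6,14)$ elements, so $H=\ker\theta$ is normal and each type is precisely one coset of $H$.

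Finally I would prove $\theta$ surjective, which by the first isomorphism theorem gives $\Gamma^{\prime\prime}/H\cong(\ZZ/2\ZZ)^3$ and index $8$. It suffices to hit three generators: solving $u^2u'-v^2v'-w^2w'+x^2x'=4$ with the parity constraints $u-v,w-x\in2\ZZ$, one finds $(u,v,w,x)=(0,0,1,1)$ of type $(2,42,3,7)$ (degree $(1,0,0)$), $(5,3,2,0)$ of type $(3,7,2,42)$ (degree $(0,1,0)$), and $(1,1,0,0)$ of type $(7,3,42,2)$ (degree $(0,0,1)$). I expect the main obstacle to be the bookkeeping in the compatibility check of the third paragraph, namely matching the $\mathrm V$-orbit description of the admissible tuples with the chosen degrees of $I,D,J,W$ so that homogeneity holds on the nose; surjectivity, by contrast, is a short explicit search.
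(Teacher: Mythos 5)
Your proposal is correct, and it takes a genuinely different route from the paper's. The paper argues by direct computation: it verifies that $H$ is closed under multiplication, exhibits an explicit representative $r$ for each of the eight types (the table following the proposition), and checks that left and right multiplication by $r$ and $r^{-1}$ biject $H$ with the set of elements of each type; this identifies every type simultaneously with a left and a right coset of $H$, giving normality, index $8$, and the quotient structure read off from the table. Your argument instead packages all eight of those verifications into one structural fact: $\Gamma^{\prime\prime}$ sits inside $M_2(L)$ with $L=\QQ(\sqrt{2},\sqrt{3},\sqrt{7})$, which carries a $(\ZZ/2\ZZ)^{\oplus 3}$-algebra grading obtained by tensoring the Pauli grading of $M_2(\QQ)$ (with $\deg I=0$ and $\deg D$, $\deg J$, $\deg W$ equal to the labels of $21$, $6$, $14$ respectively — the assignment forced by requiring type-$(1,21,6,14)$ elements to be homogeneous of degree $0$, and consistent since $\deg D+\deg J=\deg W$ matches $DJ=W$, the sign in $W^2=-I$ being of degree $0$) with the Kummer/Galois grading of $L$. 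The one compatibility check you flag as the main obstacle does go through: for each of the eight admissible tuples the four monomials share the degree labelled by $u^{\prime}$ (the $\mathrm{V}$-orbit of $(1,21,6,14)$ sweeps out the even-weight subgroup $\{(0,0,0),(0,1,1),(1,1,0),(1,0,1)\}$ and the orbit of $(2,42,3,7)$ its odd-weight coset), so every element of $\Gamma^{\prime\prime}$ is homogeneous, the degree map $\theta$ is automatically a homomorphism, $H=\ker\theta$ is normal, the types are exactly the fibers, and your three surjectivity witnesses are valid (the norm checks are $-3+7=4$, $75-63-8=4$, $7-3=4$, and their degrees $(1,0,0)$, $(0,1,0)$, $(0,0,1)$ generate). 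What the paper's approach buys is elementary self-containedness — nothing beyond matrix multiplication; what yours buys is conceptual economy: one grading check replaces eight coset bijections, the elementary-abelian structure of the quotient is explained by $\mathrm{Gal}(L/\QQ)\cong(\ZZ/2\ZZ)^{\oplus 3}$ rather than observed from a table, and the well-definedness of "type" (a priori ambiguous when some of $u,v,w,x$ vanish) comes for free because a nonzero homogeneous element has a unique degree. As a bonus, your homomorphism produces a consistent labelling of the cosets, whereas the paper's table is internally inconsistent: it assigns $(0,1,1)$ to both $(42,2,7,3)$ and $(3,7,2,42)$, while any homomorphic labelling compatible with its remaining entries must assign $(1,0,1)$ to $(42,2,7,3)$.
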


\begin{proof}
    Direct computation shows that $H$ is a subgroup. For elements of type $(u^\prime,v^\prime,w^\prime,x^\prime)$, we can choose some element $r$ of this type (for example, see below). Also by direct computation, we can show that left multiplication by $r$ and $r^{-1}$ will give a bijection between elements of type $(1,21,6,14)$ and elements of type $(u^\prime,v^\prime,w^\prime,x^\prime)$. Therefore, the set of elements of type $(u^\prime,v^\prime,w^\prime,x^\prime)$ is just the left coset $rH$. Right multiplication will give the same conclusion, thus $H$ is a normal subgroup. 
\end{proof}

The following table will make the structure clear.
\begin{longtable}{ |p{2cm}||p{4cm}|p{3cm}|p{3cm}|  }
\hline
Type& Representative & Preserves $\ZZ/3\ZZ$ in $A_T$? & As elements in $(\ZZ/2\ZZ)^{\oplus3}$\\
\hline
$(1,21,6,14)$ & $\Id$ & Yes & $(0,0,0)$\\
\hline
$(21,1,14,6)$ & $\begin{pmatrix}
        \frac{\sqrt{21}+3}{2} & \frac{\sqrt{14}+\sqrt{6}}{2} \\
        \frac{\sqrt{14}-\sqrt{6}}{2} &
        \frac{\sqrt{21}-3}{2}
    \end{pmatrix}$ & No & $(1,0,0)$\\
\hline
$(6,14,1,21)$ & $\begin{pmatrix}
        \frac{3\sqrt{6}+\sqrt{14}}{2} & 3 \\
        3 &
        \frac{3\sqrt{6}-\sqrt{14}}{2}
    \end{pmatrix}$ & No & $(0,1,0)$\\
\hline
$(14,6,21,1)$ & $\begin{pmatrix}
        0 & 1 \\
        -1 &
        0
    \end{pmatrix}$ & Yes & $(1,1,0)$\\
\hline
$(2,42,3,7)$ & $\begin{pmatrix}
        0 & \frac{\sqrt{3}+\sqrt{7}}{2} \\
        \frac{\sqrt{3}-\sqrt{7}}{2} &
        0
    \end{pmatrix}$ & Yes & $(0,0,1)$\\
\hline
$(42,2,7,3)$ & $\begin{pmatrix}
        \sqrt{2} & \frac{3\sqrt{7}+5\sqrt{3}}{2} \\
        \frac{3\sqrt{7}-5\sqrt{3}}{2} &
        -\sqrt{2}
    \end{pmatrix}$ & No & $(0,1,1)$\\
\hline
$(3,7,2,42)$ & $\begin{pmatrix}
        \sqrt{3} & \sqrt{2} \\
        \sqrt{2} &
        \sqrt{3}
    \end{pmatrix}$ & No & $(0,1,1)$\\
\hline
$(7,3,42,2)$ & $\begin{pmatrix}
        \frac{\sqrt{7}+\sqrt{3}}{2} & 0 \\
        0 &
        \frac{\sqrt{7}-\sqrt{3}}{2}
    \end{pmatrix}$ & Yes & $(1,1,1)$\\
 
\hline
\caption{Elements of the Group $\Gamma^{\prime\prime}/H$}
\end{longtable}

\subsection{Commensurability Class of $\Gamma$}
\label{subsection: type of H as arith}
Recall that arithmetic subgroups of $\SL(2,\RR)$ up to commensurability and conjugacy have been completely classified, see for example \cite[Chapter 6]{morris2015introduction}. For reader's convenience, we gather these facts as below

\begin{defn}
    A closed subgroup $\Delta < \SL(2,\RR)$ is cocompact if $\SL(2,\RR)/\Delta$ is compact.  
\end{defn}

We borrow the following proposition \cite[Chapter 6, Proposition 6.1.5]{morris2015introduction}
\begin{prop}
\label{propositon:only noncocompact}
    $\SL(2,\ZZ)$ is the only noncocompact, arithmetic subgroup of $\SL(2,\RR)$ up to commensurability and conjugacy. 
\end{prop}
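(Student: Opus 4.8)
The plan is to deduce this from two standard pillars of the arithmetic theory of semisimple groups: the classification of arithmetic lattices in $\SL(2,\RR)$ via quaternion algebras (Weil's restriction of scalars together with Borel--Harish-Chandra finiteness), and Godement's compactness criterion. Granting these, the non-cocompact case is singled out by an elementary field-theoretic observation. First I would recall the structure theorem: up to commensurability and conjugacy, every arithmetic subgroup of $\SL(2,\RR)$ is commensurable to the group $\mathcal{O}^1$ of reduced-norm-one units of an order $\mathcal{O}$ in a quaternion algebra $A$ over a totally real number field $F$, where $A$ splits (i.e. $A\otimes_{F,\sigma}\RR\cong M_2(\RR)$) at exactly one real place $\sigma$ and is definite (i.e. $A\otimes_{F,\tau}\RR$ is the Hamilton quaternion division algebra) at every other real place $\tau$; the embedding into $\SL(2,\RR)$ is the one induced by $\sigma$, while the remaining real places contribute compact factors.

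Next I would apply Godement's criterion in this setting: the lattice $\mathcal{O}^1$ is cocompact in $\SL(2,\RR)$ if and only if the $\QQ$-group $\mathrm{Res}_{F/\QQ}(A^1)$ is anisotropic over $\QQ$, which for quaternion algebras is equivalent to $A$ being a division algebra. Equivalently, $\mathcal{O}^1$ fails to be cocompact exactly when $A$ is \emph{not} a division algebra, i.e. when $A$ contains nontrivial unipotents.

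The heart of the matter is then the following reduction. The only non-division quaternion algebra over a field is the split one, $A\cong M_2(F)$, and $M_2(F)$ is split at every place of $F$, in particular at every real place. But the classification above requires $A$ to be definite at all real places except one, so $M_2(F)$ can admit at most one real place. Since $F$ is totally real, this forces $[F:\QQ]=1$, hence $F=\QQ$ and $A\cong M_2(\QQ)$. Consequently every order $\mathcal{O}$ is commensurable to $M_2(\ZZ)$, and $\mathcal{O}^1$ is commensurable (up to conjugacy) to $\SL(2,\ZZ)$. Conversely $\SL(2,\ZZ)$ is genuinely non-cocompact, since it contains the unipotent element $\begin{pmatrix} 1 & 1 \\ 0 & 1 \end{pmatrix}$ (equivalently, the modular curve $\SL(2,\ZZ)\backslash\HH$ has a cusp), so the dichotomy is sharp and the non-cocompact commensurability-and-conjugacy class is unique.

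The main obstacle is that the two black boxes I rely on are themselves substantial: a self-contained proof would require developing restriction of scalars, the Borel--Harish-Chandra finiteness theorem, and Godement's compactness criterion, none of which is routine. Given those inputs, however, the remaining step---that non-cocompactness forces the split quaternion algebra and hence $F=\QQ$---is completely elementary. I would therefore treat the classification and Godement's criterion as cited results (as is done via \cite{morris2015introduction}) and present only the field-theoretic reduction as the essential argument.
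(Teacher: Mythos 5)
Your argument is mathematically correct, but there is nothing in the paper to compare it against: the paper does not prove this proposition at all, it simply borrows it as a quoted fact from \cite[Chapter 6, Proposition 6.1.5]{morris2015introduction}. Your proposal therefore supplies a proof where the paper offers only a citation. The route you take is the standard one, and it is consistent with the other material the paper does import from the same source: the quaternion-algebra classification you use as a black box is exactly the paper's Proposition \ref{proposition:arithsubgrp} (Morris, Propositions 6.2.4--6.2.6), and Godement's criterion is the second ingredient. Your field-theoretic reduction is sound: a non-cocompact lattice corresponds, via Godement, to a non-division algebra, hence to $\mathrm{M}_2(F)$ by Wedderburn; since $\mathrm{M}_2(F)$ is split at every real place while the classification requires definiteness at all real places except one, $F$ has a single archimedean place, and total reality then forces $F=\QQ$, so every order is commensurable with $\mathrm{M}_2(\ZZ)$ and the lattice is conjugate-commensurable with $\SL(2,\ZZ)$; conversely $\SL(2,\ZZ)$ contains the unipotent $\begin{pmatrix}1&1\\0&1\end{pmatrix}$, so it is genuinely non-cocompact and the dichotomy is sharp. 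What your route buys is a self-contained logical bridge between the two facts the paper cites separately from Morris (Proposition \ref{propositon:only noncocompact} and Proposition \ref{proposition:arithsubgrp}), making transparent why the non-cocompact class is unique; what the paper's route buys is brevity, treating a textbook statement as a black box. If you wanted to splice your argument into the paper, the only caution is that your two pillars (Borel--Harish-Chandra plus restriction of scalars, and Godement) are themselves deeper than anything the paper develops, so presenting them as cited inputs, as you propose, is the right call.
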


What is left is those cocompact arithmetic subgroups. We first recall what cocompact arithmetic subgroups can appear according to \cite[Proposition 6.2.4, Propsition 6.2.5 and Proposition 6.2.6]{morris2015introduction}.
\begin{defn}
\begin{enumerate}
    \item For any field $F$, and any nonzero $a,b \in F$, the corresponding quaternion algebra over $F$ is the ring
    \[
    \HH_F^{a,b} = \{p+qi+rj+sk|p,q,r,s\in F\},
    \]
    where
    \begin{itemize}
        \item addition is defined in the obvious way, and
        \item multiplication is determined by the relations
        \[
        i^2 = a, j^2 = b, ij=k=-ji
        \]
        together with the requirement that every element of $F$ is in the center of $\HH_F^{a,b}$. 
    \end{itemize}
    \item The reduced norm of $x = p+qi+rj+sk \in \HH_F^{a,b}$ is
    \[
    \mathrm{N}_{red}(x) = x\overline{x} = p^2-aq^2-br^2+abs^2 \in F
    \]
    where $\overline{x} = p-qj-rj-sk$ is the conjugate of $x$. 
\end{enumerate}
\end{defn}

For $a,b \in \RR^+$, there is an isomorphism of rings $\psi\colon \HH^{a,b}_\RR \xrightarrow{\sim} \mathrm{M}_{2\times2}(\RR)$ sending $i$ to $\begin{pmatrix}
    \sqrt{a}&0\\0&-\sqrt{a}
\end{pmatrix}$, $j$ to $\begin{pmatrix}
    0&1\\b&0
\end{pmatrix}$ and $k$ to $\begin{pmatrix}
    0&\sqrt{a}\\-b\sqrt{a} & 0
\end{pmatrix}$ such that $\mathrm{N}_{red}(x) = \det(\psi(x))$. Let $\SL(1, \HH^{a,b}_F)\coloneqq \{g\in \HH_F^{a,b}|\mathrm{N}_{red}(x) = 1\}$, then under the identification above, we have $\SL(2,\RR) = \SL(1, \HH^{a,b}_\RR)$. We recall the following fact according to \cite[Proposition 6.2.4, Proposition 6.2.5 and Proposition 6.2.6]{morris2015introduction}
\begin{prop}
\label{proposition:arithsubgrp}
    Every cocompact and arithmetic subgroup of $\SL(2,\RR)$ up to commensurability and conjugacy is of the following form
    \begin{itemize}
        \item $\SL(1, \HH_\ZZ^{a,b})< \SL(1, \HH_\RR^{a,b})\cong\SL(2,\RR)$, where $a,b \in \ZZ^+$ and $(0,0,0,0)$ is the only integer solution $(p,q,r,s)$ of the Diophantine equation\[
        w^2-ax^2-by^2+abz^2 = 0
        \]
        \item $\SL(1, \HH_\mathcal{O}^{a,b})<\SL(1,\HH_\RR^{a,b}) \cong \SL(2,\RR)$, where $\mathcal{O}$ is the ring of integers of a totally real algebraic number field $F \ne \QQ$ and $a,b\in \mathcal{O}$ are positive such that $\sigma(a)$ and $\sigma(b)$ are negative for every place $\sigma \ne \Id$. 
    \end{itemize}
\end{prop}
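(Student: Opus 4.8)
The plan is to derive the classification from the general structure theory of arithmetic lattices in $\SL(2,\RR)$ together with the arithmetic of quaternion algebras, treating the two bullets as the cases $F=\QQ$ and $F\neq\QQ$ of one uniform description. The starting point is that, up to commensurability and conjugacy, every arithmetic subgroup of $\SL(2,\RR)$ is the image of $G(\ZZ)$ for a connected semisimple $\QQ$-group $G$ whose real points $G(\RR)$ are isogenous to $\SL(2,\RR)$ times a compact factor. By the classification of $\QQ$-forms of $\SL_2$, such a $G$ is necessarily of the form $\mathrm{Res}_{F/\QQ}\SL(1,D)$, where $F$ is a number field and $D$ a quaternion algebra over $F$. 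Computing $G(\RR)=\prod_{\sigma}\SL(1,D\otimes_{F,\sigma}\RR)$ over the archimedean places $\sigma$ of $F$, the requirement that exactly one factor be noncompact and equal to $\SL(2,\RR)$ forces $F$ to be totally real and $D$ to be split at exactly one real place, say the distinguished embedding $\Id$, and ramified (i.e.\ $D\otimes\RR\cong\HH$, the Hamilton quaternions) at every other real place.

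The second step is to translate this data into the symbol $\HH^{a,b}_F$. Every quaternion algebra over $F$ is isomorphic to some $\HH^{a,b}_F$, and the real behaviour is read off from signs: $\HH^{a,b}_F\otimes_{F,\sigma}\RR\cong M_{2\times2}(\RR)$ when $\sigma(a)>0$ or $\sigma(b)>0$, and $\cong\HH$ when both $\sigma(a),\sigma(b)<0$. Hence the splitting conditions above become exactly the sign conditions in the statement: at $\Id$ one arranges $a,b>0$, while $\sigma(a),\sigma(b)<0$ for every $\sigma\neq\Id$. After multiplying by squares of denominators (which does not change $D$) one may take $a,b\in\mathcal{O}_F$, and then $\SL(1,\mathcal{O}_D)$ sits inside $\SL(1,\HH^{a,b}_\RR)\cong\SL(2,\RR)$ via the isomorphism $\psi$ of the preceding definition; any arithmetic subgroup is commensurable to one of this shape. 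When $F=\QQ$ there are no auxiliary places, and a short argument using the relations $\HH^{a,b}\cong\HH^{b,a}\cong\HH^{ac^2,bd^2}$ and $\HH^{a,b}\cong\HH^{a,-ab}$ normalizes to $a,b\in\ZZ^+$, giving the first bullet.

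The third step is the cocompactness dichotomy. By the Borel--Harish-Chandra (Godement) compactness criterion, $G(\ZZ)\backslash G(\RR)$ is compact if and only if $G$ is $\QQ$-anisotropic, and for $G=\mathrm{Res}_{F/\QQ}\SL(1,D)$ this holds precisely when $D$ is a division algebra, i.e.\ when the reduced norm form $\mathrm{N}_{red}$ is anisotropic over $F$. For $F=\QQ$ this is exactly the condition that $w^2-ax^2-by^2+abz^2=0$ have only the trivial solution. For $F\neq\QQ$, the presence of a real place $\sigma\neq\Id$ with $\sigma(a),\sigma(b)<0$ makes $D\otimes_{F,\sigma}\RR\cong\HH$ a division algebra, so $\mathrm{N}_{red}$ is already anisotropic over that completion and hence over $F$; thus $D$ is automatically a division algebra and cocompactness is automatic, which is why the second bullet carries no explicit anisotropy hypothesis. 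Together with Proposition \ref{propositon:only noncocompact}, which isolates the unique non-cocompact class $\SL(2,\ZZ)$ (the excluded case $F=\QQ$, $D=M_{2\times2}(\QQ)$), this exhausts all arithmetic subgroups.

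I expect the main obstacle to be the first step: reducing an arbitrary arithmetic subgroup to the form $\mathrm{Res}_{F/\QQ}\SL(1,D)$ rests on the full classification of $\QQ$-forms of $\SL_2$ and on the Borel--Harish-Chandra identification of arithmetic subgroups with $G(\ZZ)$ for a $\QQ$-structure, which is precisely the substantial input developed over several sections of \cite{morris2015introduction}. Once that structure theory and the description of $G(\RR)$ over the real places are granted, the remaining steps---the $\HH^{a,b}$ translation, the sign and splitting bookkeeping, and the anisotropy computation for $\mathrm{N}_{red}$---are routine field-theoretic and number-theoretic verifications.
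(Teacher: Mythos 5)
The paper does not prove this proposition at all: it is imported verbatim as a known fact from \cite[Propositions 6.2.4--6.2.6]{morris2015introduction} (note the surrounding text ``We recall the following fact according to\dots''), so there is no internal argument to compare yours against. Your outline is, in substance, a correct reconstruction of the proof that lives in that reference: the reduction of an arbitrary arithmetic subgroup to the image of $G(\ZZ)$ for $G=\mathrm{Res}_{F/\QQ}\,\SL(1,D)$ (Borel--Harish-Chandra plus the classification of forms of $\SL_2$); the archimedean analysis forcing $F$ totally real and $D$ split at exactly one real place; the translation into the symbols $\HH^{a,b}_F$ via the sign conditions; and the Godement criterion giving cocompactness $\Leftrightarrow$ anisotropy of the reduced norm form, which is automatic when $F\neq\QQ$ because ramification at a second real place already makes $D$ a division algebra, and which for $F=\QQ$ is exactly the stated Diophantine condition. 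The individual verifications you invoke (e.g.\ $\HH^{a,b}\cong\HH^{a,-ab}$ together with scaling by squares to normalize $a,b\in\ZZ^+$, and anisotropy over one completion implying anisotropy over $F$) are all correct. The only caveat is the one you flag yourself: your first step is not proved but delegated to the structure theory developed in \cite{morris2015introduction}; since the paper delegates the entire proposition to that same source, your proposal is, if anything, more self-contained than what the authors provide, and it is consistent with how the statement is used later (Theorem \ref{theorem:typeofdomainL27} only needs the criterion in the first bullet to recognize $\SL(1,\HH_\ZZ^{21,6})$ as cocompact).
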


Next we show $\Gamma$ is a cocompact arithmetic subgroup of $\SL(2,\RR)$.

\begin{prop}
\label{proposition:cocompactness}
    The curve $\mathbb{D}_T/\Gamma$ is compact, and therefore not isogenous to $\PSL(2,\ZZ)\bs\HH$. 
\end{prop}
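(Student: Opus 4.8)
The plan is to deduce compactness from the single genuine input that the rational quadratic space $T\otimes\QQ$ is anisotropic; everything else is the standard dictionary between cusps and rational isotropic subspaces. First I would record the reduction. Recall from \S\ref{tubedomainL27} that $\Phi$ induces an isomorphism $\PSL(2,\RR)\xrightarrow{\sim}\SO^{+}(1,2)$ carrying $\Gamma^{\prime\prime}=\Phi^{-1}(\Gamma^{\prime})$ onto $\Gamma^{\prime}=\rO^{+}(T)\cap\SO^{+}(1,2)$, so that $\mathbb{D}_{T}/\Gamma\cong\mathbb{D}_{T}/\Gamma^{\prime}\cong\HH/\Gamma^{\prime\prime}$ is an arithmetic quotient of $\HH$ attached to the orthogonal group of $T$. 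By the theory of the Baily--Borel compactification the cusps of this quotient biject with the $\Gamma$-orbits of primitive isotropic vectors of $T$; equivalently, by Godement's compactness criterion the lattice $\Gamma^{\prime\prime}$ is cocompact in $\SL(2,\RR)$ if and only if $\SO(T\otimes\QQ)$ has $\QQ$-rank $0$, i.e. iff $T\otimes\QQ$ carries no nonzero isotropic vector. This is precisely the anisotropy hypothesis appearing in the cocompactness condition of Proposition \ref{proposition:arithsubgrp}. Thus everything reduces to showing that the form with Gram matrix \eqref{equation: gram matrix of T} represents $0$ only trivially.

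To check anisotropy I would first simplify the form. Writing $v=a e_{1}+b e_{2}+c e_{3}$ one has $Q(v)=-2a^{2}+2ab+10b^{2}-28c^{2}$, and the invertible (over $\QQ$) substitution $d=2a-b$ gives $-2\,Q(v)=d^{2}-21b^{2}+56c^{2}$. Hence $Q(v)=0$ is equivalent to $d^{2}-21b^{2}+56c^{2}=0$, and it suffices to prove this ternary form is anisotropic over $\QQ$. I would do this $7$-adically: since $21=7\cdot 3$ and $56=7\cdot 8$, the form equals $d^{2}+7(8c^{2}-3b^{2})$, and the binary form $8c^{2}-3b^{2}\equiv c^{2}-3b^{2}\pmod 7$ is anisotropic over $\QQ_{7}$ because $3$ is a nonsquare modulo $7$ (the squares being $\{1,2,4\}$). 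Consequently every nonzero value of $8c^{2}-3b^{2}$ has even $7$-adic valuation, so a nontrivial zero of $d^{2}+7(8c^{2}-3b^{2})$ would force $d^{2}$ (even valuation) to equal $-7(8c^{2}-3b^{2})$ (odd valuation), which is impossible. Therefore the form is anisotropic over $\QQ_{7}$, a fortiori over $\QQ$, so $T\otimes\QQ$ has no nonzero isotropic vector, and by the previous paragraph $\mathbb{D}_{T}/\Gamma$ is compact.

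For the final clause I would argue that $\PSL(2,\ZZ)\backslash\HH$ is noncompact (it has a single cusp), whereas $\mathbb{D}_{T}/\Gamma\cong\HH/\Gamma^{\prime\prime}$ is compact. Two modular curves that are isogenous arise from Fuchsian groups commensurable up to conjugacy, and cocompactness is invariant under passing to finite-index subgroups and under conjugation; since $\PSL(2,\ZZ)$ is not cocompact while $\Gamma^{\prime\prime}$ is, the two groups are not commensurable and the curves cannot be isogenous. Concretely, a common finite covering cannot dominate both a compact and a noncompact Riemann surface.

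The computation is entirely routine: the arithmetic collapses to the single observation that $3$ is a quadratic nonresidue modulo $7$. The only point requiring care is invoking the correct general principle that links anisotropy of $T\otimes\QQ$ to cocompactness of the associated lattice---the Baily--Borel/Godement machinery recalled above---rather than any hard estimate, so I regard the main (conceptual) obstacle as correctly citing that equivalence rather than any step in the proof itself.
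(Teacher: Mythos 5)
Your proposal is correct and follows essentially the same route as the paper: both reduce the statement, via the classification of arithmetic subgroups of $\SL(2,\RR)$ and the Baily--Borel description of the boundary, to showing that $T\otimes\QQ$ has no nonzero isotropic vector, and then verify this by elementary number theory. The only difference is the local verification: the paper runs a parity descent at the prime $2$ (coprime solutions of $-x^{2}+xy+5y^{2}=14z^{2}$ force $x,y$, then $z$, to be even), whereas you diagonalize to $d^{2}-21b^{2}+56c^{2}$ and argue $7$-adically using that $3$ is a quadratic nonresidue modulo $7$ --- both computations are valid.
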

\begin{proof}
    By Proposition \ref{propositon:only noncocompact}, we only need to show that $\DD_T/\Gamma$ is compact. This can be shown by considering the Baily-Borel compactification. In fact, the boundaries of Baily-Borel compactification are the isotropic subspaces of $T\otimes \QQ$ \cite{baily1966compactification}. However, there is no isotropic vector. The claim is equivalent to that the equation $-x^{2}+xy+5y^{2}=14z^{2}$ has no coprime integer solution.
    
    If there is an integer solution $x,y,z$ with $\gcd(x,y,z)=1$, then  $x,y$ must be even integers. So $4$ divides $14z^{2}$ and thus $z$ is forced to be an even integer, a contradiction to the coprime assumption.
\end{proof}

\begin{rmk}
\label{rmk:compactnessfromGIT}
    The compactness of $\Gamma\backslash\mathbb{D}_{T}$ can also be deduced from the GIT analysis in Proposition \ref{singuL27}.

    More precisely, $\overline{\mathcal{F}}_{L_{2}(7)}\cong \mathbb{P}^{1}$ can be obtained from $\mathcal{F}_{L_{2}(7)}^{\mathrm{ADE}}$ by adding one point that corresponds to the determinantal cubic fourfold. And note that the point of determinantal cubic fourfold is of codimension 1. By Theorem \ref{thm:Yu--Zhengperiodmapsymmcubic4fold}, the period map:
    \begin{align*}
        \mathcal{F}_{L_{2}(7)}^{\mathrm{ADE}}\rightarrow \Gamma_{T}\bs(\mathbb{D}- \calH_{*})
    \end{align*}
extends to 
    \begin{align}
    \label{equation:periodmapL27}
\overline{\mathcal{F}}_{L_{2}(7)}\cong \overline{\Gamma_{T}\bs\mathbb{D}}^{\mathcal{H}_*}=\overline{\Gamma_{T}\bs\mathbb{D}}^{BB}.
    \end{align}
From analysis of singular cubic fourfolds in $\overline{\mathcal{F}}_{L_{2}(7)}$, the image of map \ref{equation:periodmapL27} should be $\Gamma_{T}\bs\mathbb{D}$. Therefore, $\Gamma_{T}\bs\mathbb{D}$ is itself compact.
\end{rmk}

In fact, we can further determine the type of $\HH$ as an arithmetic subgroup of $\SL(2, \RR)$

\begin{thm}
\label{theorem:typeofdomainL27}
   The curve $\DD_{T}/\Gamma$ is isogenous to $\HH/\SL(1, \HH_\ZZ^{21,6})$. 
\end{thm}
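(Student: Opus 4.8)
The plan is to reduce the statement to the classification of cocompact arithmetic Fuchsian groups recorded in Proposition \ref{proposition:arithsubgrp}, and to pin down the relevant quaternion algebra by inspecting the explicit description of $\Gamma^{\prime\prime}$ in Proposition \ref{proposition:arithgroupofL27}. Since $\Phi$ identifies $\DD_T/\Gamma$ with $\HH/\Gamma^{\prime\prime}$ (the kernel $\{\pm\Id\}$ of $\Phi$ acting trivially on $\HH$), and since the subgroup $H$ of type $(1,21,6,14)$ is normal of index $8$ in $\Gamma^{\prime\prime}$ by Proposition \ref{normal}, it suffices to show that $H$ is commensurable, up to conjugacy in $\SL(2,\RR)$, with $\SL(1,\HH^{21,6}_\ZZ)$. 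The cocompactness needed to land in the division-algebra case of Proposition \ref{proposition:arithsubgrp} is already guaranteed by Proposition \ref{proposition:cocompactness}.

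First I would exhibit the quaternion structure carried by $H$. By Proposition \ref{proposition:arithgroupofL27}, every element of $H$ is $\tfrac{u}{2}E_1+\tfrac{v}{2}E_2+\tfrac{w}{2}E_3+\tfrac{x}{2}E_4$, where $E_1=\Id$, $E_2=\begin{pmatrix}\sqrt{21}&0\\0&-\sqrt{21}\end{pmatrix}$, $E_3=\begin{pmatrix}0&\sqrt{6}\\\sqrt{6}&0\end{pmatrix}$, $E_4=\begin{pmatrix}0&\sqrt{14}\\-\sqrt{14}&0\end{pmatrix}$, with $u,v,w,x\in\ZZ$ subject to $u\equiv v$, $w\equiv x \pmod 2$ and $u^2-21v^2-6w^2+14x^2=4$. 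The crucial elementary identity $\sqrt{6}\,\sqrt{21}=3\sqrt{14}$ gives $E_2^2=21$, $E_3^2=6$ and $E_2E_3=-E_3E_2=3E_4$, so setting $i=E_2$, $j=E_3$, $k=ij=3E_4$ realizes $\Span_\QQ(E_1,E_2,E_3,E_4)$ as the quaternion algebra $\HH^{21,6}_\QQ$. Writing a generic element of $H$ in the basis $(1,i,j,k)$ as coordinates $(p,q,r,s)=(u/2,v/2,w/2,x/6)$, a direct check shows its reduced norm $p^2-21q^2-6r^2+126s^2$ equals $\tfrac14(u^2-21v^2-6w^2+14x^2)$; hence the condition $\det=1$ is exactly $\mathrm{N}_{red}=1$. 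The norm form is anisotropic over $\QQ$ (equivalently, $T$ has no isotropic vector, as in Proposition \ref{proposition:cocompactness}), so $\HH^{21,6}_\QQ$ is a division algebra and we are in the first case of Proposition \ref{proposition:arithsubgrp}.

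Next I would conjugate this model onto the standard embedding $\psi$ of Proposition \ref{proposition:arithsubgrp} and read off the commensurability. Conjugation by $C=\diag(6^{-1/4},6^{1/4})\in\SL(2,\RR)$ fixes $E_2$ and sends $E_3\mapsto\begin{pmatrix}0&1\\6&0\end{pmatrix}=\psi(j)$, hence $3E_4=E_2E_3\mapsto\psi(i)\psi(j)=\psi(k)$; thus $CHC^{-1}$ consists of the matrices $\psi(\tfrac{u}{2}+\tfrac{v}{2}i+\tfrac{w}{2}j+\tfrac{x}{6}k)$ with $\mathrm{N}_{red}=1$ and $u\equiv v$, $w\equiv x \bmod 2$. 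Every integral quaternion $p+qi+rj+sk$ with $p,q,r,s\in\ZZ$ and reduced norm $1$ corresponds to $(u,v,w,x)=(2p,2q,2r,6s)$, which automatically satisfies the two parity conditions; therefore $\SL(1,\HH^{21,6}_\ZZ)\subseteq CHC^{-1}$. As both groups are cocompact lattices in $\SL(2,\RR)$, this containment has finite index, so $H$ and $\SL(1,\HH^{21,6}_\ZZ)$ are commensurable up to conjugacy. Chaining the isomorphism $\DD_T/\Gamma\cong\HH/\Gamma^{\prime\prime}$ with the finite covering $\HH/H\to\HH/\Gamma^{\prime\prime}$ coming from $[\Gamma^{\prime\prime}:H]=8$ and the finite covering $\HH/\SL(1,\HH^{21,6}_\ZZ)\to\HH/CHC^{-1}\cong\HH/H$ yields the asserted isogeny.

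The main obstacle is the bookkeeping of the second paragraph: recognizing, behind the mixed-radical entries of $\Gamma^{\prime\prime}$, the single identity $\sqrt6\,\sqrt{21}=3\sqrt{14}$ that makes $(i,j,ij)$ close up into $\HH^{21,6}_\QQ$ (rather than a split or degenerate algebra), and then fixing the correct order --- the half-integer coordinates in $p,q,r$ and the sixth-integer coordinate in $s$ --- so that the integral norm equation $u^2-21v^2-6w^2+14x^2=4$ matches $\mathrm{N}_{red}=1$ on the nose. Once these identifications are in place, no further analytic input is needed: cocompactness and the index-$8$ relation are already available, and the commensurability follows formally from the containment $\SL(1,\HH^{21,6}_\ZZ)\subseteq CHC^{-1}$.
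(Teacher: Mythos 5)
Your proposal is correct in substance and follows essentially the same route as the paper: isolate the type-$(1,21,6,14)$ subgroup $H$, conjugate it into the standard matrix model $\psi(\HH^{21,6}_\RR)$, exhibit $\SL(1,\HH^{21,6}_\ZZ)$ as a subgroup of the conjugate, and conclude commensurability (hence the isogeny) from finite index. The differences are mostly cosmetic: you conjugate by the diagonal matrix $C=\diag(6^{-1/4},6^{1/4})$ and absorb the integrality mismatch into the sixth-integer $k$-coordinate $s=x/6$, whereas the paper conjugates by the antidiagonal matrix $A=\begin{pmatrix}0&6^{-1/4}\\-6^{1/4}&0\end{pmatrix}$ and instead proves, by reducing the norm equation modulo $3$, that $3\mid x$ holds on all of $AHA^{-1}$ before matching coordinates (you only need one containment, so you can skip that step). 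Your explicit identification $i=E_2$, $j=E_3$, $k=E_2E_3=3E_4$ via $\sqrt{6}\sqrt{21}=3\sqrt{14}$, and the verification that $\mathrm{N}_{red}=\tfrac14(u^2-21v^2-6w^2+14x^2)$, is a nice conceptual addition that the paper leaves implicit.

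The one step you assert without justification is the parenthetical claim that anisotropy of the norm form $\langle 1,-21,-6,126\rangle$ is ``equivalently'' the statement that $T$ has no isotropic vector. The equivalence is true, but it is not immediate: it amounts to identifying $\HH^{21,6}_\QQ$ with the even Clifford algebra of $T\otimes\QQ$, or equivalently to checking that the pure-quaternion ternary form $\langle -21,-6,126\rangle$ is similar over $\QQ$ to the form of $T$ (indeed $\langle -21,-6,126\rangle\sim\langle 21,6,-14\rangle\cong -6\cdot\langle -1,21,-14\rangle$, and $\langle -1,21,-14\rangle$ is the diagonalization of $T$ up to scale) --- neither your text nor the paper's toolkit supplies this. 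Moreover the claim is load-bearing: you use it, via Proposition \ref{proposition:arithsubgrp}, to know that $\SL(1,\HH_\ZZ^{21,6})$ is a cocompact lattice, which is what makes the containment $\SL(1,\HH^{21,6}_\ZZ)\subseteq CHC^{-1}$ have finite index. The paper closes this point differently, by a direct arithmetic verification that $u^2-21v^2-6w^2+126x^2=0$ has only the trivial integer solution (reduction mod $7$ plus infinite descent). Two easy patches: either reproduce that direct computation (or the similarity of ternary forms above), or sidestep anisotropy entirely by citing that arithmetic subgroups of $\SL(2,\RR)$ are lattices, so that the containment in the cocompact lattice $CHC^{-1}$ already forces finite index; cocompactness, and hence anisotropy, then comes out a posteriori.
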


\begin{proof}
    We show $\SL(1, \HH_\ZZ^{21,6})<H$ up to a conjugation.

    Consider $A=\begin{pmatrix}
        0 & 6^{-\frac{1}{4}}\\
        -6^{\frac{1}{4}} & 0
    \end{pmatrix}$, 
     then 
     \[
     AHA^{-1}=\{
    \begin{pmatrix}
        \frac{u+v\sqrt{21}}{2} & \frac{3w-x\sqrt{21}}{6} \\
        3w+x\sqrt{21} &
        \frac{u-v\sqrt{21}}{2}
    \end{pmatrix}|u,v,w,x \in \ZZ, u^{2}-21v^{2}-6w^{2}+14x^{2}=4\}.
    \]

    For equation $u^{2}-21v^{2}-6w^{2}+14x^{2}=4$, we analyze modulo $3$ and obtain $u^{2}+2x^{2}\equiv 1  \, (\mathrm{mod}\,3)$. We must have $3| x$, otherwise $u^{2}\equiv -1 \, (\mathrm{mod}\,3)$, which is impossible. Replace $x$ by $3x$, and one obtains 
    \[
    AHA^{-1}=\{
    \begin{pmatrix}
        \frac{u+v\sqrt{21}}{2} & \frac{w-x\sqrt{21}}{2} \\
        6(\frac{w+x\sqrt{21}}{2}) &
        \frac{u-v\sqrt{21}}{2}
    \end{pmatrix}|u,v,w,x \in \ZZ, u^{2}-21v^{2}-6w^{2}+126x^{2}=4\}.
    \]

    And $\SL(1, \HH_\ZZ^{21,6})$ is a subgroup of $AHA^{-1}$ consisting of matrices where $u,v,w,x$ are even integers.

    Therefore, $\SL(1, \HH_\ZZ^{21,6})<\Gamma^{\prime \prime}$ and thus they are commensurable with each other, since both are arithmetic subgroups of $\SL(2,\RR)$. 

    Modulo 7 and using infinite descent, one can show that equation $u^{2}-21v^{2}-6w^{2}+126x^{2}=0$ does not have non-trivial integer solution. Therefore, $\SL(1, \HH_\ZZ^{21,6})$ satisfies the conditions in Proposition \ref{proposition:arithsubgrp}. 
\end{proof}

%\begin{rmk} \Zhiwei{finite generatedness should be easy to expert, we will not keep this remark in the final version}
    %The group $\Gamma$ can be seen to be finitely generated using topological methods. More precisely, note that $\Gamma$ is the monodromy group of the $L_2(7)$-family. Since the family of smooth cubic fourfolds with $L_2(7)$ action, before passing to the GIT quotient, is $\PP^1$ minus $6$ points according to Proposition \ref{singuL27}, this space has a finitely generated fundamental group. Therefore, the monodromy group $\Gamma$, as the image of the fundamental group above, is finitely generated. 
%\end{rmk}

\section{Periods of the $F_{21}$-family}
\label{section: period of F21}
For convenience, we use notation $T$ instead of $T_2$, $\Gamma$ instead of $\Gamma_{T_2}$ and $\Phi$ instead of $\Phi_{2}$ inside this section.

\subsection{Calculation of the Lattice $T$}
\label{conslatF21}
The lattice $T$ for smooth cubic fourfolds with $F_{21}$-action was calculated by \cite[Table 1]{marquand2025cubic}. We review the calculation for reader’s convenience.

Now let $G=F_{21}$. Let $S$ be the coinvariant lattice for the action of $F_{21}$ on $\Lambda_0$. By \cite[Lemma 4.2]{laza2022automorphisms}, $(G,S)$ is a Leech pair satisfying the condition in Lemma \ref{lemma: leech pair}. Then by Lemma \ref{lemma: leech pair}, there is a primitive embedding $S\hookrightarrow \LL$. Thus the $F_{21}$-action on $S$ can be extended to $\mathbb{L}$, such that $S$ is the coinvariant lattice in $\mathbb{L}$. It is known from \cite{laza2022automorphisms} that the invariant sublattice $\mathbb{L}^{G}$ corresponds to \cite[Table 1, item 52]{hohn2016290}, hence $\mathbb{L}^{G}$ has discriminant form $7^{+3}$. 

%Since $S$ is the orthogonal complement of $\mathbb{L}^{G}$ in $\LL$, it has discriminant form $q_{S}=7^{-3}$. The lattice $T$ is the orthogonal complement of $S$ in $\Lambda_{0}$. So $T$ has signature $(2,2)$. 

%Use the following embedding:
%\[S\oplus E_{6} \hookrightarrow \Lambda_{0}\oplus E_{6}\hookrightarrow \text{\Rmnum{2}}_{26,2},\]
Similar as \S\ref{conslatL27}, $T$ has discriminant form $q_{T}=7^{+3}3^{-1}$ and such $T$ is unique by \cite[Theorem 1.13.2]{Nikulin_1980}. By direct construction, we know there exists an integral basis for $T$, such that the Gram matrix is:
\begin{equation}
\label{equation: Gram matrix for F21 case}
\begin{pmatrix}
    -2 & 1 & 0 & 0\\
    1 & 10 & 0 & 0\\
    0 & 0 & 0 & 7\\
    0 & 0 & 7 & 0
\end{pmatrix}
\end{equation}

\subsection{Characterization of $\Gamma$}
\label{subsection: char of Gamma F21}
By abuse of notation, we still denote the above matrix by $T$. Also from \cite[\S 4.1.2]{laza2022automorphisms}, we know that $(F_{21}, S)$ is a saturated Leech pair and therefore $F_{21} = \{h \in \widehat{\Gamma}|h|_T = \Id_T\}$. Also let $N$ be the normalizer of $F_{21}$ inside $\widehat{\Gamma}$. The same as the discussion in the last section, we have a well-defined restriction map $N \to \rO^+(T)$. Therefore, we may just assume $\Gamma = \mathrm{Im}(N \to \rO^+(T))$. 

Recall that from the above calculation, the discriminant group $A_{\Lambda_0}, A_S$ and $A_T$ are isomorphic to $\ZZ/3\ZZ, (\ZZ/7\ZZ)^3$ and $(\ZZ/7\ZZ)^3 \oplus \ZZ/3\ZZ$ respectively. Also from \cite[\S 4]{hohn2016290}, we know that $\rO(S) \to A_S$ is surjective. Therefore, we can completely copy the proof of Proposition \ref{gam} and obtain the following
\begin{prop}
    An element $f \in \rO^+(T)$ is in $\Gamma$ if and only if $\overline{f}|_{\ZZ/3\ZZ}$ is trivial. 
\end{prop}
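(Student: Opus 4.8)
The plan is to follow the proof of Proposition \ref{gam} almost verbatim, adjusting only the discriminant groups to the present situation. Recall that here $A_{\Lambda_0} \cong \ZZ/3\ZZ$, $A_S \cong (\ZZ/7\ZZ)^3$, and $A_T \cong (\ZZ/7\ZZ)^3 \oplus \ZZ/3\ZZ$, and that the saturation property of the Leech pair $(F_{21}, S)$ gives $F_{21} = \{h \in \widehat{\Gamma} \mid h|_T = \Id_T\}$. Comparing the orders of the discriminant groups shows $\Lambda_0/(S \oplus T) \cong (\ZZ/7\ZZ)^3$, so by \cite[Proposition 1.5.1]{Nikulin_1980} the primitive embedding $T \hookrightarrow \Lambda_0$ with orthogonal complement $S$ is encoded by an anti-isometry of discriminant forms $\gamma \colon H_T \xrightarrow{\sim} H_S$, where $H_T$ is the $(\ZZ/7\ZZ)^3$-summand of $A_T$ and $H_S = A_S$. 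Moreover the projection $\Lambda_0^*/(S \oplus T) \to A_T$ identifies $A_{\Lambda_0} \cong \ZZ/3\ZZ$ with the complementary $\ZZ/3\ZZ$-summand of $A_T$; this is the summand that governs membership in $\widehat{\Gamma}$.

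For the forward direction I would take $f \in \rO^+(T)$ with $\overline{f}|_{\ZZ/3\ZZ}$ trivial. Then $\overline{f}$ preserves $H_T$ and acts there by some element of $\rO(H_T)$. Since the natural map $\rO(S) \to \rO(A_S)$ is surjective by \cite[\S 4]{hohn2016290}, I would choose $g \in \rO(S)$ with $\overline{g} = \gamma \circ (\overline{f}|_{H_T}) \circ \gamma^{-1}$, so that $f$ and $g$ agree on the glue group under $\gamma$. By \cite[Corollary 1.5.2]{Nikulin_1980} the pair $(f,g)$ then extends to an isometry $\widetilde{f} \in \rO^+(\Lambda_0)$ with $\widetilde{f}|_T = f$ and $\widetilde{f}|_S = g$. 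Because $\overline{f}$ is trivial on the $\ZZ/3\ZZ = A_{\Lambda_0}$ summand, $\widetilde{f}$ acts trivially on $A_{\Lambda_0}$ and hence lies in $\widehat{\Gamma}$. Finally $\widetilde{f}$ normalizes $F_{21}$: for $h \in F_{21}$ and $t \in T$, the vector $\widetilde{f}^{-1}t \in T$ is fixed by $h$, so $(\widetilde{f} h \widetilde{f}^{-1})(t) = t$; thus $\widetilde{f} h \widetilde{f}^{-1}$ lies in $\widehat{\Gamma}$ and fixes $T$ pointwise, whence $\widetilde{f} h \widetilde{f}^{-1} \in F_{21}$ by the saturation property. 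Therefore $\widetilde{f} \in N$ restricts to $f$, giving $f \in \Gamma$.

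The converse is immediate: every $f \in \Gamma$ is the restriction to $T$ of some $\widetilde{f} \in N \subset \widehat{\Gamma}$, and membership in $\widehat{\Gamma}$ forces $\widetilde{f}$ to act trivially on $A_{\Lambda_0}$, i.e.\ on the $\ZZ/3\ZZ$-summand of $A_T$, which is exactly $\overline{f}|_{\ZZ/3\ZZ} = \Id$. The only ingredient beyond formal lattice gluing — and the reason the action of $f$ on the $(\ZZ/7\ZZ)^3$ part poses no obstruction but can always be absorbed by a compensating $g \in \rO(S)$ — is the surjectivity of $\rO(S) \to \rO(A_S)$, supplied by the H\"ohn--Mason tables \cite[\S 4]{hohn2016290}. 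I do not expect any genuinely new difficulty relative to the $L_2(7)$ case treated in Proposition \ref{gam}.
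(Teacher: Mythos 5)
Your proof is correct and takes essentially the same route as the paper, which disposes of this proposition by stating that one can ``completely copy'' the proof of Proposition \ref{gam}, adjusting the discriminant groups to $A_S \cong (\ZZ/7\ZZ)^3$ and $A_T \cong (\ZZ/7\ZZ)^3 \oplus \ZZ/3\ZZ$ with glue group $(\ZZ/7\ZZ)^3$ --- exactly the adaptation you carried out, using Nikulin's gluing results and the surjectivity of $\rO(S) \to \rO(A_S)$ from H\"ohn--Mason. Your explicit verification that $\widetilde{f}$ normalizes $F_{21}$ via the saturation property $F_{21} = \{h \in \widehat{\Gamma} \mid h|_T = \Id_T\}$ even fills in the step that the paper's model proof dismisses as ``easy to check.''
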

Up to a multiplication by $-\Id$, we may just consider $\rO^+(T)$ itself. More precisely
\begin{prop}
    As a group, $\rO^+(T)$ is generated by $\Gamma$ and $-\Id$. $\Gamma$ is an index $2$ subgroup of $\rO^+(T)$. 
\end{prop}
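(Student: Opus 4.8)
The plan is to mirror the short argument already given for the $L_2(7)$-family, since the immediately preceding proposition supplies exactly the characterization $\Gamma = \{f \in \rO^+(T) : \overline{f}|_{\ZZ/3\ZZ} = \Id\}$ that we need. The entire statement then reduces to understanding the homomorphism recording the action of $\rO^+(T)$ on the $\ZZ/3\ZZ$-summand of the discriminant group $A_T \cong (\ZZ/7\ZZ)^3 \oplus \ZZ/3\ZZ$.

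First I would record that $\Aut(\ZZ/3\ZZ) \cong \ZZ/2\ZZ$, whose nontrivial element is negation $x \mapsto -x$. Hence every $f \in \rO^+(T)$ restricts on the $\ZZ/3\ZZ$-summand either to the identity or to negation, and the preceding proposition identifies $\Gamma$ precisely as those $f$ restricting to the identity. In other words, $\Gamma$ is the kernel of the homomorphism $\chi \colon \rO^+(T) \to \Aut(\ZZ/3\ZZ) \cong \ZZ/2\ZZ$ induced by the discriminant action, so $\Gamma$ is normal of index at most $2$.

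Next I would exhibit $-\Id$ as an element realizing the nontrivial class. Since $-\Id$ acts as the identity on $\PP(T \otimes \CC)$, it fixes $\DD_T$ pointwise and therefore lies in $\rO^+(T)$. Its induced action on $A_T = T^*/T$ is negation on every summand; in particular $\chi(-\Id)$ is the negation automorphism of $\ZZ/3\ZZ$, which is nontrivial, so $-\Id \notin \Gamma$. Consequently $\chi$ is surjective, $\Gamma = \ker \chi$ has index exactly $2$, and the two cosets are $\Gamma$ and $(-\Id)\Gamma$. Concretely, if $f$ acts by negation on $\ZZ/3\ZZ$ then $-f$ acts trivially, whence $-f \in \Gamma$ and $f = (-\Id)(-f) \in (-\Id)\Gamma$; this gives $\rO^+(T) = \Gamma \sqcup (-\Id)\Gamma = \langle \Gamma, -\Id\rangle$.

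I expect essentially no obstacle beyond the verifications already packaged in the preceding proposition: the only two points to confirm are that $-\Id$ preserves the connected component $\DD_T$ (immediate, as $-\Id$ is projectively trivial) and that it acts nontrivially on the $\ZZ/3\ZZ$-part of $A_T$ (immediate, as $-1 \not\equiv 1 \pmod 3$). The statement then follows formally.
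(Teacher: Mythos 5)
Your proof is correct and follows essentially the same route as the paper: the paper likewise observes that any $f \in \rO^+(T)$ acting nontrivially on the $\ZZ/3\ZZ$-summand of $A_T$ must act by negation, so that $-f \in \Gamma$ while $-\Id \notin \Gamma$, giving $\rO^+(T) = \Gamma \sqcup (-\Id)\Gamma$. Your packaging of this as the kernel of a character $\chi\colon \rO^+(T) \to \Aut(\ZZ/3\ZZ) \cong \ZZ/2\ZZ$ is only a cosmetic reformulation of the paper's argument.
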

An important thing to point out is that unlike the $L_2(7)$ case, $-\Id$ in $\rO^+(T)$ has determinant $1$. Therefore, we cannot find the copy of $\Gamma$ to be $\rO^+(T)\cap \SO(2,2)$. Instead, we first find an element \[P = \begin{pmatrix}
    1&0&0&0\\0&1&0&0\\0&0&0&-1\\0&0&-1&0
\end{pmatrix}\] 
$\in \rO^+(T)$ which is of determinant $-1$, and thus form the following result.
\begin{prop}
The group $\SO^+(T)$ is a normal subgroup of $\rO^+(T)$ with index two, and the quotient group $\rO^+(T)/\SO^+(T)$ is generated by $P$ above. 
\end{prop}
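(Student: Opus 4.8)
The plan is to realize $\SO^+(T)$ as the kernel of the determinant homomorphism restricted to $\rO^+(T)$, and then to use the explicit element $P$ to see that this kernel has index exactly two. This reduces the whole statement to an application of the first isomorphism theorem together with one determinant computation.

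First I would observe that every $g \in \rO(T)$ is an integral matrix satisfying $g^{T}Tg = T$, so $\det(g)^2 = 1$ and hence the determinant restricts to a group homomorphism $\det\colon \rO^+(T) \to \{\pm 1\}$. By definition $\SO^+(T) = \rO^+(T) \cap \SO(T)$ consists precisely of those $g \in \rO^+(T)$ with $\det g = 1$; that is, $\SO^+(T) = \ker(\det|_{\rO^+(T)})$. As the kernel of a homomorphism it is automatically a normal subgroup of $\rO^+(T)$.

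Next I would show the homomorphism is surjective, which forces the index to be two. For this it suffices to exhibit a single element of $\rO^+(T)$ of determinant $-1$, and $P$ is exactly such an element: it was constructed in \S\ref{conslatF21} to lie in $\rO^+(T)$, and a direct block computation gives $\det P = -1$, since the upper $2\times 2$ block of $P$ is the identity (contributing $+1$) while the lower block $\left(\begin{smallmatrix} 0 & -1 \\ -1 & 0\end{smallmatrix}\right)$ has determinant $-1$. Then by the first isomorphism theorem
\[
\rO^+(T)/\SO^+(T) \;\cong\; \{\pm 1\} \;\cong\; \ZZ/2\ZZ,
\]
so $\SO^+(T)$ has index two. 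Since $\det P = -1 \neq 1$, the coset $P\,\SO^+(T)$ is the nontrivial element of this quotient, and therefore $\rO^+(T)/\SO^+(T)$ is generated by the image of $P$.

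The only point requiring care is the claim $P \in \rO^+(T)$, as opposed to merely $P \in \rO(T)$: one must know that $P$ preserves the chosen connected component $\DD_T$ of the period domain rather than swapping the two components. This is precisely the content of the construction of $P$ in \S\ref{conslatF21}, which I would simply cite; were it unavailable, one would verify it by tracking the action of $P$ on a reference point of $\DD_T$, or equivalently by computing its real spinor norm. The remaining verifications, namely $P^{T}TP = T$ and $\det P = -1$, are routine block-diagonal computations and present no obstacle.
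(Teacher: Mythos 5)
Your proof is correct and is essentially the argument the paper intends: the paper simply constructs $P \in \rO^+(T)$ with $\det P = -1$ and immediately states the proposition (``\dots which is of determinant $-1$, and thus form the following result''), leaving implicit exactly the determinant-kernel / index-two reasoning you spell out. Your flagging of the one nontrivial point --- that $P$ preserves the component $\DD_T$ and not merely $P \in \rO(T)$ --- is also exactly what the paper's construction of $P$ in \S\ref{conslatF21} is meant to supply.
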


We will first compute $\SO^+(T)$ and give a modular interpretation of $\mathbb{D}_T/\SO^+(T)$. The latter will be a $2$ to $1$ cover of the $F_{21}$-family. 

\subsection{Tube Domain Model for the $F_{21}$-family}
\label{subsection: tube domain model of the F21}
Now let $e_1,e_2,e_3,e_4 \in T$ be the integral basis for which we have the above Gram matrix \eqref{equation: Gram matrix for F21 case}. Then we have $(e_1)^2=-2,(e_2)^2=10,(e_1,e_2)=1,(e_3,e_4) = 7$. Now take $f_1,f_2,g_1,g_2 \in T\otimes \RR$ such that if we denote 

\begin{align}
\label{equation:transitionmatrix}
   B = \begin{pmatrix}
    -\frac{\sqrt{21}}{42}-\frac{1}{2}&0&0&-\frac{\sqrt{21}}{42}+\frac{1}{2}\\
    -\frac{\sqrt{21}}{21} & 0 & 0 & -\frac{\sqrt{21}}{21}\\
    0&1&0&0\\
    0&0&-\frac{1}{7}&0
\end{pmatrix}, 
\end{align}

we have $\begin{pmatrix}
    f_1&f_2&g_2&g_1
\end{pmatrix} = \begin{pmatrix}
    e_1&e_2&e_3&e_4
\end{pmatrix}B$

Therefore, the Gram matrix with respect to $f_1,f_2,g_2,g_1$ is just \[
B^TTB=
\begin{pmatrix}
    0&0&0&1\\0&0&-1&0\\0&-1&0&0\\1&0&0&0
\end{pmatrix}.\]

With respect to this basis, there is an accidental isometry from $T\otimes\CC$ to $\mathrm{M}_2(\CC)$, sending $z_{11}f_1+z_{12}f_2+z_{21}g_2+z_{22}g_1$ to $\begin{pmatrix}
    z_{11} & z_{12}\\z_{21} & z_{22}
\end{pmatrix}$. Here $\mathrm{M}_2(\CC)$ is endowed with the quadratic form $2(z_{11}z_{22}-z_{12}z_{21})$. 

Now consider the group $\SL(2,\RR)\times \SL(2,\RR)$. Each element $(h_1,h_2) \in \SL(2,\RR)\times \SL(2,\RR)$ can be seen as an isometry of the space $\mathrm{M}_2(\CC)$ via the action $(h_1,h_2)\cdot M = h_1Mh_2^T$. In this way, we obtain a surjective morphism $\SL(2,\RR)\times \SL(2,\RR) \to \SO^+(2,2;\RR)$ with kernel $\pm(\I,\I)$. When pass to $\SL(2,\RR)\times \SL(2,\RR) \to \mathrm{PSO}^+(2,2;\RR)$ we have kernel being $(\pm\I,\pm\I)$. More explicitely, set $h = (h_1,h_2) = (\begin{pmatrix}
    a_1&b_1\\c_1&d_1
\end{pmatrix},\begin{pmatrix}
    a_2&b_2\\c_2&d_2
\end{pmatrix})$ and let $A(h)$ be the matrix of the action of $h$ with respect to the basis $f_1,f_2,g_2,g_1$, then we have 
\[
A(h) = \begin{pmatrix}
    a_1h_2&b_1h_2\\
    c_1h_2&d_1h_2
\end{pmatrix} = h_1\otimes h_2.
\] 
Now denote by $\Phi \colon \SL(2,\RR)\times \SL(2,\RR) \to \SO^+(2,2)$ the morphism sending $h$ to $BA(h)B^{-1}$, the matrix of the action of $h$ with respect to the basis $e_1, e_2, e_3, e_4$. 

Now let $\HH^2 = \{(z_1,z_2)\in \CC^2|\Im(z_1)>0,\Im(z_2)>0\}$ be the product of upper half planes. The group $\SL(2,\RR)\times \SL(2,\RR)$ acts on $\HH^2$ via $h\cdot (z_1,z_2) = (\frac{a_1z_1+b_1}{c_1z_1+d_1}, \frac{a_2z_2+b_2}{c_2z_2+d_2})$. There is a classical tube domain model identification of $\mathbb{D}_T$ as $\HH^2$. Precisely, there is a biholomorphic morphism $\mathbb{H}^2 \to \mathbb{D}_T$, sending $(z_1,z_2)$ to $z_1z_2f_1+z_1f_2+z_2g_2+g_1$. The latter is $\begin{pmatrix}
    z_1z_2 & z_1\\z_2&1
\end{pmatrix} = \begin{pmatrix}
    z_1\\1
\end{pmatrix}\begin{pmatrix}
    z_2&1
\end{pmatrix}$ when identified as a matrix. Therefore, we obtain an equivariant diagram:
\[ 
\begin{tikzcd}
\SL(2, \mathbb{R}) \times \SL(2, \mathbb{R}) \arrow{r}{\Phi} \arrow[symbol=\circlearrowright]{d} & \SO^+(2,2) \arrow[symbol=\circlearrowright]{d} \arrow[hook]{r} & \mathrm{O}(2,2) \arrow[symbol=\circlearrowright]{d} \\
\mathbb{H}^2 \arrow{r}{\cong} & \mathbb{D}_T \arrow[hook]{r} & \mathbb{P}(T \otimes \CC)
\end{tikzcd}
\]

First we find the inverse image $\Phi^{-1}(\SO^+(T))$. Note that $h \in \Phi^{-1}(\SO^+(T))$ if and only if $\Phi(h)$ has integer coefficients. 

\begin{prop}
\label{strulem}
    Denote $\Gamma^\prime = \Phi^{-1}(\SO^+(T))$, then we have $h \in \Gamma^\prime$ if and only if
    \[
    h =(\begin{pmatrix}
        \frac{\alpha_1\sqrt{\alpha}+\beta_1\sqrt{\beta}}{2}
        &\frac{1}{\sqrt{7}}\frac{\gamma_1\sqrt{\gamma}+\delta_1\sqrt{\delta}}{2}\\
        \sqrt{7}\frac{\gamma_2\sqrt{\gamma}-\delta_2\sqrt{\delta}}{2}&\frac{\alpha_2\sqrt{\alpha}-\beta_2\sqrt{\beta}}{2}
    \end{pmatrix},
    \begin{pmatrix}
        \frac{\alpha_2\sqrt{\alpha}+\beta_2\sqrt{\beta}}{2}
        &\sqrt{7}\frac{\gamma_2\sqrt{\gamma}+\delta_2\sqrt{\delta}}{2}\\
        \frac{1}{\sqrt{7}}\frac{\gamma_1\sqrt{\gamma}-\delta_1\sqrt{\delta}}{2}&\frac{\alpha_1\sqrt{\alpha}-\beta_1\sqrt{\beta}}{2}
    \end{pmatrix})
    \]
    where 
\begin{itemize}
    \item $(\alpha,\beta,\gamma,\delta) = (1,21,3,7)$ up to permutation by $\mathrm{V}=\langle(13)(24),(12)(34)\rangle < \mathrm{S}_4$,  
    \item $\alpha_i,\beta_i,\gamma_i,\delta_i \in \ZZ$ such that $\alpha_i-\beta_i,\gamma_i-\delta_i\in 2\ZZ$,
    \item $\alpha_1\alpha_2\alpha - \beta_1\beta_2\beta-\gamma_1\gamma_2\gamma+\delta_1\delta_2\delta = 4$ and $\alpha_1\beta_2-\alpha_2\beta_1=\gamma_1\delta_2-\gamma_2\delta_1$. 
\end{itemize}
    (The last condition is equivalent to saying that the determinants of $h_1$ and $h_2$ are $1$.)
\end{prop}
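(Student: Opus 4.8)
The plan is to imitate the proof of Proposition \ref{proposition:arithgroupofL27} verbatim, now for the two--factor map $\Phi=\Phi_2$. First I would compute $\Phi(h)=B\,A(h)\,B^{-1}$ explicitly, where $A(h)=h_1\otimes h_2$ is written in the basis $f_1,f_2,g_2,g_1$ and $B$ is the transition matrix \eqref{equation:transitionmatrix}. The crucial structural feature is that $B$ decouples: the vectors $f_1,g_1$ lie in $\mathrm{Span}_\RR(e_1,e_2)$, the block $\left(\begin{smallmatrix}-2&1\\1&10\end{smallmatrix}\right)$ of discriminant $-21$ (which produces $\sqrt{21}$), while $f_2,g_2$ lie in $\mathrm{Span}_\RR(e_3,e_4)$, the scaled hyperbolic block $\left(\begin{smallmatrix}0&7\\7&0\end{smallmatrix}\right)$ (which produces the factors $\sqrt7$ and $1/\sqrt7$). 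Consequently each of the sixteen entries of $\Phi(h)$ is a $\QQ$--linear combination of the bilinear monomials $a_1a_2,a_1b_2,\dots,d_1d_2$ with coefficients lying in $\QQ+\QQ\sqrt{21}+\QQ\sqrt3+\QQ\sqrt7$.

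Next I would impose that every entry of $\Phi(h)$ be an integer, which by the remark preceding the statement is exactly the condition $h\in\Gamma'$. As in the $L_2(7)$ case, the right bookkeeping is to pass to the cross--paired combinations
\[
\{\,a_1\pm d_2,\ a_2\pm d_1,\ b_1\pm c_2,\ b_2\pm c_1\,\}
\]
(so that $h_1$'s top--left entry is paired with $h_2$'s bottom--right entry, and so on). Integrality forces the squares of these quantities, as well as the products such as $\sqrt{21}\,(a_1+d_2)(a_1-d_2)$, to be integers, so I may write $a_1+d_2=\alpha_1\sqrt\alpha$, $a_1-d_2=\beta_1\sqrt\beta$, and analogously for the remaining six, with $\alpha_i,\beta_i,\gamma_i,\delta_i\in\ZZ$ and $\alpha,\beta,\gamma,\delta$ square--free. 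Running exactly the square--free descent of Proposition \ref{proposition:arithgroupofL27}---now analysing the two products $\alpha\beta$ and $\gamma\delta$, both forced to equal $21$ up to squares, together with the role of the prime $7$ entering through the scaled hyperbolic block---pins the tuple down to $(\alpha,\beta,\gamma,\delta)=(1,21,3,7)$ up to permutation by $\mathrm{V}$. Reading the finer integrality of the individual entries modulo $2$ yields the parity constraints $\alpha_i-\beta_i,\gamma_i-\delta_i\in2\ZZ$. This reproduces the displayed shape of $h_1$ and $h_2$.

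Finally I would account for the two determinant conditions. Expanding $\det h_1=a_1d_1-b_1c_1$ in the parametrization, using $\sqrt\alpha\sqrt\beta=\sqrt\gamma\sqrt\delta=\sqrt{21}$, splits it into a rational part and a $\sqrt{21}$--part; setting $\det h_1=1$ gives exactly $\alpha_1\alpha_2\alpha-\beta_1\beta_2\beta-\gamma_1\gamma_2\gamma+\delta_1\delta_2\delta=4$ from the rational part and $\alpha_1\beta_2-\alpha_2\beta_1=\gamma_1\delta_2-\gamma_2\delta_1$ from the $\sqrt{21}$--part. A parallel expansion of $\det h_2=a_2d_2-b_2c_2$ produces the same two equations, so the two listed conditions are precisely $\det h_1=\det h_2=1$. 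For the converse I would substitute the displayed form back into $B\,A(h)\,B^{-1}$ and verify, by the same elementary congruences as in the $L_2(7)$ case, that the parity and norm conditions guarantee integrality of all entries.

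The main obstacle is the descent step. Unlike the one--factor situation, here two independent square--free radicals ($\sqrt\alpha,\sqrt\beta$ on the diagonal and $\sqrt\gamma,\sqrt\delta$ off the diagonal) are coupled through the $7$--scaling of the second block, so one must track the primes $2,3,7$ simultaneously to exclude spurious tuples and to confirm that only the single $\mathrm{V}$--orbit of $(1,21,3,7)$ survives. The explicit $4\times4$ conjugation $B\,A(h)\,B^{-1}$ is routine but lengthy, and the genuine care lies in organizing its entries into the eight cross--paired combinations and in keeping the orientation and determinant--one constraints separate from the integrality constraints.
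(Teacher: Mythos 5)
Your overall strategy coincides with the paper's: compute $\Phi(h)=BA(h)B^{-1}$, impose integrality of all entries, pass to the cross-paired quantities $a_1\pm d_2$, $a_2\pm d_1$, $\sqrt{7}(b_1\pm c_2)$, $\tfrac{1}{\sqrt{7}}(b_2\pm c_1)$ (note that you must carry the $\sqrt{7}$-scalings here; the squares of your unscaled combinations $b_1\pm c_2$ are \emph{not} integers), run the square-free descent, and read off the two determinant identities. However, there is one genuine gap. Imitating Proposition \ref{proposition:arithgroupofL27} verbatim does \emph{not} pin the tuple down to the single $\mathrm{V}$-orbit of $(1,21,3,7)$: the descent together with the norm equation only yields $(\alpha,\beta,\gamma,\delta)=(y,21y,3y,7y)$ up to the $\mathrm{V}$-action with $y$ square-free and dividing $4$, i.e.\ $y\in\{1,2\}$. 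Indeed, in the one-factor case the second family genuinely survives --- the conclusion of Proposition \ref{proposition:arithgroupofL27} contains \emph{both} types $(1,21,6,14)$ and $(2,42,3,7)$ --- so "running exactly the square-free descent" should lead you to expect two families here as well, not one.

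Excluding $y=2$, i.e.\ the type $(2,42,6,14)$, is precisely the step that is new in the two-factor situation, and it is exactly where your guiding principle of "keeping the orientation and determinant-one constraints separate from the integrality constraints" breaks down. The paper kills this family by combining, modulo $4$, three things: the parity constraints $\alpha_i-\beta_i,\gamma_i-\delta_i\in2\ZZ$ coming from integrality, the norm equation $\alpha_1\alpha_2-21\beta_1\beta_2-3\gamma_1\gamma_2+7\delta_1\delta_2=2$ (the $\det=1$ condition specialized to $y=2$), and the coupling identity $\alpha_1\beta_2-\alpha_2\beta_1=\gamma_1\delta_2-\gamma_2\delta_1$, which ties the two $\SL(2,\RR)$ factors together and has no analogue in the one-factor case; together these force $(\alpha_1-\beta_1)(\alpha_2-\beta_2)+(\gamma_1-\delta_1)(\gamma_2-\delta_2)\equiv2\ (\mathrm{mod}\ 4)$, contradicting the parities. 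Your proposal contains all the raw ingredients but no mechanism for this interaction, so as written the assertion "the descent pins the tuple down to $(1,21,3,7)$" would fail. A smaller instance of the same issue: even the first step of the descent --- integrality of the squares $u_i^2$, etc. --- is obtained in the paper via $a_1^2=\det(a_1h_2)$ and a $2\times2$-minor computation, so the condition $\det h_2=1$ enters the integrality analysis from the very beginning and cannot be quarantined to the end.
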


\begin{proof}
    Set $h = (h_1,h_2) = (\begin{pmatrix}
    a_1&b_1\\c_1&d_1
\end{pmatrix},\begin{pmatrix}
    a_2&b_2\\c_2&d_2
\end{pmatrix})$ and suppose 
\[\Phi(h) = \begin{pmatrix}
    n_1&n_2&n_3&n_4\\
    n_5&n_6&n_7&n_8\\
    n_9&n_{10}&n_{11}&n_{12}\\
    n_{13}&n_{14}&n_{15}&n_{16}
\end{pmatrix}
=
BA(h)B^{-1}
\]
where $n_i \in \ZZ$ and the expression of $B$ is given in \ref{equation:transitionmatrix}. Writing it out, we see that
\begin{equation*}
A(h)=  \begin{pmatrix}
    a_1h_2&b_1h_2\\
    c_1h_2&d_1h_2
\end{pmatrix}
= \begin{pmatrix}
    a_1 a_2&a_1 b_2 & b_1 a_2 & b_1 b_2 \\
    a_1 c_2&a_1 d_2 & b_1 c_2 & b_1 d_2 \\
    c_1 a_2 & c_1 b_2 & d_1 a_2 & d_1 b_2 \\
    c_1 c_2 & c_1 d_2 & d_1 c_2 & d_1 d_2
\end{pmatrix}
\end{equation*}
equals to 
\begin{frame}
\footnotesize
\setlength{\arraycolsep}{2.5pt} % default: 5pt
\medmuskip = 1mu % default: 4mu plus 2mu minus 4mu
\begin{align*}
\left( \begin{array}{llllll} 
\frac{21(n_1+n_6)+(n_1+n_2+5n_5-n_6)\sqrt{21}}{42} & \frac{n_7-2n_3-n_7\sqrt{21}}{2} & \frac{2n_4-n_8+n_8\sqrt{21}}{14} & \frac{21(-n_1+n_5+n_6)+(n_1+2n_2-11n_5-n_6)\sqrt{21}}{42}  \\
\frac{-21n_9-(n_9+2n_{10})\sqrt{21}}{42} & n_{11} & -\frac{n_{12}}{7} & \frac{21n_9-(n_9+2n_{10})\sqrt{21}}{42} \\
\frac{21n_{13}+(n_{13}+2n_{14})\sqrt{21}}{6} & -7n_{15} & n_{16} & \frac{-21n_{13}+(n_{13}+2n_{14})\sqrt{21}}{6}  \\
 \frac{21(-n_1+n_5+n_6)-(n_1+2n_2-11n_5-n_6)\sqrt{21}}{42} & \frac{-n_7+2n_3-n_7\sqrt{21}}{2} & \frac{-2n_4+n_8+n_8\sqrt{21}}{14} & \frac{21(n_1+n_6)-(n_1+n_2+5n_5-n_6)\sqrt{21}}{42}    \\
\end{array} \right)
\end{align*}
\end{frame}

Now if we set $u_1 = a_1+d_2,v_1=a_1-d_2,w_1=\sqrt{7}(b_1+c_2),x_1=\sqrt{7}(b_1-c_2)$ and $u_2 = a_2 +d_1,v_2=a_2-d_1,w_2=\frac{1}{\sqrt{7}}(b_2+c_1),x_2=\frac{1}{\sqrt{7}}(b_2-c_1)$, then using information from the above matrix, we can prove various properties of these numbers. 

First, we claim that $u_i^2,v_i^2,w_i^2,x_i^2\in\ZZ$. For example, $u_1^2 = a_1^2+2a_1d_2+d_2^2$, the term $a_1 d_2 = n_{11}$ is an integer. The terms $a_1^2=\det(a_1 h_2)$ and $d_2^2=\det(d_2 h_1)$ can be expressed in terms of $n_i$ and we can obtain 
\begin{equation*}
u_1^2=n_{1}n_{11}-n_{3}n_{9}+n_{6}n_{11}-n_{7}n_{10}+2n_{11}.
\end{equation*}
Therefore, $u_1^2 \in \ZZ$. The others are similar. 

Second, note that $u_1u_2 = a_1a_2+d_1d_2+a_1d_1+a_2d_2, v_1v_2 = a_1a_2+d_1d_2-(a_1d_1+a_2d_2),w_1w_2 = b_1b_2+c_1c_2+b_1c_1+b_2c_2, x_1x_2=b_1b_2+c_1c_2-(b_1c_1+b_2c_2)$. Among them, $a_1a_2+d_1d_2,b_1b_2+c_1c_2$ are easily seen to be integers. Taking the $(2,3)$ rows and $(2,3)$ columns, the corresponding submatrix has determinant $a_1d_1a_2d_2-b_1c_1b_2c_2  = (a_1d_1-b_1c_1)a_2d_2+b_1c_1(a_2d_2-b_2c_2) = a_2d_2+b_1c_1 = a_2d_2+a_1d_1-1=b_1c_1+b_2c_2+1$, which is an integer. Therefore, $u_1u_2,v_1v_2,w_1w_2,x_1x_2 \in \ZZ$. 

Combining the two observations above, we can set $u_i= \alpha_i\sqrt{\alpha}$, $v_i = \beta_i\sqrt{\beta}$, $w_i = \gamma_i\sqrt{\gamma}$, $x_i = \delta_i\sqrt{\delta}$, where $\alpha_i,\beta_i,\gamma_i,\delta_i$ are integers and $\alpha,\beta,\gamma,\delta$ are positive square-free integers. Next we want to determine the possibilities of $\alpha$, $\beta$, $\gamma$, $\delta$. Using the same tricks before, we obtain $\sqrt{21}u_2v_2, \sqrt{21}w_2x_2, \sqrt{3}u_2w_2 \in \ZZ$. Therefore, we can set $\alpha\beta=21y_1^2, \gamma\delta=21y_2^2, \alpha\gamma = 3y_3^2$ with $y_1,y_2,y_3\in \ZZ^+$, square free, $y_1,y_2$ not divided by $3,7$ and $y_3$ not divided by $3$. Note that $\alpha\gamma = 3y_3^2$ gives two possibilities. One is $\alpha = 3\gamma$ and the other is $\gamma = 3\alpha$. Suppose $\alpha = 3\gamma$, then $\gamma$ is not divided by $3$. Therefore $\gamma = 7y_2$ or $\gamma = y_2$. Meanwhile, since $\alpha$ is divided by $3$, we obtain $\alpha = 21y_1$ or $3y_1$. Suppose $\gamma = 7y_2$, then $\alpha \ne 3y_1$, otherwise $\alpha = 3\gamma = 21y_2$ will imply that $y_1$ is divided by $7$. Similarily, when $\gamma = y_2$, we do not have $\alpha = 21y_1$. The same argument can run again for $\gamma = 3\alpha$. In summary, we obtain that $(\alpha,\beta,\gamma,\delta) = (y_2,21y_2,3y_2,7y_2)$ up to permutation by $\mathrm{V} < \mathrm{S}_4$. 

Determinant of $h_1$ or $h_2$ being $1$ tells you that $\alpha_1\alpha_2\alpha-\beta_1\beta_2\beta-\gamma_1\gamma_2\gamma+\delta_1\delta_2\delta = 4$ and $\alpha_1\beta_2-\alpha_2\beta_1=\gamma_1\delta_2-\gamma_2\delta_1$. From the first equation above, we note that $4$ is divided by $y_2$ and therefore $y_2 = 1$ or $2$. 

Now we already know $h$ must be of the form in the statement of this lemma, but only assuming $(\alpha,\beta,\gamma,\delta) = (1,21,3,7)$ or $(2,42,6,14)$ up to permutation by $\mathrm{V} < \mathrm{S}_4$ and having determinants $1$. By writing out the matrix $\Phi(h)=BA(h)B^{-1}$, an elementary case-by-case observation shows that $\Phi(h)$ has integer coefficients if and only if $\frac{\alpha_i^2-\beta_i^2}{4},\frac{\gamma_i^2-\delta_i^2}{4}\in \ZZ$ when $(\alpha,\beta,\gamma,\delta) = (1,21,3,7)$ up to permutation and $\frac{\alpha_i^2-\beta_i^2}{2},\frac{\gamma_i^2-\delta_i^2}{2}\in \ZZ$ when $(\alpha,\beta,\gamma,\delta) = (2,42,6,14)$ up to permutation. They are all the same as $\alpha_i-\beta_i,\gamma_i-\delta_i\in 2\ZZ$. 

Finally we show $(\alpha,\beta,\gamma,\delta) = (2,42,6,14)$ (up to permutation) is impossible. For example, when $(\alpha,\beta,\gamma,\delta) = (2,42,6,14)$, we would have $\alpha_1\alpha_2-21\beta_1\beta_2-3\gamma_1\gamma_2+7\delta_1\delta_2=2$. Mod out $4$ and using $\alpha_1\beta_2-\alpha_2\beta_1=\gamma_1\delta_2-\gamma_2\delta_1$, this gives us $(\alpha_1-\beta_1)(\alpha_2-\beta_2)+(\gamma_1-\delta_1)(\gamma_2-\delta_2) \equiv 2 \, (\mathrm{mod}\,4)$ but not $0$. Therefore, the whole lemma has been proven. 
\end{proof}

Let us denote the subgroup of elements of type $(1,21,3,7)$ as $H$. Similar as Proposition \ref{normal}, we obtain the following result.
\begin{prop}
    $H$ is an index $4$ normal subgroup of $\Gamma^{\prime}$ above. With $\Gamma^{\prime}/H \cong \ZZ/2\ZZ\oplus\ZZ/2\ZZ$. 
\end{prop}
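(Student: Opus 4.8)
The plan is to follow the strategy of Proposition \ref{normal} from the $L_2(7)$ case, now adapted to the \emph{four} types that survive in Proposition \ref{strulem}. First I would record that, by Proposition \ref{strulem}, every $h \in \Gamma^\prime$ carries a well-defined \emph{type}, namely the permutation $\sigma \in \mathrm{V}$ with $(\alpha,\beta,\gamma,\delta) = \sigma\cdot(1,21,3,7)$; since the tuple $(2,42,6,14)$ was excluded there, exactly these four types occur, so the statement amounts to showing that the type is a surjective homomorphism onto $\mathrm{V}$ with kernel $H$. Well-definedness follows because the square-free integers $\alpha,\beta,\gamma,\delta$ are recovered from $h$ through the squares of the quantities $u_i, v_i, w_i, x_i$ of the proof of Proposition \ref{strulem} (e.g. $u_1 = a_1+d_2$, with $u_1^2 = \alpha_1^2\alpha$). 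The only ambiguity arises when some of these vanish; I would dispose of those degenerate elements using the determinant relation $\alpha_1\alpha_2\alpha-\beta_1\beta_2\beta-\gamma_1\gamma_2\gamma+\delta_1\delta_2\delta=4$, which prevents all the relevant coefficients from vanishing at once.

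Next I would verify that $H$ (the type-$(1,21,3,7)$ elements) is a subgroup containing the identity: all entries of both factors lie in $K = \QQ(\sqrt3,\sqrt7)$, and the key collapse $\sqrt3\cdot\sqrt7 = \sqrt{21}$ together with $\sqrt1\cdot\sqrt{21}=\sqrt{21}$ keeps the diagonal of $h_1$ in $F = \QQ(\sqrt{21})$ and the off-diagonal in $\tfrac{1}{\sqrt7}K$, so the displayed matrix shape is preserved under product and inverse; one then checks that the parity conditions $\alpha_i\equiv\beta_i$, $\gamma_i\equiv\delta_i \pmod 2$ and the secondary relation $\alpha_1\beta_2-\alpha_2\beta_1=\gamma_1\delta_2-\gamma_2\delta_1$ persist. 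To finish, mirroring the table following Proposition \ref{normal}, I would fix an explicit representative $r_\sigma$ for each of the three nontrivial types and check by direct multiplication that left and right multiplication by $r_\sigma$ carry $H$ bijectively onto the set of type-$\sigma$ elements, so this set is simultaneously the left coset $r_\sigma H$ and the right coset $H r_\sigma$. This gives normality and index $4$; the quotient is identified as $\mathrm{V}\cong \ZZ/2\ZZ\oplus\ZZ/2\ZZ$ by verifying the Klein relations $r_\sigma^2 \in H$ and $r_\sigma r_\tau \in r_{\sigma\tau}H$ on the chosen representatives.

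The main obstacle is the multiplicativity of the type, i.e. that a type-$\sigma$ element times a type-$\tau$ element has type $\sigma\tau$. This is where the bookkeeping of square roots across the two $\SL(2,\RR)$-factors must be controlled: one has to check that the square classes of $\alpha$ and $\gamma$ combine according to the group law of $\mathrm{V}$ in $\QQ^\times/(\QQ^\times)^2$ (where $1,3,7,21$ already form a Klein four-group), and that the parity and determinant constraints above are stable under this combination. After substituting $u_i,v_i,w_i,x_i$, I expect this to reduce to a finite, routine set of congruence checks, exactly parallel to the closure computation in Proposition \ref{normal}. Alternatively, the homomorphism property can be read off conceptually by recognizing $H$ as the image of a congruence subgroup of $\SL(2,\mathcal{O}_F)$ with $F=\QQ(\sqrt{21})$, embedded via its two real places — precisely the viewpoint developed in Theorem \ref{theorem: hilbert modular group} — which makes the four cosets the fibers of a natural quadratic-character pair and renders $\Gamma^\prime/H \cong \mathrm{V}$ transparent.
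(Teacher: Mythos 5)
Your proposal is correct and takes essentially the same route as the paper: the paper's proof is exactly the coset argument of Proposition \ref{normal} transplanted to this setting --- $H$ is a subgroup by direct computation, and for each of the four types a chosen representative's left and right multiplications give bijections onto the set of elements of that type, so the types are simultaneously left and right cosets, yielding normality, index $4$, and (via squaring the tabulated representatives) the Klein four-group quotient. Your additional care about well-definedness of the type (resolved by the determinant relation pinning down at least one slot of the tuple) and the alternative identification of $H$ with the Hilbert modular group $\Gamma(\mathcal{O}_F\oplus\mathfrak{p})$ merely make explicit details the paper leaves implicit or defers to the subsequent proposition.
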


We also make a table to make its structure clear

\begin{longtable}{ |p{2cm}||p{6cm}|  }

 \hline
 Type& Representative \\
 \hline
 $(1,21,3,7)$ & $(\Id,\Id)$  \\
 \hline
 $(21,1,7,3)$ & $(\Id,-\Id)$  \\
    \hline
 $(3,7,1,21)$ & $(\begin{pmatrix}
        0 & \frac{1}{\sqrt{7}} \\
        -\sqrt{7} &
        0
    \end{pmatrix},\begin{pmatrix}
        0 & -\sqrt{7} \\
        \frac{1}{\sqrt{7}} &
        0
    \end{pmatrix})$  \\
    \hline
 $(7,3,21,1)$ & $(\begin{pmatrix}
        0 & \frac{1}{\sqrt{7}} \\
        -\sqrt{7} &
        0
    \end{pmatrix},\begin{pmatrix}
        0 & \sqrt{7} \\
        -\frac{1}{\sqrt{7}} &
        0
    \end{pmatrix})$  \\
    \hline
    \caption{Elements of $\Gamma^{\prime}/H$}
\end{longtable}

Note that $(\Id,-\Id)$ has no effect on $\mathbb{H}^2$. Therefore we can view $\mathbb{D}_T/\Gamma$ as $\HH^2$ quotient by $H$ and an involution
\[
\widehat{F} = (\begin{pmatrix}
        0 & \frac{1}{\sqrt{7}} \\
        -\sqrt{7} &
        0
    \end{pmatrix},\begin{pmatrix}
        0 & \sqrt{7} \\
        -\frac{1}{\sqrt{7}} &
        0
    \end{pmatrix}). 
\]

Moreover, $H$ is isomorphic to a Hilbert modular group. To make it precise, let $F = \QQ(\sqrt{21})$ and $\mathcal{O}_F = \ZZ[\frac{1+\sqrt{21}}{2}]$ be its ring of integers. It is well known that $\mathcal{O}_F$ is a PID. Let $\mathfrak{p} = (\frac{7+\sqrt{21}}{2})$ be its only prime ideal lying over $7 \in \ZZ$. The Hilbert modular group corresponding to $\mathfrak{p}$ is
\[
\Gamma(\mathcal{O}_F\oplus\mathfrak{p}) = \{\begin{pmatrix}
    a&b\\c&d
\end{pmatrix}\in \SL(2,F)|a,d\in \mathcal{O}_F,b\in \mathfrak{p}^{-1},c\in\mathfrak{p}\}. 
\]
It is a subgroup of $\SL(2,F)$. 

Now let $\varphi\colon \SL(2,F) \to \SL(2,\RR)\times\SL(2,\RR)$ be a group homomorphism, sending $\begin{pmatrix}
    a&b\\c&d
\end{pmatrix}$ to $(\begin{pmatrix}
    a&b\\c&d
\end{pmatrix},\begin{pmatrix}
    a^\prime&b^\prime\\c^\prime&d^\prime
\end{pmatrix})$. Here we use $a^\prime$ to denote the Galois conjugate of $a$ inside $F$. Meanwhile, let $C = \begin{pmatrix}
    0&-1\\1&0
\end{pmatrix}$ and $\psi_C$ be an automorphism of $\SL(2,\RR)\times\SL(2,\RR)$ sending $((h_1,h_2)$ to $(h_1,Ch_2C^{-1})$. We will have
\begin{prop}
    The morphism $\psi \circ \varphi\colon\SL(2,F)\to \SL(2,\RR)\times\SL(2,\RR)$ maps $\Gamma(\mathcal{O}_F\oplus\mathfrak{p})$ isomorphically onto $H$. 
\end{prop}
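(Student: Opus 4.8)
The map $\psi_C\circ\varphi$ is an injective group homomorphism: $\varphi$ is injective because its first component is the tautological inclusion $\SL(2,F)\hookrightarrow\SL(2,\RR)$ reading off the entries $a,b,c,d\in F\subset\RR$, and $\psi_C$ is an automorphism of $\SL(2,\RR)\times\SL(2,\RR)$. The first step is to compute the image explicitly. Conjugating the second (Galois-conjugate) factor by $C=\begin{pmatrix}0&-1\\1&0\end{pmatrix}$ gives
\[
\psi_C\circ\varphi\begin{pmatrix}a&b\\c&d\end{pmatrix}=\left(\begin{pmatrix}a&b\\c&d\end{pmatrix},\ \begin{pmatrix}d'&-c'\\-b'&a'\end{pmatrix}\right),
\]
where $'$ denotes the nontrivial Galois conjugation of $F=\QQ(\sqrt{21})$. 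The plan is then to match this pair against the explicit parametrization of $H$ provided by Proposition \ref{strulem} with $(\alpha,\beta,\gamma,\delta)=(1,21,3,7)$: writing $a=\tfrac{\alpha_1+\beta_1\sqrt{21}}2$, $d=\tfrac{\alpha_2-\beta_2\sqrt{21}}2$, $b=\tfrac{7\delta_1+\gamma_1\sqrt{21}}{14}$ and $c=\tfrac{-7\delta_2+\gamma_2\sqrt{21}}2$, a direct comparison of all eight entries shows that $\psi_C\circ\varphi(M)$ agrees with the general element of $H$, the $\sqrt7$ and $\tfrac1{\sqrt7}$ scalings in the off-diagonal entries being reproduced exactly. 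Thus the problem reduces to checking that the defining conditions of $\Gamma(\mathcal{O}_F\oplus\mathfrak{p})$ correspond, under this parametrization, precisely to the defining conditions of $H$.

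This is carried out condition by condition. The requirement $a,d\in\mathcal{O}_F=\ZZ[\tfrac{1+\sqrt{21}}2]$ is equivalent to $\alpha_i\equiv\beta_i\pmod 2$, which is half of the parity condition defining $H$. For the off-diagonal conditions I would first record that, with $\pi=\tfrac{7+\sqrt{21}}2$, one has $N(\pi)=\pi\bar\pi=7$, so $\mathfrak{p}\bar{\mathfrak{p}}=(7)$ and $\mathfrak{p}^{-1}=\tfrac{\bar\pi}{7}\mathcal{O}_F=\tfrac{7-\sqrt{21}}{14}\mathcal{O}_F$. Then $b\in\mathfrak{p}^{-1}$ is equivalent to $b\pi\in\mathcal{O}_F$; computing $b\pi=\tfrac{(7\delta_1+3\gamma_1)+(\delta_1+\gamma_1)\sqrt{21}}4$ and imposing membership in $\mathcal{O}_F$ yields exactly $\gamma_1\equiv\delta_1\pmod2$. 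Symmetrically, $c\in\mathfrak{p}$ is equivalent to $c/\pi\in\mathcal{O}_F$ and reduces to $\gamma_2\equiv\delta_2\pmod2$; together these complete the parity conditions of $H$. Finally, expanding $\det M=ad-bc$ and separating rational and $\sqrt{21}$ parts, the condition $\det M=1$ becomes the two equations $\alpha_1\alpha_2-21\beta_1\beta_2-3\gamma_1\gamma_2+7\delta_1\delta_2=4$ and $\alpha_1\beta_2-\alpha_2\beta_1=\gamma_1\delta_2-\gamma_2\delta_1$ appearing in Proposition \ref{strulem}.

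Since the correspondence between the integer parameters $(\alpha_i,\beta_i,\gamma_i,\delta_i)$ and $(a,b,c,d)$ is a bijection, these equivalences show simultaneously that $\psi_C\circ\varphi$ maps $\Gamma(\mathcal{O}_F\oplus\mathfrak{p})$ into $H$ and that every element of $H$ is hit by a unique $M\in\Gamma(\mathcal{O}_F\oplus\mathfrak{p})$; combined with injectivity and the homomorphism property already noted, this gives the desired isomorphism. The only genuinely delicate points are the computation of $\mathfrak{p}^{-1}$ and the translation of the integrality conditions at the prime over $7$, which is where the $\sqrt7$-scaling in Proposition \ref{strulem} is seen to encode precisely the level structure $b\in\mathfrak{p}^{-1}$, $c\in\mathfrak{p}$ of the Hilbert modular group. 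The conjugation $\psi_C$ is likewise essential: without it the second factor would not match the ``rotated'' shape of $h_2$ in Proposition \ref{strulem}.
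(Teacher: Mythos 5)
Your proof is correct and follows essentially the same route as the paper: both start from the parametrization of $H$ in Proposition \ref{strulem} with $(\alpha,\beta,\gamma,\delta)=(1,21,3,7)$, observe that the second factor is the $C$-conjugate of the Galois conjugate of the first (so $H$ lies in the image of the injective map $\psi\circ\varphi$), and then translate the parity and determinant conditions entry by entry into the conditions $a,d\in\mathcal{O}_F$, $b\in\mathfrak{p}^{-1}$, $c\in\mathfrak{p}$, $\det=1$. Your organization of the ideal-membership checks (multiplying by $\pi=\tfrac{7+\sqrt{21}}{2}$ and using $\pi\bar\pi=7$ to compute $\mathfrak{p}^{-1}$) is only a cosmetic variant of the paper's explicit factorization identities.
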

\begin{proof}
    According to Lemma \ref{strulem}, we know that $h \in H$ if and only if 
    \begin{align*}
        h &=(\begin{pmatrix}
        \frac{\alpha_1+\beta_1\sqrt{21}}{2}
        &\frac{1}{7}\frac{\gamma_1\sqrt{21}+7\delta_1}{2}\\
        \frac{\gamma_2\sqrt{21}-7\delta_2}{2}&\frac{\alpha_2-\beta_2\sqrt{21}}{2}
    \end{pmatrix},
    \begin{pmatrix}
        \frac{\alpha_2+\beta_2\sqrt{21}}{2}
        &\frac{\gamma_2\sqrt{21}+7\delta_2}{2}\\
        \frac{1}{7}\frac{\gamma_1\sqrt{21}-7\delta_1}{2}&\frac{\alpha_1-\beta_1\sqrt{21}}{2}
    \end{pmatrix})\\
    &= (\begin{pmatrix}
        \frac{\alpha_1+\beta_1\sqrt{21}}{2}
        &\frac{1}{7}\frac{\gamma_1\sqrt{21}+7\delta_1}{2}\\
        \frac{\gamma_2\sqrt{21}-7\delta_2}{2}&\frac{\alpha_2-\beta_2\sqrt{21}}{2}
    \end{pmatrix},
    C\begin{pmatrix}
        \frac{\alpha_1-\beta_1\sqrt{21}}{2}
        &\frac{1}{7}\frac{-\gamma_1\sqrt{21}+7\delta_1}{2}\\
        \frac{-\gamma_2\sqrt{21}-7\delta_2}{2}&\frac{\alpha_2+\beta_2\sqrt{21}}{2}
    \end{pmatrix}C^{-1}
    )\\
    &= \psi \circ \varphi(\begin{pmatrix}
        \frac{\alpha_1+\beta_1\sqrt{21}}{2}
        &\frac{1}{7}\frac{\gamma_1\sqrt{21}+7\delta_1}{2}\\
        \frac{\gamma_2\sqrt{21}-7\delta_2}{2}&\frac{\alpha_2-\beta_2\sqrt{21}}{2}
    \end{pmatrix})
    \end{align*}
    such that $\alpha_i-\beta_i,\gamma_i-\delta_i \in 2\ZZ$ and satisfies the "determinant $=1$" condition. 

    Therefore, $H$ lies in the image of $\psi \circ \varphi$. Meanwhile, $\psi\circ\varphi$ is clearly injective, thus we only need to prove that the inverse image of $H$ is just $\Gamma(\mathcal{O}_F\oplus\mathfrak{p})$. 

    Note that $\frac{\alpha_1+\beta_1\sqrt{21}}{2} = \frac{\alpha_1-\beta_1}{2}+\beta_1\frac{1+\sqrt{21}}{2} \in \mathcal{O}_F$, the same apllies to $\frac{\alpha_2-\beta_2\sqrt{21}}{2}$. Meanwhile, $\frac{-7\delta_2+\gamma_2\sqrt{21}}{2} = (-(\gamma_2+2\delta_2)+\frac{\gamma_2+\delta_2}{2}\frac{1+\sqrt{21}}{2})\frac{7+\sqrt{21}}{2} \in \mathfrak{p}$ and almost the same applies to $\frac{1}{7}\frac{\gamma_1\sqrt{21}+7\delta_1}{2}$. Therefore, the inverse image of $H$ lies in $\Gamma(\mathcal{O}_F\oplus\mathfrak{p})$. Conversely, each element in $\mathcal{O}_F$ can be written as $\frac{\alpha_1+\beta_1\sqrt{21}}{2}$ with $\alpha_1-\beta_1\in 2\ZZ$ and each element in $\mathfrak{p}$ can be written as $\frac{-7\delta_2+\gamma_2\sqrt{21}}{2}$ with $\delta_2-\gamma_2 \in 2\ZZ$. The same also applies to $\mathfrak{p}^{-1}$. Therefore, the proposition is proven. 
\end{proof}

Recall that in \cite[Chapter 2, \S7]{van2012hilbert}, we can call $\Gamma(\mathcal{O}_F\oplus \mathfrak{p})\backslash\HH^2$ a Hilbert modular surface. As a summary of above description of $\Gamma$, we have the following theorem: 
\begin{thm}
    There are natural morphisms
    \[   
    \Gamma(\mathcal{O}_F\oplus\mathfrak{p})\backslash\HH^2 \to \Gamma^\prime\backslash\HH^2 \xrightarrow{\sim} \SO^+(T)\backslash\mathbb{D}_T \to \Gamma\backslash\mathbb{D}_T\cong\rO^+(T)\backslash\DD_T.
    \]
    Each of them is of degree $2$ except the isomorphisms. The first degree $2$ morphism is a quotient by 
    \[
    (\begin{pmatrix}
        0 & \frac{1}{\sqrt{7}} \\
        -\sqrt{7} &
        0
    \end{pmatrix},\begin{pmatrix}
        0 & \sqrt{7} \\
        -\frac{1}{\sqrt{7}} &
        0
    \end{pmatrix})
    \]
    and the second is a quotient by
    \[
    P = \begin{pmatrix}
    1&0&0&0\\0&1&0&0\\0&0&0&-1\\0&0&-1&0
\end{pmatrix}.
    \]
  Therefore, the period domain $\Gamma\backslash\DD_T$ (for cubic fourfolds with order-$7$ action) is commensurable to the Hilbert modular surface $\Gamma(\mathcal{O}_F\oplus\mathfrak{p})\backslash\HH^2$.
\end{thm}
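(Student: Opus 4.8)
The plan is to read off the entire chain from the group-theoretic facts already assembled, treating each arrow separately and keeping careful track of exactly which elements act trivially on the relevant symmetric space. All four quotients are quotients of $\HH^2$ (equivalently $\DD_T$) by arithmetic groups acting properly discontinuously, hence normal quasi-projective varieties, and the non-isomorphism arrows are finite quotient maps; the entire content is therefore the determination of the four degrees.

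First I would establish the central isomorphism $\Gamma^\prime\backslash\HH^2 \xrightarrow{\sim}\SO^+(T)\backslash\DD_T$. This is immediate from the equivariant diagram attached to $\Phi\colon\SL(2,\RR)\times\SL(2,\RR)\to\SO^+(2,2)$: the top map $\HH^2\xrightarrow{\cong}\DD_T$ intertwines the two actions, $\Phi$ is surjective onto $\SO^+(2,2)$ with $\Phi(\Gamma^\prime)=\SO^+(T)$ by the very definition $\Gamma^\prime=\Phi^{-1}(\SO^+(T))$, and its kernel $\pm(\I,\I)$ acts trivially on $\HH^2$. Hence $\Phi$ descends to a bijective biholomorphism of the quotients.

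Next the two top-row degree-$2$ statements. For $\Gamma(\mathcal{O}_F\oplus\mathfrak{p})\backslash\HH^2\to\Gamma^\prime\backslash\HH^2$ I would invoke the preceding proposition that $\psi\circ\varphi$ is an isomorphism $\Gamma(\mathcal{O}_F\oplus\mathfrak{p})\xrightarrow{\sim}H$, so $\Gamma(\mathcal{O}_F\oplus\mathfrak{p})\backslash\HH^2\cong H\backslash\HH^2$, after which $\Gamma^\prime\backslash\HH^2$ is the further quotient by $\Gamma^\prime/H\cong\ZZ/2\ZZ\oplus\ZZ/2\ZZ$. The key observation — precisely where one must be careful — is that the coset of type $(21,1,7,3)$ is represented by $(\Id,-\Id)$, which acts trivially on $\HH^2$ by Möbius transformations; consequently the action of $\Gamma^\prime/H$ on $H\backslash\HH^2$ factors through the order-two quotient generated by the class of $\widehat{F}$. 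Since $\widehat{F}$ acts by $(z_1,z_2)\mapsto(-1/(7z_1),-7/z_2)$ it is nontrivial, so the map is a genuine degree-$2$ quotient by $\langle\widehat{F}\rangle$. For the bottom arrows I would use the structural results of \S\ref{subsection: char of Gamma F21}: because $T$ has rank $4$, the element $-\Id$ lies in $\SO^+(T)$ yet acts trivially on $\DD_T\subset\PP(T\otimes\CC)$, which together with $\rO^+(T)=\langle\Gamma,-\Id\rangle$ and $[\rO^+(T):\Gamma]=2$ yields the isomorphism $\Gamma\backslash\DD_T\cong\rO^+(T)\backslash\DD_T$; and $\SO^+(T)$ is index two in $\rO^+(T)$ with quotient generated by $P$, where $P$ has determinant $-1$, hence $P\ne\pm\Id$ acts nontrivially on $\DD_T$, making $\SO^+(T)\backslash\DD_T\to\rO^+(T)\backslash\DD_T$ a degree-$2$ quotient by $P$. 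Composing all four arrows, the Hilbert modular surface dominates $\Gamma\backslash\DD_T$ through a composite of degree-$2$ covers, giving the asserted commensurability.

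The main obstacle is the bookkeeping of the two distinct trivially-acting involutions. One must separate the element $(\Id,-\Id)$ acting trivially on $\HH^2$ — which collapses the Klein four-group $\Gamma^\prime/H$ to a single effective involution upstairs — from the element $-\Id$ acting trivially on $\DD_T$ — which is responsible for $\Gamma\backslash\DD_T\cong\rO^+(T)\backslash\DD_T$ downstairs, a phenomenon absent in the odd-rank $L_2(7)$ case where $-\Id$ has determinant $-1$. Getting these two separate ``trivial kernels'' right is exactly what makes the degree sequence come out as $2,1,2,1$ rather than $4,1,2,1$.
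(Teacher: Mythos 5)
Your proposal is correct and follows essentially the same route as the paper, which states this theorem as a direct summary of the section's results: the isomorphism $\psi\circ\varphi\colon\Gamma(\mathcal{O}_F\oplus\mathfrak{p})\xrightarrow{\sim}H$, the Klein four-group $\Gamma^\prime/H$ with $(\Id,-\Id)$ acting trivially on $\HH^2$, the equivariant tube-domain diagram identifying $\Gamma^\prime\backslash\HH^2$ with $\SO^+(T)\backslash\DD_T$, and the index-two structure $\rO^+(T)=\langle\Gamma,-\Id\rangle$ with $P$ generating $\rO^+(T)/\SO^+(T)$. In particular, your bookkeeping of the two distinct trivially-acting involutions, which forces the degree sequence $2,1,2,1$ rather than $4,1,2,1$, is precisely the point the paper makes when it reduces $\DD_T/\Gamma$ to $\HH^2$ modulo $H$ and the single effective involution $\widehat{F}$.
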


\bibliography{reference}
\end{document}